\numberwithin{equation}{section}
\newcommand{\E}{{\sf E}}
\renewcommand{\H}{\mathscr{H}}
\newcommand{\M}{\mathcal{M}}
\newcommand{\NN}{\mathcal{N}}
\newcommand{\N}{\mathbb{N}}
\newcommand{\R}{\mathbb{R}}
\newcommand{\Z}{\mathbb{Z}}
\newcommand{\mm}{{\mbox{\boldmath$m$}}}
\newcommand{\ggamma}{{\mbox{\boldmath$\gamma$}}}
\newcommand{\ppi}{{\mbox{\boldmath$\pi$}}}
\newcommand{\Ggamma}{{\mbox{\boldmath$\Gamma$}}}
\newcommand{\sfd}{{\sf d}}
\newcommand{\sfh}{{\sf h}}
\newcommand{\Kliminf}{K\kern-3pt-\kern-2pt\mathop{\rm lim\,inf}\limits}  
\newcommand{\Lip}{\mathop{\rm Lip}\nolimits}          
\renewcommand{\d}{{\mathrm d}}
\newcommand{\restr}[1]{\lower3pt\hbox{$|_{#1}$}}
\newcommand{\la}{\left<}                  
\newcommand{\ra}{\right>}
\newcommand{\eps}{\varepsilon}  
\newcommand{\nchi}{{\raise.3ex\hbox{$\chi$}}}
\newcommand{\limi}{\varliminf}
\newcommand{\lims}{\varlimsup}
\newcommand{\fr}{\penalty-20\null\hfill$\blacksquare$}                      
\newcommand{\prob}[1]{\mathscr P(#1)}                   
\newcommand{\probt}[1]{\mathscr P_2(#1)}                   
\newcommand{\e}{{\rm{e}}}                           
\renewcommand{\mm}{\mathfrak m}                                
\renewenvironment{proof}{\removelastskip\par\medskip   
\noindent{\em proof} \rm}{\penalty-20\null\hfill$\square$\par\medbreak}
\newenvironment{sketch}{\removelastskip\par\medskip   
\noindent{\em Sketch of the proof} \rm}{\penalty-20\null\hfill$\square$\par\medbreak}
\newtheorem{theorem}{Theorem}[section]
\newtheorem{corollary}[theorem]{Corollary}
\newtheorem{lemma}[theorem]{Lemma}
\newtheorem{proposition}[theorem]{Proposition}
\newtheorem{definition}[theorem]{Definition}
\newtheorem{example}[theorem]{Example}
\newtheorem{remark}[theorem]{Remark}
\newtheorem{thmdef}[theorem]{Theorem/Definition}
\newcommand{\bd}{{\mathbf\Delta}}
\newcommand{\s}{{\rm S}}
\newcommand{\test}[1]{{\rm Test}(#1)}
\newcommand{\vsm}{{\rm TestV}(\X)}
\newcommand{\ffsm}[1]{{\rm TestForm}_{#1}(\X)}
\newcommand{\Int}[1]{{\rm Int}(#1)}
\newcommand{\X}{{\sf X}}
\newcommand{\Y}{{\rm Y}}
\renewcommand{\Z}{{\rm Z}}
\newcommand{\h}{{\sfh}}
\renewcommand{\ae}{{\textrm{\rm{-a.e.}}}}
\newcommand{\RCD}{{\sf RCD}}
\newcommand{\comp}{{\rm Comp}}
\newcommand{\closed}[1]{{\rm C}_{#1}(\X)}
\newcommand{\exact}[1]{{\rm E}_{#1}(\X)}
\newcommand{\exacto}[1]{\overline{\rm E}_{#1}(\X)}
\newcommand{\lf}{{\sf Lip}}
\newcommand{\cf}{{\sf Comp}}
\newcommand{\ric}{{\mathbf{Ric}}}
\newcommand{\harm}[1]{{\rm Harm}_{#1}(\X)}
\newcommand{\mes}{{\sf Meas}}
\newcommand{\PCM}{{\sf Pcm}}
\renewcommand{\Int}{{\sf Int}}
\newcommand{\LIP}{{\rm LIP}}
\newcommand{\Der}{{\sf Der}}
\renewcommand{\S}{{\rm S}}
\newcommand{\lip}{{\rm lip}}
\newcommand{\HS}{{\rm HS}}
\renewcommand{\div}{{\rm div}}
\newcommand{\He}{{\rm Hess}}
\newcommand{\Ho}{{\rm H}}
\newcommand{\eh}{\sf E_\Ho}
\newcommand{\ec}{\sf E_C}
\newcommand{\weakgrad}[1]{|{\rm D} #1|}
\DeclareMathOperator*{\esssup}{\rm ess-sup}
\title{Lecture notes on differential calculus on $\RCD$ spaces}
\begin{document}

\author{
   Nicola Gigli\
   \thanks{SISSA. email: \textsf{ngigli@sissa.it}}
   }

\maketitle




\tableofcontents

\section*{Introduction}
These are extended notes of the course given by the author at RIMS, Kyoto, in October 2016. The aim is to give a self-contained  overview on the recently developed approach to differential calculus on metric measure spaces, with most, but not all, the material coming from \cite{Gigli14}. The effort is directed into giving as many ideas as possible, without losing too much time in technical details and utmost generality: for this reason many statements are  given under some simplifying assumptions and proofs are sometimes only sketched.

The notes are divided in two parts: in the first one we study the first-order structure of general metric measure spaces, then, building on top of this,  in the second we study the second-order differential structure of spaces with (Riemannian) Ricci curvature bounded from below.

\bigskip

For what concerns the first part, a crucial role is played by the concept of $L^2$-normed $L^\infty$-module, which provides a convenient abstraction of the notion of `space of $L^2$ sections of a vector bundle'. This is a variant of the similar notion of $L^\infty$-module introduced by Weaver in \cite{Weaver01} who was also interested in developing a calculus on non-smooth spaces. In fact, some of the statements which we shall present in Sections \ref{se:cot} and \ref{se:tan} can be seen as technical variants of analogous statements given in \cite{Weaver01}. Still, our axiomatization and the study of Sobolev functions carried out in \cite{AmbrosioGigliSavare11} allow to produce new and interesting links between the abstract differential calculus and the structure of the space: for instance, in Theorem \ref{thm:speedplan} we shall see that we can associate to `almost every absolutely continuous curve' a derivative whose modulus coincides with the metric speed of the curve itself. This kind of statement, whose  precise formulation requires the notions of `test plan' and of `pullback of a module', is crucial in applications to geometry, see for instance \cite{DPG16}.

We also remark that the definition of cotangent module that we give here can be canonically identified with the cotangent bundle as built by Cheeger in \cite{Cheeger00}. We won't insist on this point (referring to \cite{Gigli14} for more details) because the two approaches are very different in spirit: in \cite{Cheeger00}, working on doubling spaces supporting a Poincar\'e inequality, Cheeger gave  a metric version of Rademacher's theorem, which results in  much more than a mere definition of cotangent bundle. Here, instead, we are only interested in giving an abstract and weak notion of differential of a Sobolev function and we shall do so without imposing any doubling or Poincar\'e inequality. In any case, our first-order theory should mostly be regarded as foundational material for the second-order one on $\RCD$ spaces.

\bigskip

In the second part of the notes we shall work in $\RCD$ spaces, mostly without imposing any dimension bound (we  confine to the final Section \ref{se:findim} some recent results about calculus on finite dimensional spaces). The definition of $\RCD(K,\infty)$ spaces that we shall adopt  is the one, coming from \cite{AmbrosioGigliSavare12}, based on the appropriate weak formulation of the Bochner inequality
\begin{equation}
\label{eq:bocintro1}
\Delta\frac{|\nabla f|^2}2\geq \langle\nabla f,\nabla \Delta f\rangle+K|\nabla f|^2.
\end{equation}

There  is a certain amount of `cheating' in choosing this approach, because it is the closest to differential calculus and the furthest from the fact, crucial for the theory, that the class of $\RCD(K,\infty)$ spaces is closed w.r.t.\ measured-Gromov-Hasdorff convergence. Nevertheless, the validity of Bochner inequality on $\RCD$ spaces is now well-established within the theory, so that possibly there is no much harm in taking it as starting point for our discussion. The reader interested in the stability issue might want to start from the lecture notes \cite{AmbrosioGigliSavare-compact} for an account of the path which starts from the original approach of Lott-Sturm-Villani (\cite{Lott-Villani09}, \cite{Sturm06I}) and uses the heat flow (\cite{Gigli10}, \cite{Gigli-Kuwada-Ohta10}, \cite{AmbrosioGigliSavare11}) to isolate `Riemannian' spaces (\cite{AmbrosioGigliSavare11-2}) by also providing a stable version of the Bochner inequality (\cite{AmbrosioGigliSavare12}).

From the technical point of view, the main result of this second part of the notes (Lemmas \ref{le:lemmachiave} and \ref{le:riscritto}) is the improvement of the Bochner inequality from \eqref{eq:bocintro1} to:
\begin{equation}
\label{eq:bocintro2}
\Delta\frac{|X|^2}2\geq |\nabla X|_{\HS}^2-\langle X,(\Delta_\Ho X^\flat)^\sharp\rangle+K|X|^2
\end{equation}
in the appropriate weak sense. Notice that for $X=\nabla f$, \eqref{eq:bocintro2} reduces to \eqref{eq:bocintro1} with the additional non-negative contribution $|\He f|_\HS^2$ on the right hand side. Here the language of $L^2$-normed modules provides natural spaces where objects like the Hessian or the covariant derivative belong, and one of the effects of the improved formula \eqref{eq:bocintro2} is the bound
\begin{equation}
\label{eq:bocintro3}
\int|\He f|_\HS^2\,\d\mm\leq \int (\Delta f)^2-K|\nabla f|^2\,\d\mm
\end{equation}
obtained integrating  \eqref{eq:bocintro2} for $X=\nabla f$ (Corollary \ref{cor:bello}). Since functions with gradient and Laplacian in $L^2$ are easy to build using the heat flow, \eqref{eq:bocintro3} grants that there are `many' functions with Hessian in $L^2$. Starting from this, it will not be hard to build a second order calculus and an indication of the novelty of the theory is in the fact that we can prove that the exterior differential is a closed operators on the space of $k$-forms for any $k\in\N$ (Theorem \ref{thm:basew12d}), whereas previously known results only covered the case $k=0$ (\cite{Cheeger00}, \cite{Weaver01}, \cite{Cheeger-Colding97III}). In particular, quite natural versions of the De Rham cohomology and of the Hodge theorem can be provided (Section \ref{se:dr})

Another consequence of the fact that we have well-defined differential operators is that we can define the Ricci curvature as the quantity for which the Bochner identity holds:
\[
\ric(X,X):=\Delta\frac{|X|^2}2- |\nabla X|_{\HS}^2+\langle X,(\Delta_\Ho X^\flat)^\sharp\rangle.
\]
It turns out that $\ric (X,X)$ is a measure-valued tensor and the role of \eqref{eq:bocintro2} is to grant that the Ricci curvature is bounded from below by $K$, as expected.

\bigskip

Finally, a feature of the language proposed here is that the differential operators are stable w.r.t.\ measured-Gromov-Hausdorff convergence of the base spaces in a quite natural sense. To keep the presentation short we won't discuss this - important and under continuous development - topic, referring to \cite{H15}, \cite{AST17}, \cite{AH16} for recent results.

\bigskip

{\bf Acknowledgment} I wish to thank RIMS for the invitation in giving a course there and the very warm hospitality. This project has also been partly financed by the MIUR SIR-grant `Nonsmooth Differential Geometry' (RBSI147UG4).

\section{First order theory for general metric measure spaces}
\subsection{Sobolev functions on metric measure spaces}
For the purpose of this note a metric measure space  $(\X,\sfd,\mm)$ is a complete separable metric space $(\X,\sfd)$ endowed with a non-negative (and not zero) Borel measure $\mm$ giving finite mass to bounded sets.

$\prob \X$ is the space of Borel probability measures on $\X$ and $C([0,1],\X)$   the space of continuous curves with value in $\X$ endowed with the $\sup$ norm. For $t\in[0,1]$ the evaluation map $\e_t:C([0,1],\X)\to \X$ is  defined by
\[
\e_t(\gamma):=\gamma_t,\qquad\forall \gamma\in C([0,1],\X).
\]
Recall that $\gamma:[0,1]\to \X$ is absolutely continuous provided there is  $f\in L^1(0,1)$ such that
\begin{equation}
\label{eq:accurve}
\sfd(\gamma_t,\gamma_s)\leq \int_t^sf(r)\,\d r,\qquad\forall t,s\in[0,1],\ t<s.
\end{equation}
In this case, for a.e.\ $t\in[0,1]$ there exists  $|\dot\gamma_t|:=\lim_{h\to 0}\frac{\sfd(\gamma_{t+h},\gamma_t)}{|h|}$ and $|\dot\gamma_t|$ is the least, in the a.e.\ sense, function $f\in L^1(0,1)$ for which \eqref{eq:accurve} holds (see e.g.\ Theorem 1.1.2 of \cite{AmbrosioGigliSavare08} for a proof).

By $\LIP(\X)$ (resp. $\LIP_b(\X)$) we mean the space of Lipschitz (resp. Lipschitz and bounded) functions on $\X$.

\bigskip

There are several equivalent definitions of Sobolev functions on a metric measure space (\cite{Cheeger00}, \cite{Shanmugalingam00}, \cite{AmbrosioGigliSavare11}), here we shall adopt one of those proposed in the latter reference, where the notion of Sobolev function is given in duality with that of test plan:
\begin{definition}[Test Plans]
Let $\ppi\in\prob{C([0,1], \X)}$. We say that $\ppi$ is a test plan provided for some $C>0$ we have
\[
\begin{split}
(\e_t)_*\ppi&\leq C \mm,\qquad\forall t\in[0,1],\\
\iint_0^1|\dot\gamma_t|^2\,\d t\,\d\ppi(\gamma)&<\infty.
\end{split}
\]
The least such $C$ is called compression constant of $\ppi$ and denoted as  $\cf(\ppi)$.
\end{definition}
Recall that $L^0(\X)$ is the space of (equivalence classes w.r.t.\ $\mm$-a.e.\ equality of) Borel real valued functions on $\X$.
\begin{definition}[The Sobolev class $\s^2(\X,\sfd, \mm)$]
The Sobolev class $\s^2(\X,\sfd,\mm)$, or simply $\s^2(\X)$ is the space of all functions $f\in L^0(\X)$ such that there exists a non-negative $G\in L^2( \mm)$, called weak upper gradient of $f$,  for which it holds
\begin{equation}
\label{eq:defsob}
\int|f(\gamma_1)-f(\gamma_0)|\,\d\ppi(\gamma)\leq \iint_0^1G(\gamma_t)|\dot\gamma_t|\,\d t\,\d\ppi(\gamma),\qquad\forall \ppi\textrm{ test plan}.
\end{equation}
\end{definition}
Notice that the assumptions on $\ppi$ grant that the integrals are well defined and that the one in the right hand side is finite. With an argument based on the stability of the class of test plans by `restriction' and `rescaling' it is not hard to check that $f\in\s^2(\X)$ with $G$ being a weak upper gradient if and only if for any test plan $\ppi$ and  any $t,s\in[0,1]$, $t<s$ it holds
\begin{equation}
\label{eq:localsob}
|f(\gamma_s)-f(\gamma_t)|\leq \int_t^sG(\gamma_r)|\dot\gamma_r|\,\d r\qquad\ppi\ae\ \gamma.
\end{equation}
Then an application of Fubini's theorem (see  \cite{Gigli12} for the details) shows that this is in turn equivalent to: for any test plan $\ppi$ and $\ppi$-a.e.\ $\gamma$, the function $t\mapsto f(\gamma_t)$ is in $W^{1,1}(0,1)$ and
\begin{equation}
\label{eq:localsob2}
\big|\frac\d{\d t}f(\gamma_t)\big|\leq G(\gamma_t)|\dot\gamma_t|,\qquad{\rm a.e. }\ t.
\end{equation}
It is then easy to check that  there exists a minimal $G$ in the $\mm$-a.e. sense for which \eqref{eq:defsob} holds: such $G$ will be called {\bf minimal weak upper gradient} and denoted by $\weakgrad f$.

From the definitions it is clear that $\S^2(\X)$ is a vector space and that
\begin{equation}
\label{eq:s2vector}
\weakgrad{(\alpha f+\beta g)}\leq |\alpha|\weakgrad f+|\beta|\weakgrad g\qquad\forall f,g\in\s^2(\X),\ \alpha,\beta\in\R.
\end{equation}
Beside this, the two crucial properties of minimal weak upper gradients that we shall use are:

\noindent\underline{Lower semicontinuity of minimal weak upper gradients}. Let $(f_n)\subset \s^2(\X)$ and $f\in L^0(\X)$ be such that $f_n\to f$ as $n\to\infty$ in $L^0(\X)$ (i.e.\ $\mm$-a.e.). Assume that $(\weakgrad {f_n})$ converges to some $G\in L^2(\X)$ weakly in $L^2(\X)$.

Then 
\begin{equation}
\label{eq:lscwug}
f\in \s^2(\X )\qquad\text{ and }\qquad\weakgrad f\leq  G,\quad \mm\ae.
\end{equation}

\noindent\underline{Locality}. The minimal weak upper gradient is local in the following sense:
\begin{equation}
\label{eq:localgrad0}
\weakgrad f=0,\qquad\mm\ae\textrm{  on }\{f=0\},\qquad \forall f\in\s^2(\X).
\end{equation}
\eqref{eq:lscwug}  follows quite easily from the very definition of $\S^2(\X)$, while \eqref{eq:localgrad0} comes from the characterization \eqref{eq:localsob2} and the analogous property of functions in $W^{1,1}(0,1)$.

The lower semicontinuity of minimal weak upper gradients ensures that the space $W^{1,2}(\X):= L^2\cap \s^2(\X)$ endowed with the norm
\[
\|f\|_{W^{1,2}(\X)}^2:=\|f\|_{L^2(\X)}^2+\|\weakgrad f\|^2_{L^2(\X)}.
\]
is a Banach space. It is trivial to check that Lipschitz functions with bounded support are in $W^{1,2}(\X)$ with 
\[
\weakgrad f\leq \lip(f)\qquad\mm\ae,
\]
where
\[
\lip(f)(x):=\lims_{y\to x}\frac{|f(y)-f(x)|}{\sfd(x,y)}\quad\text{ if $x$ is not isolated,\qquad 0 otherwise.}
\]
In particular,   $W^{1,2}(\X)$ is dense in $L^2(\X)$. On the other hand it is non-trivial that for every  $f\in W^{1,2}(\X)$ there exists a sequence $(f_n)$ of Lipschitz functions with bounded support   converging to $f$ in $L^2$ such that
\[
\int \weakgrad f^2\,\d\mm=\lim_n\int \lip^2(f_n)\,\d\mm.
\]
We shall not use this fact (see \cite{AmbrosioGigliSavare11} for the proof).

We conclude recalling that, as shown in \cite{ACM14}, 
\begin{equation}
\label{eq:refsep}
\text{if $W^{1,2}(\X)$ is reflexive, then it is separable.}
\end{equation}
This can be proved considering a countable $L^2$-dense set $D$ of the unit ball $B$ of $W^{1,2}(\X)$. Then for $f\in B$ find $(f_n)\subset D$ converging to $f$ in $L^2(\X)$: being $(f_n)$ bounded in $W^{1,2}(\X)$, up to subsequences it must have a weak limit in $W^{1,2}(\X)$ and this weak limit must be $f$. Hence the weak closure of $D$ is precisely $B$ and by Mazur's lemma this is sufficient to conclude. 

\subsection{$L^2$-normed modules, cotangent module and differential}\label{se:cot}
\subsubsection{$L^2$-normed modules}

\begin{definition}[$L^2(\X)$-normed $L^\infty(\X)$-modules]
A $L^2(\X)$-normed $L^\infty(\X)$-module, or simply a $L^2(\X)$-normed module, is a structure $(\M,\|\cdot\|,\cdot,|\cdot|)$ where
\begin{itemize}
\item[i)] $(\M,\|\cdot\|)$ is a Banach space
\item[ii)] $\cdot$ is a bilinear map from $L^\infty(\X)\times \M$ to $\M$, called multiplication by $L^\infty(\X)$ functions, such that
\begin{subequations}
\label{eq:moltf}
\begin{align}
\label{eq:m0}f\cdot(g\cdot v)&=(fg)\cdot v,\\
\label{eq:m1}{\mathbf 1}\cdot v&=v,
\end{align}
\end{subequations}
for every $v\in \M$ and $f,g\in L^\infty(\X)$, where ${\bf 1}$ is the function identically equal to 1.
\item[iii)] $|\cdot|$ is a map from $\M$ to $L^2(\X)$, called pointwise norm, such that 
\begin{subequations}
\label{eq:pontnorm}
\begin{align}
\label{eq:normgeq} |v|&\geq 0\qquad\mm\ae\\
\label{eq:normpunt} |fv|&=|f|\,|v|\qquad\mm\ae\\
\label{eq:recnorm} \|v\|&=\sqrt{\int|v|^2\,\d\mm},
\end{align}
\end{subequations}
\end{itemize}
An isomorphism between  two $L^2(\X)$-normed modules is a linear bijection which preserves the norm, the product with $L^\infty(\X)$ functions and the pointwise norm.
\end{definition}
We shall typically write $fv$ in place of $f\cdot v$ for the product with $L^\infty(\X)$ function.

Notice that thanks to \eqref{eq:m1}, for $\lambda\in \R$ and $v\in \M$ the values of $\lambda v$ intended as coming from the vector space structure and as the product with the function constantly equal to $\lambda$ agree, so that the expression is unambiguous. Also, from \eqref{eq:normpunt} and \eqref{eq:recnorm} we obtain that
\[
\|fv\|\leq\|f\|_{L^\infty}\|v\|.
\]
We also remark that  the pointwise norm  satisfies
\[
\begin{split}
|\lambda v|&=|\lambda|\, |v|\\
|v+w|&\leq |v|+|w|,
\end{split}
\]
$\mm$-a.e.\ for every $v,w\in \M$ and $\lambda\in\R$. Indeed, the first comes from  \eqref{eq:normpunt}, while for the second we argue by contradiction. If it were false, for some $v,w\in \M$, Borel set $E\subset \X$ with $\mm(E)\in(0,\infty)$ and positive real numbers $a,b,c$ with $a+b<c$ we would have  $\mm$-a.e.\ on $E$
\[
|v+w|\geq c\qquad|v|\leq a\qquad |w|\leq b
\]
However, this creates a contradiction with \eqref{eq:recnorm} and the fact that $\|\cdot\|$ is a norm because 
\[
\begin{split}
\|\nchi_Ev\|+\|\nchi_Ew\|&=\|\nchi_E|v|\|_{L^2}+\|\nchi_E|w|\|_{L^2}\leq \sqrt{\mm(E)}\,(a+b)\\
&<\sqrt{\mm(E)}\,c\leq \|\nchi_E|v+w|\|_{L^2}=\|\nchi_E(v+w)\|.
\end{split}
\]
In the following for given $v,w\in\M$ and Borel set $E\subset \X$ we shall say that $v=w$  $\mm$-a.e.\ on $E$,  provided
\[
\nchi_{E}(v-w)=0\qquad\text{or equivalently if }\qquad |v-w|=0\quad\mm\ae\text{ on }E.
\]
\begin{example}{\rm Consider a manifold $\X$ equipped with a reference measure $\mm$ and with a normed vector bundle. Then the space of $L^2(\X,\mm)$-sections of the bundle naturally carries  the structure of $L^2(\X)$-normed module. This is the example which motivates the abstract definition of $L^2(\X)$-normed module.
}\fr\end{example}
We say that $f\in L^\infty(\X)$ is {\bf simple} provided it attains only a finite number of values.
 \begin{definition}[Generators]
We say that $V\subset \M$ generates $\M$ provided finite sums of the form $\sum_i\nchi_{E_i}v_i$ with $(E_i)$ Borel partition of $X$ and $(v_i)\subset V$ are dense in $\M$.
\end{definition}
By approximating $L^\infty$ functions with simple ones, it is easy to see that $V$ generates $\M$ if and only if $L^\infty$-linear combinations of elements of $V$ are dense in $\M$.\bigskip

A particularly important class of modules is that of {\bf Hilbert modules}, i.e.\ modules $\H$ which are, when seen as Banach spaces, Hilbert spaces. It is not hard to check that in this case the pointwise norm satisfies the pointwise parallelogram identity
\[
|v+w|^2+|v-w|^2=2(|v|^2+|w|^2)\quad\mm\ae\qquad\forall v,w\in\H
\]
and thus that by polarization it induces a pointwise scalar product $\la\cdot,\cdot\ra:\H^2\to L^1(\X)$ which is $L^\infty(\X)$-bilinear and satisfies
\[
\begin{split}
|\langle v,w\rangle|\leq |v|\,|w|\qquad\qquad\qquad\la v,v\ra=|v|^2, 
\end{split}
\]
$\mm$-a.e.\ for every $v,w\in\H$.

\bigskip

It is at times convenient to deal with objects with less integrability; in this direction, the following concept is useful:
\begin{definition}[$L^0$-normed module] A $L^0$-normed module is a structure $(\M,\tau,\cdot,|\cdot|)$ where:
\begin{itemize}
\item[i)] $\cdot$ is a bilinear map, called multiplication with $L^0$ functions, from $L^0(\X)\times \M$ to $\M$ for which \eqref{eq:m0}, \eqref{eq:m1} hold for any $f\in L^0(\X)$, $v\in \M$,
\item[ii)] $|\cdot|:\M\to L^0(\X)$, called pointwise norm, satisfies \eqref{eq:normgeq} and \eqref{eq:normpunt}  for any $f\in L^0(\X)$, $v\in \M$,
\item[iii)] for some Borel partition $(E_i)$ of $\X$ into sets of finite $\mm$-measure, $\M$ is complete w.r.t.\ the distance 
\begin{equation}
\label{eq:m0dist}
\sfd_0(v,w):=\sum_i\frac{1}{2^i\mm(E_i)}\int_{E_i}\min\{1,|v-w|\}\,\d\mm
\end{equation}
and $\tau$ is the topology induced by the distance.
\end{itemize}
An isomorphims of $L^0$-normed modules is a linear homeomorphism preserving the pointwise norm and the multiplication with $L^0$-functions.
\end{definition}
It is readily checked that the choice of the partition $(E_i)$ in $(iii)$ does not affect the completeness of $\M$ nor the topology $\tau$.

\begin{thmdef}[$L^0$ completion of a module]\label{thm:defl0} Let $\M$ be a $L^2$-normed module. Then there exists a unique couple $(\M^0,\iota)$, where $\M^0$ is a $L^0$-normed module and $\iota:\M\to \M^0$ is linear, preserving the pointwise norm and  with dense image. 

Uniqueness is intended up to unique isomorphism, i.e.:  if $(\tilde \M^0,\tilde\iota)$ has the same properties, then there exists a unique isomorphism $\Phi:\M^0\to \tilde \M^0$ such that $\tilde\iota=\Phi\circ\iota$.
\end{thmdef}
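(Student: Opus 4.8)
The plan is to realize $\M^0$ as the metric completion of $\M$ with respect to the distance $\sfd_0$ of \eqref{eq:m0dist}, and then to transport the whole algebraic structure to the completion by continuity. First I would fix a Borel partition $(E_i)$ of $\X$ into sets of finite positive $\mm$-measure — such a partition exists because $\mm$ is $\sigma$-finite, being finite on bounded sets of a separable space — and check that $\sfd_0$ is a translation-invariant distance on $\M$: symmetry is clear, $\sfd_0(v,w)=0$ forces $|v-w|=0$ $\mm$-a.e.\ and hence $v=w$ since the pointwise norm of an $L^2$-normed module is definite, and the triangle inequality follows from the pointwise triangle inequality for $|\cdot|$ together with the subadditivity of $t\mapsto\min\{1,t\}$. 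Let $(\M^0,\sfd_0)$ be the abstract metric completion and $\iota:\M\to\M^0$ the canonical isometric embedding, which automatically has dense image and is injective (again because $|\cdot|$ is definite).

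Next I would extend each piece of structure by uniform continuity. Addition is $\sfd_0$-uniformly continuous by translation invariance and the triangle inequality, and multiplication by a fixed real scalar is $\sfd_0$-Lipschitz since $|\lambda v-\lambda w|=|\lambda|\,|v-w|$; hence $\M^0$ inherits the structure of a topological vector space. The pointwise norm extends to a map $|\cdot|:\M^0\to L^0(\X)$ because the pointwise reverse triangle inequality $\big||v|-|w|\big|\le|v-w|$ makes $v\mapsto|v|$ continuous from $(\M,\sfd_0)$ to $L^0(\X)$ with its local-convergence-in-measure topology; by construction $|\iota(v)|=|v|$, so $\iota$ preserves the pointwise norm and \eqref{eq:normgeq} passes to the limit. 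For multiplication by $L^\infty(\X)$ functions, for fixed $f$ the map $v\mapsto fv$ is $\sfd_0$-Lipschitz with constant controlled by $\|f\|_{L^\infty}$, so it extends to $\M^0$, and the identities \eqref{eq:m0}, \eqref{eq:m1} and \eqref{eq:normpunt} survive the passage to the limit since both sides are $\sfd_0$-continuous in $v$.

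The delicate step — and the one I expect to be the main obstacle — is upgrading the multiplication from $L^\infty(\X)$ to $L^0(\X)$ scalars. Given $f\in L^0(\X)$ and $v\in\M^0$, I would set $f_n:=(-n)\vee(f\wedge n)\in L^\infty(\X)$ and define $fv:=\lim_n f_n v$, the point being that $|f_n v-f_m v|=|f_n-f_m|\,|v|$ and that $|f_n-f_m|\to0$ in $L^0(\X)$, so that $(f_n v)$ is $\sfd_0$-Cauchy because multiplication $L^0(\X)\times L^0(\X)\to L^0(\X)$ is continuous for local convergence in measure. One must then check that the limit is independent of the truncation scheme and that \eqref{eq:m0}, \eqref{eq:m1} and \eqref{eq:normpunt} persist for $L^0(\X)$ scalars; all of this reduces to continuity of the $L^0$ operations and density of $L^\infty(\X)$ in $L^0(\X)$. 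Completeness of $\M^0$ is automatic, and the fact that the intrinsic distance built from the extended pointwise norm via \eqref{eq:m0dist} coincides with the completion distance is immediate by density. This yields the existence of $(\M^0,\iota)$.

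For uniqueness, suppose $(\tilde\M^0,\tilde\iota)$ enjoys the same properties. I would define $\Phi$ on the dense subset $\iota(\M)$ by $\Phi(\iota(v)):=\tilde\iota(v)$; this is well posed and isometric for the respective distances because both $\iota$ and $\tilde\iota$ preserve the pointwise norm and both distances are built from it via \eqref{eq:m0dist}. By the universal property of the completion $\Phi$ extends uniquely to an isometry $\M^0\to\tilde\M^0$, which is surjective since $\tilde\iota(\M)$ is dense and its image is closed. Linearity, preservation of the pointwise norm, and compatibility with the $L^0(\X)$-multiplication all pass from $\iota(\M)$ to $\M^0$ by continuity, so $\Phi$ is an isomorphism of $L^0$-normed modules with $\tilde\iota=\Phi\circ\iota$; any other such map agrees with $\Phi$ on the dense set $\iota(\M)$ and hence everywhere, giving uniqueness.
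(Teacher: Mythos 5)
Your proposal is correct and follows essentially the same route as the paper, which defines $\M^0$ as the metric completion of $\M$ with respect to the distance \eqref{eq:m0dist} and extends the module structure by continuity, treating uniqueness as routine via density. Your write-up simply fills in the details (notably the truncation argument for passing from $L^\infty$ to $L^0$ scalars) that the paper leaves implicit.
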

\begin{proof}Uniqueness is trivial.  For existence define $\M^0$ to be the metric completion of $\M$ w.r.t.\ the distance defined in \eqref{eq:m0dist} and $\iota$ as the natural embedding, then observe that the $L^2$-normed module structure of $\M$ can be extended by continuity and induce an $L^0$-normed module structure on $\M^0$. 
\end{proof}
\subsubsection{Cotangent module and differential}
The cotangent module $L^2(T^*\X)$ and the differential $\d:\S^2(\X)\to L^2(T^*\X)$ are defined, up to unique isomorphism, by the following theorem. The elements of the cotangent module will be called 1-forms.
\begin{thmdef}\label{thm:defcot}
There exists a unique couple $(L^2(T^*\X),\d)$ with $L^2(T^*\X)$ being a $L^2$-normed module and $\d:\S^2(\X)\to L^2(T^*\X)$ a linear map such that:
\begin{itemize}
\item[i)] for any $f\in \S^2(\X)$ it holds $|\d f|=\weakgrad f$ $\mm$-a.e.,
\item[ii)] $L^2(T^*\X)$ is generated by $\{\d f:f\in \S^2(\X)\}$.
\end{itemize}
Uniqueness is intended up to unique isomorphism, i.e.: if $(\M,\d')$ is another such couple, then there is a unique isomorphism $\Phi:L^2(T^*\X)\to \M$ such $\Phi(\d f)=\d' f$ for every $f\in\s^2(\X)$.
\end{thmdef}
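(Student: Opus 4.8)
The whole statement will follow from the structural properties of minimal weak upper gradients recorded above, chiefly subadditivity \eqref{eq:s2vector} and locality \eqref{eq:localgrad0}; these two combine to give the comparison $\weakgrad f=\weakgrad g$ $\mm$-a.e.\ on $\{f=g\}$, which is essentially the only nontrivial input I will need. I would dispose of uniqueness first. If $(\M,\d')$ also satisfies (i) and (ii), then on the set of finite sums $\sum_i\nchi_{E_i}\d f_i$, which is dense by (ii), I define $\Phi(\sum_i\nchi_{E_i}\d f_i):=\sum_i\nchi_{E_i}\d' f_i$. Using \eqref{eq:normpunt}, property (i) in both modules gives
\[
\Big|\sum_i\nchi_{E_i}\d' f_i\Big|=\sum_i\nchi_{E_i}\weakgrad{f_i}=\Big|\sum_i\nchi_{E_i}\d f_i\Big|\qquad\mm\ae,
\]
so $\Phi$ preserves the pointwise norm, hence by \eqref{eq:recnorm} the Banach norm; in particular it is well defined and extends by continuity to a norm-preserving linear bijection onto $\M$. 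That it intertwines the products with $L^\infty$-functions and the pointwise norms is checked on generators and passes to the closure, and its uniqueness is forced by density of $\{\d f\}$.

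For existence I would construct the module explicitly. Let $\PCM$ be the set of finite families $\{(f_i,E_i)\}_i$ with $f_i\in\s^2(\X)$ and $(E_i)$ a Borel partition of $\X$, and set $\{(f_i,E_i)\}\sim\{(g_j,F_j)\}$ iff $\weakgrad{(f_i-g_j)}=0$ $\mm$-a.e.\ on $E_i\cap F_j$ for all $i,j$. Reflexivity and symmetry are immediate, and transitivity follows from subadditivity \eqref{eq:s2vector} applied on triple intersections, so $\sim$ is an equivalence relation. On the quotient I define
\[
\big|[\{(f_i,E_i)\}]\big|:=\sum_i\nchi_{E_i}\weakgrad{f_i}\in L^2(\X),
\]
addition by $[\{(f_i,E_i)\}]+[\{(g_j,F_j)\}]:=[\{(f_i+g_j,E_i\cap F_j)\}]$, multiplication by a simple function $h=\sum_k a_k\nchi_{A_k}$ by $h\cdot[\{(f_i,E_i)\}]:=[\{(a_kf_i,A_k\cap E_i)\}]$, and finally $\|\cdot\|:=\big(\int|\cdot|^2\,\d\mm\big)^{1/2}$.

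The core of the argument is to check that each of these is independent of the chosen representatives and that the module axioms \eqref{eq:moltf}--\eqref{eq:pontnorm} hold. Representative-independence of the pointwise norm and the pointwise triangle inequality $|v+w|\le|v|+|w|$ both reduce, atom by atom on the sets $E_i\cap F_j$, to subadditivity \eqref{eq:s2vector} together with locality \eqref{eq:localgrad0}; granting these, \eqref{eq:normgeq}, \eqref{eq:normpunt} and \eqref{eq:recnorm} are routine and one obtains a normed module that may fail only completeness. I would then extend the multiplication from simple to arbitrary $h\in L^\infty(\X)$ using the bound $\|hv\|\le\|h\|_{L^\infty}\|v\|$, take $L^2(T^*\X)$ to be the metric completion, and observe that the continuity estimate $\big\||v|-|w|\big\|_{L^2}\le\|v-w\|$ lets the pointwise norm and the $L^\infty$-action extend to the completion, where the closed identities \eqref{eq:moltf}--\eqref{eq:pontnorm} persist. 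Defining $\d f:=[\{(f,\X)\}]$ makes $\d$ linear and yields $|\d f|=\weakgrad f$, which is (i); since every class is a finite sum $\sum_i\nchi_{E_i}\d f_i$, the differentials generate the completion, which is (ii).

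I expect the single genuine obstacle to be the bookkeeping around representative-independence and the pointwise triangle inequality on the quotient: this is the one point where subadditivity \eqref{eq:s2vector} and locality \eqref{eq:localgrad0} must be deployed together, refining them into their localized forms on the atoms of a common refinement of two partitions. Everything else---extension by continuity, the completion step, and the verification of (i) and (ii)---is formal once the quotient $\PCM/\!\sim$ is known to be a well-defined normed module.
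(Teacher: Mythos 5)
Your proposal is correct and follows essentially the same route as the paper: uniqueness by defining $\Phi$ on simple forms $\sum_i\nchi_{E_i}\d f_i$, using property (i) in both modules to see it is well posed and pointwise-norm preserving, then extending by density; existence via the same pre-cotangent module of finite families $(f_i,E_i)$ modulo the relation $\weakgrad{(f_i-g_j)}=0$ $\mm$-a.e.\ on $E_i\cap F_j$, with the quotient structure (pointwise norm, simple-function multiplication, norm) well defined thanks to locality \eqref{eq:localgrad0} and subadditivity \eqref{eq:s2vector}, followed by completion and continuous extension of the module operations. No gaps worth flagging.
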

Note: we shall call a form $\omega\in L^2(T^*\X)$ {\bf simple} if it can be written as $\sum_i\nchi_{A_i}\d f_i$ for a finite Borel partition $(A_i)$ of $\X$ and $(f_i)\subset \S^2(\X)$.
\begin{proof}\\*
{\bf Uniqueness} Consider a simple form $\omega\in L^2(T^*\X)$ and notice that the requirements that $\Phi$ is $L^\infty$-linear and such that  $\Phi(\d f)=\d' f$ force the definition
\begin{equation}
\label{eq:defPhi}
\Phi(\omega):=\sum_i\nchi_{A_i}\d'f_i\qquad\text{for}\qquad \omega=\sum_i\nchi_{A_i}\d f_i.
\end{equation}
The identity 
\[
|\Phi(\omega)|=\sum_i\nchi_{A_i}|\d'f_i|\quad \stackrel{(i)\text{ for }\M}  =\quad\sum_i\nchi_{A_i}\weakgrad{f_i}\quad \stackrel{(i)\text{ for }L^2(T^*\X)} =\quad\sum_i\nchi_{A_i}|\d f_i|=|\omega|
\]
shows in particular that the definition of $\Phi(\omega)$ is well-posed, i.e.\ $\Phi(\omega)$ depends only on $\omega$ and not on the way we represent it as finite sum. It also shows that $\Phi$ preserves the pointwise norm of simple forms and thus, since $\Phi$ is clearly linear, grants that $\Phi$ is continuous. Being simple forms dense in $L^2(T^*\X)$ (by property $(ii)$ for $L^2(T^*\X)$), $\Phi$ can be uniquely extended by continuity to a map from $L^2(T^*\X)$ to $\M$ and this map is clearly linear, continuous and preserves the pointwise norm. Also, from the very definition \eqref{eq:defPhi} we see that $\Phi(f\omega)=f\Phi(\omega)$ for simple $f$ and $\omega$, so that by approximation we see that the same holds for general $f\in L^\infty(\X)$, $\omega\in L^2(T^*\X)$. Property \eqref{eq:recnorm} grants that $\Phi$ also preserves the norm, so that to conclude it is sufficient to show that its image is the whole $\M$. This follows from the density of simple forms in $\M$ (property $(ii)$ for $\M$).

{\bf Existence} We define the `Pre-cotangent module' $\PCM$ to be the set of finite sequences $(A_i,f_i)$ with $(A_i)$ being a Borel partition of $\X$ and $(f_i)\subset \S^2(\X)$. Then we define an equivalence relation on $\PCM$ by declaring $(A_i,f_i)\sim (B_j,g_j)$ iff for every $i,j$ we have
\[
\weakgrad{(f_i-g_j)}=0,\qquad \mm\ae\text{ on }\ A_i\cap B_j.
\]
Denoting by $[A_i,f_i]$ the equivalence class of $(A_i,f_i)$, we endow $\PCM/\sim$ with a vector space structure by putting
\[
\begin{split}
[A_i,f_i]+[B_j,g_j]&:=[A_i\cap B_j,f_i+g_j],\\
\lambda[A_i,f_i]&:=[A_i,\lambda f_i].
\end{split}
\]
Notice that thanks to the locality property \eqref{eq:localgrad0} of the minimal weak upper gradient, these definitions are well posed. For the same reason, the quantity
\[
\|[A_i,f_i]\|:=\sqrt{\sum_i\int_{A_i}\weakgrad{f_i}^2\,\d\mm}
\]
is well defined, and from \eqref{eq:s2vector} we see that it is a norm. Let $(L^2(T^*\X),\|\cdot\|)$ be the completion of $(\PCM/\sim,\|\cdot\|)$ and $\d:\S^2(\X)\to L^2(T^*\X)$ be the map sending $f$ to $[\X,f]$. By construction, $L^2(T^*\X)$ is a Banach space and $\d$ is linear. We want to endow $L^2(T^*\X)$ with the structure of $L^2(\X)$-normed module and to this aim we define $|\cdot|:\PCM/\sim\to L^2(\X)$ by
\[
|[A_i,f_i]|:=\sum_i\nchi_{A_i}\weakgrad{f_i}
\]
and a bilinear map $\{\text{simple functions}\}\times \PCM/\sim\ \to\ \PCM/\sim$ by 
\[
\Big(\sum_j\alpha_j\nchi_{E_j}\Big)\cdot[A_i,f_i]:=[A_i\cap E_j,\alpha_jf_i],
\]
where $(E_j)$ is a finite partition of $\X$. It is readily verified that these definitions are well posed and that properties \eqref{eq:moltf} and \eqref{eq:pontnorm} hold for simple functions and elements of $\PCM/\sim$. It is also clear that $||\omega_1|-|\omega_2||\leq |\omega_1-\omega_2|$ $\mm$-a.e.\ for every $\omega_1,\omega_2\in \PCM/\sim$ and therefore we have
\[
\||\omega_1|-|\omega_2|\|_{L^2}\leq \|\omega_1-\omega_2\|,
\]
showing that the pointwise norm can, and will, be extended by continuity to the whole $L^2(T^*\X)$. Similarly, for $h:\X\to\R$ simple and $\omega\in \PCM/\sim$ from the identity $|h\omega|=|h||\omega|$ we obtain 
\[
\|h\omega\|^2=\int |h\omega|^2\,\d\mm\leq \|h\|^2_{L^\infty}\int |\omega|^2\,\d\mm= \|h\|^2_{L^\infty}\|\omega\|^2,
\]
showing that the multiplication by simple functions on $\PCM/\sim$ can, and will, be extended by continuity to a multiplication by $L^\infty(\X)$ functions on $L^2(T^*\X)$. 

The fact that properties \eqref{eq:moltf} and \eqref{eq:pontnorm} hold for these extensions follows trivially by approximation. Hence $L^2(T^*\X)$ is a $L^2(\X)$-normed module.

To conclude, notice that property $(i)$  is a direct consequence of the definition of $\d$ and of the pointwise norm. The fact that $L^2(T^*\X)$ is generated by $\{\d f:f\in \S^2(\X)\}$ also follows by the construction once we observe that the typical element $[A_i,f_i]$ of $\PCM/\sim$ is equal to $\sum_i\nchi_{A_i}\d f_i$ by the very definitions given.
\end{proof}
\begin{remark}\label{rem:gencot}{\rm
By a simple cut-off and truncation argument we see that $\{\d f:f\in W^{1,2}(\X)\}$ also generates $L^2(T^*\X)$. Hence, slightly more generally, we also have that if $D$ is a dense subset of $W^{1,2}(\X)$, then $\{\d f:f\in D\}$ generates $L^2(T^*\X)$.

This also shows that if $W^{1,2}(\X)$ is separable, then so is $L^2(T^*\X)$.
}\fr\end{remark}
\begin{remark}{\rm
It is not hard to check that if $\X$ is a smooth Finsler manifold, then $W^{1,2}(\X)$ as we defined it coincides with the Sobolev space defined via charts and that $\weakgrad f$ coincides a.e.\ with the norm of the distributional differential. 

From this fact and Theorem \ref{thm:defcot} it follows that the cotangent module can be identified with the space of $L^2$ sections of the cotangent bundle via the map which sends $\d f$ to the distributional differential of $f$.
}\fr\end{remark}
\begin{proposition}[Closure of the differential]\label{prop:closed} Let $(f_n)\subset \S^2(\X)$ be a sequence $\mm$-a.e. converging to some function $f\in L^0(\X)$. Assume that $(\d f_n)$ converges to some $\omega\in L^2(T^*\X)$ in the weak topology of $L^2(T^*\X)$ seen as Banach space. 

Then $f\in \S^2(\X)$ and $\d f=\omega$.
\end{proposition}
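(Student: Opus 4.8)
The plan is to upgrade the weak convergence of $(\d f_n)$ to strong convergence of a suitably modified sequence, since the lower semicontinuity \eqref{eq:lscwug} only controls \emph{pointwise norms} and cannot, by itself, distinguish two forms sharing the same pointwise norm (the pointwise norm of a general module need not be strictly convex). First I would invoke Mazur's lemma: since $\d f_n\to\omega$ weakly and convex sets have coinciding weak and strong closures, for every $n$ the element $\omega$ lies in the strong closure of $\mathrm{conv}\{\d f_k:k\ge n\}$, so there is a finite convex combination $\tilde\omega_n:=\sum_{k\ge n}\alpha^n_k\,\d f_k$ (with $\alpha^n_k\ge 0$, $\sum_k\alpha^n_k=1$, finitely many nonzero) such that $\|\tilde\omega_n-\omega\|\to 0$. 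Setting $g_n:=\sum_{k\ge n}\alpha^n_k f_k\in\S^2(\X)$, linearity of $\d$ gives $\d g_n=\tilde\omega_n$, so that $\d g_n\to\omega$ \emph{strongly} in $L^2(T^*\X)$.

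Next I would observe that $g_n\to f$ $\mm$-a.e.: for $\mm$-a.e.\ $x$ and any $\eps>0$ there is $K$ with $|f_k(x)-f(x)|<\eps$ for all $k\ge K$, and since for $n\ge K$ the function $g_n$ is a convex combination of the $f_k$ with $k\ge n\ge K$, also $|g_n(x)-f(x)|<\eps$. By the reverse triangle inequality for the pointwise norm, $\weakgrad{g_n}=|\d g_n|\to|\omega|$ strongly, hence weakly, in $L^2(\X)$; applying \eqref{eq:lscwug} to $g_n\to f$ then already yields $f\in\S^2(\X)$ (together with the bound $\weakgrad f\le|\omega|$).

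The decisive step is to identify $\d f$ with $\omega$. For this I would fix $m$ and apply \eqref{eq:lscwug} to the sequence $(g_n-g_m)_n$, which converges $\mm$-a.e.\ to $f-g_m$ as $n\to\infty$, noting that $\weakgrad{(g_n-g_m)}=|\d g_n-\d g_m|\to|\omega-\d g_m|$ strongly in $L^2(\X)$. This gives $f-g_m\in\S^2(\X)$ together with
\[
|\d f-\d g_m|=\weakgrad{(f-g_m)}\le|\omega-\d g_m|\qquad\mm\ae.
\]
Squaring, integrating and using \eqref{eq:recnorm} converts this pointwise bound into $\|\d f-\d g_m\|\le\|\omega-\d g_m\|\to 0$, so $\d g_m\to\d f$ strongly; since also $\d g_m\to\omega$ strongly, uniqueness of limits forces $\d f=\omega$.

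I expect the main obstacle to be exactly the passage from norms to forms just described. A naive attempt feeding the linear combinations $t g_n+(1-t)f$ into \eqref{eq:lscwug} only produces $|\d f|\le|\d f+t(\omega-\d f)|$ for every $t\in\R$, which in a non-strictly-convex module does \emph{not} force $\omega=\d f$. Testing the lower-semicontinuity inequality on the \emph{differences} $g_n-g_m$ instead is what turns the estimate into genuine strong convergence $\d g_m\to\d f$ and thereby pins down $\omega$.
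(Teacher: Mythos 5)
Your argument is correct and follows essentially the same route as the paper: Mazur's lemma to replace weak convergence of $(\d f_n)$ by strong convergence of differentials of convex combinations (which still converge $\mm$-a.e.\ to $f$), then the lower semicontinuity \eqref{eq:lscwug} applied first to get $f\in\S^2(\X)$ and then to the \emph{differences} to identify $\d f=\omega$. The only cosmetic difference is that you spell out the convex combinations and obtain a pointwise bound $|\d f-\d g_m|\leq|\omega-\d g_m|$, whereas the paper phrases the same step as a $\liminf$ estimate on $L^2$-norms and concludes via the Cauchy property.
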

\begin{proof}
By applying Mazur's lemma we can assume that the convergence of $(\d f_n)$ to $\omega$ is strong in $L^2(T^*\X)$. In particular $(|\d f_n|)$ converges to $|\omega|$ in $L^2(\X)$ and by \eqref{eq:lscwug} this grants that $f\in \S^2(\X)$. For any $m\in\N$ we have that $f_n-f_m\to f-f_m$ $\mm$-a.e., thus using again \eqref{eq:lscwug}  we have 
\[
\|\d f-\d f_n\|_{L^2(T^*\X)}=\|\weakgrad{(f-f_n)}\|_{L^2(\X)}\leq\limi_m\|\weakgrad{(f_m-f_n)}\|_{L^2(\X)}=\limi_m\|\d f_m-\d f_n\|_{L^2(T^*\X)}
\]
and the conclusion follows letting $n\to\infty$ using the fact that, being $(\d f_n)$ strongly converging in $L^2(T^*\X)$, it is a Cauchy sequence.
\end{proof}

\begin{proposition}[Calculus rules]
The following holds.
\begin{itemize}
\item[-] \emph{Locality} For every $f,g\in \S^2(\X)$ we have
\begin{equation}
\label{eq:diffloc}
\d f=\d g\quad\mm\ae\ \text{\rm on} \ \{f=g\}.
\end{equation} 
\item[-] \emph{Chain rule} For every $f\in \S^2(\X)$ and $\varphi\in \LIP\cap C^1(\R)$  we have $\varphi\circ f\in \s^2(\X)$ and
\begin{equation}
\label{eq:chain}
\d(\varphi\circ f)=\varphi'\circ f\,\d f.
\end{equation}
\item[-] \emph{Leibniz rule} For every $f,g\in L^\infty\cap \S^2(\X)$ we have $fg\in \s^2(\X)$
\begin{equation}
\label{eq:leibniz}
\d(fg)=f\,\d g+g\,\d f.
\end{equation}
\end{itemize}
\end{proposition}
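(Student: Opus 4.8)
The plan is to derive the three rules in the stated order, taking locality as the foundation, using the curve characterization \eqref{eq:localsob2} to produce membership in $\s^2(\X)$, and invoking Proposition \ref{prop:closed} to upgrade inequalities to exact identities.

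\emph{Locality.} This is immediate from what is already available. Since $\d$ is linear, $\d f-\d g=\d(f-g)$, and by property $(i)$ of Theorem/Definition \ref{thm:defcot} together with the pointwise norm of the module, $|\d f-\d g|=|\d(f-g)|=\weakgrad{(f-g)}$ $\mm\ae$. The locality of minimal weak upper gradients \eqref{eq:localgrad0} gives $\weakgrad{(f-g)}=0$ $\mm\ae$ on $\{f-g=0\}=\{f=g\}$, which is exactly the statement $\d f=\d g$ $\mm\ae$ on $\{f=g\}$.

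\emph{Chain rule.} First I would fix a test plan $\ppi$. For $\ppi$-a.e.\ $\gamma$ the map $t\mapsto f(\gamma_t)$ lies in $W^{1,1}(0,1)$ by \eqref{eq:localsob2}, so, since $\varphi\in\LIP\cap C^1$, the composition $t\mapsto \varphi(f(\gamma_t))$ is again in $W^{1,1}(0,1)$ with $\frac{\d}{\d t}\varphi(f(\gamma_t))=\varphi'(f(\gamma_t))\frac{\d}{\d t}f(\gamma_t)$; bounding $|\varphi'|$ and using \eqref{eq:localsob2} shows that $|\varphi'\circ f|\weakgrad f$ is a weak upper gradient, so $\varphi\circ f\in\s^2(\X)$ with $\weakgrad{(\varphi\circ f)}\le|\varphi'\circ f|\weakgrad f$. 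To obtain the identity \eqref{eq:chain} I would argue by approximation, starting from the affine case: if $\varphi(t)=at+b$ then $\d(\varphi\circ f)=a\,\d f=\varphi'\circ f\,\d f$, by linearity of $\d$ and the fact that constants have vanishing minimal weak upper gradient. For piecewise affine $\varphi$ the identity then holds $\mm\ae$ on each set $\{f\in I\}$, with $I$ an interval of affinity, by the just-proved locality \eqref{eq:diffloc} (on $\{f\in I\}$ the function $\varphi\circ f$ coincides with $\ell_I\circ f$ for the affine piece $\ell_I$); on the countably many level sets $\{f=c\}$ attached to break points both sides vanish because $\weakgrad f=0$ there. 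Finally, for general $\varphi\in\LIP\cap C^1$ I would choose piecewise affine $\varphi_n\to\varphi$ with $|\varphi_n'|\le\Lip(\varphi)$ and $\varphi_n'\to\varphi'$ pointwise; then $\varphi_n\circ f\to\varphi\circ f$ $\mm\ae$ while $\varphi_n'\circ f\,\d f\to\varphi'\circ f\,\d f$ in $L^2(T^*\X)$ by dominated convergence (the relevant integrand is dominated by $2\Lip(\varphi)\weakgrad f\in L^2$), and Proposition \ref{prop:closed} delivers $\d(\varphi\circ f)=\varphi'\circ f\,\d f$.

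\emph{Leibniz rule.} The membership $fg\in\s^2(\X)$ follows as above: for $\ppi$-a.e.\ $\gamma$ both $t\mapsto f(\gamma_t)$ and $t\mapsto g(\gamma_t)$ are bounded (by $\|f\|_{L^\infty},\|g\|_{L^\infty}$, using $(\e_t)_*\ppi\le\cf(\ppi)\mm$ and Fubini) elements of $W^{1,1}(0,1)$, hence so is their product, and the usual $W^{1,1}$ Leibniz rule produces the weak upper gradient $|g|\weakgrad f+|f|\weakgrad g\in L^2$. For the identity I would avoid a direct computation and instead polarize: choose $\varphi\in\LIP\cap C^1(\R)$ with $\varphi(t)=t^2$ on $[-M,M]$, where $M\ge\|f\|_{L^\infty}+\|g\|_{L^\infty}$. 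Applying the chain rule to $f\pm g\in L^\infty\cap\s^2(\X)$ gives $\d((f\pm g)^2)=2(f\pm g)\,\d(f\pm g)$, and then $fg=\tfrac14\big((f+g)^2-(f-g)^2\big)$, combined with the linearity of $\d$ and of the module multiplication, expands to $\d(fg)=f\,\d g+g\,\d f$.

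I expect the core difficulty to lie in the chain-rule identity rather than in the membership statements, which are routine consequences of \eqref{eq:localsob2}. The delicate points are: arranging the piecewise affine approximation with a uniform Lipschitz bound and pointwise convergence of derivatives, so that dominated convergence applies in $L^2(T^*\X)$ and Proposition \ref{prop:closed} can be invoked; and handling the break points of the piecewise affine $\varphi_n$, where $\varphi_n'$ is ambiguous but both sides of \eqref{eq:chain} vanish by locality of $\weakgrad{\cdot}$. Once the chain rule is secured, locality is essentially free and Leibniz reduces to algebra through the polarization identity.
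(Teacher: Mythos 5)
Your proposal is correct, and for the first two items it follows the paper's proof essentially verbatim: locality by linearity plus $|\d(f-g)|=\weakgrad{(f-g)}$ and \eqref{eq:localgrad0}; the chain rule by the affine case, the piecewise affine case via locality (with $\d f=0$ on the level sets through break points), and then approximation combined with the closure of the differential (Proposition \ref{prop:closed}). Your only deviations are in the details of the limit and in the Leibniz rule. In the chain-rule limit you use pointwise convergence of $\varphi_n'$ together with the domination $2\Lip(\varphi)\weakgrad f\in L^2$, where the paper asks for uniform convergence of $(\varphi_n')$ to $\varphi'$; your variant is if anything slightly more robust, since $\varphi'$ bounded and continuous need not be uniformly continuous on all of $\R$, and the dominated-convergence argument bypasses that point (you should just note, as you implicitly do, that $\varphi_n'\circ f\to\varphi'\circ f$ can fail only on preimages of the countably many break points, where $\d f=0$). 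For the Leibniz identity the paper also reduces to the chain rule, but with a different device: after shifting so that $f,g\geq 1$ it applies the chain rule with $\varphi=\log$ to $f$, $g$ and $fg$, obtaining $\frac{\d(fg)}{fg}=\frac{\d f}{f}+\frac{\d g}{g}$, and removes the shift at the end; you instead apply the chain rule with a Lipschitz $C^1$ truncation of $t\mapsto t^2$ to $f\pm g$ and polarize via $fg=\tfrac14\big((f+g)^2-(f-g)^2\big)$. Both are legitimate: your squaring/polarization route avoids the positivity normalization and the constant-shift bookkeeping, while the paper's logarithm trick avoids constructing the truncated square and works directly with the product. No gaps.
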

\begin{proof}\ \\
\noindent{\bf Locality} By the linearity of the differential the claim is equivalent to 
\[
\d f=0\qquad\mm\ae\text{ on} \ \{f=0\}
\]
which follows directly from $|\d f|=\weakgrad f$ $\mm$-a.e.\ and the locality property  \eqref{eq:localgrad0} of $\weakgrad f$.

\noindent{\bf Chain rule} The fact that  $\Lip(\varphi)\weakgrad f\in L^2(\X)$ is a weak upper gradient for $\varphi\circ f$ is obvious, hence in particular $\varphi\circ f\in\s^2(\X)$.

To prove  \eqref{eq:chain}, start noticing that taking into account the linearity of the differential and the fact that constant functions have 0 differential (because trivially their minimal weak upper gradient is 0), the chain rule \eqref{eq:chain} is trivial if $\varphi$ is affine. Hence by the locality property \eqref{eq:diffloc} the chain rule \eqref{eq:chain} holds if $\varphi$ is piecewise affine. Notice that this also forces  $\d f$ to be 0 $\mm$-a.e.\ on $f^{-1}(z)$ for any $z\in\R$, and thus also $\mm$-a.e.\ on $f^{-1}(\mathcal N)$ for $\mathcal N\subset\R$ countable.

Let now $\varphi\in\LIP\cap C^1(\R)$ and find a sequence $(\varphi_n)$ of equi-Lipschitz and piecewise affine functions such that $(\varphi_n),(\varphi_n')$ uniformly converge to  $\varphi,\varphi'$ respectively. From these, what previously said and the closure of the differential we can  pass to the limit in
\[
\d(\varphi_n\circ f)=\varphi'_n\circ f\,\d f
\]
and conclude.

\noindent{\bf Leibniz rule} From the characterization \eqref{eq:localsob2} it easily follows that $|g|\weakgrad f+|f|\weakgrad g\in L^2(\X)$ is a weak upper gradient for $fg$, so that  $fg\in\s^2(\X)$. Now assume that $f,g\geq 1$ $\mm$-a.e.. Then also $fg\geq 1$ $\mm$-a.e.\ and we can apply the chain rule with $\varphi=\log$, which is Lipschitz on the image of $f,g$ and $fg$, to get
\[
\begin{split}
\frac{\d(fg)}{fg}=\d(\log(fg))=\d(\log f+\log g)=\d\log f+\d \log g=\frac{\d f}f+\frac{\d g}g,
\end{split}
\]
which is the thesis. The general case now follows easily replacing $f,g$ by $f+C,g+C$ for $C\in\R$ large enough.
\end{proof}

\subsection{Duality and the tangent module}\label{se:tan}

\subsubsection{The module dual}

\begin{definition}[Dual of a module]
Let $\M$ be a $L^2(\X)$-normed module. Its dual $\M^*$ is the space of linear continuous maps $L:\M\to L^1(\X)$ such that
\[
L(fv)=f\, L(v),\qquad\forall f\in L^\infty(\X),\ v\in \M.
\]
We equip $\M^*$ with the operator norm, i.e. $\|L\|_*:=\sup_{v:\|v\|\leq 1} \|L(v)\|_{L^1}$. The multiplication of $f\in L^\infty(\X)$ and $L\in \M^*$ is defined as 
\[
(fL)(v):=L(fv),\qquad\forall v\in \M.
\]
Finally, the pointwise norm $|L|_*$ of $L\in\M^*$ is defined as
\[
|L|_*:=\esssup_{v: |v|\leq1\ \mm\ae}|L(v)|.
\]
\end{definition}
The only non-trivial thing to check in order to show that the structure just defined is a $L^2$-normed module is property \eqref{eq:recnorm} (which also grants that $|L|_*$ belongs to $L^2(\X)$). From the definition it is not hard to check that
\[
|L(v)|\leq |L|_*|v|\quad\mm\ae\qquad\forall v\in\M,\ L\in\M^*,
\]
and thus by integration we get $\|L(v)\|_{L^1}\leq \|v\|\||L|_*\|_{L^2}$ showing that $\|L\|_*\leq\||L|_*\|_{L^2}$.

For the opposite inequality notice  that from the basic properties of the essential supremum there is a sequence $(v_n)\subset \M$ such that $|v_n|\leq 1$ $\mm$-a.e.\ for every $n\in\N$ satisfying $|L|_*=\sup_n|L(v_n)|$. Put $\tilde v_0:=v_0$ and for $n>0$ define recursively $A_n:=\{|L(v_n)|>|L(\tilde v_{n-1})|\}$ and $\tilde v_n:=\nchi_{A_n}v_n+\nchi_{A_n^c}\tilde v_{n-1}$. Then $|\tilde v_n|\leq 1$ $\mm$-a.e.\ and the sequence $(|L(\tilde v_n)|)$ is increasing and converges $\mm$-a.e.\ to $|L|_*$. Pick $f\in L^2\cap L^\infty(\X)$ arbitrary, notice that $\|f\tilde v_n\|=\||f\tilde v_n|\|_{L^2}\leq \|f\|_{L^2}$ and thus
\[
\int|f||L(\tilde v_n)|\,\d\mm=\int |L(f\tilde v_n)|\,\d\mm\leq \|f\tilde v_n\|\,\|L\|_*=\|f\|_{L^2}\|L\|_*\qquad\forall n\in\N.
\]
By the monotone convergence theorem the integral on the left goes to $\int |f||L|_*\,\d\mm$ as $n\to\infty$, hence passing to the limit we obtain
\[
\int |f||L|_*\,\d\mm\leq\|f\|_{L^2}\|L\|_*
\]
and being this true for every $f\in L^2\cap L^\infty(\X)$ we conclude that $\||L|_*\|_{L^2}\leq\|L\|_*$, 
as desired.

\bigskip

We shall frequently use the fact that for $L:\M\to L^1(\X)$ linear and continuous we have
\begin{equation}
\label{eq:checkmstar}
L\in \M^*\quad\Leftrightarrow\quad  L(\nchi_Ev)=\nchi_EL(v)\quad\text{ for every $E\subset \X$ Borel and $v\in\M$,}
\end{equation}
which can be proved by first checking that $L(fv)=fL(v)$ holds for simple $f$ and then arguing by approximation.

\bigskip

Denote by $\M'$ the dual of $\M$ seen as Banach space, so that $\M'$ is the Banach space of linear continuous maps from $\M$ to $\R$ equipped with its canonical norm $\|\cdot\|_{'}$. Integration provides  a natural map $\Int:\M^*\to \M'$ sending $L\in \M^*$ to the operator $\Int(L)\in \M'$ defined as
\[
\Int(L)(v):=\int L(v)\,\d\mm,\qquad\forall v\in \M.
\]
\begin{proposition}\label{prop:fulldual}
The map $\Int$ is a bijective isometry, i.e.\ $\|L\|_*=\|\Int(L)\|_{'}$ for every $L\in \M^*$.
\end{proposition}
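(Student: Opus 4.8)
The plan is to establish the two inequalities $\|\Int(L)\|_{'}\le\|L\|_*$ and $\|L\|_*\le\|\Int(L)\|_{'}$ first, which together give the isometry (and in particular injectivity), and then to prove surjectivity by a Radon--Nikodym construction. The bound $\|\Int(L)\|_{'}\le\|L\|_*$ is immediate: for $v\in\M$ with $\|v\|\le1$ one has $|\Int(L)(v)|\le\int|L(v)|\,\d\mm=\|L(v)\|_{L^1}\le\|L\|_*$. For the reverse inequality, given $v$ with $\|v\|\le1$ I would set $g:=\mathrm{sgn}(L(v))\in L^\infty(\X)$, so that $|g|\le1$ and $gL(v)=|L(v)|$ $\mm$-a.e.; then, using $L^\infty$-linearity of $L$, $\|L(v)\|_{L^1}=\int L(gv)\,\d\mm=\Int(L)(gv)\le\|\Int(L)\|_{'}\|gv\|\le\|\Int(L)\|_{'}$, where $\|gv\|=\||g|\,|v|\|_{L^2}\le\|v\|\le1$. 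Taking the supremum over such $v$ yields $\|L\|_*\le\|\Int(L)\|_{'}$.

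For surjectivity, fix $\ell\in\M'$. The idea is to reconstruct a candidate $L(v)\in L^1(\X)$ from the real numbers $\ell(\nchi_E v)$ as $E$ ranges over Borel sets. Concretely, for fixed $v\in\M$ I would define the set function $\nu_v(E):=\ell(\nchi_E v)$ and check that it is a finite signed measure absolutely continuous with respect to $\mm$. Countable additivity follows from the continuity of $\ell$ together with the fact that $\nchi_E v=\sum_n\nchi_{E_n}v$ in $\M$ whenever $E=\bigsqcup_n E_n$ (since $\|\nchi_F v\|^2=\int_F|v|^2\,\d\mm\to0$ as $F$ shrinks, by dominated convergence); finiteness of the total variation comes from $\sum_i|\nu_v(E_i)|=\ell(gv)\le\|\ell\|_{'}\|v\|$ for $g=\sum_i\varepsilon_i\nchi_{E_i}$ with signs $\varepsilon_i\in\{-1,1\}$ and $|g|=1$; and $\nu_v\ll\mm$, since $\mm(E)=0$ forces $\nchi_E v=0$ in $\M$. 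As $\mm$ is $\sigma$-finite (being finite on bounded sets of a separable space), the Radon--Nikodym theorem produces a density, which I define to be $L(v)$, characterised by $\int_E L(v)\,\d\mm=\ell(\nchi_E v)$ for every Borel $E$.

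It then remains to verify that $L\in\M^*$ and that $\Int(L)=\ell$. Linearity of $L$, and the identity $L(\nchi_F v)=\nchi_F L(v)$, follow from the defining relation and the $\mm$-a.e.\ uniqueness of the Radon--Nikodym density (testing both sides against an arbitrary $\nchi_E$): for instance $\int_E L(\nchi_F v)\,\d\mm=\ell(\nchi_{E\cap F}v)=\int_{E\cap F}L(v)\,\d\mm=\int_E\nchi_F L(v)\,\d\mm$. By the characterisation \eqref{eq:checkmstar} the latter upgrades to full $L^\infty(\X)$-linearity, and continuity is exactly $\|L(v)\|_{L^1}=|\nu_v|(\X)\le\|\ell\|_{'}\|v\|$. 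Finally, taking $E=\X$ gives $\Int(L)(v)=\nu_v(\X)=\ell(v)$, so $\Int(L)=\ell$.

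I expect the main obstacle to be the surjectivity step, and specifically the careful verification that $\nu_v$ is a genuine finite signed measure absolutely continuous with respect to $\mm$, so that Radon--Nikodym applies and yields an $L^1(\X)$ density depending both linearly and $L^\infty(\X)$-linearly on $v$; once this is in place, membership in $\M^*$ and the identity $\Int(L)=\ell$ are routine consequences of the uniqueness of the density and \eqref{eq:checkmstar}.
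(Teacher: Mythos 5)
Your proposal is correct and follows essentially the same route as the paper: the easy bound by integration, the reverse bound by multiplying with a sign function (the paper's $\tilde v:=\nchi_{\{L(v)\geq 0\}}v-\nchi_{\{L(v)<0\}}v$ is exactly your $gv$), and surjectivity via the set function $E\mapsto\ell(\nchi_E v)$, Radon--Nikodym, and the characterisation \eqref{eq:checkmstar}. The only cosmetic differences are that you bound $\|L(v)\|_{L^1}$ for all unit $v$ instead of using an $\eps$-optimal $v$, and you get continuity of $L$ from the total-variation estimate rather than the paper's sign-trick, both of which are equivalent.
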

\begin{proof} The trivial bound
\[
|\Int(L)(v)|=\Big|\int L(v)\,\d\mm\Big|\leq \|L(v)\|_{L^1}\leq \|v\|\|L\|_*
\]
shows that $\|\Int(L)\|_{'}\leq \|L\|_*$. To prove the converse, fix $L\in \M^*$, $\eps>0$ and find $v\in \M$ such that $\|L(v)\|_{L^1}\geq \|v\|(\|L\|_*-\eps)$. Put $\tilde v:=\nchi_{\{L(v)\geq 0\}}v-\nchi_{\{L(v)<0\}}v$, notice that $|\tilde v|=|v|$ and $L(\tilde v)=|L(v)|$ $\mm$-a.e.\ and conclude by
\[
\|\Int(L)\|_{'}\|\tilde v\|\geq |\Int(L)(\tilde v)|=\Big|\int L(\tilde v)\,\d\mm\Big|=\|L(v)\|_{L^1}\geq \|v\|(\|L\|_*-\eps)= \|\tilde v\|(\|L\|_*-\eps)
\]
and the arbitrariness of $\eps>0$. Thus it remains to prove that $\Int$ is surjective.

Pick $\ell\in \M'$, fix $v\in \M$ and consider the map sending a Borel set $E$ to $\mu_{v}(E):=\ell(\nchi_Ev)\in\R$. It is additive and given a disjoint sequence $(E_i)$ of Borel sets we have
\[
|\mu_{v}(\cup_nE_n)-\mu_{v}(\cup_{n=1}^NE_n)|=|\mu_{v}(\cup_{n>N}E_n)|=|\ell(\nchi_{\cup_{n>N}E_n}v)|\leq\|\ell\|_{'}\|\nchi_{\cup_{n>N}E_n}v\|
\]
and since $\|\nchi_{\cup_{n>N}E_n}v\|^2=\int_{\cup_{n>N}E_n}|v|^2\,\d\mm\to 0$ by the dominate convergence theorem, we see that $\mu_{v}$ is a Borel measure. By construction, it is also absolutely continuous w.r.t.\ $\mm$ and thus it has a Radon-Nikodym derivative, which we shall denote by $L(v)\in L^1(\X)$.

The construction trivially ensures that $v\mapsto L(v)$ is linear and since for every $ E,F\subset \X$ Borel the identities  $\mu_{\nchi_Ev}(F)=\ell(\nchi_F\nchi_Ev)=\ell(\nchi_{E\cap F}v)=\mu_{v}(E\cap F)$ grant that $\int _FL(\nchi_E v)=\int_{E\cap F}L(v)$, we see that
\begin{equation}
\label{eq:spezzatoe}
L(\nchi_E v)=\nchi_EL(v)\qquad\forall v\in\M,\ E\subset\X\ \text{Borel}.
\end{equation}
Now given $v\in \M$ we put $\tilde v:=\nchi_{\{L(v)\geq 0\}}v-\nchi_{\{L(v)<0\}}v$ so that $|\tilde v|=|v|$ and, by \eqref{eq:spezzatoe} and  the linearity of $L$ we have $|L(v)|=L(\tilde v)$ $\mm$-a.e.. Then 
\[
\begin{split}
\|L(v)\|_{L^1}=\int L(\tilde v)\,\d\mm=\mu_{\tilde v,\ell}(\X)=\ell(\tilde v)\leq\|\ell\|_{'}\|\tilde v\|=\|\ell\|_{'}\|v\|,
\end{split}
\]
i.e.\ $v\mapsto L(v)$ is continuous. The conclusion follows from   \eqref{eq:spezzatoe} and \eqref{eq:checkmstar}.
\end{proof}
The Hanh-Banach theorem grants that for every $v\in\M$ there exists $\ell\in \M'$ with $\|\ell\|_{'}=\|v\|$ and $|\ell(v)|=\|v\|^2$. Putting $L:=\Int^{-1}(v)$, from the fact that the inequalities
\[
\|v\|^2=\ell(v)=\int L(v)\,\d\mm\leq\int |L|_*|v|\,\d\mm\leq \||v|\|_{L^2}\||L|_*\|_{L^2}=\|v\|\|L\|_*=\|v\|\|\ell\|_{'}=\|v\|^2
\] 
are in fact equalities we deduce  that $\mm$-a.e.\ it holds
\begin{equation}
\label{eq:perdual}
|L|_*=|v|\qquad\qquad L(v)=|v|^2.
\end{equation}
It follows that the natural embedding $\mathcal I:\M\to \M^{**}$ sending $v$ to the map $L\mapsto L(v)$, which is trivially $L^\infty$-linear, preserves the pointwise norm. Indeed, since for any $v,L$ we have $|\mathcal I(v)(L)|=|L(v)|\leq |v||L|_*$ we have $|\mathcal I(v)|_{**}\leq |v|$, while the opposite inequality comes considering $L$ such that \eqref{eq:perdual} holds. 

Modules $\M$ for which $\mathcal I$ is surjective will be called {\bf reflexive}.
\begin{proposition}[Riesz theorem for Hilbert modules and reflexivity]\label{prop:riesz} Let $\H$ be an Hilbert module and consider the map sending $v\in \H$ to $L_v\in \H^*$ given by $L_v(w):=\la v, w\ra$.

Then this map is an isomorphism of modules. In particular, Hilbert modules are reflexive.
\end{proposition}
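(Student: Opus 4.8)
The plan is to verify that $v\mapsto L_v$ is an isomorphism of modules by establishing, in order, that it maps into $\H^*$, that it is $L^\infty$-linear, that it preserves the pointwise norm (hence is a norm-preserving injection), and finally that it is surjective; only the last point is genuinely non-trivial. First I would note that $L_v\in\H^*$ follows at once from the $L^\infty$-bilinearity of $\la\cdot,\cdot\ra$ recorded for Hilbert modules: linearity in $w$ is clear, the identity $L_v(fw)=\la v,fw\ra=fL_v(w)$ gives the module-homomorphism property, and $|\la v,w\ra|\le|v|\,|w|$ together with Cauchy--Schwarz gives $\|L_v(w)\|_{L^1}\le\|v\|\,\|w\|$, i.e.\ continuity. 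The same computation $(fL_v)(w)=L_v(fw)=f\la v,w\ra=L_{fv}(w)$ shows $L_{fv}=fL_v$, so the map is $L^\infty$-linear.

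To see that the pointwise norm is preserved, the bound $|\la v,w\ra|\le|v|$ for $|w|\le1$ $\mm\ae$ gives $|L_v|_*\le|v|$ immediately. For the reverse inequality I would test against a truncation of $v/|v|$: for $n\in\N$ set $f_n:=\min\{|v|^{-1},n\}$ on $\{|v|>0\}$ and $f_n:=0$ elsewhere, so that $f_n\in L^\infty(\X)$ and $w_n:=f_nv$ satisfies $|w_n|=\min\{1,n|v|\}\le1$ while $L_v(w_n)=f_n|v|^2\uparrow|v|$ $\mm\ae$; hence $|L_v|_*\ge|v|$, giving $|L_v|_*=|v|$. By \eqref{eq:recnorm} the map then preserves $\|\cdot\|$ and is therefore injective.

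The main obstacle is surjectivity, and here I would reduce to the classical Riesz theorem through the identification of Proposition \ref{prop:fulldual}. Viewing $\H$ as a Hilbert space with scalar product $(v,w)\mapsto\int\la v,w\ra\,\d\mm$, any $L\in\H^*$ produces $\Int(L)\in\H'$, which the classical Riesz theorem for Hilbert spaces represents as $\Int(L)(w)=\int\la v,w\ra\,\d\mm$ for a unique $v\in\H$. Since the right-hand side equals $\int L_v(w)\,\d\mm=\Int(L_v)(w)$ and $\Int$ is injective by Proposition \ref{prop:fulldual}, we conclude $L=L_v$. This completes the proof that $v\mapsto L_v$ is a module isomorphism; note that all the analytic content has been outsourced to Proposition \ref{prop:fulldual} and the ordinary Hilbert-space Riesz theorem.

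For reflexivity I would bootstrap the statement just proved. As a surjective Banach-space isometry out of the Hilbert space $\H$, the map forces $\H^*$ to be a Hilbert space, hence a Hilbert module; moreover preservation of the pointwise norm together with polarization yields $\la L_v,L_w\ra_{\H^*}=\la v,w\ra$ $\mm\ae$. Applying the Riesz isomorphism already established to $\H^*$, any $\Lambda\in\H^{**}$ can be written $\Lambda(\cdot)=\la L,\cdot\ra_{\H^*}$ with $L=L_v$ for some $v\in\H$; then $\Lambda(L_w)=\la L_v,L_w\ra_{\H^*}=\la v,w\ra=L_w(v)=\mathcal I(v)(L_w)$ for every $w\in\H$, and since every element of $\H^*$ is of the form $L_w$ this gives $\Lambda=\mathcal I(v)$. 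Hence $\mathcal I$ is onto and $\H$ is reflexive.
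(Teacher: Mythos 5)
Your proof is correct and follows essentially the same route as the paper: the only genuinely nontrivial point is surjectivity, which both you and the paper obtain by applying the classical Riesz theorem to $\Int(L)\in\H'$ via Proposition \ref{prop:fulldual} (the paper then localizes the integrated identity by testing against $\nchi_E w$, whereas you equivalently invoke the injectivity of $\Int$). Your explicit verification of $|L_v|_*=|v|$ and your detailed bootstrap of reflexivity simply spell out steps the paper dismisses as trivial or ``obvious'', and they are fine.
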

\begin{proof} The only non-trivial claim about the map $v\mapsto L_v$ is surjectivity. To check it, let $L\in \H^*$, consider $\Int(L)\in \H'$ and apply the standard Riesz theorem to find $v\in \H$ such that
\[
\int L(w)\,\d\mm=\Int(L)(w)=\la v,w\ra_\H=\int \la v,w\ra\,\d\mm\qquad\forall w\in\H,
\]
where $\la\cdot,\cdot\ra_\H$ is the scalar product in the Hilbert space $\H$ and the last identity follows from \eqref{eq:recnorm} by polarization. Writing $\nchi_E w$ in place of $w$ in the above for $E\subset \X$ Borel arbitrary we see that $L(w)=\la v,w\ra$ $\mm$-a.e., i.e.\ $L=L_v$. The claim about reflexivity is now obvious.
\end{proof}

\begin{proposition}\label{prop:genext}
Let $\M$ be a $L^2(\X)$-normed module $V\subset \M$ a vector subspace which generates $\M$ and $L:V\to L^1(\X)$ a linear map. Assume that for some $g\in L^2(\X)$ we have
\begin{equation}
\label{eq:perext}
|L(v)|\leq g\,|v|\quad\mm\ae \qquad\forall v\in V. 
\end{equation}
Then there is a unique $\tilde L\in \M^*$ such that $\tilde L(v)=L(v)$ for every $v\in V$ and for such $\tilde L$ we have $|\tilde L|_*\leq g$.
\end{proposition}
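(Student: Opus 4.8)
The plan is to extend $L$ to all of $\M$ by continuity, using that $V$ generates $\M$. Recall that by the definition of generation---and the observation that $V$ generates $\M$ iff $L^\infty$-linear combinations of elements of $V$ are dense---finite sums $w=\sum_i\nchi_{E_i}v_i$, with $(E_i)$ a Borel partition of $\X$ and $v_i\in V$, are dense in $\M$. On such elements I would set
\[
\tilde L(w):=\sum_i\nchi_{E_i}L(v_i),
\]
which is forced as soon as one requires $\tilde L$ to be $L^\infty$-linear and to restrict to $L$ on $V$.

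The main obstacle is the well-posedness of this formula, since a given $w$ has many representations; this is exactly where hypothesis \eqref{eq:perext} enters. If $\sum_i\nchi_{E_i}v_i=\sum_j\nchi_{F_j}w_j$, then $|v_i-w_j|=0$ $\mm\ae$ on $E_i\cap F_j$, and since $v_i-w_j\in V$ the linearity of $L$ together with \eqref{eq:perext} gives $|L(v_i)-L(w_j)|=|L(v_i-w_j)|\leq g\,|v_i-w_j|=0$ $\mm\ae$ on $E_i\cap F_j$. Intersecting the two partitions then shows that the two candidate values of $\tilde L(w)$ agree, so $\tilde L$ is well defined. The same computation applied to a single representation, together with \eqref{eq:normpunt} and the disjointness of the $E_i$, yields the pointwise bound $|\tilde L(w)|\leq g\,|w|$ $\mm\ae$ on the dense set.

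From this pointwise bound and Cauchy--Schwarz (using $g\in L^2(\X)$ and \eqref{eq:recnorm}) I obtain $\|\tilde L(w)\|_{L^1}\leq \|g\|_{L^2}\|w\|$, so $\tilde L$ is linear and continuous into $L^1(\X)$ on the dense set and extends uniquely by continuity to all of $\M$. To check that the extension lies in $\M^*$ I would verify the criterion \eqref{eq:checkmstar}: on the dense set $\nchi_E w=\sum_i\nchi_{E\cap E_i}v_i$ is again of the admissible form, whence $\tilde L(\nchi_E w)=\nchi_E\tilde L(w)$, and passing to the limit (continuity of $\tilde L$ and of multiplication by $\nchi_E$ on $L^1$) extends this to every $v\in\M$. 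The pointwise norm estimate $|\tilde L|_*\leq g$ follows from $|\tilde L(v)|\leq g\,|v|$ $\mm\ae$, obtained by approximating $v$ with finite sums $w_n$ and passing to an $\mm\ae$ convergent subsequence in $|\tilde L(w_n)|\leq g\,|w_n|$, noting that $|w_n|\to|v|$ in $L^2$ because $\||w_n|-|v|\|_{L^2}\leq\|w_n-v\|\to0$. Finally, uniqueness is immediate: any two elements of $\M^*$ agreeing with $L$ on $V$ must, by $L^\infty$-linearity, agree on all finite sums $\sum_i\nchi_{E_i}v_i$, hence on their dense span, hence everywhere.
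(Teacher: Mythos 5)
Your proof is correct and follows essentially the same route as the paper: you define $\tilde L$ on finite sums $\sum_i\nchi_{E_i}v_i$ by the forced formula, use \eqref{eq:perext} to get the pointwise bound $|\tilde L(w)|\leq g\,|w|$ (which settles well-posedness and continuity), extend by density, and verify $L^\infty$-linearity via \eqref{eq:checkmstar}. Your extra explicit treatment of the bound $|\tilde L|_*\leq g$ and of uniqueness only spells out steps the paper leaves implicit.
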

\begin{proof}
Any extension $\tilde L$ of $L$ which is $L^\infty(\X)$-linear must be such that
\begin{equation}
\label{eq:defltilde}
\tilde L(v)=\sum_i\nchi_{E_i}L(v_i),\qquad\text{ for }v=\sum_i\nchi_{E_i}v_i
\end{equation}
where $(E_i)$ is a finite partition of $\X$ and $(v_i)\subset V$. For $\tilde L$ defined in this way, the bound \eqref{eq:perext} gives that
\[
|\tilde L(v)|=\sum_i\nchi_{E_i}|L(v_i)|\leq \sum_i\nchi_{E_i}g|v_i| =g\Big|\sum_i\nchi_{E_i}v_i\Big|=g |v|
\]
and in particular $\|\tilde L(v)\|_{L^1(\X)}\leq \|g\|_{L^2(\X)}\|v\|$. This shows that the definition \eqref{eq:defltilde} is well-posed - in the sense that $\tilde L(v)$ depends only on $v$ and not on the way to represent it as $\sum_i\nchi_{E_i}v_i$ - and that it is continuous. Since by assumption the set of $v$'s of the form $\sum_i\nchi_{E_i}v_i$ is dense in $\M$, we can uniquely extend $\tilde L$ to a continuous operator $\tilde L:\M\to L^1(\X)$. The fact that such $\tilde L$ is linear is obvious and the definition \eqref{eq:defltilde} easily gives that $\tilde L(fv)=f\tilde L(v)$ holds for simple functions $f$. Then $L^\infty$-linearity follows by approximation.
\end{proof}
We conclude with the following proposition, which in some sense says that the operations of taking the dual and of taking the $L^0$-completion (recall Theorem \ref{thm:defl0}) commute:
\begin{proposition} Let $\M$ be a $L^2$-normed module. Then the duality pairing $\M\times \M^*\to L^1(\X)$ uniquely extends to a continuous duality pairing $\M^0\times(\M^*)^0\to L^0(\X)$. Moreover, if $L:\M^0\to L^0(\X)$ is such that for some $g\in L^0(\X)$ it holds
\begin{equation}
\label{eq:bl0}
|L(v)|\leq g\, |v|\quad\mm\ae\qquad\forall v\in \M^0,
\end{equation}
then $L\in (\M^*)^0$ (in the sense of the previously defined pairing).
\end{proposition}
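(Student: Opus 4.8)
The plan is to treat the two assertions separately, the common tool being the fundamental pointwise bound $|L(v)|\le |L|_*\,|v|$ $\mm$\ae, valid for $v\in\M$ and $L\in\M^*$, together with the fact that multiplication is continuous on $L^0(\X)$ when the latter carries the topology of local convergence in measure — which is exactly the topology underlying the distance $\sfd_0$ of the $L^0$-completion.

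\emph{Extension of the pairing.} First I would observe that the map $(v,L)\mapsto L(v)$, regarded as taking values in $L^0(\X)$ through the inclusion $L^1(\X)\subset L^0(\X)$, is continuous for the $L^0$-topologies on $\M$ and $\M^*$: this is immediate from the displayed bound and from the continuity of the pointwise norms $|\cdot|$ on $\M^0$ and $|\cdot|_*$ on $(\M^*)^0$. Since $\iota(\M)$ is dense in $\M^0$ and the image of $\M^*$ is dense in $(\M^*)^0$, a continuous extension is necessarily unique, so it remains to produce one. Given $v\in\M^0$ and $L\in(\M^*)^0$, I would pick $v_n\in\M$, $L_n\in\M^*$ with $v_n\to v$ in $\M^0$ and $L_n\to L$ in $(\M^*)^0$, and define $\langle v,L\rangle$ as the $L^0$-limit of $L_n(v_n)$. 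To see that this limit exists I would use $L_n(v_n)-L_m(v_m)=L_n(v_n-v_m)+(L_n-L_m)(v_m)$, giving
\[
|L_n(v_n)-L_m(v_m)|\le |L_n|_*\,|v_n-v_m|+|L_n-L_m|_*\,|v_m|\qquad\mm\ae.
\]
Since $|L_n|_*\to|L|_*$ and $|v_m|\to|v|$ in $L^0$, while $|v_n-v_m|\to0$ and $|L_n-L_m|_*\to0$ in $L^0$, continuity of multiplication $L^0(\X)\times L^0(\X)\to L^0(\X)$ forces the right-hand side to $0$; hence $(L_n(v_n))$ is $L^0$-Cauchy and converges. Independence of the chosen sequences follows by interlacing two competing choices, and passing to the limit in $|L_n(v_n)|\le|L_n|_*|v_n|$ yields $|\langle v,L\rangle|\le|L|_*\,|v|$ $\mm$\ae, so the extended pairing is continuous; bilinearity is inherited in the limit.

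\emph{The converse.} For the second assertion I would first note that, as $g<\infty$ $\mm$\ae, the bound $|L(v)|\le g|v|$ already makes $L$ continuous from $\M^0$ to $L^0(\X)$, and I would assume — as the conclusion implicitly requires — that $L$ is $L^0(\X)$-linear. Using that $\mm$ is $\sigma$-finite, I would fix Borel sets $G_n\uparrow\X$ with $\mm(G_n)<\infty$ and $g\le n$ on $G_n$, and set $L_n(v):=\nchi_{G_n}L(\iota(v))$ for $v\in\M$. Then $|L_n(v)|\le\nchi_{G_n}g|v|\le n\nchi_{G_n}|v|$, whence $|L_n|_*\le g\nchi_{G_n}\in L^2(\X)$ and $\|L_n(v)\|_{L^1}\le n\sqrt{\mm(G_n)}\,\|v\|$; with the $L^0$-linearity of $L$ this shows $L_n\in\M^*$. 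Viewing the $L_n$ in $(\M^*)^0$, the estimate $|L_n-L_m|_*\le g\,\nchi_{G_n\triangle G_m}$, together with $\nchi_{G_n\triangle G_m}\to0$ in $L^0$ as $n,m\to\infty$, shows that $(L_n)$ is $L^0$-Cauchy, hence converges to some $\hat L\in(\M^*)^0$. Finally, for $v\in\M$ the first part gives $\langle\iota(v),\hat L\rangle=\lim_n L_n(v)=\lim_n\nchi_{G_n}L(\iota(v))=L(\iota(v))$, since $\nchi_{G_n}\to1$ in $L^0$; as both $v\mapsto\langle v,\hat L\rangle$ and $L$ are $L^0$-continuous and agree on the dense set $\iota(\M)$, they coincide on all of $\M^0$, i.e. $L\in(\M^*)^0$.

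I expect the main obstacle to be purely topological: because $L^0(\X)$ is not normed one cannot argue through uniform bounds, and every limit must be controlled via convergence in measure. Concretely, the delicate points are that the products in the Cauchy estimates behave well only by virtue of the continuity of multiplication on $L^0$ over the $\sigma$-finite space $(\X,\mm)$, and, in the converse direction, the passage from the merely $L^0$-integrable control $g$ to genuine elements $L_n\in\M^*$ — which is what forces the localization onto the finite-measure sets $G_n$ where $g$ is bounded.
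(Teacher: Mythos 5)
Your argument is correct and follows essentially the same route as the paper: the extension of the pairing is the routine density/continuity argument the paper dismisses as trivial, and for the converse you truncate onto sets where $g$ is bounded and of finite measure so that $\nchi_{G_n}L$ defines elements of $\M^*$, show these form a Cauchy sequence in $(\M^*)^0$, and identify the limit with $L$ --- exactly the paper's scheme (the paper simply invokes Proposition \ref{prop:genext} where you verify membership in $\M^*$ directly). Your extra assumption that $L$ be $L^0$-linear is harmless: plain linearity together with the pointwise bound \eqref{eq:bl0} already forces $L(\nchi_E v)=\nchi_E L(v)$ for every Borel $E$, hence the required module-linearity.
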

\begin{proof}
The claim about the unique continuous extension is a trivial consequence of the definitions. For the second part of the claim just notice that we can always find a  sequence $(E_n)$ of Borel sets such that $\nchi_{E_n}g\in L^2(\X)$ for every $n\in\N$ and $(\nchi_{E_n}g)\to g$ in $L^0(\X)$. Then from \eqref{eq:bl0} and Proposition \ref{prop:genext} above with $V=\M$ we see that the map $v\mapsto L_n(v):=\nchi_{E_n}L(v)$ belongs to $\M^*$. Since clearly  $|L_n-L_m|_*\leq|\nchi_{E_n}-\nchi_{E_m}|g$, the sequence  $(L_n)$ is Cauchy in $(\M^*)^0$ and its limit is easily seen to be equal to $L$.
\end{proof}

\subsubsection{The tangent module}

\begin{definition}[Tangent module]
The tangent module $L^2(T\X)$ is defined as the dual of the cotangent module $L^2(T^*\X)$. Its elements are called vector fields.
\end{definition}
To keep consistency with the notation used in the smooth setting, we shall denote the pointwise norm in $L^2(T\X)$ as $|\cdot|$, rather than $|\cdot|_*$, and the duality pairing between $\omega\in L^2(T^*\X)$ and $X\in L^2(T\X)$ as $\omega(X)$.
\begin{definition}[$L^2$ derivations]
A $L^2$-derivation is a linear map $L:\S^2(\X)\to L^1(\X)$ for which there is $g\in L^2(\X)$ such that
\begin{equation}
\label{eq:defder}
|L(f)|\leq g \weakgrad f\qquad\forall f\in \S^2(\X).
\end{equation}
\end{definition}
Notice that the concept of derivation has a priori nothing to do with the notion of $L^2$-normed module. It is therefore interesting to see that such notion emerges naturally from the concept of derivation, because as the following theorem shows,  derivations and vector fields are two different points of view on the same kind of object. The same result, in conjunction with the  Leibniz rule \eqref{eq:leibniz},  also shows that, although not explicitly encoded in the definition,  derivations satisfy the Leibniz rule $L(fg)=fL(g)+gL(f)$ for any $f,g\in L^\infty\cap \S^2(\X)$.
\begin{theorem}[Derivations and vector fields]\label{thm:dervf}
 For any vector field  $X\in L^2(T\X)$ the map $X\circ \d:\s^2(\X)\to L^1(\X)$ is a derivation.  
 
Conversely, given a  derivation $L$ there exists a unique vector field  $X\in L^2(T\X)$ such that  the diagram
\begin{center}
\begin{tikzpicture}[node distance=2.5cm, auto]
  \node (S) {$\s^2(\X)$};
  \node (C) [right of=S] {$L^2(T^*\X)$};
  \node (L) [below  of=C] {$L^1(\X)$};
  \draw[->] (S) to node {$\d$} (C);
  \draw[->] (C) to node {$X$} (L);
  \draw[->] (S) to node [swap] {$L$} (L);
\end{tikzpicture}
\end{center}
commutes.
\end{theorem}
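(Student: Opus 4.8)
The plan is to prove the two directions separately: the forward implication is immediate from the basic pointwise bound for elements of a module dual, while the converse is obtained by extending a naturally defined map via Proposition \ref{prop:genext}.

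\textbf{Forward direction.} Let $X\in L^2(T\X)=(L^2(T^*\X))^*$. Since $X$ is an element of the module dual, the fundamental pointwise bound $|X(\omega)|\leq |X|\,|\omega|$ holds $\mm$-a.e.\ for every $\omega\in L^2(T^*\X)$, where the pointwise norm $|X|$ belongs to $L^2(\X)$. Evaluating on $\omega=\d f$ and using property $(i)$ of Theorem/Definition \ref{thm:defcot}, namely $|\d f|=\weakgrad f$, gives $|X(\d f)|\leq |X|\,\weakgrad f$ for every $f\in\S^2(\X)$. As $X\circ\d$ is linear (being a composition of linear maps), this is exactly the bound \eqref{eq:defder} with $g=|X|\in L^2(\X)$, so $X\circ\d$ is a derivation.

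\textbf{Converse: setup and well-posedness.} Let $L$ be a derivation, with $g\in L^2(\X)$ as in \eqref{eq:defder}. The set $V:=\{\d f:f\in\S^2(\X)\}$ is a \emph{vector subspace} of $L^2(T^*\X)$ (because $\d$ is linear) and it generates the cotangent module by property $(ii)$ of Theorem/Definition \ref{thm:defcot}. I would introduce the candidate map $\hat L\colon V\to L^1(\X)$ by setting $\hat L(\d f):=L(f)$. The only genuinely delicate point is that this is well posed: if $\d f=\d f'$, then $\weakgrad{(f-f')}=|\d(f-f')|=0$ $\mm$-a.e., and the derivation bound \eqref{eq:defder} forces $|L(f)-L(f')|=|L(f-f')|\leq g\,\weakgrad{(f-f')}=0$, whence $L(f)=L(f')$. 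Thus $\hat L$ depends only on $\d f$ and not on $f$, is linear, and, again by \eqref{eq:defder}, satisfies $|\hat L(\omega)|\leq g\,|\omega|$ $\mm$-a.e.\ for every $\omega\in V$.

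\textbf{Converse: extension and uniqueness.} Now $V$ generates $L^2(T^*\X)$ and $\hat L$ meets the hypothesis \eqref{eq:perext} of Proposition \ref{prop:genext}. That proposition therefore yields a unique $X\in(L^2(T^*\X))^*=L^2(T\X)$ extending $\hat L$, and this $X$ satisfies $X(\d f)=\hat L(\d f)=L(f)$ for all $f\in\S^2(\X)$, i.e.\ $L=X\circ\d$, which is precisely commutativity of the diagram. Uniqueness of the vector field is inherited from the uniqueness clause of Proposition \ref{prop:genext}: any $X'$ with $X'\circ\d=L$ must agree with $X$ on the generating set $V$, and two $L^\infty$-linear continuous maps coinciding on a generating set coincide everywhere. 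The main obstacle here is conceptual rather than computational: the key observation is that the derivation bound is exactly what makes $\hat L$ factor through $\d$, so that the whole statement reduces to a verbatim application of the extension result already established.
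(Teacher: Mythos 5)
Your proof is correct and follows essentially the same route as the paper: the forward direction via the pointwise duality bound $|X(\d f)|\leq |X|\,|\d f|=|X|\,\weakgrad f$, and the converse by defining $\tilde L(\d f):=L(f)$ on the generating subspace $V=\{\d f:f\in\S^2(\X)\}$, checking well-posedness from the derivation bound, and invoking Proposition \ref{prop:genext}. The only difference is that you spell out the well-posedness and uniqueness details that the paper leaves implicit.
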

\begin{proof} The first claim follows from the linearity of  $X\circ\d$, the fact that  $|X|\in L^2(\X)$ and the inequality   $|\d f(X)|\leq |X|\,|\d f|=|X|\,\weakgrad f$  valid $\mm$-a.e.\ for any $f\in \S^2(\X)$.

For the second, let $L$ be a derivation, put $V:=\{\d f:f\in\s^2(\X)\}$ and define $\tilde L:V\to L^1(\X)$ by $\tilde L(\d f):=L(f)$. Inequality \eqref{eq:defder} grants that  this is a good definition, i.e.\ $\tilde L(\d f)$ depends only on $\d f$ and not on $f$, and that
\[
| \tilde L(\d f)|\leq g |\d f|.
\]
The conclusion then follows from Proposition \ref{prop:genext} recalling that $V$ generates $L^2(T^*\X)$.
\end{proof}
Taking the adjoint of the differential leads to the notion of divergence:
\begin{definition}[Divergence]
We say that $X\in L^2(T\X)$ has divergence in $L^2$, and write $X\in  D(\div)$ provided there is $h\in L^2(\X)$ such that
\begin{equation}
\label{eq:defdiv}
\int f h\,\d\mm=-\int \d f(X)\,\d\mm\qquad\forall f\in W^{1,2}(\X).
\end{equation}
In this case we shall call $h$ the divergence of $X$ and denote it by $\div(X)$.
\end{definition}
Notice that by the density of $W^{1,2}(\X)$ in $L^2(\X)$ there is at most one $h$ satisfying \eqref{eq:defdiv}, hence the divergence is unique.

It is also easily verified that for $X\in D(\div)$ and $g\in \LIP_b(\X)$ we have $gX\in D(\div)$ with
\begin{equation}
\label{eq:leibdiv}
\div(gX)=\d g(X)+g\div (X),
\end{equation}
indeed, start observing that replacing $f$ with $\min\{\max\{f,-n\},n\}$ in \eqref{eq:defdiv} and then sending $n\to\infty$, we can reduce to check \eqref{eq:defdiv} for $f\in L^\infty\cap W^{1,2}(\X)$. For such $f$ we can apply the Leibniz rule \eqref{eq:leibniz} to get
\[
\begin{split}
\int f(\d g(X)+g\div (X))\,\d\mm=\int f \,\d g(X)-\d(fg)(X)\,\d\mm=-\int g\,\d f(X)\,\d\mm,
\end{split}
\]
which is the claim.

Notice that we are not claiming that in general $D(\div)$ contains an non-zero vector field; in this direction, see \eqref{eq:densediv}.
\subsection{Link with the metric}

\subsubsection{Pullback of a module}
The concept of pullback of a module mimics the one of pullback of a bundle. 
\begin{definition}[Maps of bounded compression]\label{def:boundcomp} Let $(\X,\mm_\X)$ and $(\Y,\mm_\Y)$ be measured spaces. We say that $\varphi:\Y\to \X$ has bounded compression provided $\varphi_*\mm_\Y\leq C \mm_\X$ for some $C>0$.
The least such constant $C$ is called \emph{compression constant} and denoted by $\comp(\varphi)$.
\end{definition}

\begin{thmdef}[Pullback module and pullback map]
Let $\M$ be a $L^2(\X)$-normed module and $\varphi:\Y\to \X$ a map of bounded compression. 

Then there exists a unique couple $(\varphi^*\M,\varphi^*)$ with $\varphi^*\M$ being a $L^2(\Y)$-normed module and $\varphi^*:\M\to \varphi^*\M$ linear and continuous such that
\begin{itemize}
\item[i)] for every $v\in \M$ it holds $|\varphi^*v|=|v|\circ\varphi$\ \ $\mm_\Y$-a.e.
\item[ii)] $\varphi^*\M$ is generated by $\{\varphi^*v\ :\ v\in \M\}$.
\end{itemize}
Uniqueness is intended up to unique isomorphism, i.e.: if $(\widetilde{\varphi^*\M},\widetilde\varphi^*)$ is another such couple, then there is a unique isomorphism $\Phi:{\varphi^*\M}\to \widetilde{\varphi^*\M}$ such that $\Phi(\varphi^*v)=\widetilde\varphi^*v$ for any $v\in\M$,
\end{thmdef}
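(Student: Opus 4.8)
The proof follows the now-familiar pattern used for the cotangent module (Theorem/Definition \ref{thm:defcot}) and the $L^0$-completion, so the plan is to mimic that structure closely. \emph{Uniqueness} is the easy half: given a candidate $(\varphi^*\M,\varphi^*)$, property $(ii)$ says that elements of the form $\sum_i\nchi_{E_i}\varphi^*v_i$, with $(E_i)$ a Borel partition of $\Y$ and $(v_i)\subset\M$, are dense. The requirement that $\Phi$ be $L^\infty(\Y)$-linear and satisfy $\Phi(\varphi^*v)=\widetilde\varphi^*v$ then \emph{forces} $\Phi(\sum_i\nchi_{E_i}\varphi^*v_i)=\sum_i\nchi_{E_i}\widetilde\varphi^*v_i$. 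Using property $(i)$ for both couples, one checks that $|\Phi(\cdot)|=|\cdot|$ pointwise $\mm_\Y$-a.e.\ on such simple combinations, exactly as in the cotangent case, so $\Phi$ preserves the pointwise norm; by \eqref{eq:recnorm} it preserves the norm, hence extends uniquely by continuity to an isometry, and property $(ii)$ for $\widetilde{\varphi^*\M}$ gives surjectivity.

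For \emph{existence} I would build the module by hand in the same spirit as $\PCM$. Consider formal finite sums $\sum_i\nchi_{E_i}v_i$ with $(E_i)$ a Borel partition of $\Y$ and $(v_i)\subset\M$, and define a candidate pointwise norm by $|\sum_i\nchi_{E_i}v_i|:=\sum_i\nchi_{E_i}(|v_i|\circ\varphi)$, noting that $|v_i|\circ\varphi\in L^2(\Y)$ precisely because $\varphi$ has bounded compression (so $\int|v_i|^2\circ\varphi\,\d\mm_\Y\leq\comp(\varphi)\int|v_i|^2\,\d\mm_\X<\infty$). The key point to make this rigorous is to introduce the right equivalence relation: two representations should be identified when their candidate pointwise norms of the difference vanish $\mm_\Y$-a.e., i.e.\ one declares $\sum_i\nchi_{E_i}v_i\sim\sum_j\nchi_{F_j}w_j$ iff $|v_i-w_j|\circ\varphi=0$ $\mm_\Y$-a.e.\ on $E_i\cap F_j$ for all $i,j$. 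On the quotient the natural vector-space operations, the multiplication by simple functions $\sum_k\alpha_k\nchi_{A_k}$, and the pointwise norm are all well defined, and one defines $\|\cdot\|:=\|\,|\cdot|\,\|_{L^2(\Y)}$. Then $\varphi^*\M$ is the completion of this quotient in $\|\cdot\|$, with $\varphi^*:\M\to\varphi^*\M$ sending $v\mapsto[\nchi_\Y\,v]$.

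It remains to verify the module axioms and properties $(i)$, $(ii)$ on the completion. The triangle-type inequality $\big||\omega_1|-|\omega_2|\big|\leq|\omega_1-\omega_2|$ $\mm_\Y$-a.e.\ holds on the quotient (it reduces, on each piece $E_i\cap F_j$, to the corresponding inequality in $\M$ precomposed with $\varphi$), giving $\|\,|\omega_1|-|\omega_2|\,\|_{L^2}\leq\|\omega_1-\omega_2\|$, so the pointwise norm extends continuously to $\varphi^*\M$; similarly $\|h\omega\|\leq\|h\|_{L^\infty}\|\omega\|$ for simple $h$ lets the $L^\infty(\Y)$-multiplication extend. Axioms \eqref{eq:moltf} and \eqref{eq:pontnorm} then pass to the limit by approximation. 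Property $(i)$ is immediate from the definition of the pointwise norm on the generators $\varphi^*v=[\nchi_\Y\,v]$, and property $(ii)$ holds because the elements $\sum_i\nchi_{E_i}\varphi^*v_i$ are by construction dense. The continuity and linearity of $\varphi^*$ are clear since $\|\varphi^*v\|^2=\int|v|^2\circ\varphi\,\d\mm_\Y\leq\comp(\varphi)\|v\|^2$.

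The main obstacle is not any single step but the care needed in setting up the equivalence relation and checking that \emph{all} the operations descend to the quotient well-posedly: one must confirm that addition and $L^\infty$-multiplication respect $\sim$ (this again rests on the locality-type behavior encoded through $\varphi$, i.e.\ that the relation is defined piecewise on the intersections $E_i\cap F_j$), and that the pointwise norm is genuinely a function of the equivalence class. Everything else is a routine transcription of the cotangent-module argument, with the single new ingredient being the bounded-compression bound, which is exactly what guarantees the composed norms $|v|\circ\varphi$ land in $L^2(\Y)$ and that $\varphi^*$ is bounded.
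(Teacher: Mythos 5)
Your proposal is correct and follows essentially the same route as the paper: the paper's (sketched) proof also obtains uniqueness by noting that $L^\infty$-linearity forces $\Phi$ on simple elements $\sum_i\nchi_{A_i}\varphi^*v_i$ and extends by continuity, and obtains existence via a ``pre-pullback module'' of finite families $(A_i,v_i)$ with the equivalence relation $|v_i-w_j|\circ\varphi=0$ $\mm_\Y$-a.e.\ on $A_i\cap B_j$, then equips the quotient with the pointwise norm, multiplication by simple functions and norm, and completes, exactly as in the cotangent-module construction. The details you supply (well-posedness on the quotient, continuity of the pointwise norm and of the $L^\infty$-multiplication, the role of bounded compression in making $|v|\circ\varphi\in L^2(\Y)$ and $\varphi^*$ bounded) are precisely those the paper leaves to the reader.
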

Note: we call an element of $\varphi^*\M$ {\bf simple} if it can be written as $\sum_i \nchi_{A_i}\varphi^*v_i$ for some finite Borel partition $(A_i)$ of $\Y$ and elements $v_i\in\M$.
\begin{sketch}\\
\noindent{\bf Uniqueness} As in the proof of Theorem \ref{thm:defcot}, any such $\Phi$ must send the simple element $\sum_i \nchi_{A_i}\varphi^*v_i$ to $\sum_i \nchi_{A_i}\widetilde\varphi^*v_i$ and properties $(i),(ii)$ grant that this is a good definition and that $\Phi$ can uniquely be extended by continuity to a map which is the desired isomorphism.

\noindent{\bf Existence} Consider the set `Pre-Pullback Module' ${\sf Ppb}$ defined as
\[
{\sf Ppb}:=\{(A_i,v_i)_{i=1,\ldots,n}\ : \ n\in\N,\ (A_i)\text{ is a Borel partition of $\Y$ and }v_i\in \M\ \forall i=1,\ldots,n\},
\]
define an equivalence relation on it by declaring $(A_i,v_i)\sim (B_j,w_j)$ provided
\[
|v_i-w_j|\circ\varphi=0\qquad\mm_\Y\ae\text{ on } \ A_i\cap B_j,\qquad\forall i,j
\]
and   the map $\varphi^*:\M\to {\sf Ppb}/\sim$ which sends $v$ to the equivalence class of $(\Y,v)$. The construction now proceeds as for the  cotangent module given in Theorem \ref{thm:defcot}: one defines on ${\rm Ppb}/\sim$ a vector space structure, a multiplication by simple functions on $\Y$, a pointwise norm and a norm, then passes to the completion to conlude. We omit the details.
\end{sketch}
\begin{example}{\rm
If $\M=L^2(\X)$, then $\varphi^*\M$ is (=can be identified with) $L^2(\Y)$, the pullback map being given by $\varphi^*f=f\circ\varphi$.
}\fr\end{example}
\begin{example}\label{ex:prodpb}{\rm If $(\Y,\mm_Y)$ is the product of $(\X,\mm_\X)$ and another measured space $(\Z,\mm_\Z)$ and $\varphi:\Y\to\X$ is the natural projection, then the pullback of $\M$ via $\varphi$ is (=can be identified with) $L^2(\Z,\M)$ with the pullback map being the one assigning to  a given $v\in\M$ the function identically equal to $v$. 

Notice indeed that $L^2(\Z,\M)$ admits a canonical multiplication with functions in $L^\infty(\Y)=L^\infty(\X\times\Z )$: the product of  $z\mapsto v(z)\in \M$ and $f(x,z)\in L^\infty(\X\times \Z )$ is $z\mapsto f(\cdot,z)v(z)\in\M$. Also, on $L^2(\Z,\M)$ there is a natural pointwise norm: the one assigning to $z\mapsto v(z)\in \M$ the map $(x,z)\mapsto |v(z)|(x)$. 

The claim is now easily verified.}\fr
\end{example}
\begin{proposition}[Universal property of the pullback]\label{prop:univpb}
Let $\M$ be a $L^2(\X)$-normed module, $\varphi:\Y\to \X$ a map of bounded compression, $\NN$ a $L^2(\Y)$-normed module and $T:\M\to \NN$ linear and such that for some $C>0$ it holds
\[
|T(v)|\leq C|v|\circ\varphi\qquad \mm_\Y\ae.
\]
Then there exists a unique $L^\infty(\Y)$-linear and continuous map $\hat T:\varphi^*\M\to \NN$ such that
\[
\hat T(\varphi^*v)=T(v)\qquad\forall v\in \M.
\]
\end{proposition}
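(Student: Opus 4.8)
The plan is to mimic the construction used for the cotangent module in Theorem \ref{thm:defcot} and for the extension in Proposition \ref{prop:genext}: I would first define $\hat T$ on the dense subspace of simple elements by the only formula compatible with $L^\infty(\Y)$-linearity and with the requirement $\hat T(\varphi^*v)=T(v)$, and then extend by continuity. For uniqueness, I would observe that any $L^\infty(\Y)$-linear map with $\hat T(\varphi^*v)=T(v)$ is forced to send the simple element $\sum_i\nchi_{A_i}\varphi^*v_i$ to $\sum_i\nchi_{A_i}T(v_i)$; since by property $(ii)$ such elements are dense in $\varphi^*\M$ and $\hat T$ is continuous, this determines $\hat T$ completely.

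For existence I would set
\[
\hat T\Big(\sum_i\nchi_{A_i}\varphi^*v_i\Big):=\sum_i\nchi_{A_i}T(v_i).
\]
The heart of the argument, and the step I expect to be the main obstacle, is the well-posedness of this definition. It rests on the pointwise bound
\[
\Big|\sum_i\nchi_{A_i}T(v_i)\Big|=\sum_i\nchi_{A_i}|T(v_i)|\leq C\sum_i\nchi_{A_i}|v_i|\circ\varphi=C\sum_i\nchi_{A_i}|\varphi^*v_i|=C\Big|\sum_i\nchi_{A_i}\varphi^*v_i\Big|\quad\mm_\Y\ae,
\]
where the first and last equalities use that $(A_i)$ is a partition together with \eqref{eq:normpunt}, the inequality is the hypothesis on $T$, and the middle equality is property $(i)$ of the pullback. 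Applying this estimate to a representation of the \emph{zero} element obtained by passing to a common refinement of two given partitions and using the linearity of $T$ shows that $\sum_i\nchi_{A_i}T(v_i)$ depends only on the simple element $\sum_i\nchi_{A_i}\varphi^*v_i$ and not on its representation, so that $\hat T$ is well-defined. Once this is cleared, the rest is routine.

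Integrating the displayed bound and using \eqref{eq:recnorm} gives $\|\hat T(\omega)\|\leq C\|\omega\|$ for every simple $\omega$, so $\hat T$ is continuous on the dense subspace of simple elements and extends uniquely to a continuous linear map $\hat T:\varphi^*\M\to\NN$; the pointwise bound $|\hat T(\cdot)|\leq C|\cdot|\circ\varphi$ persists under this extension by continuity of the pointwise norm. Finally, $L^\infty(\Y)$-linearity holds for simple functions straight from the defining formula, since multiplying by $\nchi_E$ merely refines the partition $(A_i)$, and then for arbitrary $g\in L^\infty(\Y)$ by approximating $g$ with simple functions and invoking continuity; the identity $\hat T(\varphi^*v)=T(v)$ is the special case of the defining formula with the trivial partition $A_1=\Y$, $v_1=v$.
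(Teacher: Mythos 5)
Your proposal is correct and follows essentially the same route as the paper: the paper's (sketched) proof defines the map on the generating set $\{\varphi^*v:v\in\M\}$ and then "argues as for Proposition \ref{prop:genext}", which is exactly the extension to simple elements, well-posedness via the pointwise bound, and completion by density that you spell out. You have simply filled in the details the paper omits, including the uniqueness argument by density of simple elements.
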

\begin{sketch} Consider the space $V:=\{\varphi^*v:v\in \M\}$, which generates $\varphi^*\M$, and the map $L:V\to \NN$ given by $L(\varphi^*v):=T(v)$, then argue as for Proposition \ref{prop:genext}.
\end{sketch}
\begin{remark}[Functoriality of the pullback]\label{rem:functpb}{\rm
A direct consequence of this last proposition is that if $\varphi:\Y\to \X$ and $\psi:\Z\to\Y$ are both of bounded compression and $\M$ is a $L^2(\X)$-normed module, then $\psi^*\varphi^*\M$ can be canonically identified to $(\psi\circ\varphi)^*\M$ via the only isomorphism which sends $\psi^*\varphi^*v$ to $(\psi\circ\varphi)^*v$ for every $v\in\M$.
}\fr\end{remark}
\begin{remark}[The case of invertible $\varphi$]\label{rem:invpb}{\rm
If $\varphi$ is invertible with inverse of bounded deformation, then the previous remark grants that $\varphi^*$ is bijective. Moreover, the right composition with $\varphi$ provides an isomorphism of $L^\infty(\X)$ and $L^\infty(\Y)$ and under this isomorphism the modules $\M$ and $\varphi^*\M$ can be identified, the isomorphism being $\varphi^*$.
}\fr\end{remark}

Consider now also the dual $\M^*$ of the module $\M$ and its pullback $\varphi^*\M^*$. There is a natural duality relation between $\varphi^*\M$ and $\varphi^*\M^*$:
\begin{proposition}
There exists a unique $L^\infty(\Y)$-bilinear and continuous map from $\varphi^*\M\times \varphi^*\M^*$ to $L^1(\Y)$ such that 
\begin{equation}
\label{eq:dualpullb}
\varphi^*\omega(\varphi^*v)=\omega(v)\circ\varphi\qquad\forall v\in \M,\ \omega\in \M^*
\end{equation}
and for such map it holds
\begin{equation}
\label{eq:bpb}
|W(V)|\leq |W|_*|V|\qquad\forall V\in\varphi^*\M,\ W\in \varphi^*\M^*.
\end{equation}
\end{proposition}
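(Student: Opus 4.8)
The plan is to produce the pairing by constructing a natural morphism $\hat T\colon\varphi^*\M^*\to(\varphi^*\M)^*$ from the pullback of the dual into the dual of the pullback, and then declaring $W(V):=\hat T(W)(V)$. Uniqueness is the easy direction and I would dispatch it first: any $L^\infty(\Y)$-bilinear continuous map satisfying \eqref{eq:dualpullb} is forced, by $L^\infty(\Y)$-bilinearity, to send a pair of simple elements $V=\sum_i\nchi_{A_i}\varphi^*v_i$ and $W=\sum_j\nchi_{B_j}\varphi^*\omega_j$ to $\sum_{i,j}\nchi_{A_i\cap B_j}\,\omega_j(v_i)\circ\varphi$; since simple elements are dense in $\varphi^*\M$ and in $\varphi^*\M^*$ and the map is continuous, this determines it completely.

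For existence I would build $\hat T$ in two steps, reusing the two extension results already available. First, fix $\omega\in\M^*$ and consider on the generating subspace $V:=\{\varphi^*v:v\in\M\}\subset\varphi^*\M$ the linear map $L(\varphi^*v):=\omega(v)\circ\varphi$. Using $|\omega(v)|\leq|\omega|_*|v|$ $\mm_\X\ae$ together with property $(i)$ of the pullback, one checks both that $L$ is well defined (if $\varphi^*v=\varphi^*v'$ then $|\omega(v-v')|\circ\varphi\leq(|\omega|_*\circ\varphi)\,|v-v'|\circ\varphi=0$) and that $|L(\varphi^*v)|\leq(|\omega|_*\circ\varphi)\,|\varphi^*v|$, with $|\omega|_*\circ\varphi=|\varphi^*\omega|\in L^2(\Y)$. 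Proposition \ref{prop:genext} then yields a unique $T(\omega)\in(\varphi^*\M)^*$ extending $L$, with $|T(\omega)|_*\leq|\omega|_*\circ\varphi$. The resulting map $\omega\mapsto T(\omega)$ is linear (by the uniqueness clause of Proposition \ref{prop:genext}) and satisfies $|T(\omega)|_*\leq|\omega|_*\circ\varphi$ $\mm_\Y\ae$, which is exactly the hypothesis of the universal property (Proposition \ref{prop:univpb}) with target the $L^2(\Y)$-normed module $(\varphi^*\M)^*$. This produces the desired $L^\infty(\Y)$-linear continuous $\hat T\colon\varphi^*\M^*\to(\varphi^*\M)^*$ with $\hat T(\varphi^*\omega)=T(\omega)$.

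It then remains to set $W(V):=\hat T(W)(V)$ and verify the two displayed properties. Bilinearity and continuity are immediate from the construction, and \eqref{eq:dualpullb} follows from $\hat T(\varphi^*\omega)(\varphi^*v)=T(\omega)(\varphi^*v)=\omega(v)\circ\varphi$. The step I expect to be the most delicate is the pointwise bound \eqref{eq:bpb}: the universal property by itself only guarantees continuity of $\hat T$, so I would recover the pointwise estimate by hand. On generators $|\hat T(\varphi^*\omega)|_*=|T(\omega)|_*\leq|\omega|_*\circ\varphi=|\varphi^*\omega|_*$; extending by $L^\infty(\Y)$-linearity to simple $W$ gives $|\hat T(W)|_*\leq|W|_*$, and this passes to all of $\varphi^*\M^*$ by continuity and density of simple elements. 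Combining with the defining inequality $|\hat T(W)(V)|\leq|\hat T(W)|_*\,|V|$ of the dual module yields \eqref{eq:bpb}. Throughout, the main conceptual point to keep straight is that $\varphi^*\M^*$ and $(\varphi^*\M)^*$ are genuinely different objects, so the entire content of the statement is precisely the construction of the comparison morphism $\hat T$ between them.
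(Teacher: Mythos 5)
Your proof is correct, but it takes a genuinely different route from the paper's. The paper proceeds hands-on: it defines $W(V):=\sum_{i,j}\nchi_{A_i\cap B_j}\omega_i(v_j)\circ\varphi$ on simple elements $W=\sum_i\nchi_{A_i}\varphi^*\omega_i$, $V=\sum_j\nchi_{B_j}\varphi^*v_j$, checks via the pointwise bound $|W(V)|\leq|W|_*|V|$ that this is well posed and continuous, and extends by density of simple elements. You instead build the comparison morphism $\hat T\colon\varphi^*\M^*\to(\varphi^*\M)^*$ first, by applying Proposition \ref{prop:genext} (for each fixed $\omega\in\M^*$, using $|\omega(v)|\leq|\omega|_*|v|$ pulled back through $\varphi$, which is legitimate since $\varphi_*\mm_\Y\leq C\mm_\X$ sends $\mm_\X$-null sets to $\mm_\Y$-null sets, and $|\omega|_*\circ\varphi\in L^2(\Y)$ again by bounded compression) and then Proposition \ref{prop:univpb} to the resulting linear map $\omega\mapsto T(\omega)$, and only afterwards define the pairing as $W(V):=\hat T(W)(V)$. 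What your approach buys is structural economy: well-posedness on finite sums is absorbed into the already-proven extension results rather than re-verified by hand, and you obtain directly the embedding $\mathcal I$ of $\varphi^*\M^*$ into $(\varphi^*\M)^*$ which the paper only extracts from the proposition afterwards. What it costs is that the pointwise estimate \eqref{eq:bpb} is not automatic from the universal property, so you still have to run the simple-element computation $|\hat T(W)|_*\leq|W|$ (correctly done, using locality of the pointwise norm on a partition and continuity of $\hat T$) -- essentially the same calculation the paper does, but in one variable only -- before concluding with $|\hat T(W)(V)|\leq|\hat T(W)|_*|V|$. Both arguments are complete; yours presupposes that $(\varphi^*\M)^*$ is an $L^2(\Y)$-normed module, which the paper has indeed established when introducing the dual.
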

\begin{proof}
Considering simple elements $W\in \varphi^*\M^*$ and $V\in \varphi^*\M$ we see that the requirement \eqref{eq:dualpullb} and $L^\infty(Y)$-bilinearity force the definition
\begin{equation}
\label{eq:defdualpb}
W(V):=\sum_{i,j}\nchi_{A_i\cap B_j}\omega_i(v_j)\circ\varphi\qquad\text{for}\quad W=\sum_i\nchi_{A_i}\varphi^*\omega_i\quad V:=\sum_j\nchi_{B_j}\varphi^*v_j.
\end{equation}
The  bound
\[
\Big|\sum_{i,j}\nchi_{A_i\cap B_j}\omega_i(v_j)\circ\varphi\Big|\leq \sum_{i,j}\nchi_{A_i\cap B_j}|\omega_i|\circ\varphi|v_j|\circ\varphi=\sum_i\nchi_{A_i}|\omega_i|\circ\varphi\sum_j\nchi_{B_j}|v_j|\circ\varphi=|W|\,|V|
\]
shows that the above definition is well posed, in the sense that the definition of $W(V)$ depends only on $V,W$ and not on the way they are written as finite sums. The same bound also shows that \eqref{eq:bpb} holds for simple elements and that $\|W(V)\|_{L^1(Y)}\leq \|W\|_{\varphi^*M^*}\|V\|_{\varphi^*M}$. 

Since the definition \eqref{eq:defdualpb} also trivially grants that $(fW)(gV)=fgW(V)$ for $f,g$ simple, all the conclusions follow by the density of  simple elements in the respective modules.
\end{proof}
The last proposition can be read by saying that there is a natural embedding $\mathcal I$ of $\varphi^*\M^*$ into $(\varphi^*\M)^*$ which sends $W\in \varphi^*\M^*$ into the map
\[
\varphi^*\M\ni V\quad\mapsto\quad W(V)\in L^1(\Y).
\]
Routine computations shows that $\mathcal I$ is a module morphism which preserves the pointwise norm. It is natural to wonder whether it is surjective, i.e.\ whether $\varphi^*\M^*$ can be identified or not with the dual of $\varphi^*\M$. Example \ref{ex:prodpb} and Proposition \ref{prop:fulldual}  show that in general the answer is negative, because in such  case our question can be reformulated as: is the dual of $L^2(\Z,\M)$ given by $L^2(\Z,\M^*)$? It is known (see e.g.\ \cite{DiestelUhl77}) that the answer to this latter question is yes if and only if $\M^*$ has the Radon-Nikodym property and that this is ensured if $\M^*$ is separable. 

In our case we have the following result:
\begin{theorem}[Identification of $\varphi^*\M^*$ and $(\varphi^*\M)^*$]\label{thm:dualpull} Let $(\X,\sfd_\X,\mm_\X)$, $(\Y,\sfd_\Y,\mm_\Y)$ be two complete and separable metric spaces equipped with non-negative Borel measures finite on bounded sets and $\varphi:\Y\to \X$ of bounded compression. Let $\M$ be a $L^2(\X)$ normed module such that its dual $\M^*$ is separable.

Then  $\mathcal I:\varphi^*\M^*\to (\varphi^*\M)^*$ is surjective.
\end{theorem}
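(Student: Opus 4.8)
The map $\mathcal I$ is already known to be an injective morphism preserving the pointwise norm, so the whole content of the statement is surjectivity. The plan is to start from an arbitrary $L\in(\varphi^*\M)^*$ and read off the data it carries on the generators of $\varphi^*\M$. Setting $\Lambda(v):=L(\varphi^*v)\in L^1(\Y)$ for $v\in\M$ we obtain an $\R$-linear continuous map $\Lambda:\M\to L^1(\Y)$ which, since $\varphi^*(fv)=(f\circ\varphi)\,\varphi^*v$, satisfies $\Lambda(fv)=(f\circ\varphi)\Lambda(v)$ for every $f\in L^\infty(\X)$, together with the pointwise bound $|\Lambda(v)|\le g\,|v|\circ\varphi$ $\mm_\Y$-a.e., where $g:=|L|_*\in L^2(\Y)$. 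Because simple elements generate $\varphi^*\M$ and $\mathcal I$ is an $L^\infty(\Y)$-linear continuous morphism, to prove $L\in\mathrm{Im}(\mathcal I)$ it is enough to produce $W\in\varphi^*\M^*$ with $W(\varphi^*v)=\Lambda(v)$ for every $v\in\M$: indeed $\mathcal I(W)$ and $L$ would then agree on the generators $\varphi^*v$, hence on simple elements by $L^\infty(\Y)$-linearity and everywhere by density.

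The construction of such a $W$ is where separability enters. First I would localise: decomposing $\Y$ into a countable Borel partition into sets of finite $\mm_\Y$-measure and solving the problem on each piece, the partial solutions glue into a single $W$ because their pointwise norms are dominated by $g\in L^2(\Y)$; thus we may assume $\mm_\Y(\Y)<\infty$. Next, via the isometry $\Int$ of Proposition \ref{prop:fulldual} the module dual $\M^*$ is identified with the Banach dual $\M'$; separability of $\M^*$ then forces $\M$ itself to be separable and makes $\M'$ a \emph{separable dual Banach space}, which by the Dunford--Pettis theorem enjoys the Radon--Nikodym property. This is exactly the hypothesis singled out in the discussion preceding the statement.

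I would then fix countable dense sets $(\omega_n)\subset\M^*$ and $(v_m)\subset\M$ and exploit the elementary but crucial fact that, $\varphi$ having bounded compression, pullback along $\varphi$ is well defined on $L^1(\X)$; hence each $\omega_n(v_m)\circ\varphi=\varphi^*\omega_n(\varphi^*v_m)$ is a genuine element of $L^1(\Y)$. The goal is to select, in a Borel-measurable way, a field $y\mapsto\omega_y\in\M^*$ so that the functions $\omega_y(v_m)\circ\varphi$ reproduce $\Lambda(v_m)$. Concretely, for $\mm_\Y$-a.e.\ $y$ the assignment $v_m\mapsto\Lambda(v_m)(y)$ is a linear functional bounded by $g(y)\,(|v_m|\circ\varphi)(y)$, and the Radon--Nikodym property together with the density of $(\omega_n)$ allow one to build the field $y\mapsto\omega_y$ so that $\omega_y(v_m)\circ\varphi=\Lambda(v_m)$ holds in $L^1(\Y)$ for every $m$. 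Strong (Bochner) measurability of $y\mapsto\omega_y$ follows from Pettis' theorem, since weak measurability is clear from the measurability of each $y\mapsto\Lambda(v_m)(y)$ and the range is separable. The strongly measurable field is approximated in $\varphi^*\M^*$ by simple elements $\sum_i\nchi_{A_i}\varphi^*\omega_{n_i}$, whose limit is the desired $W$; by construction $W(\varphi^*v_m)=\Lambda(v_m)$, and this extends to all $v\in\M$ by continuity and density, giving $\mathcal I(W)=L$.

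The main obstacle is precisely the measurable selection of the fibrewise functionals and the verification that the resulting field is an honest element of $\varphi^*\M^*$; this is the only point where separability (through the Radon--Nikodym property and Pettis' theorem) is indispensable. It is tempting to shortcut the argument by integrating $\Lambda$ into an $\M'$-valued measure $E\mapsto(\,v\mapsto\int_E\Lambda(v)\,\d\mm_\Y\,)$ on $\Y$ and applying the Radon--Nikodym property directly, but this fails: such a density would forget the localisation at $\varphi(y)$ encoded in $\Lambda(fv)=(f\circ\varphi)\Lambda(v)$, and no element of $\M'$ can implement ``evaluation at the point $\varphi(y)$''. The correct device is to keep everything inside $L^1(\Y)$ via the well-defined pullbacks $\omega_n(v_m)\circ\varphi$, so that the selection problem is posed---and solved---on $\Y$ itself. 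The remaining steps (the algebraic identities for $\Lambda$, the reduction to finite measure, and the final identification $\mathcal I(W)=L$) are routine.
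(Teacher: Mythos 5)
The paper does not actually prove this theorem --- it is stated with the remark that the proof is ``rather technical'' and deferred to \cite{Gigli14} --- so your proposal has to stand on its own, and at its central step it does not. The preliminary reductions are fine (passing to $\Lambda(v):=L(\varphi^*v)$, the bound $|\Lambda(v)|\le g\,|v|\circ\varphi$, gluing over a countable partition of $\Y$ of finite measure, the identification $\M^*\cong\M'$ via $\Int$ and Dunford--Pettis giving the RNP). But the sentence ``the Radon--Nikodym property together with the density of $(\omega_n)$ allow one to build the field $y\mapsto\omega_y$'' is exactly the point that needs a proof, and nothing is constructed there: the RNP produces densities of $\M'$-valued measures of bounded variation, and you never exhibit such a vector measure --- indeed the only natural candidate, $E\mapsto(v\mapsto\int_E\Lambda(v)\,\d\mm_\Y)$, is the one you yourself correctly rule out. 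Worse, the fibrewise functional $v\mapsto\Lambda(v)(y)$, bounded by $g(y)\,|v|(\varphi(y))$, is \emph{not} an element of $\M^*\cong\M'$: elements of the module dual satisfy an integrated bound $|\ell(v)|\le\int|L|_*\,|v|\,\d\mm_\X$, whereas yours is a pointwise bound at the single point $\varphi(y)$ --- precisely the ``evaluation at $\varphi(y)$'' object you observe cannot live in $\M'$. So the target of your measurable selection is not $\M^*$ but the (yet to be constructed) fibre of the dual module over $\varphi(y)$, and constructing and gluing those fibres is the actual technical content of the theorem, not a routine afterthought. Relatedly, the compatibility requirement ``$\omega_y(v_m)\circ\varphi=\Lambda(v_m)$'' is not meaningful as stated: for fixed $y$, $\omega_y(v_m)$ is an $\mm_\X$-a.e.\ equivalence class, so its value at $\varphi(y)$ is undefined, and the ``weak measurability'' fed into Pettis' theorem does not follow from measurability of $y\mapsto\Lambda(v_m)(y)$, since testing $\omega_y$ against $v_m$ in the duality of $\M'$ yields $\int\omega_y(v_m)\,\d\mm_\X$, not $\Lambda(v_m)(y)$.

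A symptom of the gap is that your argument never uses the hypothesis that $\X$ and $\Y$ are complete and separable metric spaces. That assumption is there because the omitted proof needs the standard Borel structure to disintegrate $\mm_\Y$ over $\varphi$ (equivalently, to make rigorous sense of the fibres over which your $\omega_y$ should live), and it is only after that reduction that separability of $\M^*$ and RNP-type arguments are brought to bear. Your intuition comes from the product case $\Y=\X\times\Z$, where $\varphi^*\M\cong L^2(\Z,\M)$ and the classical vector-valued duality $L^2(\Z,\M)^*\cong L^2(\Z,\M^*)$ under RNP applies literally; but for a general map of bounded compression the base point $\varphi(y)$ and the fibre variable are entangled, elements of $\varphi^*\M^*$ are not maps from $\Y$ to $\M^*$, and the passage from the product case to the general one is precisely what must be supplied.
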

The proof of this result is rather technical: we shall omit it, referring to \cite{Gigli14} for the details. Here we instead prove the following much simpler statement:
\begin{proposition}
Let $(\X,\mm_\X)$ and $(\Y,\mm_\Y)$ be two measured spaces, $\varphi:\Y\to \X$ of bounded compression and $\H$ an Hilbert module on $\X$. 

Then $\mathcal I:\varphi^*\H^*\to (\varphi^*\H)^*$ is surjective.
\end{proposition}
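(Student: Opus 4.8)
The plan is to reduce everything to the Riesz representation theorem for Hilbert modules (Proposition \ref{prop:riesz}), applied to both $\H$ and its pullback. The first point to establish is that $\varphi^*\H$ is itself a Hilbert module. By property (i) of the pullback, $|\varphi^* v|=|v|\circ\varphi$ $\mm_\Y\ae$, and since $\varphi^*$ is linear, composing the pointwise parallelogram identity on $\H$ with $\varphi$ yields, for all $v,w\in\H$,
\[
|\varphi^* v+\varphi^* w|^2+|\varphi^* v-\varphi^* w|^2 = 2\big(|\varphi^* v|^2+|\varphi^* w|^2\big)\qquad\mm_\Y\ae.
\]
On a simple element the two sides are computed piecewise, so the identity extends to simple elements of $\varphi^*\H$, and then to all of $\varphi^*\H$ by density and the $L^2(\Y)$-continuity of the pointwise norm (passing to an a.e.-convergent subsequence). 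Integrating and using \eqref{eq:recnorm} shows that the Banach norm of $\varphi^*\H$ satisfies the parallelogram law, so $\varphi^*\H$ is a Hilbert module; I denote by $\langle\cdot,\cdot\rangle$ its pointwise scalar product and by $S:\varphi^*\H\to(\varphi^*\H)^*$ the surjective Riesz isomorphism of Proposition \ref{prop:riesz}.

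Next I would match the pullback of the Riesz map on $\H$ with $S$. Let $R:\H\to\H^*$, $Rv:=\langle v,\cdot\rangle_\H$, be the Riesz isomorphism of $\H$. Since $\mathcal I$ acts by the duality pairing, one has $\mathcal I(\varphi^*(Rv))(\varphi^* w)=\varphi^*(Rv)(\varphi^* w)$, and combining the defining pairing \eqref{eq:dualpullb} with polarization of property (i) gives, for all $v,w\in\H$,
\[
\mathcal I(\varphi^*(Rv))(\varphi^* w) = (Rv)(w)\circ\varphi = \langle v,w\rangle_\H\circ\varphi = \langle \varphi^* v,\varphi^* w\rangle = S(\varphi^* v)(\varphi^* w).
\]
Because $\{\varphi^* w:w\in\H\}$ generates $\varphi^*\H$ and both $\mathcal I(\varphi^*(Rv))$ and $S(\varphi^* v)$ are $L^\infty(\Y)$-linear and continuous, they coincide; hence $\mathcal I(\varphi^*(Rv))=S(\varphi^* v)$ for every $v\in\H$.

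Finally I would conclude by a generation argument. Since $\mathcal I$ preserves the pointwise norm it is an isometry (via \eqref{eq:recnorm}), so its image is a closed $L^\infty(\Y)$-submodule of $(\varphi^*\H)^*$; by the previous step this image contains $S(\varphi^* v)$ for every $v\in\H$. As $S$ is a module isomorphism and $\{\varphi^* v:v\in\H\}$ generates $\varphi^*\H$, the family $\{S(\varphi^* v)\}$ generates $(\varphi^*\H)^*=S(\varphi^*\H)$. A closed submodule containing a generating set is the whole module, whence $\mathcal I$ is surjective.

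The main obstacle is not deep: unlike the general Theorem \ref{thm:dualpull}, where one needs the Radon--Nikodym property, here the self-duality of Hilbert modules does all the work, so the only genuinely technical step is verifying that $\varphi^*\H$ is a Hilbert module, i.e.\ propagating the pointwise parallelogram identity from the generators $\varphi^* v$ to arbitrary elements through the abstract completion. Once that is settled, the identification $\mathcal I\circ\varphi^*\circ R=S$ on generators and the closed-submodule argument are routine, and I would only take care to record that $\mathcal I$ is a continuous module morphism with closed image, which is essentially already noted before the statement.
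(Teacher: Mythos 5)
Your proposal is correct and follows essentially the same route as the paper: show that $\varphi^*\H$ is a Hilbert module, invoke the Riesz isomorphisms of $\H$ and $\varphi^*\H$, and match $\mathcal I\circ\varphi^*\circ R$ with the Riesz map of $\varphi^*\H$ on the generators $\varphi^*v$. The only (immaterial) difference is in the packaging of the last step: the paper extends $v\mapsto\varphi^*(Rv)$ via the universal property of the pullback and exhibits $\widehat{\varphi^*\circ R}\circ\hat R^{-1}$ as an explicit inverse of $\mathcal I$, whereas you conclude surjectivity from the fact that the (closed, isometric) image of $\mathcal I$ contains a generating set of $(\varphi^*\H)^*$.
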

\begin{proof} The pointwise norm of $\H$ satisfies the pointwise parallelogram identity, hence the same holds for the pointwise norm of $\varphi^*\H$ (check first the case of simple elements, then argue by approximation). Thus $\varphi^*\H$ is a Hilbert module. Now let $R:\H\to\H^*$ and $\hat R:\varphi^*\H\to(\varphi^*\H)$ be the respective Riesz isomorphisms (recall Proposition \ref{prop:riesz}), consider $\varphi^*\circ R:\H\to\varphi^*(\H^*)$ and the induced map $\widehat{\varphi^*\circ R}:\varphi^*\H\to \varphi^*(\H^*)$ as given by Proposition \ref{prop:univpb}.

It is then readily verified that $\widehat{\varphi^*\circ R}\circ \hat R^{-1}:(\varphi^*\H)^*\to\varphi^*\H^*$ is the inverse of $\mathcal I:\varphi^*\H^*\to (\varphi^*\H)^*$, thus giving the result.
\end{proof}

\subsubsection{Speed of a test plan}
With the aid of the concept of pullback of a module we can now assign to any test plan its `derivative' $\ppi'_t$ for a.e.\ $t$. The maps of bounded compression that we shall consider are the evaluation maps $\e_t$ from $C([0,1],\X)$ endowed with a test plan $\ppi$ as reference measure to $(\X,\sfd,\mm)$. In this case, we shall denote the pullback of the tangent bundle $L^2(T\X)$ via $\e_t$ by $L^2(T\X,\e_t,\ppi)$.

\begin{thmdef}\label{thm:speedplan} Let $(\X,\sfd,\mm)$ be a metric measure space such that $L^2(T\X)$ is separable and $\ppi$ a test plan.

Then for a.e.\ $t\in[0,1]$ there exists a unique vector field $\ppi'_t\in L^2(T\X,\e_t,\ppi)$ such that for every $f\in W^{1,2}(\X)$ the identity
\begin{equation}
\label{eq:vel12}
\lim_{h\to 0}\frac{f(\gamma_{t+h})-f({\gamma_t})}h=(\e_t^*\d f)(\ppi'_t)(\gamma),
\end{equation}
holds, the limit being intended in the strong topology of $L^1(\ppi)$. For these $\ppi'_t$'s we also have
\begin{equation}
\label{eq:linksp12}
|\ppi'_t|(\gamma)=|\dot\gamma_t|,\qquad \ppi\times\mathcal L^1\restr{[0,1]}\ae \ (\gamma,t).
\end{equation}
\end{thmdef}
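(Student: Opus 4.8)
The plan is to construct $\ppi'_t$ as a derivation on the \emph{pullback cotangent module} and then transport it to the tangent side via the duality identification of Theorem~\ref{thm:dualpull}. First I would record the object coming from the Sobolev theory: for every $f\in W^{1,2}(\X)$, characterization \eqref{eq:localsob2} says that for $\ppi$-a.e.\ $\gamma$ the map $t\mapsto f(\gamma_t)$ is in $W^{1,1}(0,1)$ with $\big|\tfrac{\d}{\dt}f(\gamma_t)\big|\le\weakgrad f(\gamma_t)\,|\dot\gamma_t|$ a.e.; set $G_f(t,\gamma):=\tfrac{\d}{\dt}f(\gamma_t)$. Since $(\e_t)_*\ppi\le\cf(\ppi)\mm$ and $\iint_0^1|\dot\gamma_t|^2\,\dt\,\d\ppi<\infty$, Cauchy--Schwarz gives $G_f\in L^1(\ppi\times\mathcal L^1\restr{[0,1]})$, and Fubini gives that for a.e.\ $t$ the function $\gamma\mapsto|\dot\gamma_t|$ lies in $L^2(\ppi)$. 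I would then fix a countable $W^{1,2}(\X)$-dense family $(f_n)$ --- available because separability of $L^2(T\X)$ forces that of $L^2(T^*\X)$ (Proposition~\ref{prop:fulldual}) and hence of $W^{1,2}(\X)$, which embeds isometrically into $L^2(\mm)\times L^2(T^*\X)$ --- so that $\{\d f_n\}$ generates $L^2(T^*\X)$ by Remark~\ref{rem:gencot}, and work at the full-measure set of $t$ where $\gamma\mapsto|\dot\gamma_t|\in L^2(\ppi)$ and all $G_{f_n}(t,\cdot)$ are defined.

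For such $t$, consider $V:=\mathrm{span}_\R\{\e_t^*\d f_n\}$, which generates $\e_t^*L^2(T^*\X)$ since a generating set pulls back to a generating set, and define $L_t\colon V\to L^1(\ppi)$ by $L_t(\e_t^*\d f):=G_f(t,\cdot)$. Linearity of $f\mapsto G_f$ together with the bound $|G_f(t,\cdot)|\le|\dot\gamma_t|\,|\e_t^*\d f|$ makes $L_t$ well defined (it annihilates forms of zero pointwise norm) and gives $|L_t(w)|\le|\dot\gamma_t|\,|w|$ on $V$. Proposition~\ref{prop:genext} then extends $L_t$ to $\tilde L_t\in(\e_t^*L^2(T^*\X))^*$ with $|\tilde L_t|_*\le|\dot\gamma_t|$. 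As $L^2(T\X)=(L^2(T^*\X))^*$ is separable, Theorem~\ref{thm:dualpull} identifies $\e_t^*L^2(T\X)$ isometrically (and pointwise-norm-preservingly) with $(\e_t^*L^2(T^*\X))^*$, so $\tilde L_t$ corresponds to a unique $\ppi'_t\in L^2(T\X,\e_t,\ppi)$ with $(\e_t^*\d f)(\ppi'_t)=G_f(t,\cdot)$ for all $f$ in the family. The elementary estimate $\|G_f-G_g\|_{L^1(\ppi)}\le\cf(\ppi)^{1/2}\big\||\dot\gamma_t|\big\|_{L^2(\ppi)}\,\|\weakgrad{(f-g)}\|_{L^2(\mm)}$ lets me pass this identity to every $f\in W^{1,2}(\X)$ by density, and uniqueness of $\ppi'_t$ is immediate, a vector field being determined by its pairings against a generating set.

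Next I would upgrade the a.e.\ pointwise derivative to the strong $L^1(\ppi)$ limit in \eqref{eq:vel12}. Writing $\tfrac{f(\gamma_{t+h})-f(\gamma_t)}{h}=\tfrac1h\int_t^{t+h}G_f(s,\gamma)\,\d s$ and viewing $s\mapsto G_f(s,\cdot)$ as an element of $L^1((0,1);L^1(\ppi))$ (strongly measurable by Pettis, $L^1(\ppi)$ being separable), the vector-valued Lebesgue differentiation theorem yields $\tfrac1h\int_t^{t+h}G_f(s,\cdot)\,\d s\to G_f(t,\cdot)$ in $L^1(\ppi)$ at a.e.\ $t$. The delicate point, and the step I expect to be the main obstacle, is that \eqref{eq:vel12} must hold for \emph{all} $f$ at one common full-measure set of $t$, whereas Lebesgue points depend on $f$. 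I would resolve this by keeping only those $t$ that are simultaneously Lebesgue points for the countable family, Lebesgue points of $s\mapsto\big\||\dot\gamma_s|\big\|_{L^2(\ppi)}$, and where $\gamma\mapsto|\dot\gamma_t|\in L^2(\ppi)$; for general $f$ one approximates by $f_n$ and controls the tail uniformly in small $h$ through $\limsup_{h\to0}\tfrac1h\int_t^{t+h}\big\|\weakgrad{(f-f_n)}\circ\e_s\,|\dot\gamma_s|\big\|_{L^1(\ppi)}\,\d s\le\cf(\ppi)^{1/2}\,\|\weakgrad{(f-f_n)}\|_{L^2(\mm)}\,\big\||\dot\gamma_t|\big\|_{L^2(\ppi)}$, which tends to $0$ as $n\to\infty$ independently of $h$, while the $f_n$-term converges by the Lebesgue property.

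Finally, for \eqref{eq:linksp12} the inequality $|\ppi'_t|\le|\dot\gamma_t|$ is exactly the bound $|\tilde L_t|_*\le|\dot\gamma_t|$ carried through the pointwise-norm-preserving identification. For the reverse inequality I would fix a countable dense set $(x_m)\subset\X$ and put $u_m:=\sfd(\cdot,x_m)$, which is $1$-Lipschitz, so $\weakgrad{u_m}\le1$; after a standard cut-off placing $u_m$ in $W^{1,2}(\X)$ along $\ppi$-a.e.\ (relatively compact) curve, the identity just established gives $\big|\tfrac{\d}{\dt}u_m(\gamma_t)\big|=\big|(\e_t^*\d u_m)(\ppi'_t)\big|\le|\ppi'_t|$. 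Since $\sfd(\gamma_s,\gamma_t)=\sup_m|u_m(\gamma_s)-u_m(\gamma_t)|\le\int_t^s\sup_m\big|\tfrac{\d}{\d r}u_m(\gamma_r)\big|\,\d r$, the minimality of the metric speed among functions dominating the distance increments forces $|\dot\gamma_t|\le\sup_m\big|\tfrac{\d}{\dt}u_m(\gamma_t)\big|\le|\ppi'_t|$ for $\ppi\times\mathcal L^1\restr{[0,1]}$-a.e.\ $(\gamma,t)$, giving the desired equality.
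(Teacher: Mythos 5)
Your argument is correct and follows the same architecture as the paper's proof: separability of $L^2(T\X)$ is bootstrapped to separability of $W^{1,2}(\X)$, a derivative functional is built on the pullback cotangent module, extended via Proposition \ref{prop:genext}, identified with an element of $L^2(T\X,\e_t,\ppi)$ through Theorem \ref{thm:dualpull}, and \eqref{eq:linksp12} is obtained with $\leq$ from the construction and $\geq$ from a countable family of $1$-Lipschitz distance-type functions. The one genuinely different ingredient is how you reach the strong $L^1(\ppi)$ limit in \eqref{eq:vel12}: the paper points out that $L^1(\ppi)$ lacks the Radon--Nikodym property, so it works with uniform integrability of the difference quotients, extracts a weak $L^1(\mathcal L^1\restr{[0,1]}\times\ppi)$ limit $\Der_t(f)$, and recovers strong convergence at a.e.\ $t$ from the representation $F_s-F_t=\int_t^s\Der_r(f)\,\d r$; you instead start from the characterization \eqref{eq:localsob2}, set $G_f(t,\gamma)=\frac{\d}{\d t}f(\gamma_t)$, and invoke the Lebesgue differentiation theorem for Bochner integrals, which holds in every Banach space because one differentiates the indefinite integral of an integrable map, so the RNP obstruction never enters; your Lebesgue-point/three-term estimate for making the exceptional set of $t$'s independent of $f$ is exactly the ``little bit of work'' the paper leaves implicit. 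Two points should be tightened, at the same level of detail the paper's own sketch glosses over: the well-posedness of $L_t$ on the full $\R$-span of $\{\e_t^*\d f_n\}$ requires taking the countable dense family to be a $\Q$-vector space (or extending from the $\Q$-span by continuity), so that the bound $|G_f(t,\cdot)|\leq|\dot\gamma_t|\,|\e_t^*\d f|$ and the linearity of $f\mapsto G_f(t,\cdot)$ hold at the fixed $t$ simultaneously for all combinations used; and since $\sfd(\cdot,x_m)\notin W^{1,2}(\X)$ in general, the ``standard cut-off'' in the last step should be made explicit, e.g.\ precisely the paper's choice $f_{n,m}(x)=\max\{0,m-\sfd(x,x_n)\}$, which is Lipschitz with bounded support and still recovers $\sfd(\gamma_s,\gamma_t)$ as a countable supremum of increments.
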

\begin{sketch} Start observing that since $L^2(T\X)$ is separable and isometric to the Banach dual of $L^2(T^*\X)$ (Proposition \ref{prop:fulldual}), $L^2(T^*\X)$ is also separable. Then observe that since $f\mapsto (f,\d f)$ is an isometry of $W^{1,2}(\X)$ into $L^2(\X)\times L^2(T^*\X)$ with the norm $\|(f,\omega)\|^2:=\|f\|_{L^2(\X)}^2+\|\omega\|^2_{L^2(T^*\X)}$, the space $W^{1,2}(\X)$ is separable as well.

Now pick $f\in W^{1,2}(X)$, define  $[0,1]\ni t\mapsto F_t,G_t\in  L^1(\ppi)$ as
\[
F_t(\gamma):=f(\gamma_t)\qquad\qquad G_t(\gamma):=\weakgrad f(\gamma_t)|\dot\gamma_t|,
\]
and notice that \eqref{eq:localsob} can be written as
\begin{equation}
\label{eq:ptder}
|F_s-F_t|\leq \int_t^s  G_r\,\d r\qquad \ppi\ae.
\end{equation}
Integrating this bound w.r.t.\ $\ppi$ we see in particular that the map $t\mapsto F_t\in L^1(\ppi)$ is absolutely continuous. Although this is not sufficient to deduce that such curve is differentiable at a.e.\ $t$ (because the Banach space $L^1(\ppi)$ does not have the Radon-Nikodym property), the pointwise bound \eqref{eq:ptder}  grants uniform integrability of the incremental ratios $\frac{F_{t+h}-F_t}{h}$ and in turn this grants that for some $h_n\downarrow0$ the sequence $\frac{F_{\cdot+h_n}-F_\cdot}{h_n}$ converges in the weak topology of $L^1(\mathcal L^1\restr{[0,1]}\times \ppi)$ to a limit function $\Der_\cdot(f)$  which by \eqref{eq:ptder} and the definition of $G_t$ satisfies
\begin{equation}
\label{eq:normder}
|\Der_t(f)|(\gamma)\leq\weakgrad f(\gamma_t)|\dot\gamma_t|= |\e_t^*\d f|(\gamma)|\dot\gamma_t|\qquad \mathcal L^1\restr{[0,1]}\times \ppi\ae \ (t,\gamma).
\end{equation}
From the definition of $\Der_t(f)$ it also  follows that
\[
F_s-F_t=\int_t^s\Der_r(f)\,\d r\qquad\forall t,s\in[0,1],\ t<s,
\]
and this in turn implies that $\frac{F_{t+h}-F_t}{h}$ converge to $\Der_t(f)$ strongly in $L^1(\ppi)$ as $h\to 0$ for a.e.\ $t\in[0,1]$. With a little bit of work based on the fact that $W^{1,2}(\X)$ is separable, we can then see that the exceptional set of $t$'s is independent on $f$, so that for a.e.\ $t$ we have:
\[
\forall f\in W^{1,2}(\X)\quad\frac{f\circ\e_{t+h}-f\circ \e_t}{h}\text{ converge in $L^1(\ppi)$ to some }\Der_t(f) \text{ for which \eqref{eq:normder} holds.}
\]
Fix $t$ for which this holds  and let $L_t:\{\e_t^*\d f:f\in W^{1,2}(\X)\}\to L^1(\ppi)$ be defined as $L_t(\e_t^*\d f):=\Der_t(f)$. The bound \eqref{eq:normder} grants that this is a good definition, then using  Proposition \ref{prop:genext} and Theorem \ref{thm:dualpull} (recall that we assumed $L^2(T\X)$ to be separable) we deduce that there exists a unique $\ppi'_t\in L^2(T\X,\e_t,\ppi)$ such that 
\[
\e_t^*\d f(\ppi'_t)=\Der_t(f)\qquad\forall f\in W^{1,2}(\X) 
\]
and that inequality $\leq $ in \eqref{eq:linksp12} holds. To prove $\geq$ notice that for $f\in W^{1,2}\cap \LIP(\X)$ and $\gamma$ absolutely continuous the map $t\mapsto f(\gamma_t)$ is absolutely continuous. Therefore the derivative $\frac{\d}{\d t}f(\gamma_t)$ is well defined for $\ppi\times\mathcal L^1\restr{[0,1]}$-a.e.\ $(\gamma,t)$ and it is easy to check that it $\ppi\times\mathcal L^1\restr{[0,1]}$-a.e.\ coincides with $\Der_t(f)(\gamma)$. Thus $\ppi\times\mathcal L^1\restr{[0,1]}$-a.e.\  $(\gamma,t)$ we have
\[
\frac \d{\d  t}f(\gamma_t)=\e_t^*\d f(\ppi'_t)(\gamma)\leq |\e_t^*\d f|(\gamma)|\ppi'_t|(\gamma)= |\d f|(\gamma_t)|\ppi'_t|(\gamma)\leq \Lip(f)\,|\ppi'_t|(\gamma).
\]
Hence to conclude it is sufficient to show that there exists a countable family $D$ of 1-Lipschitz functions in $W^{1,2}(\X)$ such that for any absolutely continuous curve $\gamma$ we have
\begin{equation}
\label{eq:1lipcurve}
\sup_{f\in D}\frac{\d}{\d t}f(\gamma_t)\geq |\dot\gamma_t|,\qquad{\rm a.e.}\ t.
\end{equation}
Let $(x_n)\subset \X$ be countable and dense and define $f_{n,m}(x):=\max\{0,m-\sfd(x,x_n)\}$. It is clear that $f_{n,m}\in W^{1,2}\cap \LIP(\X)$ and that $\sfd(x,y)=\sup_{n,m}f_{n,m}(x)-f_{n,m}(y)$, thus for $\gamma$ absolutely continuous we have
\[
\sfd(\gamma_s,\gamma_t)=\sup_{n,m}f_{n,m}(\gamma_s)-f_{n,m}(\gamma_t)=\sup_{n,m}\int_t^s\frac{\d}{\d r}f_{n,m}(\gamma_r)\,\d r\leq \int_t^s\sup_{n,m}\frac{\d}{\d r}f_{n,m}(\gamma_r)\,\d r
\]
and the claim \eqref{eq:1lipcurve} follows.
\end{sketch}
In applications one can often find explicit expressions for the vector fields $\ppi'_t$ in terms of the data of the problem, so that this last theorem can be used to effectively calculate the derivative of $f\circ\e_t$, see for instance Remark \ref{re:speedgeo}.

\subsection{Maps of bounded deformation}
Here we introduce maps between metric measure spaces which are `first-order smooth' and see that they naturally induce a pull-back of 1-forms and, by duality, that they have a differential.
\begin{definition}[Maps of bounded deformation]
Let $(\X,\sfd_\X,\mm_\X)$ and $(\Y,\sfd_\Y,\mm_\Y)$ be metric measure spaces. A map $\varphi:\Y\to\X$ is said of bounded deformation provided it is Lipschitz and of bounded compression (Definition \ref{def:boundcomp}).
\end{definition}
A map of bounded deformation induces by left composition a map $\hat\varphi: C([0,1],\Y)\to C([0,1],\X)$. It is clear that if $\gamma$ is absolutely continuous then so is $\hat\varphi(\gamma)$ and, denoting by ${\rm ms}_t(\hat \varphi(\gamma))$ its metric speed at time $t$, that
\begin{equation}
\label{eq:lipsp}
{\rm ms}_t(\hat\varphi(\gamma))\leq \Lip(\varphi)|\dot\gamma_t|\qquad {\rm a.e.}\ t.
\end{equation}
Also, for $\mu\in\prob \Y$ such that $\mu\leq C\mm_\Y$ we  have $\varphi_*\mu\leq C\comp(\varphi)\mm_\X$. It follows that if $\ppi$ is a test plan on $\Y$, then $\hat\varphi_*\ppi$ is a test plan on $\X$. 

By duality, we now check that for $f\in\S^2(\X)$ we have $f\circ\varphi\in \S^2(\Y)$ with
\begin{equation}
\label{eq:bdp}
|\d(f\circ\varphi)|\leq \Lip(\varphi)|\d f|\circ\varphi\qquad\mm_\Y\ae.
\end{equation}
Indeed, let $\ppi$ be a test plan on $\Y$ and  notice that
\[
\begin{split}
\int|f(\varphi(\gamma_1))-f(\varphi(\gamma_0))|\,\d\ppi(\gamma)&=\int|f(\tilde\gamma_1)-f(\tilde\gamma_0)|\,\d\hat\varphi_*\ppi(\tilde\gamma)\\
\text{because $\hat\varphi_*\ppi$ is a test plan on $\X$}\qquad&\leq\iint_0^1|\d f|(\tilde\gamma_t){\rm ms}_t(\tilde\gamma)\,\d\hat\varphi_*\ppi(\tilde\gamma)\\
&=\iint_0^1|\d f|(\varphi(\gamma_t)){\rm ms}_t(\hat \varphi(\gamma))\,\d\ppi(\gamma)\\
\text{by \eqref{eq:lipsp}}\qquad&\leq\Lip(\varphi)\iint_0^1|\d f|(\varphi(\gamma_t))|\dot\gamma_t| \,\d\ppi(\gamma),
\end{split}
\]
which, by the arbitrariness of $\ppi$ and the very definition of $\S^2(\Y)$ and minimal weak upper gradient, gives the claim.

A direct consequence of this simple observation is:
\begin{thmdef}[Pull-back of 1-forms]\label{thm:pbf}
Let $\varphi:\Y\to\X$ be of bounded deformation. Then there exists a unique linear and continuous map $\varphi^*:L^2(T^*\X)\to L^2(T^*\Y)$, called pull-back of 1-forms, such that
\begin{align}
\label{eq:pullbforms1}
\varphi^*(\d f)&=\d(f\circ\varphi)&&\forall f\in \S^2(\X)\\
\label{eq:pullbforms2}
\varphi^*(g\omega)&=g\circ\varphi\,\varphi^*\omega&&\forall g\in L^\infty(\X),\ \omega\in L^2(T^*\X),
\end{align}
and for such map it holds
\begin{equation}
\label{eq:bdp2}
|\varphi^*\omega|\leq \Lip(\varphi)|\omega|\circ\varphi\qquad\mm_\Y\ae \qquad \forall\omega\in L^2(T^*\X).
\end{equation}
\end{thmdef}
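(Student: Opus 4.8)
The plan is to mimic the construction of the cotangent module in Theorem \ref{thm:defcot}, taking the already-established estimate \eqref{eq:bdp} as the key input. The strategy is to pin down $\varphi^*$ on simple forms, verify the pointwise bound there, deduce well-posedness and continuity from it, and finally extend by density.

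\emph{Uniqueness and the forced definition.} Let $\omega=\sum_i\nchi_{A_i}\d f_i$ be a simple form. The requirements of $L^\infty(\X)$-linearity \eqref{eq:pullbforms2} and \eqref{eq:pullbforms1}, together with $\nchi_{A_i}\circ\varphi=\nchi_{\varphi^{-1}(A_i)}$, force
\[
\varphi^*\omega=\sum_i(\nchi_{A_i}\circ\varphi)\,\varphi^*(\d f_i)=\sum_i\nchi_{\varphi^{-1}(A_i)}\,\d(f_i\circ\varphi).
\]
Since simple forms are dense in $L^2(T^*\X)$ (property $(ii)$ of Theorem \ref{thm:defcot}) and $\varphi^*$ is required continuous, this determines $\varphi^*$ everywhere, giving uniqueness. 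For existence I take the displayed formula as the \emph{definition} of $\varphi^*\omega$ on simple forms. Because $(\varphi^{-1}(A_i))$ is a Borel partition of $\Y$, property \eqref{eq:normpunt} gives $|\varphi^*\omega|=\sum_i\nchi_{\varphi^{-1}(A_i)}|\d(f_i\circ\varphi)|$, and feeding in \eqref{eq:bdp} yields the pointwise estimate
\[
|\varphi^*\omega|\leq\Lip(\varphi)\sum_i\nchi_{\varphi^{-1}(A_i)}|\d f_i|\circ\varphi=\Lip(\varphi)\Big(\sum_i\nchi_{A_i}|\d f_i|\Big)\circ\varphi=\Lip(\varphi)\,|\omega|\circ\varphi,
\]
which is exactly \eqref{eq:bdp2} on simple forms.

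\emph{Well-posedness, continuity, extension, and the defining properties.} The pointwise estimate also makes the definition independent of the chosen representation: if $\omega$ has two simple representations, their formal difference represents the zero form, and the bound forces the corresponding difference of candidate values to have vanishing pointwise norm, hence to be zero. Integrating the squared bound and using bounded compression, $\int|\omega|^2\circ\varphi\,\d\mm_\Y=\int|\omega|^2\,\d\varphi_*\mm_\Y\leq\comp(\varphi)\int|\omega|^2\,\d\mm_\X$, gives $\|\varphi^*\omega\|_{L^2(T^*\Y)}\leq\Lip(\varphi)\sqrt{\comp(\varphi)}\,\|\omega\|_{L^2(T^*\X)}$, so $\varphi^*$ is continuous on the dense subspace of simple forms and extends uniquely to a continuous linear map on all of $L^2(T^*\X)$, with \eqref{eq:bdp2} passing to the limit. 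Property \eqref{eq:pullbforms1} holds by choosing the trivial partition, and \eqref{eq:pullbforms2} holds for simple $g$ and simple $\omega$ directly from the definition (a routine intersection-of-partitions check), then for general $g\in L^\infty(\X)$ and $\omega\in L^2(T^*\X)$ by approximation.

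I expect no serious obstacle here, since the substantive analytic content is already contained in \eqref{eq:bdp}; within this proof the one genuinely delicate point is the well-posedness on simple forms, where it is essential that the locality of the differential (entering through \eqref{eq:bdp}) combines with bounded compression to convert a statement $|\d f_i|=|\d g_j|$ $\mm_\X$-a.e.\ on $A_i\cap B_j$ into the corresponding $\mm_\Y$-a.e.\ statement on $\varphi^{-1}(A_i\cap B_j)$; everything else is the same formal completion argument as in Theorem \ref{thm:defcot}.
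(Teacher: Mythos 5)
Your proposal is correct and follows essentially the same route as the paper: forcing the definition on simple forms via \eqref{eq:pullbforms1}--\eqref{eq:pullbforms2}, using \eqref{eq:bdp} to get the pointwise bound (hence well-posedness and \eqref{eq:bdp2} on simple forms), integrating with bounded compression for continuity, and extending by density with \eqref{eq:pullbforms2} recovered by approximation. Your closing remark on combining locality of the differential with bounded compression is a correct and slightly more explicit account of the well-posedness step that the paper leaves implicit.
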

\begin{proof}
For a simple form $W=\sum_i\nchi_{A_i}\d f_i\in L^2(T^*\X)$ the requirements \eqref{eq:pullbforms1},\eqref{eq:pullbforms2}  force the definition $\varphi^*W:=\sum_i\nchi_{A_i}\circ\varphi\,\d(f_i\circ\varphi)$. The inequality
\[
\big|\sum_i\nchi_{A_i}\circ\varphi\d(f_i\circ\varphi)\big|=\sum_i\nchi_{A_i}\circ\varphi|\d(f_i\circ\varphi)|\stackrel{\eqref{eq:bdp}}\leq \Lip(\varphi)\sum_i(\nchi_{A_i}|\d f_i|)\circ\varphi=\Lip(\varphi)|W|\circ\varphi
\]
shows that the definition of $\varphi^*W$ is well-posed - i.e.\ it depends only on $W$ and not on the way we write it as $\sum_i\nchi_{A_i}\d f_i$ - and that \eqref{eq:bdp2} holds for simple forms. In particular we have
\[
\|\varphi^*W\|_{L^2(T^*\Y)}\leq\Lip(\varphi)\sqrt{\int |W|^2\circ\varphi\,\d\mm_\Y }\leq\Lip(\varphi)\sqrt{\comp(\varphi)}\|W\|_{L^2(T^*\X)} ,\qquad\forall W\ \text{simple}
\]
showing that the map $\varphi^*$ so defined is continuous from the space of simple 1-forms on $\X$ to $L^2(T^*\Y)$. Hence it can be uniquely extended to a linear continuous map from $L^2(T^*\X)$ to $L^2(T^*\Y)$, which clearly satisfies \eqref{eq:bdp2}. Thus by construction we have \eqref{eq:pullbforms1} and \eqref{eq:pullbforms2} for simple functions; the validity \eqref{eq:pullbforms2} for any $g\in L^\infty(\X)$ then follows by approximation.
\end{proof}
Notice that the composition of maps of bounded deformations is of bounded deformation and by a direct verification of the characterizing properties \eqref{eq:pullbforms1}, \eqref{eq:pullbforms2} we see that
\[
(\varphi\circ\psi)^*=\psi^*\circ\varphi^*
\]
We remark that given a map of bounded deformation $\varphi:\Y\to \X$ we have two (very) different ways of considering the pull-back of 1-forms: the one defined in the previous theorem, which takes values in $L^2(T^*\Y)$, and the one in the sense of pull-back modules, which takes values in the pullback $\varphi^*L^2(T^*\X)$ of $L^2(T^*\X)$ via $\varphi$. To avoid confusion, we shall denote the latter map by $[\varphi^*]$ keeping the notation $\varphi^*$ for the former.

With this said, by duality we can now define the differential of a map of bounded deformation:
\begin{thmdef}[Differential of a map of bounded deformation]
Let $\varphi:\Y\to\X$ be of bounded deformation and assume that $L^2(T\X)$ is separable. Then there exists a unique $L^\infty(\Y)$-linear and continuous map $\d\varphi:L^2(T\Y)\to \varphi^*L^2(T\X)$, called differential of $\varphi$, such that
\begin{equation}
\label{eq:defdiffbd}
[\varphi^*\omega]\big(\d\varphi(v)\big)=\varphi^*\omega(v)\qquad\forall \omega\in L^2(T^*\X),\ v\in L^2(T\Y)
\end{equation}
and it satisfies
\begin{equation}
\label{eq:diffnorm}
|\d\varphi(v)|\leq \Lip(\varphi)|v|\quad\mm_\Y\ae \qquad\forall v\in L^2(T\Y).
\end{equation}
\end{thmdef}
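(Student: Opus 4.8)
The plan is to build $\d\varphi$ by duality, using the identification of the pullback $\varphi^*L^2(T\X)$ with the dual module $(\varphi^*L^2(T^*\X))^*$. Since $L^2(T\X)=(L^2(T^*\X))^*$ is separable by hypothesis, Theorem \ref{thm:dualpull} applied with $\M=L^2(T^*\X)$ shows that the canonical morphism $\mathcal I:\varphi^*L^2(T\X)\to (\varphi^*L^2(T^*\X))^*$ is surjective; as $\mathcal I$ always preserves the pointwise norm it is also injective, hence a module isomorphism. It therefore suffices to produce, for each $v\in L^2(T\Y)$, a suitable element of $(\varphi^*L^2(T^*\X))^*$ and then define $\d\varphi(v)$ as its preimage under $\mathcal I$.

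Concretely, I would fix $v\in L^2(T\Y)$ and work on the subspace $V:=\{[\varphi^*\omega]:\omega\in L^2(T^*\X)\}$, which generates $\varphi^*L^2(T^*\X)$ by the defining property of the pullback module. On $V$ set $L_v([\varphi^*\omega]):=\varphi^*\omega(v)$, where $\varphi^*\omega\in L^2(T^*\Y)$ is the pull-back of the $1$-form $\omega$ from Theorem \ref{thm:pbf} and $\varphi^*\omega(v)\in L^1(\Y)$ is the duality pairing on $\Y$. The key estimate is
\[
|L_v([\varphi^*\omega])|=|\varphi^*\omega(v)|\leq |\varphi^*\omega|\,|v|\stackrel{\eqref{eq:bdp2}}{\leq}\Lip(\varphi)\,(|\omega|\circ\varphi)\,|v|=\Lip(\varphi)\,|v|\,|[\varphi^*\omega]|\qquad\mm_\Y\ae,
\]
where the last equality uses $|[\varphi^*\omega]|=|\omega|\circ\varphi$, the defining pointwise-norm identity of the pullback module. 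In particular $L_v$ vanishes whenever $|[\varphi^*\omega]|=0$, so it is well defined on $V$ and linear there. Taking $g:=\Lip(\varphi)\,|v|\in L^2(\Y)$, Proposition \ref{prop:genext} yields a unique extension $\tilde L_v\in (\varphi^*L^2(T^*\X))^*$ with $|\tilde L_v|_*\leq \Lip(\varphi)\,|v|$, and I would set $\d\varphi(v):=\mathcal I^{-1}(\tilde L_v)$.

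It then remains to check the stated properties. The defining relation \eqref{eq:defdiffbd} is immediate, since for every $\omega$
\[
[\varphi^*\omega](\d\varphi(v))=\mathcal I(\d\varphi(v))([\varphi^*\omega])=\tilde L_v([\varphi^*\omega])=L_v([\varphi^*\omega])=\varphi^*\omega(v).
\]
The bound \eqref{eq:diffnorm} follows because $\mathcal I$ preserves the pointwise norm, whence $|\d\varphi(v)|=|\tilde L_v|_*\leq \Lip(\varphi)\,|v|$. Linearity and $L^\infty(\Y)$-linearity of $v\mapsto\d\varphi(v)$ descend from those of $v\mapsto L_v$ (using the $L^\infty(\Y)$-bilinearity of the pairing on $\Y$, so that $L_{fv}=fL_v$ on $V$), from the uniqueness of the extension in Proposition \ref{prop:genext}, and from $\mathcal I^{-1}$ being a module isomorphism; continuity is a consequence of \eqref{eq:diffnorm} via $\|\d\varphi(v)\|\leq \Lip(\varphi)\|v\|$. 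Uniqueness is clear as well: any $L^\infty(\Y)$-linear continuous map satisfying \eqref{eq:defdiffbd} must have $\mathcal I(\d\varphi(v))$ agreeing with $\tilde L_v$ on the generating set $V$, hence everywhere by density of $L^\infty$-combinations, and $\mathcal I$ is injective.

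The only genuinely delicate input is the identification $\varphi^*L^2(T\X)\cong (\varphi^*L^2(T^*\X))^*$, that is, the surjectivity of $\mathcal I$ from Theorem \ref{thm:dualpull}: without it the functional $\tilde L_v$ would live merely in the abstract dual and could not be realized as a pullback vector field. This is exactly where separability of $L^2(T\X)$ is used; everything else reduces to the by-now standard mechanism of defining a morphism on a generating set, controlling its pointwise norm, and extending by Proposition \ref{prop:genext}.
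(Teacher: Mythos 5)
Your proposal is correct and follows essentially the same route as the paper: define $L_v$ on the generating set $\{[\varphi^*\omega]\}$ via the pairing $\varphi^*\omega(v)$, use \eqref{eq:bdp2} to get the pointwise bound with $g=\Lip(\varphi)|v|$, extend by Proposition \ref{prop:genext}, and realize the resulting functional in $\varphi^*L^2(T\X)$ through the identification of Theorem \ref{thm:dualpull}, which is exactly where separability enters. Your extra remarks on well-definedness, injectivity of $\mathcal I$, and uniqueness only make explicit what the paper leaves implicit.
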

\begin{proof} Let $v\in L^2(T\Y)$ and consider the map $L_v:\{\varphi^*\omega:\omega\in L^2(T\X)\}\to L^1(\Y)$ sending $\varphi^*\omega$ to $\varphi^*\omega(v)$. The bound \eqref{eq:bdp2} and the identity $|\omega|\circ\varphi=|[\varphi^*]\omega|$ give
\[
|L_v(\omega)|\leq \Lip(\varphi)|[\varphi^*]\omega||v|\quad\mm_\Y\ae \qquad\forall\omega\in L^2(T^*\X).
\]
The vector space $\{\varphi^*\omega:\omega\in L^2(T^*\X)\}$ generates $\varphi^*L^2(T^*\X)$ and  the dual of this  module is - by Theorem \ref{thm:dualpull} and the separability assumption on $L^2(T\X)$ - the module $\varphi^*L^2(T\X)$, thus by Proposition \ref{prop:genext}  we deduce that there is a unique  element in $\varphi^*L^2(T\X)$, which we will call $\d\varphi(v)$, for which \eqref{eq:defdiffbd} holds and such $\d\varphi(v)$ also satisfies \eqref{eq:diffnorm}.

It is clear that the assignment $v\mapsto\d\varphi(v)$ is $L^\infty(\Y)$-linear and since the bound \eqref{eq:diffnorm} also ensures that such assignment is continuous, the proof is completed.
\end{proof}
\begin{remark}\label{rem:diffinv}{\rm
If $\varphi$ is invertible with inverse of bounded compression, then Remark \ref{rem:invpb} tells that the pullback module $\varphi^*L^2(T\X)$ can be identified with $L^2(T\X)$ via the pullback map. Once this identification is done, the differential $\d\varphi$ can be seen as a map from $L^2(T\Y)$ to  $L^2(T\X)$ and \eqref{eq:defdiffbd} reads as
\[
\omega(\d\varphi(v))=\varphi^*\omega(v)\circ\varphi^{-1}.
\]
}\fr\end{remark}

We shall now relate the differential just built with the notion of `speed of a test plan' as given by Theorem \ref{thm:speedplan} to see that in our setting we have an analogous of the standard chain rule
\[
(\varphi\circ\gamma)'_t=\d\varphi(\gamma_t')
\]
valid in the smooth world.

As before, let $\varphi:\Y\to\X$ be of bounded deformation, denote by $\hat\varphi$ the induced map from $C([0,1],\Y)$ to $C([0,1],\X)$ and let $\ppi$ be a test plan on $\Y$. For $t\in[0,1]$ let us also denote by $\e^\X_t,\e^\Y_t$ the evaluation maps on $C([0,1],\X)$ and $C([0,1],\Y)$ respectively.

Notice that $[(\e^\Y_t)^*]\d\varphi:L^2(T\Y)\to (\e^\Y_t)^*\varphi^*L^2(T\X)$ satisfies
\[
|[(\e^\Y_t)^*]\d\varphi(v)|\leq \Lip(\varphi)|v|\circ\e^\Y_t
\]
and thus by the universal property of the pullback given in Proposition \ref{prop:univpb} we see that there is a unique $L^\infty(\ppi)$-linear and continuous map, which we shall denote by $\widehat{\d\varphi}$, from $L^2(T\Y,\e_t^\Y,\ppi)$ to $ (\e^\Y_t)^*\varphi^*L^2(T\X)$ such that
\[
\widehat{\d\varphi}([(\e^\Y_t)^*](v))=[(\e^\Y_t)^*]\d\varphi(v)\qquad\forall v\in L^2(T\Y).
\]
We observe that for such map it holds
\begin{equation}
\label{eq:ut}
\big([(\e^\Y_t)^*](\varphi^*\omega)\big)(V)=\big([(\e^\Y_t)^*][\varphi^*](\omega)\big)\big(\widehat{\d\varphi}(V)\big)\qquad\forall \omega\in L^2(T^*\X),\ V\in L^2(T\Y,\e_t^\Y,\ppi),
\end{equation}
indeed for $V$ of the form $(\e^\Y_t)^*v$ for $v\in L^2(T\Y)$ this is a direct consequence of the defining property and the conclusion for general $V$'s follows from the fact that both sides of \eqref{eq:ut} are $L^\infty(\ppi)$-linear and continuous in $V$.

With this said, we have the following result, proved in \cite{DPG16}:
\begin{proposition}[Chain rule for speeds] Assume that $L^2(T\X)$ is separable. Then for a.e.\ $t$ we have
\begin{equation}
\label{eq:diffsp}
\widehat{\d\varphi}(\ppi'_t)=[\hat\varphi^*](\hat\varphi_*\ppi)'_t\ .
\end{equation}
\end{proposition}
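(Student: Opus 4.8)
The plan is to verify the identity through the dual pairing, reducing everything to the defining property \eqref{eq:vel12} of the two speeds. First I would check that both sides live in the same module. By the functoriality of the pullback (Remark \ref{rem:functpb}) and the identity $\varphi\circ\e^\Y_t=\e^\X_t\circ\hat\varphi$, there are canonical identifications
\[
(\e^\Y_t)^*\varphi^*L^2(T\X)=(\varphi\circ\e^\Y_t)^*L^2(T\X)=(\e^\X_t\circ\hat\varphi)^*L^2(T\X)=\hat\varphi^*(\e^\X_t)^*L^2(T\X),
\]
so that $\widehat{\d\varphi}(\ppi'_t)$ and $[\hat\varphi^*](\hat\varphi_*\ppi)'_t$ are indeed both elements of $(\e^\Y_t)^*\varphi^*L^2(T\X)$. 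To prove they coincide it is enough to show that they give the same pairing against every element of a generating set of the dual module. Since $L^2(T\X)$ is separable, Theorem \ref{thm:dualpull} identifies this dual with $(\e^\Y_t)^*\varphi^*L^2(T^*\X)$, and by Remark \ref{rem:gencot} the latter is generated by the elements $[(\e^\Y_t)^*][\varphi^*]\d f$ with $f\in W^{1,2}(\X)$. (That testing against a generating set of the dual suffices follows from the fact that the canonical embedding into the bidual preserves the pointwise norm.)

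Next I would compute the pairing of the left-hand side against such a generator. By \eqref{eq:ut} with $V=\ppi'_t$ and $\omega=\d f$, together with $\varphi^*\d f=\d(f\circ\varphi)$ from \eqref{eq:pullbforms1}, one obtains
\[
\big([(\e^\Y_t)^*][\varphi^*]\d f\big)\big(\widehat{\d\varphi}(\ppi'_t)\big)=\big([(\e^\Y_t)^*]\d(f\circ\varphi)\big)(\ppi'_t).
\]
Now $f\circ\varphi\in W^{1,2}(\Y)$, because $\varphi$ is of bounded deformation, so the defining property \eqref{eq:vel12} of $\ppi'_t$ identifies the right-hand side with the $L^1(\ppi)$-limit of $h^{-1}\big((f\circ\varphi)(\gamma_{t+h})-(f\circ\varphi)(\gamma_t)\big)$.

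For the right-hand side I would write the same generator as $[\hat\varphi^*]\big([(\e^\X_t)^*]\d f\big)$ (again by functoriality, since the module pullback maps compose), and then use the compatibility of the duality pairing with the pullback, \eqref{eq:dualpullb} applied to $\hat\varphi$, to get
\[
\big([\hat\varphi^*][(\e^\X_t)^*]\d f\big)\big([\hat\varphi^*](\hat\varphi_*\ppi)'_t\big)=\Big(\big([(\e^\X_t)^*]\d f\big)\big((\hat\varphi_*\ppi)'_t\big)\Big)\circ\hat\varphi.
\]
The defining property \eqref{eq:vel12} of the speed of the test plan $\hat\varphi_*\ppi$ on $\X$ identifies the inner pairing with the $L^1(\hat\varphi_*\ppi)$-limit of $h^{-1}\big(f(\sigma_{t+h})-f(\sigma_t)\big)$, and since pushing forward turns $L^1(\hat\varphi_*\ppi)$-convergence into $L^1(\ppi)$-convergence after precomposition with $\hat\varphi$, the right-hand side becomes the $L^1(\ppi)$-limit of $h^{-1}\big(f((\hat\varphi\gamma)_{t+h})-f((\hat\varphi\gamma)_t)\big)$. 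The crucial observation is that $f((\hat\varphi\gamma)_s)=f(\varphi(\gamma_s))=(f\circ\varphi)(\gamma_s)$, so the two incremental ratios are literally the same function on $C([0,1],\Y)$; hence their $L^1(\ppi)$-limits agree and the two pairings coincide. As this holds for every $f\in W^{1,2}(\X)$ and the associated generators span the dual, the two vector fields are equal for a.e.\ $t$, namely for those $t$ in the full-measure set on which both speeds are defined.

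The main obstacle I anticipate is not analytic but bookkeeping: keeping track of the three nested pullbacks (along $\e^\Y_t$, along $\varphi$, and along $\hat\varphi$) and verifying that the functorial identifications, the pullback of $1$-forms \eqref{eq:pullbforms1}, and the two incarnations of the pairing (the one from \eqref{eq:ut} and the one from \eqref{eq:dualpullb}) are genuinely the same maps on the generators $[(\e^\Y_t)^*][\varphi^*]\d f$. Once this is in place, the entire content of the statement collapses to the single identity $f\circ(\hat\varphi\gamma)=(f\circ\varphi)\circ\gamma$, which says precisely that differentiating $f$ along the pushed-forward curve equals differentiating $f\circ\varphi$ along the original one.
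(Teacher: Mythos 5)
Your proposal is correct and follows essentially the same route as the paper's proof: identify the ambient module via functoriality and $\varphi\circ\e^\Y_t=\e^\X_t\circ\hat\varphi$, test both sides against the generating forms $[(\e^\Y_t)^*][\varphi^*]\d f$, and reduce via \eqref{eq:ut}, \eqref{eq:pullbforms1}, the defining property \eqref{eq:vel12} of the speeds, and \eqref{eq:dualpullb} to the identity $f\circ\hat\varphi(\gamma)=(f\circ\varphi)\circ\gamma$. The only cosmetic difference is your justification for why testing against generators suffices (via the isometric embedding into the bidual), where the paper instead invokes directly that both sides lie in the dual of $(\e^\Y_t)^*\varphi^*L^2(T^*\X)$ by Theorem \ref{thm:dualpull}.
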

\begin{proof} Both sides of \eqref{eq:diffsp} define elements of $(\e^\Y_t)^*\varphi^*L^2(T\X)\sim\hat\varphi^*(\e^\X_t)^*L^2(T\X)$, where the `$\sim$' comes from the functoriality of the pull-back (Remark \ref{rem:functpb}) and $\varphi\circ\e^\Y_t=\e^\X_t\circ\hat\varphi$. Since $(\e^\Y_t)^*\varphi^*L^2(T\X)$  is the dual of $(\e^\Y_t)^*\varphi^*L^2(T^*\X)$ (by the separability assumption and Theorem \ref{thm:dualpull}), to prove \eqref{eq:diffsp} it is sufficient to test both sides against forms of the kind $[(\e^\Y_t)^*][\varphi^*](\d f)$ for $f\in \S^2(\X)$, as they generate $(\e^\Y_t)^*\varphi^*L^2(T^*\X)$ (recall Proposition \ref{prop:genext}).

Thus let $f\in \S^2(\X)$ and notice that  for a.e.\ $t$ we have
\begin{align*}
[(\e^\Y_t)^*][\varphi^*](\d f)\big(\widehat{\d\varphi}(\ppi'_t)\big)&=[(\e^\Y_t)^*](\varphi^*\d f)(\ppi'_t)&&\text{by \eqref{eq:ut}}\\
&=[(\e^\Y_t)^*](\d(f\circ\varphi))(\ppi'_t)&&\text{by \eqref{eq:pullbforms1}}\\
&=\mathop{L^1(\ppi){\rm -lim}}_{h\to 0}\frac{f\circ\varphi\circ\e^\Y_{t+h}-f\circ\varphi\circ\e^\Y_t}{h}&&\text{by definition of }\ppi'_t\\
&=\Big(\mathop{L^1(\hat\varphi_*\ppi){\rm -lim}}_{h\to 0}\frac{f\circ\e^\X_{t+h}-f\circ\e^\X_t}{h}\Big)\circ\hat\varphi&&\text{because }\varphi\circ\e^\Y_t=\e^\X_t\circ\hat\varphi\\
&=[(\e^\X_t)^*](\d f)(\hat\varphi_*\ppi)'_t\circ\hat\varphi&&\text{by definition of }(\hat\varphi_*\ppi)'_t\\
&=\big([\hat\varphi^*][(\e^\X_t)^*](\d f)\big)\big([\hat\varphi^*](\hat\varphi_*\ppi)'_t\big)&&\text{by \eqref{eq:dualpullb}}\\
&=\big([(\e^\Y_t)^*][\varphi^*](\d f)\big)\big([\hat\varphi^*](\hat\varphi_*\ppi)'_t\big)&&\text{because }\varphi\circ\e^\Y_t=\e^\X_t\circ\hat\varphi
\end{align*}
having also used Remark \ref{rem:functpb} in the last step. This is sufficient to conclude.
\end{proof}
\begin{remark}{\rm
If $\varphi$ is invertible with inverse of bounded compression we know from Remark \ref{rem:diffinv} that $\d\varphi$ can be seen as a map from $L^2(T\Y)$ to $L^2(T\X)$, thus in this case the lift of its composition with $(\e^\X_t)^*$ to $L^2(T\Y,\e^\Y_t,\ppi)$ provides a map $\widehat{\d\varphi}$ from $L^2(T\Y,\e^\Y_t,\ppi)$ to $L^2(T\X,\e^\X_t,\hat\varphi_*\ppi)$ and in this case \eqref{eq:diffsp} reads as
\[
\widehat{\d\varphi}(\ppi'_t)=(\hat\varphi_*\ppi)'_t\ .
\]
}\fr\end{remark}

\subsection{Infinitesimally Hilbertian spaces and Laplacian}

\begin{definition}[Infinitesimally Hilbertian spaces]
$(\X,\sfd,\mm)$ is said to be infinitesimally Hilbertian provided $L^2(T^*\X)$ (and thus also $L^2(T\X)$) is a Hilbert module.
\end{definition}
\begin{remark}\label{re:defih}{\rm
Since $f\mapsto (f,\d f)$ is an isometry of $W^{1,2}(\X)$ into $L^2(\X)\times L^2(T^*\X)$ endowed with the norm $\|(f,\omega)\|^2:=\|f\|_{L^2}^2+\|\omega\|_{L^2(T^*\X)}^2$, we see that if $\X$ is infinitesimally Hilbertian, then $W^{1,2}(\X)$ is a Hilbert space.

It is possible, although not entirely trivial, to show that also the converse implication holds, i.e.\ if $W^{1,2}(\X)$ is Hilbert, then so is $L^2(T^*\X)$. In fact, the original definition of infinitesimally Hilbertian spaces given in \cite{Gigli12} adopted such `$W^{1,2}$' approach, but the for the purpose of this note we preferred to start with the seemingly more powerful definition above.
}\fr\end{remark}
By Proposition \ref{prop:riesz} we know that $L^2(T^*\X)$ and $L^2(T\X)$ are isomorphic as $L^\infty$-modules. For $f\in \S^2(\X)$, the image of $\d f$ under such isomorphism is called {\bf gradient} of $f$ and denoted by $\nabla f$. Directly from \eqref{eq:chain} and \eqref{eq:leibniz} it follows that 
\begin{align*}
\nabla(\varphi\circ f)&=\varphi'\circ f\nabla f,\qquad&&\forall f\in \S^2(\X),\ \varphi\in \LIP\cap C^1(\R),\\
\nabla (fg)&=f\nabla g+g\nabla f\qquad&&\forall f,g\in L^\infty\cap \S^2(\X).
\end{align*}
\begin{remark}\label{re:hilsep}{\rm
Remark \ref{re:defih} and \eqref{eq:refsep} grant that $W^{1,2}(\X)$ is separable. Hence by Remark \ref{rem:gencot} we see that $L^2(T^*\X)$, and thus also $L^2(T\X)$, is separable. Thus all the results of the previous sections are applicable.
}\fr\end{remark}
Notice also that both  $L^2(T^*\X)$ and $L^2(T\X)$ are endowed with a pointwise scalar product.
\begin{definition}[Laplacian]\label{def:lapl2}
The space $D(\Delta)$ is the space of all functions  $f\in W^{1,2}(\X)$ such that there is $h\in L^2(\X)$ for which
\[
\int hg\,\d\mm=-\int\la\nabla f,\nabla g\ra\,\d\mm\qquad\forall g\in W^{1,2}(\X).
\]
In this case the function $h$ is called Laplacian of $f$ and denoted by $\Delta f$. 
\end{definition}
In other words, $\Delta$ is the infinitesimal generator associated to (as well as the opposite of the subdifferential of) the Dirichlet form
\begin{equation}
\label{eq:defE}
\E(f):=\left\{
\begin{array}{ll}
\displaystyle{\frac12\int|\d f|^2\,\d\mm},&\qquad\text{ if }f\in W^{1,2}(\X),\\
+\infty,&\qquad\text{ otherwise.}
\end{array}
\right.
\end{equation}
in particular is a closed operator and from the density of $\{\E<\infty\}=W^{1,2}(\X)$ in $L^2(\X)$ it follows that $D(\Delta)$ is dense in $W^{1,2}(\X)$.  It is also clear from the definitions that 
\[
f\in D(\Delta) \quad\Leftrightarrow\quad \nabla f\in D(\div)\qquad\text{ and in this case }\Delta f=\div(\nabla f),
\]
and thus recalling \eqref{eq:leibdiv} we see that 
\begin{equation}
\label{eq:densediv}
\text{on infinitesimally Hilbertian spaces the space } D(\div)\text{ is dense in }L^2(T\X).
\end{equation}
The following calculus rules are also easily established:
\begin{align}
\label{eq:chainlap}
\Delta (\varphi\circ f)=&\varphi'\circ f\Delta f+\varphi''\circ f|\nabla f|^2,&&\forall f\in \LIP_b(\X)\cap D(\Delta),\ \varphi\in C^2(\R)\\
\label{eq:leiblap}
\Delta ( f g)=&f\Delta g+g\Delta f+2\la\nabla f,\nabla g\ra&&\forall f,g\in \LIP_b(\X)\cap D(\Delta).
\end{align}
For instance, for the second notice that for $h\in W^{1,2}(\X)$ and $f,g$ as stated, we have $fh,gh\in W^{1,2}(\X)$ and thus the claim follows from
\[
\begin{split}
\int\la\nabla h,\nabla (fg)\ra\,\d\mm&=\int \la\nabla (fh),\nabla g\ra+\la\nabla(g h),\nabla f\ra-2h\la\nabla f,\nabla g\ra\,\d\mm.
\end{split}
\]
\begin{remark}{\rm
In \cite{Sauvageot89} a different construction of `$L^2$ 1-forms' has been introduced in relation to Dirichlet forms $\E$ admitting a Carr\'e du champ $\Gamma$. Adapting a bit the original presentation, the construction starts defining a symmetric bilinear map from $[L^\infty(\X)\otimes D(\E)]^2$ to $L^1(\X)$ by putting
\[
\langle f\otimes g,f'\otimes g'\rangle:=ff'\,\Gamma(g,g')\qquad \forall f,f'\in L^\infty(\X),\ g,g'\in D(\E) 
\]
and extending it by bilinearity. Then one defines the seminorm $\|\cdot\|$ on $L^\infty(\X)\otimes D(\E)$ by putting
\[
\|\omega\|^2:=\int \langle\omega,\omega\rangle\,\d\mm\qquad\forall \omega\in L^\infty(\X)\otimes D(\E),
\]
then passes to the quotient and finally to the completion. Calling $\M$ the resulting Banach space it is easy to check that it comes with the structure of a $L^2$-normed module, the pointwise norm being given by $|\omega|:=\sqrt{\langle\omega,\omega\rangle}$ and the product with $L^\infty$-functions as (the linear continuous extension of) $h\cdot(f\otimes g):=(hf)\otimes g$.

In particular, the space of forms of the kind ${\bf 1}\otimes g$,  for $g\in D(\E)$, generates $\M$ and it holds $|{\bf 1}\otimes g|=\sqrt{\Gamma(g,g)}$.

In the case of infinitesimally Hilbertian spaces, the form $\E$ defined in \eqref{eq:defE} is a Dirichlet form whose Carr\'e du champ is given (thanks to \eqref{eq:leiblap}) by $\Gamma(f,g)=\langle\nabla f,\nabla g\rangle$ and in particular $\Gamma(g,g)=|\d g|^2$. This and Theorem \ref{thm:defcot} (and Remark \ref{rem:gencot}) show that the cotangent module $L^2(T^*\X)$ and the space $\M$ coincide, meaning that the map sending $\d g$ to ${\bf 1}\otimes g$, for $g\in W^{1,2}(\X)=D(\E)$, uniquely extends to an isomorphism of modules.
}\fr\end{remark}
We conclude with a proposition (which concentrates results from \cite{AmbrosioGigliSavare11}, \cite{AmbrosioGigliSavare11-2}, \cite{Gigli12}  and \cite{Gigli13}) which is crucial in the application of this theory to the study of geometry of $\RCD$ spaces: it provides an explicit differentiation formula along (appropriate) $W_2$-geodesics. Both the statement and the proof rely on notions of optimal transport, see e.g\ \cite{Villani09}, \cite{AmbrosioGigli11}, \cite{Santambrogio15} for an introduction to the topic. Notice that the result can be read as a purely metric version of the Brenier-McCann theorem about optimal maps and $W_2$-geodesics.
\begin{theorem}[Derivation along geodesics]\label{thm:dergeo}
Let $(\X,\sfd,\mm)$ be an infinitesimally Hilbertian space and $t\mapsto\mu_t=\rho_t\mm\subset \probt\X$ a $W_2$-geodesic made os measures with uniformly bounded supports and densities. Assume also that for some, and thus any, $p\in[1,\infty)$, the map $t\mapsto \rho_t\in L^p(\mm)$ is continuous.

Then for every $f\in W^{1,2}(\X)$ the map $t\mapsto\int f\,\d\mu_t$ is $C^1([0,1])$ and the formula
\begin{equation}
\label{eq:derc1}
\frac\d{\d t}\int f\,\d\mu_t=-\int\la\nabla f,\nabla\varphi_t\ra\,\d\mu_t,\qquad\forall t\in[0,1],
\end{equation}
where $\varphi_t$ is, for every $t\in[0,1]$, Lipschitz and such that for some $s\neq t$ the function $(s-t)\varphi$ is a Kantorovich potential from $\mu_t$ to $\mu_s$.
\end{theorem}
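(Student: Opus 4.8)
The plan is to lift the geodesic $(\mu_t)$ to an optimal test plan and then differentiate $t\mapsto\int f\,\d\mu_t$ by means of the speed-of-a-test-plan formula. First I would represent $(\mu_t)$ by a plan $\ppi\in\prob{C([0,1],\X)}$ concentrated on constant-speed $W_2$-geodesics and satisfying $(\e_t)_*\ppi=\mu_t$ for every $t$; such a lift exists by the superposition principle (see \cite{AmbrosioGigliSavare08}). The assumption that the densities $\rho_t$ are uniformly bounded gives $(\e_t)_*\ppi=\rho_t\mm\leq C\mm$ with $C$ independent of $t$, while the uniform bound on the supports makes $|\dot\gamma_t|=\sfd(\gamma_0,\gamma_1)$ $\ppi$-essentially bounded, so that $\iint_0^1|\dot\gamma_t|^2\,\d t\,\d\ppi<\infty$. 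Hence $\ppi$ is a test plan, and since $L^2(T\X)$ is separable (Remark~\ref{re:hilsep}) Theorem~\ref{thm:speedplan} applies.

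Writing $\int f\,\d\mu_{t+h}-\int f\,\d\mu_t=\int\big(f(\gamma_{t+h})-f(\gamma_t)\big)\,\d\ppi(\gamma)$, dividing by $h$ and using the $L^1(\ppi)$-convergence in \eqref{eq:vel12}, I obtain for a.e.\ $t$
\[
\frac{\d}{\d t}\int f\,\d\mu_t=\int(\e_t^*\d f)(\ppi'_t)\,\d\ppi.
\]
Thus formula \eqref{eq:derc1} will follow once I identify $\ppi'_t$ with $-\e_t^*\nabla\varphi_t$ in $L^2(T\X,\e_t,\ppi)$ (after replacing $\varphi_t$ by a compactly supported Lipschitz modification agreeing with it near the fixed bounded set containing $\supp\mu_t$, so that $\nabla\varphi_t\in L^2(T\X)$). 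Since the forms $\e_t^*\d g$, $g\in W^{1,2}(\X)$, generate the pullback module $\e_t^*L^2(T^*\X)$, whose dual is $L^2(T\X,\e_t,\ppi)$ (Theorem~\ref{thm:dualpull}), by the duality relation \eqref{eq:dualpullb} this identification reduces to the pointwise identity
\[
\frac{\d}{\d t}g(\gamma_t)=-\la\nabla g,\nabla\varphi_t\ra(\gamma_t)\qquad\text{for a.e.\ }(t,\gamma),
\]
the left-hand side being $(\e_t^*\d g)(\ppi'_t)$ by \eqref{eq:vel12}.

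The main obstacle is exactly this identity, which is the metric version of the Brenier--McCann theorem: it asserts that the velocity of the optimal geodesic is $-\nabla\varphi_t$. I would derive it from the first-order optimality encoded in the Kantorovich potential. Using that $(s-t)\varphi_t$ is a potential from $\mu_t$ to $\mu_s$, the geodesic $\tau\mapsto\mu_{t+\tau(s-t)}$ on $[0,1]$ has initial velocity $-(s-t)\nabla\varphi_t$, while by the chain rule in $\tau$ its initial velocity equals $(s-t)\dot\gamma_t$; matching these and invoking $|\ppi'_t|=|\dot\gamma_t|$ from \eqref{eq:linksp12} yields $\dot\gamma_t=-\nabla\varphi_t(\gamma_t)$ in the module sense, hence the identity. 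Making this rigorous in the nonsmooth setting is the heart of the matter and rests on the optimal-transport results of \cite{Gigli12,Gigli13,AmbrosioGigliSavare11-2}.

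Finally, to pass from the a.e.\ derivative to the $C^1$ statement I would prove that $t\mapsto-\int\la\nabla f,\nabla\varphi_t\ra\rho_t\,\d\mm$ is continuous. The potentials $\varphi_t$ are equi-Lipschitz with supports in a fixed bounded set, so by Arzel\`a--Ascoli and stability of optimal plans they converge, up to additive constants, with $\nabla\varphi_t$ converging weakly in $L^2$ on that set; combined with the hypothesis that $t\mapsto\rho_t$ is continuous in every $L^p$, the uniform bounds permit passing to the limit in the integral. Since $t\mapsto\int f\,\d\mu_t$ is Lipschitz, hence absolutely continuous, and its a.e.\ derivative coincides with this continuous function, it is of class $C^1$ and \eqref{eq:derc1} holds for every $t$, the endpoints being reached by choosing $s\neq t$ on the appropriate side.
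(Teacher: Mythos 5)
Your overall scaffolding (lifting $(\mu_t)$ to a test plan, invoking Theorem~\ref{thm:speedplan}, and upgrading an a.e.\ derivative to a $C^1$ statement via continuity of the candidate derivative) is sound, but the proof has a genuine gap at exactly the point you call ``the heart of the matter''. Your argument for the identification $\ppi'_t=-\e_t^*\nabla\varphi_t$ is circular: the assertion that ``the geodesic $\tau\mapsto\mu_{t+\tau(s-t)}$ has initial velocity $-(s-t)\nabla\varphi_t$'' \emph{is} the statement to be proved --- once you test it against $\e_t^*\d g$ for all $g\in W^{1,2}(\X)$ it is precisely the conclusion \eqref{eq:derc1} of the theorem, which, as the paper notes, is itself the purely metric version of the Brenier--McCann theorem. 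Matching it with ``$(s-t)\dot\gamma_t$ by the chain rule'' and invoking $|\ppi'_t|=|\dot\gamma_t|$ from \eqref{eq:linksp12} adds no information, and deferring the rigorous justification to the references does not constitute a proof, since those results are essentially the theorem under discussion. What is missing is an actual argument extracting the velocity from the first-order optimality of the potential, e.g.\ showing that $\ppi$-a.e.\ one has $|\d\varphi_t|(\gamma_t)=\sfd$-speed of $\gamma$ together with saturation of the Cauchy--Schwarz inequality along the curve; nothing of this sort appears in your text.

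The paper sidesteps the identification entirely and this is worth internalizing: it never constructs $\ppi'_t$ for the purpose of the proof. In Step~1 it uses only the $c$-superdifferential relation $\gamma_1\in\partial^c\varphi(\gamma_0)$ to get the two one-sided facts $|\d\varphi|(\gamma_0)\leq\sfd(\gamma_0,\gamma_1)$ (inequality \eqref{eq:daunlato}) and a pointwise lower bound on $\varphi(\gamma_0)-\varphi(\gamma_t)$, yielding the liminf estimate \eqref{eq:reprgr}; in Step~2 it gets, for an arbitrary $f\in W^{1,2}(\X)$, a limsup upper bound on the difference quotients by the weak-upper-gradient inequality plus Young's inequality. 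Applying the upper bound to $\eps f-\varphi$, subtracting \eqref{eq:reprgr}, dividing by $\eps$ and letting $\eps\to0^\pm$ produces the derivative \eqref{eq:der0} at $t=0$ directly; rescaling and the continuity argument (your last paragraph is essentially the paper's Step~3) finish the proof. The identification $\ppi'_t=\e_t^*(\nabla\varphi_t)$ is then a \emph{consequence} recorded in Remark~\ref{re:speedgeo}, not an ingredient. If you want to keep your route, you must supply the metric Brenier-type argument yourself (the two-sided estimates of the paper's Steps~1--2 are exactly the tools needed); as written, your proposal assumes what it sets out to prove.
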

Note: on $\RCD(K,\infty)$ spaces every $W_2$-geodesic such that $\mu_0,\mu_1$ have both bounded densities and support satisfy the assumptions (see \cite{RajalaSturm12}).
\begin{sketch}

{\bf Step 1} Let $\varphi$ be a Lipschitz Kantorovich potential from $\mu_0$ to $\mu_1$ and let $\ppi$ be a lifting of $(\mu_t)$, i.e.\ so that $(\e_t)_*\ppi=\mu_t$ for every $t\in[0,1]$, $\ppi$ is concentrated on geodesics and $(\e_0,\e_1)_*\ppi$ is an optimal plan. We claim that
\begin{equation}
\label{eq:reprgr}
\limi_{t\to 0}\int\frac{\varphi(\gamma_0)-\varphi(\gamma_t)}t\,\d\ppi(\gamma)\geq \frac12\int|\d\varphi|^2\,\d\mu_0+\frac12W_2^2(\mu_0,\mu_1).
\end{equation} 
To see this, start noticing that  $\gamma_1\in\partial^c\varphi(\gamma_0)$ for $\ppi$-a.e.\ $\gamma$ and thus for $\ppi$-a.e.\ $\gamma$ we have
\[
\varphi(z)-\varphi(\gamma_0)\leq \frac{\sfd^2(z,\gamma_1)}2-\frac{\sfd^2(\gamma_0,\gamma_1)}2\leq \sfd(z,\gamma_0)\frac{\sfd(z,\gamma_1)+\sfd(\gamma_0,\gamma_1)}2,
\]
taking the positive part, dividing by $\sfd(z,\gamma_0)$ and letting $z\to\gamma_0$ we obtain
\begin{equation}
\label{eq:daunlato}
|\d\varphi|(\gamma_0)\leq \lims_{z\to\gamma_0}\frac{(\varphi(z)-\varphi(\gamma_0))^+}{\sfd(z,\gamma_0)}\leq \sfd(\gamma_0,\gamma_1)\qquad\ppi\ae\ \gamma,
\end{equation}
where the first inequality is an easy consequence of the definition of minimal weak upper gradient and the fact that $\varphi$ is Lipschitz. On the other hand, still from $\gamma_1\in\partial^c\varphi(\gamma_0)$ for $\ppi$-a.e.\ $\gamma$ we have
\[
\varphi(\gamma_0)-\varphi(\gamma_t)\geq\frac{\sfd^2(\gamma_0,\gamma_1)}2-\frac{\sfd^2(\gamma_t,\gamma_1)}2=\sfd^2(\gamma_0,\gamma_1)(t-t^2/2) \quad\forall t\in(0,1)\qquad\ppi\ae\ \gamma.
\]
Thus
\[
\limi_{t\to 0}\int\frac{\varphi(\gamma_0)-\varphi(\gamma_t)}t\,\d\ppi(\gamma)\geq\int  \limi_{t\to 0}\frac{\varphi(\gamma_0)-\varphi(\gamma_t)}t\,\d\ppi(\gamma)\geq \int\sfd^2(\gamma_0,\gamma_1)\,\d\ppi(\gamma)
\]
and since $ \int\sfd^2(\gamma_0,\gamma_1)\,\d\ppi(\gamma)=W_2^2(\mu_0,\mu_1)$, this inequality and \eqref{eq:daunlato} give \eqref{eq:reprgr}.

{\bf Step 2} Let $\ppi$ as before, notice that it is a test plan  and let $f\in W^{1,2}(\X)$. Then
\[
\begin{split}
\int\frac{f(\gamma_t)-f(\gamma_0)}t\,\d\ppi(\gamma)&\leq\frac1t\iint_0^t|\d f|(\gamma_s)|\dot\gamma_s|\,\d s\,\d\ppi(\gamma)\\
&\leq \frac1{2t}\iint_0^t |\d f|^2\rho_s\,\d s\,\d\mm+\frac12W_2^2(\mu_0,\mu_1),
\end{split}
\]
passing to the limit noticing that $(\rho_t)\subset L^\infty$ is weakly$^*$-continuous we conclude that
\[
\lims_{t\to 0}\int\frac{f(\gamma_t)-f(\gamma_0)}t\,\d\ppi(\gamma)\leq\frac12\int|\d f|^2\,\d\mu_0+\frac12W_2^2(\mu_0,\mu_1).
\]
Write this inequality with $\eps f-\varphi$ in place of $f$ and subtract \eqref{eq:reprgr} to deduce that
\[
\lims_{t\to 0}\eps \int\frac{f(\gamma_t)-f(\gamma_0)}t\,\d\ppi(\gamma)\leq\frac12\int|\d(\eps f-\varphi)|^2-|\d\varphi|^2\,\d\mu_0.
\]
Dividing by $\eps>0$ (resp. $\eps<0$) and letting $\eps\downarrow0$ (resp. $\eps\uparrow 0$) and noticing that $\frac{|\d(\eps f-\varphi)|^2-|\d\varphi|^2}{\eps}= -2\la\nabla f,\nabla \varphi\ra+\eps|\d f|^2$ we conclude that 
\begin{equation}
\label{eq:der0}
\frac\d{\d t}\int f\,\d\mu_t\restr{t=0}=-\int\la\nabla f,\nabla\varphi\ra\,\d\mu_0.
\end{equation}
{\bf Step 3} By rescaling, we see from \eqref{eq:der0} that formula \eqref{eq:derc1} holds for any $t$, so that to conclude it remains to prove that the right hand side is continuous in $t$. Notice also that we are free in the choice of the (rescaled) Kantorovich potentials in \eqref{eq:derc1} and thus we may assume that they are equiLipschitz. Then since uniform limits of Kantorovich potentials are Kantorovich potentials, it is easy to see that to conclude it is sufficient to prove that for $t_n\to t$ and $(\varphi_{t_n})$ uniformly Lipschitz and uniformly converging to some $\varphi_t$ we have
\[
\lim_{n\to\infty}\int \la\nabla f,\nabla\varphi_{t_n}\ra\rho_{t_n}\,\d\mm=\int \la\nabla f,\nabla\varphi_{t}\ra\rho_{t}\,\d\mm.
\]
Since the $\rho_t$'s have uniformly bounded support, up to multiplying the $\varphi$'s by an appropriate cut-off we can assume that the $\varphi$'s  are bounded in $W^{1,2}(\X)$ and thus that the convergence of $(\varphi_{t_n})$ to $\varphi$ is weak in $W^{1,2}(\X)$. Thus $(\nabla\varphi_n)$ weakly converges to $\nabla\varphi_t$ in $L^2(T\X)$ and, by the assumptions on $\rho_t$, $(\rho_{t_n}\nabla f)$ strongly converges to $\rho_t\nabla f$ in $L^2(T\X)$. The thesis follows.
\end{sketch}
\begin{remark}\label{re:speedgeo}{\rm
In connection with Theorem \ref{thm:speedplan}, the proof of this last proposition can be used to show that for $\ppi$ as in the proof, the vector fields $\ppi'_t$ are defined for every $t$ (and not just for a.e.\ $t$) and are given by
\[
\ppi'_t=\e_t^*(\nabla\varphi_t).
\]
This follows noticing that for $A\subset C([0,1],\X)$ Borel with $\ppi(A)>0$, the plan $\ppi_A:=(\ppi(A))^{-1}\ppi\restr A$ is still a test plan and the curve $t\mapsto (\e_t)_*\ppi_A$ still satisfies the assumptions with the same functions $\varphi$'s.
}\fr\end{remark}
\section{Second order theory for $\RCD$ spaces}
\subsection{Definition of $\RCD$ spaces}
From now on, we shall always assume that our space satisfies the Riemannian Curvature Dimension condition  $\RCD(K,\infty)$, the definition being (\cite{AmbrosioGigliSavare12}):
\begin{definition}[$\RCD(K,\infty)$ spaces]
Let $K\in\R$. $(\X,\sfd,\mm)$ is a $\RCD(K,\infty)$ space provided:
\begin{itemize}
\item[i)] it is infinitesimally Hilbertian
\item[ii)] for some $C>0$ and $x\in \X$ it holds $\mm(B_r(x))\leq e^{Cr^2}$ for every $r>0$
\item[iii)] every $f\in W^{1,2}(\X)$ with $|\d f|\in L^\infty(\X)$ admits a Lipschitz representative $\tilde f$ with $\Lip(\tilde f)\leq \||\d f|\|_{L^\infty}$
\item[iv)] for every $f\in D(\Delta)$ with $\Delta f\in W^{1,2}(\X)$ and $g\in L^\infty(\X)\cap D(\Delta)$ with $g\geq 0$, $\Delta g\in L^\infty(\X)$, it holds the Bochner inequality:
\begin{equation}
\label{eq:boc1}
\frac12\int|\d f|^2\Delta g\,\d\mm\geq \int g\big(\la\nabla f,\nabla \Delta f\ra+K|\d f|^2\big)\,\d\mm
\end{equation}
\end{itemize}
\end{definition}
In some sense the `truly defining' properties are $(i)$ and $(iv)$, while $(ii),(iii)$ are more of a technical nature: $(ii)$ is necessary to ensure a priori that the heat flow - see below - preserves the mass, while $(iii)$ to grant that Sobolev functions determine the metric of the space (notice that there are doubling spaces supporting a Poincar\'e inequality for which $(iii)$ fails).

The {\bf heat flow} $(\h_t)$ on $\X$ is the gradient flow of (=the flow associated to) the Dirichlet form $\E$, i.e.\ for $f\in L^2(\X)$ the map $t\mapsto\h_tf\in L^2(\X)$ is the only continuous curve on $[0,\infty)$ which is absolutely continuous on $(0,\infty)$ and such that $\h_0f=f$ and
\[
\frac\d{\d t}\h_tf=\Delta\h_tf\qquad\text{a.e. }t>0.
\]
It is possible to check, we omit the details, that the heat flow satisfies the {\bf weak maximum principle}  
\[
f\leq C\quad\mm\ae\qquad\Rightarrow\qquad \h_tf\leq C\quad\mm\ae\qquad\forall t\geq0
\]
and thus it can be extended to $L^1+L^\infty(\X)$. Then from \eqref{eq:boc1} one gets the following important {\bf Bakry-\'Emery estimate}: for every  $f\in W^{1,2}(\X)$ and $t\geq 0$ it holds
\begin{equation}
\label{eq:be}
|\d \h_tf|^2\leq e^{-2Kt}\h_t(|\d f|^2)\qquad\mm\ae.
\end{equation}
Formally, this comes noticing that the derivative of $[0,t]\ni s\mapsto F(s):=\h_{t-s}(|\d \h_{s}f|^2)$ is given by
\[
\h_{t-s}\Big(-\Delta (|\d \h_{s}f|^2)+2\la\nabla \h_{s}f,\nabla\Delta h_{s}f\ra \Big)
\]
and this is $\leq -2KF(s)$ by the Bochner inequality \eqref{eq:boc1}. Then one concludes with the Gronwall's Lemma.

We shall also make use of the $L^\infty-\Lip$ regularization: for $f\in L^\infty(\X)$ and $t>0$ we have $\h_tf\in \LIP(\X)$ with
\begin{equation}
\label{eq:lilip}
\sqrt{2\int_0^te^{2Ks}\,\d s}\ \Lip(\h_tf)\leq \|f\|_{L^\infty}.
\end{equation}
This, again formally, follows integrating in $s\in[0,t]$ the bound
\[
\frac\d{\d s}\h_s(|\h_{t-s}f|^2)=\h_s\Big(\Delta|\h_{t-s}f|^2-2\h_{t-s}f\Delta\h_{t-s}f\Big)\stackrel{\eqref{eq:leiblap}}=2\h_s(|\d\h_{t-s}f|^2)\stackrel{\eqref{eq:be}}\geq 2e^{2Ks}|\d\h_tf|^2,
\]
then using the weak maximum principle and  Property $(iii)$ in the definition of $\RCD$ spaces.

\subsection{Measure-valued Laplacian and test functions}
A key tool that we shall use to develop second order calculus on $\RCD$ spaces is the notion of `test function' introduced in \cite{Savare13}:
\[
\test\X:=\Big\{f\text{ bounded, Lipschitz, in $D(\Delta)$ with $\Delta f\in  W^{1,2}(\X)$}\Big\}.
\]
From \eqref{eq:lilip} and general regularization properties of the heat flow we have that
\[
f\in L^2\cap L^\infty(\X),\ f\geq 0\qquad\Rightarrow\qquad \h_tf\in\test\X,\ \h_tf\geq 0\qquad\forall t>0
\] 
and thus in particular that  $\test\X$ is dense in $W^{1,2}(\X)$. To analyze the properties of test functions it is useful to introduce the following notion, coming from \cite{Gigli12}:
\begin{definition}[Measure-valued Laplacian]
Let $f\in W^{1,2}(\X)$. We say that $f$ has a measure-valued Laplacian, and write $f\in D(\bd)$, provided there exists a  Borel measure $\mu$ on $\X$ finite on bounded sets such that
\[
\int g\,\d\mu=-\int\la\nabla f,\nabla g\ra\,\d\mm\qquad\text{for every $g\in\LIP(\X)$  with bounded support.}
\]
In this case the measure $\mu$, which is clearly unique, will be denoted by $\bd f$.
\end{definition}
It is readily verified that this concept is fully compatible with the one given in Definition \ref{def:lapl2}, in the sense that
\[
f\in D(\Delta)\quad\Leftrightarrow\quad f\in D(\bd)\ \text{with }\bd f\ll\mm\text{ and $\frac{\d\bd f}{\d\mm}\in L^2(\X)$, and in this case }\bd f=\Delta f\,\mm,
\]
and one can check that
\begin{equation}
\label{eq:deltal1}
f\in D(\bd),\quad |\d f|\in  L^1(\X)\qquad\Rightarrow\qquad \bd f(\X)=0
\end{equation}
(this is trivial if $\mm(\X)<\infty$, for the general case one approximates the constant 1 with functions with uniformly bounded Laplacian).

We then have the following crucial property, proved in \cite{Savare13}, which is the first crucial step towards second-order calculus in $\RCD$ spaces: among others, it provides Sobolev regularity for $|\d f|^2$ for any $f\in\test\X$ (in contrast, without any lower Ricci bound it seems impossible to exhibit non-constant functions $f$ for which $|\d f|$ has any kind of regularity).
\begin{theorem}\label{thm:savare} Let $f\in \test\X$. Then $|\d f|^2\in D(\bd)\subset W^{1,2}(\X)$ and
\begin{equation}
\label{eq:boc2}
\frac12\bd|\d f|^2\geq \big(\la\nabla f,\nabla \Delta f\ra+K|\d f|^2\big)\mm.
\end{equation}
\end{theorem}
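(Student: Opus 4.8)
The plan is to upgrade the integrated Bochner inequality \eqref{eq:boc1} to the measure statement \eqref{eq:boc2} by reading \eqref{eq:boc1} as the positivity of a distributional Laplacian. Set
\[
\nu:=\big(\la\nabla f,\nabla\Delta f\ra+K|\d f|^2\big)\mm,
\]
which is a finite signed measure, absolutely continuous with respect to $\mm$: its density lies in $L^1(\X)$ since $|\d f|,|\d\Delta f|\in L^2(\X)$ force $\la\nabla f,\nabla\Delta f\ra\in L^1(\X)$, while $|\d f|^2\in L^\infty\cap L^1(\X)$ because $f$ is Lipschitz and in $W^{1,2}(\X)$. With this notation \eqref{eq:boc1} reads $\frac12\int|\d f|^2\,\Delta g\,\d\mm\geq\int g\,\d\nu$ for every admissible $g\geq0$. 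Since the functional $g\mapsto\frac12\int|\d f|^2\,\Delta g\,\d\mm$ is precisely the pairing that defines the distributional Laplacian of $\frac12|\d f|^2$, the content of \eqref{eq:boc1} is that the distribution $\frac12\bd|\d f|^2-\nu$ is nonnegative.

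First I would produce the measure. Because the class of $g$'s in \eqref{eq:boc1} (namely $g\in L^\infty\cap D(\Delta)$, $g\geq0$, $\Delta g\in L^\infty$) does not literally match the class of Lipschitz functions with bounded support used in the definition of $D(\bd)$, I would feed \eqref{eq:boc1} with heat-regularized test functions $g=\h_s\phi$ for $\phi\geq0$ bounded: by \eqref{eq:lilip} and the regularizing properties of the heat flow these belong to the admissible class, and they are rich enough to separate Borel sets. The functional
\[
\Lambda(g):=\frac12\int|\d f|^2\,\Delta g\,\d\mm-\int g\,\d\nu
\]
is then linear with $\Lambda(g)\geq0$ for $g\geq0$, and a Riesz/Daniell representation for positive functionals yields a nonnegative Borel measure $\sigma$, finite on bounded sets, with $\Lambda(g)=\int g\,\d\sigma$. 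Consequently $\frac12\int|\d f|^2\,\Delta g\,\d\mm=\int g\,\d(\nu+\sigma)$ for all such $g$, i.e.\ $\frac12|\d f|^2$ has distributional Laplacian equal to the measure $\nu+\sigma\geq\nu$.

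It remains to show that this distributional statement coincides with membership in $D(\bd)$, i.e.\ that $|\d f|^2\in W^{1,2}(\X)$, and this I expect to be the \emph{main obstacle}. The total mass of $\nu+\sigma$ is finite: $\nu$ has $L^1$ density, and testing $\Lambda$ against functions increasing to the constant $1$ with uniformly controlled, vanishing Laplacians (the approximation underlying \eqref{eq:deltal1}) identifies $\sigma(\X)=\int(\Delta f)^2\,\d\mm-K\int|\d f|^2\,\d\mm<\infty$. Granted a finite measure-valued Laplacian and the bound $|\d f|^2\in L^\infty(\X)$, I would obtain Sobolev regularity by heat-flow approximation: setting $h_t:=\h_t(|\d f|^2)$ one has $h_t\in W^{1,2}\cap D(\Delta)$, and, using that $\h_t$ intertwines with the measure-valued Laplacian and is an $L^1$-contraction on measures, the energy identity
\[
\int|\nabla h_t|^2\,\d\mm=-\int h_t\,\Delta h_t\,\d\mm\leq\||\d f|^2\|_{L^\infty}\,\big|\bd|\d f|^2\big|(\X)
\]
gives a bound uniform in $t$; letting $t\downarrow0$ and invoking the lower semicontinuity \eqref{eq:lscwug} of minimal weak upper gradients yields $|\d f|^2\in W^{1,2}(\X)$. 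With this in hand, the distributional identity of the previous paragraph is exactly the defining property of $\bd|\d f|^2$, and $\frac12\bd|\d f|^2=\nu+\sigma\geq\nu$ is \eqref{eq:boc2}.

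The two places where I expect the real work to lie are: (i) the matching of the two classes of admissible test functions and the verification that $\Lambda$ is locally finite enough to apply the representation theorem; and (ii) the rigorous justification that the heat flow commutes with the measure-valued Laplacian well enough to make the energy estimate legitimate and uniform in $t$. It is precisely here that the specific structure of $\RCD$ spaces, through the Bakry-\'Emery estimate \eqref{eq:be} and the regularization \eqref{eq:lilip}, is indispensable.
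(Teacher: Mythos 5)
Your plan is correct in outline and rests on the same two ingredients as the paper's proof: (a) the Sobolev regularity of $|\d f|^2$ is obtained by testing the Bochner inequality against $\h_t(|\d f|^2)$, getting a $t$-uniform energy bound and invoking the lower semicontinuity \eqref{eq:lscwug}; (b) the measure is produced by applying a Riesz-type representation to the nonnegative functional $g\mapsto\frac12\int|\d f|^2\Delta g\,\d\mm-\int g\,\d\nu$. The genuine difference is the order of these steps, and the paper's order is the cheaper one. The paper first extends \eqref{eq:boc1} to all nonnegative $g\in D(\Delta)$ (possible because $|\d f|^2$ and the density of $\nu$ are in $L^2$), then runs the energy estimate \emph{directly} from \eqref{eq:boc1} with $g=\h_t(|\d f|^2)$: the left-hand side equals $\int|\d\h_{t/2}(|\d f|^2)|^2\,\d\mm$ and the right-hand side is bounded by $\||\d f|^2\|_{L^\infty}\|\la\nabla f,\nabla\Delta f\ra+K|\d f|^2\|_{L^1}$, so no measure, no total-mass identification of your $\sigma$, and no intertwining of $\h_t$ with a measure-valued Laplacian are needed at this stage. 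Moreover, once $|\d f|^2\in W^{1,2}(\X)$ is known, the positive functional can be rewritten as $g\mapsto-\frac12\int\la\nabla|\d f|^2,\nabla g\ra\,\d\mm-\int g\,\d\nu$, which is exactly the pairing appearing in the definition of $D(\bd)$; hence the Riesz step immediately yields $|\d f|^2\in D(\bd)$ and \eqref{eq:boc2}, whereas in your ordering you must afterwards match the ``$\int u\,\Delta g$'' distributional Laplacian with the $D(\bd)$ one, which requires approximating arbitrary Lipschitz functions of bounded support by your regularized test class in a way that is simultaneously good in $W^{1,2}$ and against the possibly singular part of $\sigma$ (doable on $\RCD$ spaces, e.g.\ via uniform convergence of $\h_sg\to g$ for Lipschitz $g$, but extra work). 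Two small inaccuracies to fix in your write-up: for a merely bounded $\phi\geq0$ the function $\h_s\phi$ need not have Laplacian in $L^\infty(\X)$ (that would require ultracontractivity), so it is not literally in the admissible class of the $\RCD$ definition; the correct remedy is the paper's extension of \eqref{eq:boc1} to all nonnegative $g\in D(\Delta)$, after which your choice of test functions is legitimate. Also, the finiteness of $\sigma(\X)$ via cutoffs as in \eqref{eq:deltal1} is fine but, as noted above, unnecessary if you run the energy estimate first, since only the $L^1$-norm of the density of $\nu$ enters the bound.
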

\begin{sketch} From the fact that $|\d f|^2,\la\nabla f,\nabla \Delta f\ra+K|\d f|^2\in L^2(\X)$ one can check that \eqref{eq:boc1} holds for any $g\in D(\Delta)$ non-negative. Picking $g:=\h_t(|\d f|^2)$ we obtain
\[
\begin{split}
\int |\d \h_{t/2}(|\d f|^2)|^2\,\d\mm=-\int |\d f|^2\Delta \h_t(|\d f|^2)\,\d\mm&\stackrel{\eqref{eq:boc1}}\leq -\int \h_t(|\d f|^2)\big(\la\nabla f,\nabla \Delta f\ra+K|\d f|^2\big)\mm\\
&\leq \||\d f|^2\|_{L^\infty}\int\big| \la\nabla f,\nabla \Delta f\ra+K|\d f|^2\big|\mm,
\end{split}
\]
so that letting $t\downarrow0$ we conclude that $|\d f|^2\in W^{1,2}(\X)$. Now, at least if $\X$ is  compact, $|\d f|^2\in D(\bd)$ and \eqref{eq:boc2} both follow noticing that from \eqref{eq:boc1} we have that the linear operator 
\[
C(\X)\cap D(\Delta )\ni g\mapsto L(g):= \int \Delta g\frac{|\d f|^2}2- g\big(\la\nabla f,\nabla \Delta f\ra+K|\d f|^2\big)\,\d\mm
\]
is such that $L(g)\geq 0$ for $g\geq 0$. Hence it must coincide with the integral of $g$ w.r.t.\ a non-negative measure.
\end{sketch}
A direct, and important, property that follows from the above is that 
\[
\text{$\test\X$ is an algebra. }
\]Indeed, in checking that $fg\in\test\X$ for $f,g\in\test\X$ the only non-trivial thing to prove is that $\Delta(fg)\in W^{1,2}(\X)$. Since it is clear that $f\Delta g,g\Delta f\in W^{1,2}(\X)$,  by the Leibniz rule for the Laplacian \eqref{eq:leiblap} to conclude it is sufficient to show that $\la\nabla f,\nabla g\ra\in W^{1,2}(\X)$. This follows by polarization from Theorem \ref{thm:savare}.
\subsection{The space $W^{2,2}(\X)$}

\subsubsection{Tensor product of Hilbert modules}
Let $\H_1,\H_2$ be two Hilbert modules on $\X$ and denote by $\H_1\otimes_{\rm Alg}\H_2$ their tensor products as $L^\infty$-modules, so that  $\H_1\otimes_{\rm Alg}\H_2$ can be seen as the space of formal finite sums of objects of the kind $v_1\otimes v_2$ with $(v_1,v_2)\mapsto v_1\otimes v_2$ being $L^\infty$-bilinear. 

We define the $L^\infty$-bilinear and symmetric map $:$ from $[\H_1\otimes_{\rm Alg}\H_2]^2$ to $L^0(\X)$ by putting
\[
(v_1\otimes v_2):(v'_1\otimes v'_2):=\la v_1,v_1'\ra_1\la v_2,v_2'\ra_2
\]
where $\la\cdot,\cdot\ra_i$ is the pointwise scalar product on $\H_i$, $i=1,2$, and extending it by $L^\infty$-bilinearity. It is readily verified that this definition is well posed and that the resulting map is positively definite in the sense that for any $A\in \H_1\otimes_{\rm Alg}\H_2$ and $E\subset \X$ Borel it holds
\[
\begin{split}
A:A&\geq 0\quad\mm\ae\\
A:A&=0\quad\mm\ae\text{ on}\ E\qquad\text{ if and only if }\qquad A=0\quad\mm\ae\text{ on}\  E.
\end{split}
\]
Then define the \emph{Hilbert-Schimdt} pointwise norm as
\[
|A|_\HS:=\sqrt{A:A}\in L^0(\X)
\]
and the tensor product norm as
\[
\|A\|_{\H_1\otimes\H_2}:=\sqrt{\int |A|_\HS^2\,\d\mm}\in[0,+\infty].
\]
We are now ready to give the following definition:
\begin{definition}[Tensor product of Hilbert modules]
The space $\H_1\otimes\H_2$ is defined as the completion of 
\[
\Big\{A\in\H_1\otimes_{\rm Alg}\H_2\ :\ \|A\|_{\H_1\otimes\H_2}<\infty\Big\}
\]
w.r.t.\ the tensor product norm $\|\cdot\|_{\H_1\otimes\H_2}$.
\end{definition}
The multiplication by $L^\infty$ functions in $\H_1\otimes_{\rm Alg}\H_2$ is easily seen to induce by continuity a multiplication by $L^\infty$-functions on $\H_1\otimes\H_2$ which together with the pointwise norm $|\cdot|_\HS$ show that $\H_1\otimes\H_2$ comes with  the structure of $L^2$-normed module. Moreover, since $|\cdot|_\HS$ satisfies the pointwise parallelogram identity, $\H_1\otimes\H_2$ is in fact a Hilbert module.

\bigskip

If $\H_1=\H_2$, the tensor product will be denoted $\H^{\otimes 2}$. In this case the map $v_1\otimes v_2\mapsto v_2\otimes v_1$ on $\H_1\otimes_{\rm Alg}\H_2$ induces an automorphism $A\mapsto A^t$, called transposition, on $\H^{\otimes 2}$ and for a generic $A\in \H^{\otimes 2}$ we put
\[
A_{\sf Sym}:=\frac{A+A^t}2\qquad\qquad A_{\sf Asym}:=\frac{A-A^t}2
\]
for the symmetric and antisymmetric parts of $A$, respectively. It is then clear that
\begin{equation}
\label{eq:normasym}
|A|_\HS^2=|A_{\sf Sym}|_\HS^2+|A_{\sf Asym}|_\HS^2\quad\mm\ae\qquad\forall A\in \H^{\otimes 2}.
\end{equation}
We shall write $L^2((T^*)^{\otimes 2}\X)$ (resp. $L^2(T^{\otimes 2}\X)$) for the tensor product of $L^2(T^*\X)$ (resp. $L^2(T\X)$) with itself. These modules are one the dual of the other and we shall typically write $A(X,Y)$ in place of $A(X\otimes Y)$ for  $A\in L^2((T^*)^{\otimes 2}\X)$ and $X\otimes Y\in L^2(T^{\otimes 2}\X)$.

 Notice that being $L^2(T^*\X)$ separable (Remark \ref{re:hilsep}), so is  $L^2((T^*)^{\otimes 2}\X)$. Same for $L^2(T^{\otimes 2}\X)$.

\subsubsection{Definition of $W^{2,2}(\X)$}

Recall that on a smooth Riemannian manifold, the Hessian of the smooth function $f$ is characterized by the validity of the identity
\[
2\He (f)(\nabla g_1,\nabla g_2)=\la\nabla(\la\nabla f,\nabla g_1\ra),\nabla g_2\ra+\la\nabla(\la\nabla f,\nabla g_2\ra),\nabla g_1\ra-\la\nabla f,\nabla(\la\nabla g_1,\nabla g_2\ra)\ra
\]
for any smooth functions $g_1,g_2$. This motivates the following definition:
\begin{definition}[The space $W^{2,2}(\X)$ and the Hessian]
The space $W^{2,2}(\X)$ is the set of all the functions $f\in W^{1,2}(\X)$ for which there exists $A\in L^2((T^*)^{\otimes 2}\X)$ such that
\begin{equation}
\label{eq:defhess}
\begin{split}
2\int hA(\nabla g_1,\nabla g_2)\,\d\mm=-\int \la\nabla f,\nabla g_1\ra\div(h\nabla g_2)&+\la\nabla f,\nabla g_2\ra\div(h\nabla g_1)\\
&\qquad+h\la\nabla f,\nabla\la\nabla g_1,\nabla g_2\ra\ra\,\d\mm
\end{split}
\end{equation}
for every $g_1,g_2\in\test \X$ and $h\in\LIP_b(\X)$. Such $A$ will be called Hessian of $f$ and denoted by $\He f$. The space $W^{2,2}(\X)$ is equipped with the norm
\[
\|f\|_{W^{2,2}(\X)}^2:=\|f\|_{L^2(\X)}^2+\|\d f\|_{L^2(T^*\X)}^2+\|\He f\|_{L^2((T^*)^{\otimes 2}\X)}^2.
\]
\end{definition}
From the density of $\test\X$ in $W^{1,2}(\X)$ is easily follows that the Hessian, if it exists, is unique and thus in particular the $W^{2,2}$-norm is well defined. Notice that in giving the above definition we used in a crucial way Theorem \ref{thm:savare} to grant that $\la\nabla g_1,\nabla g_2\ra\in W^{1,2}(\X)$ so that  the last addend in the integral in \eqref{eq:defhess} is well defined.

The following is  easily verified:
\begin{theorem}\label{thm:basew2} We have:
\begin{itemize}
\item[i)] $W^{2,2}(\X)$ is a separable Hilbert space.
\item[ii)] The Hessian is a closed operator, i.e. the set $\{(f,\He(f)) : f\in W^{2,2}(\X)\}$ is a closed subset of $W^{1,2}(\X)\times L^2((T^*)^{\otimes 2}\X)$
\item[iii)] For every $f\in W^{2,2}(\X)$ the Hessian $\He(f)$ is symmetric, i.e.\ $\He(f)^t=\He (f)$.
\end{itemize}
\end{theorem}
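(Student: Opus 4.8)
The plan is to prove the three items in the order (iii), (ii), (i): the symmetry is self-contained, the closedness of the Hessian is the technical core, and the Hilbert/separability statement then drops out of closedness combined with the closure of the differential. For (iii), the key observation is that the right-hand side of \eqref{eq:defhess} is manifestly invariant under interchanging $g_1$ and $g_2$ (the first two summands are swapped into one another and the third is already symmetric). Hence $\int h\,\He f(\nabla g_1,\nabla g_2)\,\d\mm=\int h\,\He f(\nabla g_2,\nabla g_1)\,\d\mm$ for every $h\in\LIP_b(\X)$, and by the arbitrariness of $h$ we get $\He f(\nabla g_1,\nabla g_2)=\He f(\nabla g_2,\nabla g_1)$ $\mm$-a.e., i.e. $(\He f-\He f^t)(\nabla g_1\otimes\nabla g_2)=0$ for all $g_1,g_2\in\test\X$. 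To upgrade this to $\He f=\He f^t$ I would argue that the elementary tensors $\nabla g_1\otimes\nabla g_2$ with $g_i\in\test\X$ generate $L^2(T^{\otimes 2}\X)$: indeed $\test\X$ is dense in $W^{1,2}(\X)$, so by Remark \ref{rem:gencot} and the Riesz isomorphism (Proposition \ref{prop:riesz}) the gradients $\{\nabla g:g\in\test\X\}$ generate $L^2(T\X)$, and elementary tensors of a generating set generate the tensor product by its very construction. Since $\He f-\He f^t$ is $L^\infty$-linear and continuous and vanishes on a generating set, it vanishes identically.

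For (ii), suppose $f_n\to f$ in $W^{1,2}(\X)$ and $\He f_n\to A$ in $L^2((T^*)^{\otimes 2}\X)$; I would pass to the limit in \eqref{eq:defhess} with $g_1,g_2\in\test\X$ and $h\in\LIP_b(\X)$ held fixed. On the left, $h\,\nabla g_1\otimes\nabla g_2\in L^2(T^{\otimes 2}\X)$ (its Hilbert-Schmidt norm is $|h|\,|\nabla g_1|\,|\nabla g_2|$, which lies in $L^2$ because each $|\nabla g_i|\in L^\infty$ and $|\nabla g_i|\in L^2$), so the pairing is continuous under $\He f_n\to A$. On the right, $\nabla f_n\to\nabla f$ in $L^2(T\X)$, while $\div(h\nabla g_i)\in L^2(\X)$ by the Leibniz rule \eqref{eq:leibdiv} together with $g_i\in D(\Delta)$, and $\nabla\la\nabla g_1,\nabla g_2\ra\in L^2(T\X)$ by Theorem \ref{thm:savare}; using in addition $|\nabla g_i|\in L^\infty$, each of the three summands converges in $L^1(\X)$. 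Passing to the limit shows that $A$ satisfies \eqref{eq:defhess} for $f$, so $f\in W^{2,2}(\X)$ and $\He f=A$, which is exactly closedness.

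Finally, for (i), the norm $\|\cdot\|_{W^{2,2}(\X)}$ is a sum of squares of the Hilbert(-module) norms on $L^2(\X)$, $L^2(T^*\X)$ and $L^2((T^*)^{\otimes 2}\X)$, hence comes from a scalar product by polarization, so $W^{2,2}(\X)$ is a pre-Hilbert space. Completeness and separability I would obtain by viewing $f\mapsto(f,\d f,\He f)$ as a linear isometry of $W^{2,2}(\X)$ into the separable Hilbert space $L^2(\X)\times L^2(T^*\X)\times L^2((T^*)^{\otimes 2}\X)$ (recall that $L^2(T^*\X)$ and $L^2((T^*)^{\otimes 2}\X)$ are separable) and checking that its image is closed: if $(f_n,\d f_n,\He f_n)$ converges, then Proposition \ref{prop:closed} identifies the limit of $(f_n,\d f_n)$ with $(f,\d f)$ for some $f\in W^{1,2}(\X)$ with $f_n\to f$ in $W^{1,2}(\X)$, and then (ii) identifies the limit of $\He f_n$ with $\He f$. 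A closed subspace of a separable Hilbert space is itself a separable Hilbert space.

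The main obstacle is the term-by-term passage to the limit in (ii): it is entirely a matter of placing each factor in the correct Lebesgue/module space so that the products converge in $L^1(\X)$, and it is precisely here that the regularity $\la\nabla g_1,\nabla g_2\ra\in W^{1,2}(\X)$ furnished by Theorem \ref{thm:savare} (together with the Leibniz rule for the divergence) is indispensable; everything else is routine bookkeeping.
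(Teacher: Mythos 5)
Your proposal is correct and follows essentially the same route as the paper: closedness by passing to the limit in the defining identity \eqref{eq:defhess} (continuity of the left side in $A$ and of the right side in $f$), the Hilbert and separability claims via the parallelogram rule and the isometric embedding $f\mapsto(f,\d f,\He f)$ into $L^2(\X)\times L^2(T^*\X)\times L^2((T^*)^{\otimes 2}\X)$, and symmetry from the invariance of \eqref{eq:defhess} under swapping $g_1,g_2$. The extra details you supply (integrability bookkeeping for the limit and the density of tensors $\nabla g_1\otimes\nabla g_2$ to upgrade the symmetry) are exactly the points the paper leaves implicit, and they are handled correctly.
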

\begin{proof}
For given $g_1,g_2,h\in\test \X$ the left (resp. right) hand side of \eqref{eq:defhess} is continuous w.r.t.\ $A\in L^2((T^*)^{\otimes 2}\X)$ (resp.\ $f\in W^{1,2}(\X)$). Point $(ii)$ and the completeness of $W^{2,2}$ follow. The fact that the $W^{2,2}$-norm satisfies the parallelogram rule is obvious. For the separability, notice that $L^2(\X)\times L^2(T^*\X)\times L^2((T^*)^{\otimes 2}\X)$ endowed with its natural Hilbert structure is separable and that the map
\[
W^{2,2}(\X)\ni f\quad\mapsto\quad (f,\d f,\He f)\in L^2(\X)\times L^2(T^*\X)\times L^2((T^*)^{\otimes 2}\X)
\]
is an isometry. Point $(iii)$ comes from the symmetry in $g_1,g_2$ of \eqref{eq:defhess}.
\end{proof}
\begin{remark}{\rm
As the example of weighted Riemannian manifold shows, in general the Laplacian is not the trace of the Hessian.
}\fr\end{remark}

\subsubsection{Existence of $W^{2,2}$ functions}
It is not at all obvious that $W^{2,2}(\X)$ contains any non-constant function. This (and much more) is ensured by the following crucial Lemma which is about the self-improving of Bochner inequality. Read in the smooth setting, the claim says that for the vector field $X:=\sum_ig_i\nabla f_i$ and the 2-tensor $A:=\sum_j\nabla h_j\otimes\nabla h_j$ it holds
\begin{equation}
\label{eq:dariscr}
\left|\nabla X:A\right|^2\leq  \Big( \Delta\frac{|X|^2}{2}+ \la X, \Delta_\Ho X\ra-K|X|^2-|(\nabla X)_{\sf Asym}|_\HS^2\Big)\ \left|A\right|^2_\HS,
\end{equation}
see also Lemma \ref{le:riscritto}. Given that for the moment we don't have the covariant derivative and the Hodge Laplacian, we have to state \eqref{eq:dariscr} by `unwrapping' these operators.

From now on, we shall denote by $\mes(\X)$ the space of finite Borel measures on $\X$ equipped with the total variation norm. Then  for $f,g,h\in\test\X$ it will be useful to introduce $\Ggamma_2(f,g)\in\mes(\X)$ and $H[f](g,h)\in L^1(\X)$ as
\[
\begin{split}
\Ggamma_2(f,g)&:=\frac12\Big(\bd(\la\nabla f,\nabla g\ra)-\big(\la\nabla f,\nabla\Delta g\ra+\la\nabla g,\nabla\Delta f\ra\big)\mm\Big)\\
H[f](g,h)&:=\frac12\Big(\langle\nabla(\la\nabla f,\nabla g\ra),\nabla h\rangle+\la\nabla(\la\nabla f,\nabla h\ra),\nabla g\ra-\langle\nabla f,\nabla(\la\nabla g,\nabla h\ra)\rangle\Big)
\end{split}
\]
We shall also write 
\[
\Ggamma_2(f,g)=\ggamma_2(f,g)\mm+\Ggamma^s_2(f,g),\qquad\text{ with }\qquad\Ggamma_2^s(f,g)\perp\mm.
\]
We then have the following:
\begin{lemma}[Key inequality]\label{le:lemmachiave}
Let $n,m\in\N$ and $f_i,g_i,h_j\in\test\X$, $i=1,\ldots,n$, $j=1,\ldots,m$. Define the measure $\mu=\mu\big((f_i),(g_i)\big)\in \mes (\X)$ as
\[
\begin{split}
\mu\big((f_i),(g_i)\big):=&\sum_{i, i'} g_i g_{ i'}\big(\Ggamma_2(f_i,f_{ i'})-K\la\nabla f_i,\nabla f_{i'}\ra\mm\big)\\
&\qquad+\Big(2g_iH[f_i](f_{i'},g_{i'})+\frac{\la\nabla f_i,\nabla f_{i'}\ra\la\nabla g_i,\nabla g_{i'}\ra+\la\nabla f_i,\nabla g_{i'}\ra\la\nabla g_i,\nabla f_{i'}\ra}2\Big)\mm
\end{split}
\]
and write it as  $\mu=\rho \mm+\mu^s$ with $\mu^s\perp\mm$.

Then 
\begin{equation}
\label{eq:partesing1}
\mu^s\geq 0
\end{equation}
and
\begin{equation}
\label{eq:parteac1}
\bigg|\sum_{i,j}\la\nabla f_i,\nabla h_j\ra\la\nabla g_i,\nabla h_j\ra+g_iH[f_i](h_j, h_j)\bigg|^2\leq \rho \sum_{j, j'}|\la\nabla h_j,\nabla h_{j'}\ra|^2.
\end{equation}
\end{lemma}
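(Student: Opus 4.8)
The plan is to read the measure $\mu=\mu((f_i),(g_i))$ as the measure-valued $\Gamma_2$ of the gradient-type vector field $X:=\sum_i g_i\nabla f_i$, i.e.\ as the abstract incarnation of ``$\Ggamma_2(X,X)-K|X|^2\mm$''; with $A:=\sum_j\nabla h_j\otimes\nabla h_j$, the two conclusions are exactly the singular non-negativity $\mu^s\ge0$ and the ``Cauchy--Schwarz'' bound $(\nabla X:A)^2\le\rho\,|A|_\HS^2$ behind \eqref{eq:dariscr}. The first thing I would record is that each correction term added to the raw $\Ggamma_2(f_i,f_{i'})$ contributions -- the $2g_iH[f_i](f_{i'},g_{i'})$ and the $\tfrac12(\la\nabla f_i,\nabla f_{i'}\ra\la\nabla g_i,\nabla g_{i'}\ra+\la\nabla f_i,\nabla g_{i'}\ra\la\nabla g_i,\nabla f_{i'}\ra)$ -- is purely first order, built only from pointwise scalar products of gradients, hence absolutely continuous. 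Thus the singular part $\mu^s$ comes entirely from $\sum_{i,i'}g_ig_{i'}\Ggamma^s_2(f_i,f_{i'})$, which reduces the analysis of \eqref{eq:partesing1} to a non-negativity statement for the whole measure.

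\textbf{Step 1 (non-negativity).} I would establish $\mu\ge0$ as a measure. The seed is the case of constant coefficients $g_i\equiv c_i$: then $\nabla g_i=0$ annihilates all correction terms and $\mu$ collapses to $\Ggamma_2(\sum_i c_if_i,\sum_i c_if_i)-K|\nabla(\sum_i c_if_i)|^2\mm$, which is $\ge0$ directly by Theorem \ref{thm:savare}. Upgrading to genuine function coefficients is the substantive part: one combines Theorem \ref{thm:savare} (which also guarantees $\la\nabla f_i,\nabla f_{i'}\ra\in W^{1,2}(\X)$, so that $H[f_i](\cdot,\cdot)$ is well defined) with the Leibniz rule \eqref{eq:leiblap} through a polarization/completion-of-squares argument, the correction terms being precisely those needed for it to close. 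Granting $\mu\ge0$, \eqref{eq:partesing1} is immediate since a non-negative measure has non-negative singular part, and its $\mm$-density satisfies $\rho\ge0$.

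\textbf{Step 2 (self-improvement for the density).} For $t\in\R$ and real constants $c_j$, adjoin to the family the pairs $(h_j,\,t(h_j-c_j))$, so that the vector field becomes $X_t=X+tW$ with $W=\sum_j(h_j-c_j)\nabla h_j$. Applying Step 1 to this enlarged family gives $\mu(X_t,X_t)\ge0$ for every $t$. A short computation shows that the $t^2$-coefficient equals $\sum_{j,j'}|\la\nabla h_j,\nabla h_{j'}\ra|^2=|A|_\HS^2$ plus lower-order junk carrying explicit factors $(h_j-c_j)$, while the $2t$-coefficient is exactly the base quantity $\sum_{i,j}\la\nabla f_i,\nabla h_j\ra\la\nabla g_i,\nabla h_j\ra+g_iH[f_i](h_j,h_j)$ appearing on the left of \eqref{eq:parteac1}. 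Since a non-negative measure has non-negative $\mm$-density, that density is a non-negative quadratic in $t$, and its discriminant yields $(\text{cross})^2\le\rho\,\rho_{WW}$. As both the base quantity and $|A|_\HS^2$ depend only on the gradients $\nabla h_j$, whereas the junk in $\rho_{WW}$ vanishes where $h_j=c_j$, letting $c_j$ range over a countable dense set and sending $c_j\to h_j(x)$ $\mm\ae$ gives $\rho_{WW}\to|A|_\HS^2$ in the limit, hence \eqref{eq:parteac1}.

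\textbf{Main obstacle.} The hard part is the function-coefficient case of Step 1: passing from the scalar Bochner inequality of Theorem \ref{thm:savare} to $\mu(X,X)\ge0$ for a genuinely variable $X$ is exactly where the self-improvement mechanism resides, and it cannot be obtained by simple-function approximation of the $g_i$, which would destroy the first-order terms $\nabla g_i$ that carry all the information. A secondary but genuinely delicate issue is the measure-theoretic bookkeeping of Step 2: isolating the $\mm$-density of the perturbed $\Gamma_2$ while keeping its singular part under control, justifying the pointwise optimal shift $c_j\to h_j(x)$ via a countable family, and coping with the fact that $h_j-c_j$ need not lie in $\test\X$ when $\mm(\X)=\infty$ -- which forces a cut-off/localization, legitimate precisely because \eqref{eq:parteac1} is a pointwise, local statement.
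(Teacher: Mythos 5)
Your Step 2 is, in substance, the paper's own device restricted to the $h_j$-variables: the auxiliary function used in the paper contains exactly the terms $\sum_j(h_j^2-2c_jh_j)$, whose gradient is $2\sum_j(h_j-c_j)\nabla h_j$, i.e.\ your perturbation $W$, and the pointwise choice $c_j=h_j(x)$ through a countable dense set of constants is the same localization trick. The genuine gap is Step 1. The non-negativity of $\mu$ for truly variable coefficients $g_i$ (and a fortiori for the enlarged family needed in Step 2, whose new coefficients $t(h_j-c_j)$ are again non-constant) does \emph{not} follow from Theorem \ref{thm:savare} by ``polarization/completion of squares plus the Leibniz rule'': no algebraic recombination of the scalar Bochner inequality applied to fixed linear combinations of the $f_i$ produces the Hessian-type information encoded in the first-order correction terms of $\mu$, and you flag this point as the main obstacle without supplying an argument. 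Note moreover that Step 1 in the generality you invoke is essentially equivalent to the lemma itself (by Lemma \ref{le:riscritto}, $\mu$ is the measure $\bd\frac{|X|^2}2+\big(\la X,\Delta_\Ho X\ra-K|X|^2-|(\nabla X)_{\sf Asym}|_\HS^2\big)\mm$, whose non-negativity \emph{is} the improved Bochner inequality), so as written the scheme is circular: all of the self-improvement has been deferred to an unproven step.

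The paper resolves precisely this point by deploying the constant-localization trick simultaneously in the $(f_i,g_i)$-variables: one applies the scalar inequality \eqref{eq:boc2} to the single test function $\Phi:=\sum_i(\lambda f_ig_i+a_if_i-b_ig_i)+\sum_j(h_j^2-2c_jh_j)\in\test\X$, expands $\ggamma_2(\Phi,\Phi)-K|\nabla\Phi|^2\ge0$ by the calculus rules, checks that the dependence on the constants $a_i,b_i,c_j$ is continuous in $L^1$, and then optimizes \emph{all} of them pointwise ($\mm$-a.e.\ taking $c_j=h_j(x)$, and analogously for $a_i,b_i$), so that the surviving expression is a quadratic in $\lambda$ whose coefficients are exactly $\rho$, the left-hand side of \eqref{eq:parteac1}, and $\sum_{j,j'}|\la\nabla h_j,\nabla h_{j'}\ra|^2$; both \eqref{eq:partesing1} and \eqref{eq:parteac1} are then extracted from this one family of scalar inequalities and the arbitrariness of $\lambda$, with no appeal to any prior vector-field Bochner inequality. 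If you wish to keep your two-step structure, Step 1 must itself be proved by this device (auxiliary terms $\lambda f_ig_i+a_if_i-b_ig_i$ with pointwise-optimized constants); only then does your Step 2 go through, and even then you must handle the cut-off issue you mention, since $t(h_j-c_j)\notin\test\X$ when $\mm(\X)=\infty$ --- a wrinkle the paper's formulation avoids because $\Phi$ is built only from products of test functions.
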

\begin{sketch} We shall prove the thesis in the simplified case $n=m=1$ and $g_1\equiv 1$ (this is the original argument in \cite{Bakry83} as adapted to $\RCD(K,\infty)$ spaces in \cite{Savare13}): in this case the measure $\mu$ is given by $\mu=\Ggamma_2(f,f)-K\la\nabla f,\nabla f\ra\mm$. Then \eqref{eq:partesing1} follows from \eqref{eq:boc2} and \eqref{eq:parteac1} reads as
\begin{equation}
\label{eq:parteac2}
\big| H[f](h,h)\big|^2\leq \Big(\ggamma_2(f,f)-K\langle\nabla f,\nabla f\rangle\Big)|\nabla h|^4.
\end{equation}
For $\lambda,c\in\R$ define $\Phi_{\lambda,c}=\Phi_{\lambda,c}(f,h):=\lambda f +h^2-2ch \in \test\X$. It is only a matter of computations to check that
\[
\ggamma_2(\Phi_{\lambda,c},\Phi_{\lambda,c})- K|\nabla\Phi_{\lambda,c}|^2=\lambda^2\big(\ggamma_2(f,f)-K|\nabla f|^2\big)+4\lambda H[f](h,h)+4|\nabla h|^4+(h-c)F_{\lambda,c}
\]
for some $F_{\lambda,c}\in L^1(\X,\mm)$ so that   $c\mapsto F_{\lambda,c}\in L^1(\X,\mm)$ is continuous. It follows that $\mm$-a.e.\ the inequality $\ggamma_2(\Phi_{\lambda,c},\Phi_{\lambda,c})- K|\nabla\Phi_{\lambda,c}|^2\geq 0$ (which comes from \eqref{eq:boc2}) holds for any $c\in\R$.  Hence for $\mm$-a.e.\ $x$ we can take $c=h(x)$ and conclude that
\[
\lambda^2\big(\ggamma_2(f,f)-K|\nabla f|^2\big)+4\lambda H[f](h,h)+4|\nabla h|^4\geq 0\qquad\mm\ae
\]
and \eqref{eq:parteac2} follows by the arbitrariness of $\lambda\in\R$. 

The general case follows by a similar optimization argument using $\Phi(f_i,g_i,h_j)$ in place of $\Phi(f,h)$ for $\Phi$ given by
\[
\Phi(x_1,\ldots,x_n,y_1,\ldots,y_n,z_1,\ldots,z_m):=\sum_i(\lambda x_iy_i+a_ix_i-b_iy_i)+\sum_jz^2_j-2c_jz_j\ ,
\]
we omit the details.
\end{sketch}
The first important consequence of this lemma is the following result, which shows in particular that $W^{2,2}(\X)$ is dense in $W^{1,2}(\X)$.
\begin{theorem}\label{thm:key2}
Let $f\in\test\X$. Then $f\in W^{2,2}(\X)$ and
\begin{equation}
\label{eq:hess34}
|\He f|^2_\HS\leq \ggamma_2(f,f)-K|\nabla f|^2,\qquad\mm\ae,
\end{equation}
and moreover for every $g_1,g_2\in\test\X$ it holds
\begin{equation}
\label{eq:hess12}
H[f](g_1,g_2)=\He f(\nabla g_1,\nabla g_2),\qquad\mm\ae.
\end{equation}
\end{theorem}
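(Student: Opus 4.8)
The plan is to reduce the defining property \eqref{eq:defhess} of $W^{2,2}(\X)$ to the pointwise identity $\He f(\nabla g_1,\nabla g_2)=H[f](g_1,g_2)$, and then to \emph{construct} the tensor $\He f$ by duality, with Lemma \ref{le:lemmachiave} supplying the decisive pointwise bound. To carry out the reduction I would first rewrite the right-hand side of \eqref{eq:defhess}. Since $f,g_1,g_2\in\test\X$, Theorem \ref{thm:savare} (and polarization) gives $\langle\nabla f,\nabla g_1\rangle,\langle\nabla f,\nabla g_2\rangle,\langle\nabla g_1,\nabla g_2\rangle\in W^{1,2}(\X)$, and for $h\in\LIP_b(\X)$ the Leibniz rule \eqref{eq:leibdiv} yields $h\nabla g_i\in D(\div)$. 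Integrating by parts through the very definition of divergence turns $-\int\langle\nabla f,\nabla g_1\rangle\div(h\nabla g_2)\,\d\mm$ into $\int h\langle\nabla\langle\nabla f,\nabla g_1\rangle,\nabla g_2\rangle\,\d\mm$, and symmetrically for the other term, so the right-hand side of \eqref{eq:defhess} equals $2\int h\,H[f](g_1,g_2)\,\d\mm$. Hence \eqref{eq:defhess} is equivalent to
\[
\int h\,A(\nabla g_1,\nabla g_2)\,\d\mm=\int h\,H[f](g_1,g_2)\,\d\mm\qquad\forall h\in\LIP_b(\X),
\]
i.e.\ to $A(\nabla g_1,\nabla g_2)=H[f](g_1,g_2)$ $\mm$-a.e. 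Thus the whole theorem reduces to producing a symmetric $A\in L^2((T^*)^{\otimes2}\X)$ with this property and with $|A|_\HS\leq\sqrt{\ggamma_2(f,f)-K|\nabla f|^2}$: once $A=\He f$ is found, \eqref{eq:hess12} is immediate and \eqref{eq:hess34} is exactly the norm bound.

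To build $A$, recall that $L^2(T^{\otimes2}\X)$ is generated, as a module, by the elements $\nabla g_1\otimes\nabla g_2$ with $g_1,g_2\in\test\X$ (because gradients of test functions generate $L^2(T\X)$). On the vector space $V$ of finite sums $\sum_k\nabla g_{1,k}\otimes\nabla g_{2,k}$ I would set
\[
L\Big(\sum_k\nabla g_{1,k}\otimes\nabla g_{2,k}\Big):=\sum_k H[f](g_{1,k},g_{2,k})\in L^1(\X),
\]
and the central claim is the pointwise estimate
\[
|L(B)|\leq\sqrt{\ggamma_2(f,f)-K|\nabla f|^2}\;|B|_\HS\qquad\mm\text{-a.e.}\quad\forall B\in V.
\]
Granting this, the bound itself forces well-posedness (if $|B|_\HS=0$ then $L(B)=0$, so $L$ is independent of the chosen representation), then $\R$-linearity, and, applied to $\nchi_E B$, $L^\infty$-linearity; Proposition \ref{prop:genext} with $g:=\sqrt{\ggamma_2(f,f)-K|\nabla f|^2}$ (which lies in $L^2(\X)$, since $\ggamma_2(f,f)\in L^1$ as the density of the finite measure $\Ggamma_2(f,f)\in\mes(\X)$ and $|\nabla f|^2\in L^1$) extends $L$ uniquely to an element $A$ of the dual module $(L^2(T^{\otimes2}\X))^*=L^2((T^*)^{\otimes2}\X)$ with $|A|_\HS=|A|_*\leq g$. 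Symmetry of $A$ comes from the symmetry of $H[f]$ in its two arguments, consistently with Theorem \ref{thm:basew2}.

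The main obstacle is precisely this pointwise estimate, and it is where Lemma \ref{le:lemmachiave} is used. Taking $n=1$, $f_1=f$ and $g_1\equiv1$, the inequality \eqref{eq:parteac1} collapses to $|\sum_j H[f](h_j,h_j)|^2\leq(\ggamma_2(f,f)-K|\nabla f|^2)\sum_{j,j'}|\langle\nabla h_j,\nabla h_{j'}\rangle|^2$, that is, exactly $|L(D)|\leq\sqrt{\rho}\,|D|_\HS$ for the positive semidefinite \emph{diagonal} tensors $D=\sum_j\nabla h_j\otimes\nabla h_j$, where $\rho=\ggamma_2(f,f)-K|\nabla f|^2$. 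The delicate point is to upgrade this to \emph{all} of $V$ \emph{with the sharp constant}: merely splitting a symmetric tensor into positive and negative parts loses a factor $\sqrt2$, so one must exploit the full strength of \eqref{eq:parteac1} — in particular the cross term $\sum_{i,j}\langle\nabla f_i,\nabla h_j\rangle\langle\nabla g_i,\nabla h_j\rangle=\big(\sum_i\nabla f_i\otimes\nabla g_i\big):\big(\sum_j\nabla h_j\otimes\nabla h_j\big)$ together with the freedom in choosing the families $(f_i),(g_i),(h_j)$ — in order to reconstruct the pairing $A:B$ against an arbitrary generating $B$ without loss in the constant. Carried out $\mm$-a.e.\ at the level of the fibers, where the gradients of test functions span the fiber, this is an optimization/polarization argument over the parameters entering the auxiliary function $\Phi$ of Lemma \ref{le:lemmachiave}; I expect it to be the only genuinely non-routine step, the integration by parts and the module-duality extension being the straightforward parts already isolated above.
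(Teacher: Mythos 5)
Your scaffolding matches the paper's proof almost step for step: the integration by parts showing that \eqref{eq:defhess} amounts to $A(\nabla g_1,\nabla g_2)=H[f](g_1,g_2)$ $\mm$-a.e., the construction of $\He f$ by extending an $L^1$-valued map from a generating class via Proposition \ref{prop:genext} (the paper extends by density after an $L^1$ continuity bound, which is the same mechanism), and the derivation of \eqref{eq:hess34} from the pointwise bound. The problem is that the one estimate on which all of this rests, namely
\[
\Big|\sum_k H[f](g_{1,k},g_{2,k})\Big|\le\sqrt{\ggamma_2(f,f)-K|\nabla f|^2}\,\Big|\sum_k\nabla g_{1,k}\otimes\nabla g_{2,k}\Big|_\HS\qquad\mm\ae,
\]
is never actually proved. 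Lemma \ref{le:lemmachiave} with $n=1$, $g_1\equiv 1$ only yields this for the \emph{positive diagonal} combinations $\sum_j\nabla h_j\otimes\nabla h_j$ (this is \eqref{eq:perA} in the paper's proof): in \eqref{eq:parteac1} the functions $h_j$ enter exclusively through $\sum_j\nabla h_j\otimes\nabla h_j$, so no choice of the families $(f_i),(g_i),(h_j)$ in the lemma as stated directly produces the pairing against an arbitrary generating tensor; and since at this stage you do not yet know that $H[f](g_1,g_2)$ depends only on the pointwise values of $\nabla g_1,\nabla g_2$, a fibrewise/spanning argument is not available either. You correctly diagnose that splitting into positive and negative parts spoils the constant, but then you only state that you ``expect'' an optimization/polarization argument to bridge the gap. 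That bridge is precisely the non-routine content of the theorem, so as written the proposal has a genuine hole.

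For comparison, the paper closes exactly this point by polarizing the objects themselves: $2g_jH[f](h_j,h'_j)=g_j\big(H[f](h_j+h'_j,h_j+h'_j)-H[f](h_j,h_j)-H[f](h'_j,h'_j)\big)$, the identical identity for $\nabla h_j\otimes\nabla h'_j+\nabla h'_j\otimes\nabla h_j$, and the elementary bound $|B_{\sf Sym}|_\HS\le|B|_\HS$, which upgrade \eqref{eq:perA} to the bound \eqref{eq:perA2} for arbitrary sums $\sum_jg_j\nabla h_j\otimes\nabla h'_j$ with $g_j\in\LIP_b(\X)$; the fully detailed version of this step (where one checks that the signed combinations produced by polarization are really covered, which is where the care is needed) is in \cite{Gigli14}. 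If you wish to keep your plan you must either carry out this polarization argument explicitly, or strengthen the key lemma itself by inserting mixed terms (e.g.\ products $z_jz'_j$) into the auxiliary polynomial $\Phi$ so that non-diagonal tensors $\nabla h_j\otimes\nabla h'_j$ appear in \eqref{eq:parteac1} from the start; without one of these, neither \eqref{eq:hess12} nor \eqref{eq:hess34} follows from what you have written.
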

\begin{proof} We apply Lemma \eqref{le:lemmachiave} with $n=1$ for given functions $f,h_j\in\test\X$, $j=1,\ldots,m$ and $g\equiv 1$ (this is admissible at least if $\mm(\X)<\infty$, in the general case an approximation argument is required). In this case inequality \eqref{eq:parteac1}   reads, also recalling  the definition of pointwise norm on $L^2(T^{\otimes 2}\X)$, as:
\begin{equation}
\label{eq:perA}
\bigg|\sum_{j}H[f](h_j, h_j)\bigg|\leq \sqrt{\ggamma_2(f,f)-K|\nabla f|^2}\,\Big|\sum_{j}\nabla h_j\otimes\nabla h_j\Big|_\HS,\qquad\mm\ae.
\end{equation}
Now notice that  for arbitrary $h_j,h'_j\in\test\X $, $g_j\in\LIP_b(\X)$ we  have
\[
\begin{split}
g_j H[f](h_j, h'_j)&=\frac12g_j\Big(H[f](h_j+ h'_j,h_j+ h'_j)- H[f](h_j, h_j)-H[f](h'_j, h'_j)\Big)\\
g_j\frac{\nabla h_j\otimes\nabla h_j'+\nabla h_j'\otimes\nabla h_j}2&=g_j\frac{\nabla (h_j+h_j')\otimes\nabla(h_j+ h_j')-\nabla h_j\otimes\nabla h_j-\nabla h_j'\otimes\nabla h_j'}2,
\end{split}
\]
hence taking into account the trivial inequality $|A_{\sf Sym}|_\HS\leq|A|_\HS$ $\mm$-a.e.\ (recall \eqref{eq:normasym}) for $A:=\sum_jg_j\nabla h_j\otimes\nabla h_j'$,
from  \eqref{eq:perA} we obtain
\begin{equation}
\label{eq:perA2}
\begin{split}
\Big|\sum_jg_jH[f](h_j, h'_j)\Big|&\leq \sqrt{\ggamma_2(f,f)-K|\nabla f|^2}\,\bigg|\sum_jg_j\frac{\nabla h_j\otimes\nabla h_j'+\nabla h_j'\otimes\nabla h_j}2\bigg|_\HS\\
&\leq \sqrt{\ggamma_2(f,f)-K|\nabla f|^2}\Big|\sum_jg_j\nabla h_j\otimes\nabla h_j'\Big|_\HS.
\end{split}
\end{equation}
Now let  $V\subset L^2(T^{\otimes 2}\X)$ be the space of   linear combinations of tensors of the form $g\nabla h\otimes\nabla h'$ for $h,h'\in\test\X$, $g\in\LIP_b(\X)$  and define   $A:V\to  L^0(\X)$ as
\[
A\Big(\sum_{j}g_j\nabla h_j\otimes\nabla h'_j\Big):=\sum_jg_jH[f](h_j, h'_j).
\]
From \eqref{eq:perA2} we see that this is a good definition, i.e.\ that $A(T)$ depends only on $T$. Moreover, recalling that by \eqref{eq:partesing1} we have $\Ggamma_2^s(f,f)\geq 0$, we obtain
\begin{equation}
\label{eq:intgamma2}
\int\ggamma_2(f,f)-K|\nabla f|^2\,\d\mm\leq \Ggamma_2(f,f)(\X)-K\int |\nabla f|^2\,\d\mm\stackrel{\eqref{eq:deltal1}}=\int (\Delta f)^2-K|\nabla f|^2\,\d\mm
\end{equation}
hence from  \eqref{eq:perA2} we deduce that
 \[
 \|A(T)\|_{L^1(\X)}\leq \sqrt{\int (\Delta f)^2-K|\nabla f|^2\,\d\mm}\ \|T\|_{L^2(T^{\otimes 2}\X)},\qquad\forall T\in V.
 \]
It is readily verified that $V$ is dense in $L^2(T^{\otimes 2}\X)$, therefore $A$ can be uniquely extended to a continuous linear operator from $L^2(T^{\otimes 2}\X)$ to $L^1(\X)$ which is readily checked to be $L^\infty$-linear. In other words,  $A\in L^2((T^*)^{\otimes 2}\X)$.

Now let   $h_1,h_2\in\test\X$, $g\in \LIP_b(\X)$ be arbitrary and notice that we have
\[
\int A(g\nabla h_1\otimes \nabla h_2)\,\d\mm=2\int gH[f](h_1,h_2)\,\d\mm
\]
and, by the definition  of $H[f]$ and after an integration by parts, that
\[
\begin{split}
&2\int gH[f](h_1,h_2)\,\d\mm\\
&=\int -\la\nabla f,\nabla h_1\ra\div (g\nabla h_2)-\la\nabla f,\nabla h_2\ra\div (g\nabla h_1)-g\big<\nabla f,\nabla\la\nabla h_1,\nabla h_2\ra\big>\,\d\mm.
\end{split}
\]
These show that  $f\in W^{2,2}(\X)$ with $\He f=A$ and  that \eqref{eq:hess12} holds. For \eqref{eq:hess34} notice that \eqref{eq:perA2} can be restated as
\[
|\He f(T)|\leq  \sqrt{\ggamma_2(f,f)-K|\nabla f|^2} \,\,|T|_\HS,\qquad\forall T\in V,
\]
and use once again the density of $V$ in $L^2(T^{\otimes 2}\X)$  to conclude.
\end{proof}
In particular, we have the following important corollary:
\begin{corollary}\label{cor:bello} We have $D(\Delta)\subset   W^{2,2}(\X)$ and
\begin{equation}
\label{eq:key}
\int |\He f|^2_\HS\,\d\mm \leq \int(\Delta f)^2-K|\nabla f|^2\,\d\mm,\qquad\forall f\in D(\Delta).
\end{equation}
\end{corollary}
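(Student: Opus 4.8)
The plan is to first establish \eqref{eq:key} on the class $\test\X$, where Theorem \ref{thm:key2} does almost all the work, and then to extend it to all of $D(\Delta)$ by a density argument. For $f\in\test\X$ the bound is immediate: integrating the pointwise inequality \eqref{eq:hess34} over $\X$ and using \eqref{eq:intgamma2} gives $\int|\He f|_\HS^2\,\d\mm\leq\int\ggamma_2(f,f)-K|\nabla f|^2\,\d\mm\leq\int(\Delta f)^2-K|\nabla f|^2\,\d\mm$, which is exactly \eqref{eq:key} for test functions. Thus the whole difficulty lies in passing from $\test\X$ to $D(\Delta)$.

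The core of the extension is an approximation result: for $f\in D(\Delta)$ I would produce a sequence $(\phi_k)\subset\test\X$ with $\phi_k\to f$ in $W^{1,2}(\X)$ and $\Delta\phi_k\to\Delta f$ in $L^2(\X)$, so that in particular $\int(\Delta\phi_k)^2-K|\nabla\phi_k|^2\,\d\mm\to\int(\Delta f)^2-K|\nabla f|^2\,\d\mm$. The natural candidate is the heat flow $\h_t f$, for which $\h_t f\to f$ in $W^{1,2}(\X)$ and $\Delta\h_t f=\h_t\Delta f\to\Delta f$ in $L^2(\X)$ as $t\downarrow0$. The only obstruction is that membership in $\test\X$ also demands boundedness and Lipschitz regularity, and \eqref{eq:lilip} only supplies these when the datum is in $L^\infty$. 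I would therefore first truncate, setting $f_N:=(-N)\vee(f\wedge N)\in L^2\cap L^\infty(\X)$ so that $\h_t f_N\in\test\X$, and then extract a diagonal sequence $\phi_k:=\h_{t_k}f_{N_k}$ with $t_k\downarrow0$ and $N_k\uparrow\infty$ chosen, using $f_N\to f$ in $L^2(\X)$ together with the continuity of $\h_t:L^2(\X)\to D(\Delta)$ for fixed $t>0$ and then the strong continuity of the flow in the graph norm, so that $\phi_k$ enjoys the two convergences above. This truncation-plus-diagonal construction is the main obstacle: the heat flow is perfectly adapted to the $W^{1,2}$/Laplacian convergence but not to the boundedness required by $\test\X$, so the two approximations must be interleaved with care.

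Once $(\phi_k)$ is in hand, the conclusion is routine. Applying the already-established case to each $\phi_k$ gives $\sup_k\int|\He\phi_k|_\HS^2\,\d\mm<\infty$, so by weak compactness in the separable Hilbert module $L^2((T^*)^{\otimes 2}\X)$ (recall Remark \ref{re:hilsep} and the separability of the tensor product) a subsequence satisfies $\He\phi_k\rightharpoonup A$ weakly. I would then pass to the limit, for each fixed $g_1,g_2\in\test\X$ and $h\in\LIP_b(\X)$, in the defining identity \eqref{eq:defhess}: its left-hand side equals the duality pairing of $\He\phi_k$ with the fixed tensor $h\,\nabla g_1\otimes\nabla g_2\in L^2(T^{\otimes 2}\X)$ and hence converges under the weak convergence, while its right-hand side is continuous in $\nabla\phi_k\to\nabla f$ in $L^2(T\X)$; this shows $f\in W^{2,2}(\X)$ with $\He f=A$ (equivalently one may invoke the closability of the Hessian from Theorem \ref{thm:basew2}(ii) after a Mazur convex-combination step). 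Finally, lower semicontinuity of the $L^2$-norm under weak convergence yields $\int|\He f|_\HS^2\,\d\mm\leq\liminf_k\int|\He\phi_k|_\HS^2\,\d\mm\leq\int(\Delta f)^2-K|\nabla f|^2\,\d\mm$, which is both the inclusion $D(\Delta)\subset W^{2,2}(\X)$ and the inequality \eqref{eq:key}.
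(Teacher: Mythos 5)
Your proposal is correct and follows essentially the same route as the paper: the case $f\in\test\X$ is obtained by integrating \eqref{eq:hess34} and invoking \eqref{eq:intgamma2}, and the general case $f\in D(\Delta)$ follows by approximation (heat-flow regularization of truncations) together with the closedness of the Hessian from Theorem \ref{thm:basew2}. Your write-up merely makes explicit the approximation and weak-compactness details that the paper's sketch leaves to the reader.
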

\begin{sketch} For $f\in\test\X$ the claim follows   integrating \eqref{eq:hess34} and recalling \eqref{eq:intgamma2}. The general case is then achieved by approximation recalling that the Hessian is a closed operator.
\end{sketch}
Such corollary ensures that the following definition is meaningful:
\begin{definition}
We define $H^{2,2}(\X)$ as the $W^{2,2}$-closure of $D(\Delta)\subset W^{2,2}(\X)$.
\end{definition}
It is not hard to check that $H^{2,2}(\X)$ also coincides with the $W^{2,2}(\X)$ closure of $\test\X$; on the other hand it is important to underline that it is not at all clear whether $H^{2,2}(\X)$ coincides with $W^{2,2}(\X)$ or not.

\subsubsection{Calculus rules}

\begin{proposition}[Product rule for functions]\label{prop:prodfunct}
Let $f_1,f_2\in \LIP_b\cap W^{2,2}(\X)$. Then $f_1f_2\in W^{2,2}(\X)$ and the formula
\begin{equation}
\label{eq:leibhess}
\He({f_1f_2})=f_2\He {f_1}+f_1\He {f_2}+\d f_1\otimes\d f_2+\d f_2\otimes \d f_1,\qquad\mm\ae
\end{equation}
holds.
\end{proposition}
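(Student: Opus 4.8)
The plan is to exhibit the candidate tensor
\[
A := f_2\,\He f_1 + f_1\,\He f_2 + \d f_1\otimes\d f_2 + \d f_2\otimes\d f_1
\]
and to verify directly that it satisfies the defining identity \eqref{eq:defhess} for $f_1 f_2$. First I would record the preliminaries. By the Leibniz rule \eqref{eq:leibniz} we have $f_1 f_2\in\s^2(\X)$ with $\nabla(f_1 f_2)=f_1\nabla f_2+f_2\nabla f_1$, and since $f_1,f_2$ are bounded, $f_1 f_2\in L^2(\X)$, so $f_1 f_2\in W^{1,2}(\X)$. The candidate $A$ lies in $L^2((T^*)^{\otimes 2}\X)$: the terms $f_i\He f_j$ are in $L^2$ because $f_i\in L^\infty$ and $\He f_j\in L^2$, while $|\d f_1\otimes\d f_2|_\HS=|\d f_1|\,|\d f_2|\leq \Lip(f_1)\,|\d f_2|\in L^2$. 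It then suffices to check \eqref{eq:defhess} with this $A$.

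For the main computation, fix $g_1,g_2\in\test\X$ and $h\in\LIP_b(\X)$ and substitute $\nabla(f_1 f_2)=f_1\nabla f_2+f_2\nabla f_1$ into the right-hand side of \eqref{eq:defhess}. This splits it into a group of terms carrying the factor $f_2\la\nabla f_1,\cdot\ra$ (together with $h f_2\la\nabla f_1,\nabla\la\nabla g_1,\nabla g_2\ra\ra$) and a symmetric group carrying $f_1\la\nabla f_2,\cdot\ra$. The key observation is that the first group is almost the right-hand side of \eqref{eq:defhess} written for $f_1$ with the admissible test function $\tilde h:=h f_2\in\LIP_b(\X)$; the only discrepancy is that in that identity the divergences read $\div(h f_2\nabla g_i)$, whereas in the first group they read $f_2\div(h\nabla g_i)$. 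I would reconcile these via the Leibniz rule for the divergence \eqref{eq:leibdiv} (legitimate since $\nabla g_i\in D(\div)$ for $g_i\in\test\X$ and $f_2\in\LIP_b(\X)$), namely
\[
\div(h f_2\nabla g_i)=f_2\div(h\nabla g_i)+h\la\nabla f_2,\nabla g_i\ra .
\]
Substituting this into the defining identity for $f_1$ and rearranging shows that the first group equals $2\int h f_2\,\He f_1(\nabla g_1,\nabla g_2)\,\d\mm$ plus the leftover cross terms $\int h\big(\la\nabla f_1,\nabla g_1\ra\la\nabla f_2,\nabla g_2\ra+\la\nabla f_1,\nabla g_2\ra\la\nabla f_2,\nabla g_1\ra\big)\,\d\mm$; the second group is handled identically with the roles of $f_1,f_2$ exchanged, producing the \emph{same} cross terms.

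Summing the two groups, the Hessian contributions combine to $2\int h\,(f_2\He f_1+f_1\He f_2)(\nabla g_1,\nabla g_2)\,\d\mm$, while the two batches of cross terms assemble, via the pairing rule $(\d f_1\otimes\d f_2)(\nabla g_1,\nabla g_2)=\la\nabla f_1,\nabla g_1\ra\la\nabla f_2,\nabla g_2\ra$, into $2\int h\,(\d f_1\otimes\d f_2+\d f_2\otimes\d f_1)(\nabla g_1,\nabla g_2)\,\d\mm$. This is precisely $2\int h\,A(\nabla g_1,\nabla g_2)\,\d\mm$, so \eqref{eq:defhess} holds and $f_1 f_2\in W^{2,2}(\X)$ with $\He(f_1 f_2)=A$, which is exactly \eqref{eq:leibhess}.

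I do not expect a genuine obstacle: the argument is a verification whose difficulty is entirely bookkeeping. The points requiring care are the justifications that every manipulation is legitimate — that $\la\nabla g_1,\nabla g_2\ra\in W^{1,2}(\X)$ so that the last term of \eqref{eq:defhess} even makes sense (this is Theorem \ref{thm:savare}), that $h\nabla g_i$ and $f_2 h\nabla g_i$ lie in $D(\div)$ so that \eqref{eq:leibdiv} applies, and that all the products appearing (e.g.\ $h\la\nabla f_1,\nabla g_1\ra\la\nabla f_2,\nabla g_2\ra$) are genuinely integrable, which follows from $h,|\d f_i|\in L^\infty$ and $|\d g_j|\in L^2$. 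Tracking the symmetry $g_1\leftrightarrow g_2$ and $f_1\leftrightarrow f_2$ throughout is what avoids sign and factor-of-two errors.
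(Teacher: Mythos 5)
Your proof is correct and is essentially the paper's own argument: both expand $\nabla(f_1f_2)=f_1\nabla f_2+f_2\nabla f_1$ in the right-hand side of \eqref{eq:defhess}, use the divergence Leibniz rule \eqref{eq:leibdiv} to trade $f_i\,\div(h\nabla g_j)$ for $\div(f_i h\nabla g_j)$ minus the cross terms, and recognize the defining identities of $\He f_1$ and $\He f_2$ with the admissible test functions $f_2h,\,f_1h\in\LIP_b(\X)$, the leftover cross terms assembling into $\d f_1\otimes\d f_2+\d f_2\otimes\d f_1$. The bookkeeping, including the factor of two and the integrability justifications, checks out.
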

\begin{proof} It is obvious that $f_1f_2\in W^{1,2}(\X)$ and that  the right hand side of \eqref{eq:leibhess} defines an object in $L^2((T^*)^{\otimes 2}\X)$. Now  let $g_1,g_2\in\test \X$, $h\in \LIP_b(\X)$ be arbitrary and notice that
\[
\begin{split}
-\la\nabla(f_1f_2),\nabla g_1\ra\,\div (h\nabla g_2)&=-f_1\la\nabla f_2,\nabla g_1\ra\,\div (h\nabla g_2)-f_2\la\nabla f_1,\nabla g_1\ra \,\div (h\nabla g_2)\\
&=-\la \nabla f_2,\nabla g_1\ra\,\div(f_1h\nabla g_2)+h\la\nabla f_2,\nabla g_1\ra\la\nabla f_1,\nabla g_2\ra\\
&\qquad-\la\nabla f_1,\nabla g_1\ra\,\div(f_2h\nabla g_2)+h\la\nabla f_1,\nabla g_1\ra\la\nabla f_2,\nabla g_2\ra.
\end{split}
\]
Exchanging the roles of $g_1,g_2$, noticing that
\[
-h\big<\nabla(f_1f_2),\nabla\la \nabla g_1,\nabla g_2\ra\big>=-hf_1\big<\nabla f_2,\nabla\la\nabla g_1,\nabla g_2\ra\big>-hf_2\big<\nabla f_1,\nabla\la\nabla g_1,\nabla g_2\ra\big>,
\]
adding everything up, integrating and observing that $f_1h,f_2h\in \LIP_b(\X)$ we conclude.
\end{proof}

\begin{proposition}[Chain rule]\label{prop:chainhess}
Let $f\in \LIP\cap W^{2,2}(\X)$ and $\varphi:\R\to\R$ a $C^{2}$ function with uniformly bounded first and second derivative (and $\varphi(0)=0$ if $\mm(\X)=+\infty$). 

Then $\varphi\circ f\in W^{2,2}(\X)$ and the formula
\begin{equation}
\label{eq:chainhess}
\He({\varphi\circ f})=\varphi''\circ f\, \d f\otimes\d f+\varphi'\circ f\,\He f,\qquad\mm\ae
\end{equation}
holds.
\end{proposition}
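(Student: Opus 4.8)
The plan is to verify the defining identity \eqref{eq:defhess} directly for $\bar f:=\varphi\circ f$ with the candidate tensor
\[
A:=\varphi''\circ f\,\d f\otimes \d f+\varphi'\circ f\,\He f.
\]
First I would dispose of the preliminary integrability issues. Since $\varphi\in\LIP\cap C^1(\R)$ (its derivative is bounded), the chain rule \eqref{eq:chain} gives $\bar f\in\S^2(\X)$ with $\d\bar f=\varphi'\circ f\,\d f$; together with the bound $|\varphi\circ f|\le\|\varphi'\|_\infty|f|$ (here $\varphi(0)=0$ is used when $\mm(\X)=\infty$) this yields $\bar f\in W^{1,2}(\X)$. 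To see that $A\in L^2((T^*)^{\otimes 2}\X)$ I would use that $f$ is Lipschitz, hence $|\d f|\in L^\infty(\X)$: then $|\d f\otimes \d f|_\HS=|\d f|^2$ has finite $L^2$-norm, bounded by $\Lip(f)\,\||\d f|\|_{L^2}$, and $\varphi''$ is bounded, while $\varphi'\circ f\,\He f\in L^2((T^*)^{\otimes 2}\X)$ because $\varphi'$ is bounded and $\He f\in L^2((T^*)^{\otimes 2}\X)$.

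The heart of the argument is to feed the modified coefficient $\tilde h:=\varphi'\circ f\,h$ into the defining identity \eqref{eq:defhess} for $\He f$ itself. Since $f$ is Lipschitz and $\varphi'$ is bounded and Lipschitz (as $\varphi''$ is continuous and bounded), $\varphi'\circ f\in\LIP_b(\X)$ and hence $\tilde h\in\LIP_b(\X)$, so it is an admissible coefficient. The only place where $\tilde h$ must be unpacked is inside the divergence terms. Writing $\tilde h\nabla g_i=\varphi'\circ f\,(h\nabla g_i)$ and noting that $h\nabla g_i\in D(\div)$ (because $g_i\in\test\X\subset D(\Delta)$, by the Leibniz rule \eqref{eq:leibdiv}), that same Leibniz rule gives
\[
\div(\tilde h\nabla g_i)=\d(\varphi'\circ f)(h\nabla g_i)+\varphi'\circ f\,\div(h\nabla g_i),
\]
and applying the chain rule \eqref{eq:chain} once more, $\d(\varphi'\circ f)=\varphi''\circ f\,\d f$, so the extra term equals $\varphi''\circ f\,h\,\la\nabla f,\nabla g_i\ra$.

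Substituting this expansion into \eqref{eq:defhess} written for $f$ with coefficient $\tilde h$, the two symmetric correction terms carrying $\varphi''$ combine into $-2\int\varphi''\circ f\,h\,\la\nabla f,\nabla g_1\ra\la\nabla f,\nabla g_2\ra\,\d\mm$; moving it to the left-hand side turns the latter into exactly $2\int h\,A(\nabla g_1,\nabla g_2)\,\d\mm$, while the right-hand side becomes the right-hand side of \eqref{eq:defhess} for $\bar f$ once one uses $\varphi'\circ f\,\la\nabla f,\nabla g_i\ra=\la\nabla\bar f,\nabla g_i\ra$ and $\varphi'\circ f\,\la\nabla f,\nabla\la\nabla g_1,\nabla g_2\ra\ra=\la\nabla\bar f,\nabla\la\nabla g_1,\nabla g_2\ra\ra$. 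This is precisely the defining identity for $\bar f$ with Hessian $A$, giving $\bar f\in W^{2,2}(\X)$ together with \eqref{eq:chainhess}. The main point to watch is the clean interplay of the two applications of the chain rule and of the Leibniz rule for the divergence, and in particular the verification that $\tilde h=\varphi'\circ f\,h$ genuinely lies in $\LIP_b(\X)$ and that $h\nabla g_i\in D(\div)$; these are exactly what the hypotheses ($f$ Lipschitz, $\varphi\in C^2$ with bounded $\varphi'$ and $\varphi''$) are for, and they are what make the formal computation rigorous.
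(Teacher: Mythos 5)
Your proof is correct and is essentially the paper's own argument: both hinge on the chain rule $\d(\varphi\circ f)=\varphi'\circ f\,\d f$, $\d(\varphi'\circ f)=\varphi''\circ f\,\d f$, the Leibniz rule \eqref{eq:leibdiv} for the divergence, and the observation that $\varphi'\circ f\,h\in\LIP_b(\X)$ is an admissible coefficient in \eqref{eq:defhess}. The only difference is cosmetic --- the paper expands the terms of the identity for $\varphi\circ f$ down to the identity for $f$ with coefficient $\varphi'\circ f\,h$, while you run the same computation in the opposite direction --- and your extra care with the preliminary integrability checks is fine.
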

\begin{proof} It is obvious that $\varphi\circ f\in W^{1,2}(\X)$ and that the right hand side of \eqref{eq:chainhess} defines an object in $L^2((T^*)^{\otimes 2}\X)$. Now let  $g_1,g_2\in\test\X$, $h\in\LIP_b(\X)$ be arbitrary and notice that
\[
\begin{split}
-\la\nabla(\varphi\circ f),\nabla g_1\ra\,\div(h\nabla g_2)&=-\varphi'\circ f\la\nabla f,\nabla g_1\ra\,\div(h\nabla g_2)\\
&=-\la\nabla f,\nabla g_1\ra\,\div(\varphi'\circ fh\nabla g_2)+h\varphi''\circ f\la\nabla f,\nabla g_1\ra\la\nabla f,\nabla g_2\ra.
\end{split}
\]
Similarly,
\[
-\la\nabla(\varphi\circ f),\nabla g_2\ra\,\div(h\nabla g_1)=-\la\nabla f,\nabla g_2\ra\,\div(\varphi'\circ fh\nabla g_1)+h\varphi''\circ f\la\nabla f,\nabla g_2\ra\la\nabla f,\nabla g_1\ra
\]
and
\[
-h\big< \nabla(\varphi\circ f),\nabla\la \nabla g_1,\nabla g_2\ra\big>=-h\varphi'\circ f\big< \nabla  f,\nabla\la \nabla g_1,\nabla g_2\ra\big>.
\]
To conclude, add up these three identities, integrate and notice that   $h\varphi'\circ f\in \LIP_b(\X)$.
\end{proof}

\begin{proposition}[Product rule for gradients]\label{prop:gradehess}
Let $f_1,f_2\in \LIP\cap H^{2,2}(\X)$. Then $\la\nabla f_1,\nabla f_2\ra\in W^{1,2}(\X)$ and 
\begin{equation}
\label{eq:gradprod}
 \d\la\nabla f_1,\nabla f_2\ra=\He {f_1}(\nabla f_2,\cdot)+\He{f_2}(\nabla f_1,\cdot),\qquad\mm\ae.
\end{equation}
\end{proposition}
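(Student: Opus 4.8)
The plan is to establish \eqref{eq:gradprod} first for test functions, where it follows from the already proved identity \eqref{eq:hess12}, and then to extend it to $f_1,f_2\in\LIP\cap H^{2,2}(\X)$ by a heat-flow approximation designed to keep the gradients uniformly bounded. For the first step, let $f_1,f_2\in\test\X$. By Theorem \ref{thm:savare} and polarization $\langle\nabla f_1,\nabla f_2\rangle\in W^{1,2}(\X)$, so $\d\langle\nabla f_1,\nabla f_2\rangle$ is a well-defined $1$-form, and I would identify it by pairing against $\nabla g$ for arbitrary $g\in\test\X$: since $\{\d g:g\in\test\X\}$ generates $L^2(T^*\X)$ (Remark \ref{rem:gencot}), the Riesz isomorphism (Proposition \ref{prop:riesz}) shows that $\{\nabla g:g\in\test\X\}$ generates $L^2(T\X)$, so a $1$-form is determined by these pairings. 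Expanding the definition of $H[f_i](\cdot,\cdot)$ one checks the purely algebraic cancellation
\[
H[f_1](f_2,g)+H[f_2](f_1,g)=\langle\nabla\langle\nabla f_1,\nabla f_2\rangle,\nabla g\rangle,
\]
where the mixed terms $\langle\nabla\langle\nabla f_1,\nabla g\rangle,\nabla f_2\rangle$ and $\langle\nabla\langle\nabla f_2,\nabla g\rangle,\nabla f_1\rangle$ cancel against the corresponding terms of the other bracket. Using \eqref{eq:hess12} on the left this is exactly $\big(\He f_1(\nabla f_2,\cdot)+\He f_2(\nabla f_1,\cdot)\big)(\nabla g)=\d\langle\nabla f_1,\nabla f_2\rangle(\nabla g)$, which proves \eqref{eq:gradprod} for test functions.

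Next I would approximate. For $f_1,f_2\in\LIP\cap H^{2,2}(\X)$, which we may assume bounded (reducing to this case by truncation and locality of the Hessian), set $f_i^n:=\h_{1/n}f_i\in\test\X$. The Bakry-\'Emery estimate \eqref{eq:be} together with the weak maximum principle gives $|\d\h_tf_i|^2\le e^{-2Kt}\h_t(|\d f_i|^2)\le e^{-2Kt}\Lip(f_i)^2$, so by property (iii) of $\RCD$ spaces the approximants are \emph{uniformly} Lipschitz, $\sup_n\Lip(f_i^n)\le e^{|K|}\Lip(f_i)=:L$. Moreover $f_i^n\to f_i$ strongly in $W^{1,2}(\X)$, hence $\nabla f_i^n\to\nabla f_i$ in $L^2(T\X)$ and, along a common subsequence, $\mm$-a.e. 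Finally I claim $\He f_i^n\rightharpoonup\He f_i$ weakly in $L^2((T^*)^{\otimes 2}\X)$: granting a uniform bound $\sup_n\|\He f_i^n\|_{L^2}<\infty$, weak compactness produces a weak limit, and since the graph of the Hessian is a closed subspace (Theorem \ref{thm:basew2}(ii)), hence weakly closed, this limit can only be $\He f_i$.

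Then I would pass to the limit using the closure of the differential. Writing $u_n:=\langle\nabla f_1^n,\nabla f_2^n\rangle$ and $A_n:=\He f_1^n(\nabla f_2^n,\cdot)+\He f_2^n(\nabla f_1^n,\cdot)$, Step~1 gives $\d u_n=A_n$. The uniform Lipschitz bound yields $|u_n|\le L^2$ and $u_n\to u:=\langle\nabla f_1,\nabla f_2\rangle$ $\mm$-a.e., as well as $\sup_n\|A_n\|_{L^2}<\infty$. For a \emph{bounded} vector field $X$ one has $\nabla f_i^n\otimes X\to\nabla f_i\otimes X$ strongly in $L^2(T^{\otimes 2}\X)$ (dominated convergence, using $|\nabla f_i^n|\le L$), so pairing the weakly convergent Hessians against these strongly convergent tensors gives $\int A_n(X)\,\d\mm\to\int A(X)\,\d\mm$ with $A:=\He f_1(\nabla f_2,\cdot)+\He f_2(\nabla f_1,\cdot)\in L^2(T^*\X)$; since bounded fields are dense in $L^2(T\X)$ and $\|A_n\|_{L^2}$ is bounded, this upgrades to $A_n\rightharpoonup A$ weakly in $L^2(T^*\X)$. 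Proposition \ref{prop:closed} then gives $u\in\s^2(\X)$ with $\d u=A$, and since $u\in L^2(\X)$ we conclude $u\in W^{1,2}(\X)$ and \eqref{eq:gradprod}.

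The main obstacle is the uniform Hessian bound $\sup_n\|\He\h_{1/n}f_i\|_{L^2}<\infty$ invoked in the second step. It cannot be extracted from Corollary \ref{cor:bello}, whose estimate passes through $\|\Delta\h_tf_i\|_{L^2}$ and therefore blows up as $t\downarrow0$ when $f_i\in H^{2,2}(\X)\setminus D(\Delta)$, since the Laplacian is not controlled by the Hessian. What is genuinely required is that the heat semigroup act boundedly on $W^{2,2}(\X)$, i.e. a commutation estimate between $\h_t$ and the Hessian, which rests on the second-order (tensorial) Bochner inequality of Lemma \ref{le:lemmachiave}; this is the technical heart of the argument, the uniform Lipschitz control coming from \eqref{eq:be} being comparatively elementary but equally essential in order to tame the bilinear terms in the limit.
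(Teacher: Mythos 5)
Your first step coincides with the paper's argument: the algebraic cancellation $H[f_1](f_2,g)+H[f_2](f_1,g)=\la\nabla\la\nabla f_1,\nabla f_2\ra,\nabla g\ra$ together with \eqref{eq:hess12} is exactly how the test-function case is handled there, and it is correct. The problem is the second step, and you flag it yourself: the entire passage to the limit is conditional on the uniform bound $\sup_n\|\He\,\h_{1/n}f_i\|_{L^2}<\infty$, which you never prove. This bound does not follow from anything established in the notes. Corollary \ref{cor:bello} passes through $\|\Delta \h_t f_i\|_{L^2}$, which blows up as $t\downarrow 0$ when $f_i\notin D(\Delta)$, as you observe; and the ``commutation estimate between $\h_t$ and the Hessian'' you appeal to is not a consequence of Lemma \ref{le:lemmachiave}, nor of a lower Ricci bound at all: already on smooth manifolds, commuting the Hessian with the heat semigroup produces terms involving the full Riemann tensor, so no uniform $W^{2,2}$-boundedness of $\h_t$ can be expected in the $\RCD(K,\infty)$ setting. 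In other words, regularizing $f_i$ itself by the heat flow is the wrong choice of approximants: it gives you the uniform Lipschitz bound via \eqref{eq:be} but irretrievably loses control of the Hessians.

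The paper's extension step goes the other way around: since $f_i\in H^{2,2}(\X)$ means, by definition, that there are test functions converging to $f_i$ in the $W^{2,2}$-norm, convergence of the Hessians is built into the approximation from the start; the truncation/heat-flow argument alluded to in the sketch is applied to \emph{that} sequence, only in order to make it uniformly Lipschitz while preserving $W^{2,2}$-convergence -- it is never claimed, nor needed, that $\h_t$ maps $W^{2,2}$ into itself with uniform bounds. Once one has $f_i^n\to f_i$ in $W^{2,2}$ with $\sup_n\Lip(f_i^n)<\infty$, your limiting argument (strong convergence of $\He f_1^n(\nabla f_2^n,\cdot)+\He f_2^n(\nabla f_1^n,\cdot)$ in $L^2(T^*\X)$, $\mm$-a.e.\ convergence of the uniformly bounded functions $\la\nabla f_1^n,\nabla f_2^n\ra$, then Proposition \ref{prop:closed}) is fine and matches the paper's. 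So the defect is localized but real: you must replace the approximants $\h_{1/n}f_i$ by a uniformly Lipschitz sequence converging in the $H^{2,2}$-topology, whose existence is the genuinely technical point. (Your preliminary reduction to bounded $f_i$ ``by truncation and locality of the Hessian'' is also not justified by the calculus rules available -- truncations are not $C^2$ and locality of the Hessian is not stated -- but it is unnecessary, since boundedness of the $f_i$ plays no role in the rest of the argument.)
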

\begin{sketch} 
For $f_1,f_2\in\test\X$ the fact that $\la\nabla f_1,\nabla f_2\ra\in W^{1,2}(\X)$ follows from Theorem \ref{thm:savare} by polarization. Also, by the very definition of $H[f]$, we know that for any $g\in\test\X$ it holds
\[
\la\la\nabla f_1,\nabla f_2\ra,\nabla g\ra=H[f_1](f_2,g)+H[f_2](f_1,g),
\]
hence in this case the conclusion comes from \eqref{eq:hess12} and the arbitrariness of $g$. The general case follows by approximation by observing that with an argument based on truncation and regularization with the heat flow, we can approximate any $f\in \LIP\cap H^{2,2}(\X)$ in the $H^{2,2}(\X)$-topology with test functions which are uniformly Lipschitz.
\end{sketch}

\subsection{Covariant derivative}
\subsubsection{Sobolev vector fields}
The definition of Sobolev vector fields is based on the identity
\[
\la\nabla_{\nabla g_2}X,\nabla g_1\ra=\la\nabla(\la X,\nabla g_1\ra),\nabla g_2\ra-\He (g_1)(X,\nabla g_1),
\]
valid in the smooth world for smooth functions $g_1,g_2$ and a smooth vector field $X$.
\begin{definition}[The Sobolev space $W^{1,2}_C(T\X)$]\label{def:w12c}
The Sobolev space  $W^{1,2}_C(T\X)\subset  L^2(T\X)$ is the space of all $X\in L^2(T\X)$ for which there exists $T\in L^2(T^{\otimes 2}\X)$ such that for every $g_1,g_2\in\test\X$ and $h\in \LIP_b(\X)$ it holds
\[
\int h\, T:  (\nabla g_1\otimes \nabla g_2)\,\d\mm=\int-\la X,\nabla g_2\ra\,\div(h\nabla g_1)-h\He({ g_2})(X,\nabla g_1)\,\d\mm.
\]
In this case we shall call the tensor $T$ the covariant derivative of $X$ and denote it by $\nabla X$. We  endow $W^{1,2}_C(T\X)$ with the norm $\|\cdot\|_{W^{1,2}_C(T\X)}$ defined by
\[
\|X\|_{W^{1,2}_C(T\X)}^2:=\|X\|^2_{L^2(T\X)}+\|\nabla X\|_{L^2(T^{\otimes 2}\X)}^2.
\]
\end{definition}
It will be useful to introduce the space of `test vector fields' as
\[
\vsm:=\Big\{\sum_{i=1}^ng_i\nabla f_i\ :\ n\in\N,\ f_i,g_i\in \test\X\Big\}\subset L^2(T\X).
\]
It is easy to show that $\vsm$ is dense in $L^2(T\X)$.

\begin{theorem}[Basic properties of $W^{1,2}_C(T\X)$]\label{thm:basew12c}
We have:
\begin{itemize}
\item[i)] $W^{1,2}_C(T\X)$ is a separable Hilbert space.
\item[ii)] The covariant derivative is a closed operator, i.e. the set $\{(X,\nabla X) : X\in W^{1,2}_C(T\X)\}$ is a closed subset of $L^2(T\X)\times L^2(T^{\otimes 2}\X)$.
\item[iii)] Given  $f\in W^{2,2}(\X)$ we have $\nabla f\in W^{1,2}_C(T\X)$ with $\nabla(\nabla f)=(\He f)^\sharp$, where $\cdot^\sharp:L^2((T^*)^{\otimes 2}\X)\to L^2(T\X)$ is the Riesz (musical) isomorphims.
\item[iv)] We have $\vsm\subset W^{1,2}_C(T\X)$ with 
\[
\nabla X=\sum_i\nabla g_i\otimes\nabla f_i+g_i(\He {f_i})^\sharp,\qquad\text{ for }\qquad X=\sum_ig_i\nabla f_i.
\]
In particular, $W^{1,2}_C(T\X)$ is dense in $L^2(T\X)$.
\end{itemize}
\end{theorem}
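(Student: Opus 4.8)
The plan is to follow the template of Theorem \ref{thm:basew2}, disposing of the soft points (i),(ii) first and then carrying out the genuine computations behind (iii),(iv). For (ii) I would fix $g_1,g_2\in\test\X$ and $h\in\LIP_b(\X)$ and observe that, in the defining identity of Definition \ref{def:w12c}, the left-hand side $T\mapsto\int h\,T:(\nabla g_1\otimes\nabla g_2)\,\d\mm$ is linear and continuous on $L^2(T^{\otimes 2}\X)$, while the right-hand side $X\mapsto\int-\la X,\nabla g_2\ra\div(h\nabla g_1)-h\He(g_2)(X,\nabla g_1)\,\d\mm$ is linear and continuous on $L^2(T\X)$; for the latter one uses that $\div(h\nabla g_1)\in L^2(\X)$ (from \eqref{eq:leibdiv}, since $\nabla g_1\in D(\div)$ and $h\in\LIP_b(\X)$), that $|\nabla g_2|\in L^\infty(\X)$, and that $\He(g_2)\in L^2((T^*)^{\otimes2}\X)$ with $|\nabla g_1|\in L^\infty(\X)$. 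Passing to the limit along a convergent sequence $(X_n,\nabla X_n)$ then shows the graph is closed, giving completeness; the $W^{1,2}_C$-norm is a graph norm, hence satisfies the parallelogram law, and separability of (i) comes from the isometry $X\mapsto(X,\nabla X)$ into the separable space $L^2(T\X)\times L^2(T^{\otimes2}\X)$ (separable by Remark \ref{re:hilsep} and the tensor-product discussion), whose image is closed.

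The heart of the matter is the identity behind (iii) and (iv), and the key algebraic fact I would isolate first is that for $a,b,c\in\test\X$ the two ``$H$'' terms telescope:
\[
H[a](b,c)+H[c](a,b)=\la\nabla\la\nabla a,\nabla c\ra,\nabla b\ra,\qquad\mm\ae,
\]
which is immediate from the definition of $H$ after cancelling the two pairs of opposite terms. For (iv), with $X=\sum_i g_i\nabla f_i$ and all functions in $\test\X$, I would insert the claimed $T=\sum_i\nabla g_i\otimes\nabla f_i+g_i(\He f_i)^\sharp$ into the defining identity, use \eqref{eq:hess12} to replace each Hessian pairing of test functions by its $H$-expression, and reduce term by term via the telescoping identity above. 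The surviving single-gradient contribution $g_i\la\nabla\la\nabla f_i,\nabla g_2\ra,\nabla g_1\ra$ then combines with the $\nabla g_i\otimes\nabla f_i$ contribution through the Leibniz rule \eqref{eq:leibniz} into $\la\nabla\big(g_i\la\nabla f_i,\nabla g_2\ra\big),\nabla g_1\ra$, and a single integration by parts against $\div(h\nabla g_1)$ produces exactly the required right-hand side. The density clause of (iv) is then immediate, since $\vsm\subset W^{1,2}_C(T\X)$ and $\vsm$ is dense in $L^2(T\X)$.

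For (iii) I would first treat $f\in\test\X$: the case of a single gradient in the computation just described shows the defining identity holds with $T=(\He f)^\sharp$. To reach a general $f\in W^{2,2}(\X)$ I would exploit the crucial point that, after using the definition \eqref{eq:defhess} of $\He f$ to rewrite the left-hand side $\int h\,\He f(\nabla g_1,\nabla g_2)\,\d\mm$, the statement to be proved becomes equivalent to an identity whose two sides are $\int h\,\He(g_2)(\nabla f,\nabla g_1)\,\d\mm$ and a fixed linear combination of the three integrals occurring on the right of \eqref{eq:defhess}; in this reduced form $f$ enters \emph{only through $\nabla f$}, and both sides are continuous linear functionals of $\nabla f\in L^2(T\X)$. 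Since $\test\X$ is dense in $W^{1,2}(\X)$, I pick $f_n\in\test\X$ with $f_n\to f$ in $W^{1,2}(\X)$, so $\nabla f_n\to\nabla f$ in $L^2(T\X)$; the reduced identity holds for each $f_n$ by the test-function case, hence for $f$ by continuity, and re-inserting \eqref{eq:defhess} yields the defining identity with $T=(\He f)^\sharp$.

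I expect the genuine obstacle to be precisely this last step of (iii): one cannot approximate in the $W^{2,2}$-norm, as it is unknown whether $\test\X$ is dense in $W^{2,2}(\X)$, so the whole point is to recast the desired identity so that it depends on $f$ only through $\nabla f$ and is $L^2(T\X)$-continuous, after which the mere $W^{1,2}$-density of test functions suffices. Everything else is bookkeeping of integrations by parts, the telescoping identity, and the Leibniz rules \eqref{eq:leibniz}, \eqref{eq:leibdiv}.
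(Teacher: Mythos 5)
Your argument is correct and is essentially the paper's (only sketched) proof in expanded form: (i),(ii) follow the graph-norm/closedness scheme of Theorem \ref{thm:basew2}, while (iii),(iv) are the same direct verification resting on the product rule for gradients \eqref{eq:gradprod}/\eqref{eq:hess12}, the Leibniz rules and integration by parts, merely with the logical order of (iii) and (iv) interchanged. In particular, your reduction of (iii) to an identity that involves $f$ only through $\nabla f$ and is $L^2(T\X)$-continuous, so that the $W^{1,2}$-density of $\test\X$ suffices (a $W^{2,2}$-approximation being unavailable since it is unknown whether $H^{2,2}(\X)=W^{2,2}(\X)$), correctly identifies and fills in precisely the step compressed into the paper's phrase ``direct verification''.
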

\begin{sketch} $(i),(ii)$ are proved along the same lines of Theorem \ref{thm:basew2}.  $(iii)$ follows from  Proposition \ref{prop:gradehess} and direct verification; then $(iv)$ follows from $(iii)$ and the definitions.
\end{sketch}

\subsubsection{Calculus rules}\label{se:lc}
We know that $\vsm$ is contained in $W^{1,2}_C(T\X)$, but not if it is dense. Thus the following definition is meaningful:
\begin{definition}We define $H^{1,2}_C(T\X)\subset W^{1,2}_C(T\X)$ as the $W^{1,2}_C(T\X)$-closure of $\vsm$.
\end{definition}
We shall also denote by $L^0(T\X)$ the $L^0$-completion of $L^2(T\X)$ (Theorem \ref{thm:defl0}) and by $L^\infty(T\X)$ its subspace made of $X$'s such that $|X|\in L^\infty(\X)$.
\begin{proposition}[Leibniz rule]\label{prop:leibcov}
Let $X\in L^\infty\cap W^{1,2}_C(T\X)$ and $f\in L^\infty\cap W^{1,2}(\X)$.

Then  $fX\in W^{1,2}_C(T\X)$ and  
\begin{equation}
\label{eq:leibcov}
\nabla(fX)= \nabla f\otimes X+f\nabla X,\qquad\mm\ae.
\end{equation}
\end{proposition}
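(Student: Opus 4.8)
The plan is to verify the defining identity of Definition \ref{def:w12c} by a direct computation when $f$ is Lipschitz and bounded, and then to reach a general $f\in L^\infty\cap W^{1,2}(\X)$ by approximation, exploiting that the covariant derivative is a closed operator (Theorem \ref{thm:basew12c}$(ii)$). First I would record the integrability facts that make the statement meaningful: since $X\in L^\infty(T\X)$ and $f\in L^\infty\cap W^{1,2}(\X)$, we have $fX\in L^2(T\X)$, while the identities $|\nabla f\otimes X|_\HS=|\nabla f|\,|X|$ and $|f\nabla X|_\HS=|f|\,|\nabla X|_\HS$ show that $T:=\nabla f\otimes X+f\nabla X$ is a genuine element of $L^2(T^{\otimes 2}\X)$; this is our candidate for $\nabla(fX)$.

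\emph{Base case $f\in\LIP_b(\X)\cap W^{1,2}(\X)$.} Fix $g_1,g_2\in\test\X$ and $h\in\LIP_b(\X)$. Expanding $T:(\nabla g_1\otimes\nabla g_2)=\la\nabla f,\nabla g_1\ra\la X,\nabla g_2\ra+f\,\nabla X:(\nabla g_1\otimes\nabla g_2)$ and using that $hf\in\LIP_b(\X)$ is an admissible test function in the defining identity for $\nabla X$, I would rewrite
\[
\int hf\,\nabla X:(\nabla g_1\otimes\nabla g_2)\,\d\mm=\int -\la X,\nabla g_2\ra\,\div(hf\nabla g_1)-hf\,\He(g_2)(X,\nabla g_1)\,\d\mm.
\]
The key algebraic step is the Leibniz rule for the divergence \eqref{eq:leibdiv}, applied to the Lipschitz function $f$ and the field $h\nabla g_1\in D(\div)$, giving $\div(hf\nabla g_1)=h\la\nabla f,\nabla g_1\ra+f\,\div(h\nabla g_1)$. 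Substituting this, the two occurrences of $h\la\nabla f,\nabla g_1\ra\la X,\nabla g_2\ra$ cancel, and using $\la fX,\nabla g_2\ra=f\la X,\nabla g_2\ra$ together with the tensoriality $\He(g_2)(fX,\nabla g_1)=f\,\He(g_2)(X,\nabla g_1)$ one recognizes exactly the right-hand side of Definition \ref{def:w12c} written for $fX$. Hence $fX\in W^{1,2}_C(T\X)$ with $\nabla(fX)=T$.

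\emph{General case and the main obstacle.} The only obstruction to running the computation for arbitrary $f$ is that $f\notin\LIP_b(\X)$, so that neither is $hf$ an admissible test function nor is the divergence Leibniz rule \eqref{eq:leibdiv} directly available; this is precisely where the $L^\infty$ hypotheses must be used. I would choose $f_n\in\LIP_b(\X)\cap W^{1,2}(\X)$ with $f_n\to f$ in $W^{1,2}(\X)$ and $\sup_n\|f_n\|_{L^\infty}<\infty$ (approximate $f$ by test functions, which are dense in $W^{1,2}(\X)$, and then truncate at level $\|f\|_{L^\infty}$, an operation that preserves both the $W^{1,2}$-convergence and the uniform $L^\infty$ bound). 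By the base case, $f_nX\in W^{1,2}_C(T\X)$ with $\nabla(f_nX)=\nabla f_n\otimes X+f_n\nabla X$. The bound $|X|\in L^\infty(\X)$ gives $f_nX\to fX$ in $L^2(T\X)$ and $\nabla f_n\otimes X\to\nabla f\otimes X$ in $L^2(T^{\otimes 2}\X)$, while $f_n\nabla X\to f\nabla X$ in $L^2(T^{\otimes 2}\X)$ follows, along a subsequence along which $f_n\to f$ $\mm$-a.e., by dominated convergence, the uniform bound $\sup_n\|f_n\|_{L^\infty}<\infty$ furnishing the dominating function $C|\nabla X|_\HS\in L^2(\X)$. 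Since the covariant derivative is closed, we conclude $fX\in W^{1,2}_C(T\X)$ and $\nabla(fX)=\nabla f\otimes X+f\nabla X$, which is \eqref{eq:leibcov}. The genuinely delicate point is thus not the identity but the limiting argument, and in particular keeping the $L^\infty$ norms of the approximants under control so that all three pieces converge in the correct topologies.
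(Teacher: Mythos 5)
Your proof is correct and follows essentially the same route as the paper: you verify the defining identity of Definition \ref{def:w12c} for Lipschitz bounded $f$ by using $hf$ as a test function in the definition of $\nabla X$ together with the divergence Leibniz rule \eqref{eq:leibdiv}, and then handle general $f\in L^\infty\cap W^{1,2}(\X)$ by approximation and the closedness of the covariant derivative, which is exactly the paper's (only sketched) ``general case comes by approximation''. The single point worth a line more is that your truncation step tacitly uses the continuity of truncation on $W^{1,2}(\X)$ (true, but it needs a subsequence/a.e.\ plus equi-integrability argument); alternatively one can regularize with the heat flow, which gives Lipschitz approximants with $\|\h_tf\|_{L^\infty}\leq\|f\|_{L^\infty}$ by the weak maximum principle.
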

\begin{proof} Assume for the moment that $f\in\test\X$ and let  $g_1,g_2\in\test\X$, $h\in \LIP_b(\X)$ be arbitrary. Then  $fh\in \LIP_b(\X)$  and from the definition of $\nabla X$ we see that 
\[
\int fh\nabla X:(\nabla g_1\otimes\nabla g_2)\,\d\mm=\int-\la X, \nabla g_2\ra\,\div(fh\nabla g_1)- fh\,\He{g_2}(X,\nabla g_1)\,\d\mm.
\]
Using the identity   $\div(fh\nabla g_1)=h\la \nabla f,\nabla g_1\ra+f\div(h\nabla g_1)$ (recall \eqref{eq:leibdiv}), this gives
\[
\begin{split}
\int h\la\nabla f,\nabla g_1\ra\,\la X,\nabla g_2\ra + & fh\nabla X:(\nabla g_1\otimes \nabla g_2)\,\d\mm\\
&=\int -\la fX,\nabla g_2\ra\,\div(h\nabla g_1)- h\,\He{g_2}(fX,\nabla g_1)\,\d\mm,
\end{split}
\]
which is the thesis. The general case comes by approximation.
\end{proof}
It will be useful to introduce the following notation: for $X\in W^{1,2}_C(T\X)$ and $Z\in L^\infty(T\X)$, the vector field $\nabla_ZX\in L^2(T\X)$ is defined by
\[
\la \nabla_ZX, Y\ra:= \nabla X:(Z\otimes Y),\qquad\mm\ae,\qquad\forall Y\in L^2(T\X).
\]
Since $L^2(T\X)\ni Y\mapsto  \nabla X:(Z\otimes Y)\in L^1(\X)$ is continuous and $L^\infty$-linear, we see from Proposition \ref{prop:riesz} that this is a good definition.
\begin{proposition}[Compatibility with the metric]\label{prop:compmetr}
Let $X,Y\in L^\infty\cap H^{1,2}_C(T\X)$. Then $\la X, Y\ra \in W^{1, 2}(\X)$ and 
\[
\d\langle X,  Y\rangle(Z)=\la\nabla_Z X,  Y\ra+\la \nabla_ZY,X\ra,\qquad\mm\ae,
\]
for every $Z\in L^2(T\X)$.
\end{proposition}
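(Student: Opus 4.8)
The plan is to prove the identity first for test vector fields by a direct computation, and then to reach the general case by approximation, the only genuinely delicate point being the uniform boundedness of the approximants. The first step is to single out the candidate differential. Since $X,Y\in L^\infty\cap H^{1,2}_C(T\X)$, the map
\[
L^2(T\X)\ni Z\ \longmapsto\ \nabla X:(Z\otimes Y)+\nabla Y:(Z\otimes X)\in L^1(\X)
\]
is $L^\infty(\X)$-linear and obeys the pointwise bound $|\,\cdot\,|\leq g\,|Z|$ with $g:=|\nabla X|\,|Y|+|\nabla Y|\,|X|\in L^2(\X)$. Applying Proposition \ref{prop:genext} with $V=L^2(T\X)$, and noting that $L^2(T\X)$ is the dual of the Hilbert module $L^2(T^*\X)$, which is reflexive (Proposition \ref{prop:riesz}), so that $(L^2(T\X))^*=L^2(T^*\X)$, I would represent this map by a unique $\omega\in L^2(T^*\X)$ with $|\omega|\leq g$ and $\omega(Z)=\la\nabla_ZX,Y\ra+\la\nabla_ZY,X\ra$ for every $Z$. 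It then suffices to show $\la X,Y\ra\in W^{1,2}(\X)$ with $\d\la X,Y\ra=\omega$, since the asserted formula is precisely $\d\la X,Y\ra(Z)=\omega(Z)$.

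Next I would treat the case $X,Y\in\vsm$. By $L^\infty$-bilinearity of both sides it is enough to take $X=g\nabla f$ and $Y=g'\nabla f'$ with $f,f',g,g'\in\test\X$. Then $\la X,Y\ra=gg'\la\nabla f,\nabla f'\ra$, which lies in $W^{1,2}(\X)$ by Theorem \ref{thm:savare} (after polarisation) together with the Leibniz rule, and combining \eqref{eq:leibniz} with the product rule for gradients (Proposition \ref{prop:gradehess}) gives
\[
\d\la X,Y\ra=\la\nabla f,\nabla f'\ra\,(g'\,\d g+g\,\d g')+gg'\big(\He f(\nabla f',\cdot)+\He f'(\nabla f,\cdot)\big).
\]
On the other hand, substituting $\nabla(g\nabla f)=\nabla g\otimes\nabla f+g(\He f)^\sharp$ (Theorem \ref{thm:basew12c}) into the definition of $\nabla_ZX$ and using the symmetry of the Hessian (Theorem \ref{thm:basew2}), a termwise comparison shows that $\omega(Z)$ coincides with the evaluation at $Z$ of the displayed differential. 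This settles the test-field case.

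Finally, for general $X,Y$ I would choose $X_n,Y_n\in\vsm$ converging to $X,Y$ in $W^{1,2}_C(T\X)$. The main obstacle is that I need these to be \emph{uniformly} bounded in $L^\infty$; I expect to secure this by a truncation-and-heat-flow regularisation argument parallel to the one invoked for functions in the proof of Proposition \ref{prop:gradehess}. Granting $\sup_n\||X_n|\|_{L^\infty},\sup_n\||Y_n|\|_{L^\infty}\leq M$ together with $|X|,|Y|\leq M$, the splitting $\la X_n,Y_n\ra-\la X,Y\ra=\la X_n-X,Y_n\ra+\la X,Y_n-Y\ra$ yields $\la X_n,Y_n\ra\to\la X,Y\ra$ in $L^2(\X)$, hence $\mm$-a.e.\ along a subsequence, and in particular $\la X,Y\ra\in L^2(\X)$. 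The same bounds make $(\d\la X_n,Y_n\ra)$ bounded in $L^2(T^*\X)$, while for each $Z\in L^\infty\cap L^2(T\X)$ one has $\d\la X_n,Y_n\ra(Z)\to\omega(Z)$ in $L^1(\X)$; thus every weak cluster point must equal $\omega$, so $\d\la X_n,Y_n\ra\weakto\omega$ in $L^2(T^*\X)$. The closure of the differential (Proposition \ref{prop:closed}) then gives $\la X,Y\ra\in\s^2(\X)$ with $\d\la X,Y\ra=\omega$, which combined with $\la X,Y\ra\in L^2(\X)$ yields $\la X,Y\ra\in W^{1,2}(\X)$ and the stated formula for all $Z\in L^2(T\X)$.
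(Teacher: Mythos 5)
Your identification of the candidate differential via Proposition \ref{prop:genext} and Riesz duality, and your verification of the formula for $X,Y\in\vsm$ by combining \eqref{eq:leibniz}, the product rule for gradients \eqref{eq:gradprod} and the Leibniz rule for the covariant derivative, is correct and is exactly the route the paper takes for the test-field case. The problem is the final approximation step, which is precisely the point the paper flags as delicate (the differentials $\d\la X_n,Y_n\ra$ of generic approximating sequences only converge in $L^1(T^*\X)$, so Proposition \ref{prop:closed} cannot be applied as it stands). Your way out is to claim uniformly $L^\infty$-bounded approximants $X_n,Y_n\in\vsm$, obtained ``by a truncation-and-heat-flow regularisation argument parallel to'' the one for functions in Proposition \ref{prop:gradehess}. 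This claim is not justified and the analogy does not transfer: for functions one can compose with the heat flow, which maps bounded functions into $\test\X$ and preserves $L^\infty$ bounds by the weak maximum principle, but no regularising flow on vector fields is available in this framework at this stage, and truncating a test field --- say replacing $X_n$ by $\phi(|X_n|^2)X_n$ with $\phi$ a cut-off --- takes you out of $\vsm$, because $|X_n|^2$ is only a bounded Sobolev function, not a test function. So as written there is a genuine gap: you neither produce the uniformly bounded approximants nor know that the identity of the test-field case holds for them.

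The gap is repairable, but it requires work you have not done. One can indeed set $\tilde X_n:=\phi(|X_n|^2)X_n$ with $\phi$ Lipschitz, equal to $1$ on $[0,(M+\delta)^2]$ and suitably decaying, note that $\phi(|X_n|^2)\in L^\infty\cap W^{1,2}(\X)$ by the chain rule \eqref{eq:chain} (using that $|X_n|^2\in W^{1,2}(\X)$ for $X_n\in\vsm$), so that $\tilde X_n\in L^\infty\cap W^{1,2}_C(T\X)$ by Proposition \ref{prop:leibcov}; then one must re-derive the compatibility identity for $\tilde X_n,\tilde Y_n$ from the one already proved for $X_n,Y_n$, using \eqref{eq:leibniz}, \eqref{eq:leibcov} and the test-field identity with $Y=X$ to express $\d(|X_n|^2)$, and finally check (via the Leibniz formula for $\nabla\tilde X_n$ and uniform integrability of $|\nabla X_n|^2$) that $\tilde X_n\to X$ in $W^{1,2}_C(T\X)$. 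Only after these steps does your weak-compactness argument and the closure of the differential apply. As submitted, the proposal asserts the crucial step rather than proving it, at exactly the spot where the paper warns that care is needed.
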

\begin{sketch} For $X,Y\in\vsm$ the claim follows directly from \eqref{eq:gradprod} and \eqref{eq:leibcov}. The general case then follows by approximation (to be done carefully, because for $(X_n),(Y_n)$ converging to $X,Y$ in $H^{1,2}_C(T\X)$ the differential of $\la X_n,Y_n\ra$ only converge in $L^1(T^*\X)$ so that Proposition \ref{prop:closed} cannot be applied as it is).
\end{sketch}
In the following proposition and below we shall write $X(f)$ in place of $\d f(X)$. 
\begin{proposition}[Torsion free identity]
Let $f\in \LIP\cap H^{2,2}(\X)$ and $X,Y\in L^\infty\cap H^{1,2}_C(T\X)$. Then $X(f),Y(f)\in W^{1, 2}(\X)$ and 
\begin{equation}
\label{eq:torsionfree}
X(Y(f))-Y(X(f))=\d f(\nabla_XY-\nabla_YX),\qquad\mm\ae.
\end{equation}
\end{proposition}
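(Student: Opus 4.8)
The plan is to express everything through the compatibility-with-the-metric identity of Proposition \ref{prop:compmetr} and then to let the two second-derivative terms cancel by symmetry of the Hessian. The first thing I would record is that $\nabla f\in L^\infty\cap H^{1,2}_C(T\X)$ with $\nabla(\nabla f)=(\He f)^\sharp$. Indeed $f\in\LIP$ gives $|\nabla f|=|\d f|\leq\Lip(f)$ $\mm$-a.e., so $\nabla f\in L^\infty(T\X)$; and if $f_n\in\test\X$ approximate $f$ in the $W^{2,2}$-topology (possible since $f\in H^{2,2}(\X)$), then $\nabla f_n\to\nabla f$ in $L^2(T\X)$ while $\nabla(\nabla f_n)=(\He f_n)^\sharp\to(\He f)^\sharp$ in $L^2(T^{\otimes 2}\X)$, so $\nabla f_n\to\nabla f$ in $W^{1,2}_C(T\X)$; since each $\nabla f_n$ lies in $H^{1,2}_C(T\X)$ and the latter is closed, $\nabla f\in H^{1,2}_C(T\X)$, and Theorem \ref{thm:basew12c}(iii) supplies the formula. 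Recalling $X(f)=\d f(X)=\la\nabla f,X\ra$, the membership $X(f),Y(f)\in W^{1,2}(\X)$ is then immediate from Proposition \ref{prop:compmetr} applied to the pairs $(X,\nabla f)$ and $(Y,\nabla f)$, the fields $X,Y,\nabla f$ all belonging to $L^\infty\cap H^{1,2}_C(T\X)$.

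For the identity, I would apply Proposition \ref{prop:compmetr} to $Y$ and $\nabla f$ with $Z=X$ to get
\[
X(Y(f))=\d\la Y,\nabla f\ra(X)=\la\nabla_X Y,\nabla f\ra+\la\nabla_X\nabla f,Y\ra,
\]
and symmetrically $Y(X(f))=\la\nabla_Y X,\nabla f\ra+\la\nabla_Y\nabla f,X\ra$. The crucial observation is that the second terms are Hessian entries: from the definition of $\nabla_X$ and $\nabla(\nabla f)=(\He f)^\sharp$ one reads off $\la\nabla_X\nabla f,Y\ra=(\He f)^\sharp:(X\otimes Y)=\He f(X,Y)$, and likewise $\la\nabla_Y\nabla f,X\ra=\He f(Y,X)$. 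Since $\He f$ is symmetric (Theorem \ref{thm:basew2}(iii)), these two contributions are equal and cancel when I subtract, leaving
\[
X(Y(f))-Y(X(f))=\la\nabla_X Y-\nabla_Y X,\nabla f\ra=\d f(\nabla_X Y-\nabla_Y X),
\]
which is exactly \eqref{eq:torsionfree}.

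Once the right function spaces are in place the computation is a two-line manipulation, so the only genuine work is in the preliminary reduction. I expect the main obstacle to be verifying that $\nabla f\in H^{1,2}_C(T\X)$ rather than merely $W^{1,2}_C(T\X)$, i.e.\ that the gradient of a test function can be approximated in the $W^{1,2}_C$-norm by elements of $\vsm$; this is precisely what licenses the use of Proposition \ref{prop:compmetr}. Concretely one localizes $\nabla f_n$ by test cut-off functions tending to $1$ and controls the error term of the form $\nabla\chi\otimes\nabla f_n$, and it is here that the hypothesis $f\in H^{2,2}$ (the closure of test functions) rather than $f\in W^{2,2}$ enters essentially. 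The identification $\la\nabla_Z\nabla f,W\ra=\He f(Z,W)$ through the musical isomorphism $\sharp$ pairing a cotensor with a tensor, and the cancellation by symmetry, are then routine.
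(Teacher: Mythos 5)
Your argument is correct and is essentially the paper's own proof: both deduce $\nabla f\in L^\infty\cap H^{1,2}_C(T\X)$ from $f\in\LIP\cap H^{2,2}(\X)$, apply Proposition \ref{prop:compmetr} to $\la Y,\nabla f\ra$ (resp.\ $\la X,\nabla f\ra$) to get $X(Y(f))=\d f(\nabla_XY)+\He f(X,Y)$ via $\nabla(\nabla f)=(\He f)^\sharp$, and cancel the Hessian terms by symmetry. The extra detail you give on approximating $\nabla f$ in $W^{1,2}_C$ by gradients of test functions (with cut-offs) just fleshes out what the paper dismisses with ``by the very definition of $H^{1,2}_C(T\X)$''.
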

\begin{proof} By the very definition of $H^{1,2}_C(T\X)$ we have $\nabla f\in  L^\infty\cap H^{1,2}_C(T\X)$, thus from Proposition \ref{prop:compmetr} we know that  $Y(f)\in W^{1,2}(\X)$ and 
\[
X(Y(f))=\nabla Y:(X\otimes \nabla f)+\He f(X,Y)=\d f(\nabla_XY)+\He f(X,Y).
\]
Subtracting the analogous expression for $Y(X(f))$ and using the symmetry of the Hessian we conclude.
\end{proof}
Since $\test\X\subset \LIP\cap H^{2,2}(\X)$, we have that $\{\d f: f\in \LIP\cap H^{2,2}(\X)\}$ generates $L^2(T^*\X)$, hence $\nabla_XY-\nabla_YX$ is the only vector field for which the identity \eqref{eq:torsionfree} holds. It is therefore meaningful to define the Lie bracket of vector fields as:
\[
[X,Y]:=\nabla_XY-\nabla_YX\in L^1(T\X)\qquad\forall X,Y\in H^{1,2}_C(T\X).
\]
\subsubsection{Flow of vector fields}
In the smooth setting, the Cauchy-Lipschitz theorem provides existence and uniqueness for the solution of
\begin{equation}
\label{eq:cl}
\gamma'_t=v_t(\gamma_t)\qquad\gamma_0\text{ given,}
\end{equation}
for a suitable family of Lipschitz vector fields $v_t$ on $\R^d$. The Ambrosio-Di Perna-Lions theory (\cite{DiPerna-Lions89}, \cite{Ambrosio04}) provides an extension of this classical result to the case of Sobolev/BV vector fields with a one-sided bound on the divergence. As it turned out (\cite{Ambrosio-Trevisan14}) such theory admits an extension to $\RCD$ spaces, which we very briefly recall here. We remark  that \cite{Ambrosio-Trevisan14} has been developed independently from \cite{Gigli14}, and that the definitions and results in \cite{Ambrosio-Trevisan14} cover cases more general than those we recall below: here we just want to phrase the main result of \cite{Ambrosio-Trevisan14} in the language we are proposing and in a set of assumptions which is usually relevant in applications.

\bigskip

The concept of solution of \eqref{eq:cl} is replaced by the following definition:
\begin{definition}[Regular Lagrangian flow]
Let $(X_t)\in L^2([0,1],L^2(T\X))$. We say that $F:[0,1]\times\X\to \X$ is a Regular Lagrangian Flow for $(X_t)$ provided:
\begin{itemize}
\item[i)] For some $C>0$ it holds
\begin{equation}
\label{eq:bdflow}
(F_t)_*\mm\leq C\mm\qquad\forall t\in[0,1].
\end{equation}
\item[ii)] For $\mm$-a.e.\ $x\in\X$ the curve $[0,1]\ni t\mapsto F_t(x)\in \X$ is continuous and such that $F_0(x)=x$.
\item[iii)] for every $f\in W^{1,2}(\X)$ we have: for $\mm$-a.e.\ $x\in\X$ the function $t\mapsto f(F_t(x))$ belongs to $W^{1,1}(0,1)$ and it holds
\begin{equation}
\label{eq:deffl}
\frac{\d}{\d t}f(F_t(x))=\d f(X_t)(F_t(x))\qquad \mm\times\mathcal L^1\restr{[0,1]}\ae (x,t)
\end{equation}
where the derivative at the left-hand-side is the distributional one.
\end{itemize}
\end{definition}
Notice that it is due to property $(i)$ that property $(iii)$ makes sense. Indeed, for given $X_t\in L^2(T\X)$ and $f\in W^{1,2}(\X)$ the function $\d f(X_t)\in L^1(\X)$ is only defined $\mm$-a.e., so that (part of) the role of  \eqref{eq:bdflow} is to grant that $\d f(X_t)\circ F_t$ is  well defined $\mm$-a.e..

Notice that by arguing as in the proof of the equality \eqref{eq:linksp12} we see that for $\mm$-a.e.\ $x\in \X$ the curve $t\mapsto F_t(x)$ is absolutely continuous with
\[
|\dot{F_t}(x)|=|X_t|(F_t(x))\qquad \mm\times\mathcal L^1\restr{[0,1]}\ae (x,t).
\]
Taking into account the integrability condition on $(X_t)$ we then see that for every $\mu\in\prob\X$ with $\mu\leq C\mm$ for some $C>0$, the plan $\ppi:=(F_\cdot)_*\mu$ is a test plan, where $F_\cdot:\X\to C([0,1],\X)$ is the $\mm$-a.e.\ defined map sending $x$ to $t\mapsto F_t(x)$. It is then clear from the defining properties \eqref{eq:vel12} and  \eqref{eq:deffl} that the velocity vector fields $\ppi'_t\in L^2(T\X,\e_t,\ppi)$ of $\ppi$ are given by
\[
\ppi'_t=\e_t^*X_t,\qquad{\rm a.e.}\ t.
\]
The main result of \cite{Ambrosio-Trevisan14} can then be stated as:
\begin{theorem}
Let $(X_t)\in L^2([0,1],W^{1,2}_C(T\X))\cap  L^\infty([0,1],L^\infty(T\X)) $ be such that $X_t\in D(\div)$ for a.e.\ $t\in[0,1]$, with 
\[
\int_0^1\|\div(X_t)\|_{L^2(\X)}+\|\big(\div(X_t)\big)^-\|_{L^\infty(\X)}\,\d t<\infty.
\]
Then a Regular Lagrangian Flow $F_t$ for $(X_t)$ exists and is unique, in the sense that if $\tilde F$ is another flow, then for $\mm$-a.e.\ $x\in \X$ it holds $F_t(x)=\tilde F_t(x)$ for every $t\in[0,1]$. Moreover:
\[
(F_t)_*\mm\leq \exp\Big(\int_0^t\|\big(\div(X_t)\big)^-\|_{L^\infty(\X)}\,\d t\Big)\,\mm\qquad\forall t\in[0,1].
\]
\end{theorem}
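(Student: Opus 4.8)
The plan is to follow the now-classical DiPerna--Lions--Ambrosio strategy, in the metric-measure adaptation of \cite{Ambrosio-Trevisan14}: instead of attacking the Lagrangian problem \eqref{eq:cl} directly, one transfers both existence and uniqueness to the \emph{Eulerian} side, namely to the continuity equation
\[
\frac\d{\d t}\int u_t\,\phi\,\d\mm=\int u_t\,X_t(\phi)\,\d\mm\qquad\forall\phi\in\test\X,
\]
sought in the class $u\in L^\infty([0,1]\times\X)$. The first goal is \textbf{uniqueness}, for which it suffices to show that the only such solution with $u_0=0$ is $u\equiv0$; this, by a standard measure-theoretic argument disintegrating a lift along characteristics, will force uniqueness of the flow.

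The heart of the matter is the \textbf{renormalization property}: if $u$ solves the continuity equation, then so does $\beta(u)$ for smooth $\beta$. Its proof rests on a \emph{commutator estimate}. Since the space carries no translation structure, the mollification is performed through the heat semigroup $(\h_\eps)_{\eps>0}$, and one must show that the commutator
\[
C_\eps:=\div\big(X_t\,\h_\eps u\big)-\h_\eps\big(\div(X_tu)\big)
\]
tends to $0$ in $L^1_{\mathrm{loc}}$ as $\eps\downarrow0$. This is exactly where the second-order structure built in the previous sections is indispensable: the covariant derivative $\nabla X_t\in L^2(T^{\otimes2}\X)$ together with the Bakry--\'Emery contraction \eqref{eq:be} for $\h_\eps$ supply the quantitative control of $C_\eps$ that replaces the Sobolev commutator lemma of the Euclidean theory. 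Granting renormalization, uniqueness follows from an energy estimate: applying it to $u^2$ and using the Leibniz rule \eqref{eq:leibdiv} together with the one-sided divergence bound yields $\frac\d{\d t}\int u_t^2\,\d\mm\le\|(\div X_t)^-\|_{L^\infty}\int u_t^2\,\d\mm$, whence Gronwall and $u_0=0$ force $u\equiv0$.

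For \textbf{existence} I would pass through the \emph{superposition principle}: any nonnegative, uniformly bounded solution $(\mu_t=\rho_t\mm)$ of the continuity equation can be lifted to a measure $\ppi\in\prob{C([0,1],\X)}$ concentrated on absolutely continuous curves that solve $\gamma_t'=X_t(\gamma_t)$ in the sense of Theorem \ref{thm:speedplan}, i.e.\ $\ppi$ is a test plan with $\ppi'_t=\e_t^*X_t$ and $(\e_t)_*\ppi=\mu_t$. Starting from $\mu_0=\mm$ localized to sets of finite measure and invoking the uniqueness established above, one shows that $\ppi$ must be concentrated on the graph of a single map: its disintegration with respect to $\e_0$ consists of Dirac masses, which defines $F_t(x):=\gamma_t$ for the $\ppi$-a.e.\ curve $\gamma$ through $x$. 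Properties $(i)$--$(iii)$ of a Regular Lagrangian Flow then read off directly, $(iii)$ being precisely the identity $\ppi'_t=\e_t^*X_t$ decoded through \eqref{eq:vel12}.

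Finally, the \textbf{density bound} is obtained by applying renormalization to $\rho_t=\frac{\d(F_t)_*\mm}{\d\mm}$, which solves the continuity equation: the formal computation $\frac\d{\d t}\rho_t=-X_t(\rho_t)-\rho_t\div(X_t)\le\rho_t\,(\div X_t)^-$, made rigorous through the regularized equation, gives $\frac\d{\d t}\|\rho_t\|_{L^\infty}\le\|(\div X_t)^-\|_{L^\infty}\|\rho_t\|_{L^\infty}$ and hence the claimed exponential estimate by Gronwall. \textbf{The main obstacle} is the commutator estimate: in the absence of any translation or convolution structure one must use the heat semigroup as mollifier and convert the failure of $\h_\eps$ to commute with the derivation $X_t(\cdot)$ into a quantity controlled by $\nabla X_t$ and $|X_t|$, relying on the Bochner/Bakry--\'Emery machinery. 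This is the genuinely non-smooth difficulty, and the reason the complete argument is deferred to \cite{Ambrosio-Trevisan14}.
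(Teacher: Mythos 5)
There is nothing in the paper to compare against here: the author explicitly declines to prove this theorem, stating that it is ``outside the scope of this note'' and referring to \cite{Ambrosio-Trevisan14} (and to \cite{AmbrosioCrippa08}, \cite{AT15} for overviews). Your outline is a faithful reproduction of the strategy of that reference -- well-posedness of the continuity equation in $L^\infty$ via renormalization, existence of the flow via the superposition principle and the Dirac-disintegration argument, the compression bound via the divergence estimate -- so as a roadmap it is the right one.

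As a proof, however, it has a genuine gap, and it is exactly the one you flag yourself: the commutator estimate, i.e.\ the proof that $C_\eps\to 0$ in a suitable sense as $\eps\downarrow0$ when the mollification is performed with the heat semigroup. Everything downstream (renormalization, the Gronwall estimate on $\int u_t^2\,\d\mm$, uniqueness of the flow, the exponential density bound) is standard once this is available, but writing ``the covariant derivative together with the Bakry--\'Emery contraction supply the quantitative control'' is an assertion, not an argument; in \cite{Ambrosio-Trevisan14} the estimate is carried out by expanding $\h_\eps$-regularized quantities through the Dirichlet form and $\Gamma$-calculus, and the quantity that actually controls the commutator is the \emph{symmetric} part of the derivative of $X_t$ (in a weak, integrated formulation), together with the one-sided divergence bound -- this is where the assumption $X_t\in W^{1,2}_C(T\X)$ enters and it requires the second-order machinery in a more delicate way than a direct appeal to \eqref{eq:be}. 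Two further points are glossed over: the justification that $\int\div(u_t^2X_t)\,\d\mm=0$ (an approximation/cut-off issue when $\mm(\X)=\infty$), and the fact that the compression bound is not obtained a posteriori by renormalizing $\rho_t=\d(F_t)_*\mm/\d\mm$, but is built into the existence scheme: one first constructs solutions of the continuity equation satisfying $\rho_t\leq \exp\big(\int_0^t\|(\div X_s)^-\|_{L^\infty}\,\d s\big)$ (e.g.\ by a dual/viscous approximation) and then lifts them, since otherwise one does not even know that $(F_t)_*\mm\ll\mm$ to begin the formal computation. So: correct skeleton, consistent with the cited source, but the central analytic lemma and the order of the a priori estimates remain to be supplied.
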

It is outside the scope of this note to present the proof of this result, which is non-trivial even in Euclidean setting; we rather refer to  \cite{AmbrosioCrippa08} and \cite{AT15} for an overview of the theory in $\R^n$ and $\RCD$ spaces respectively.

\subsection{Exterior derivative}
\subsubsection{Exterior power of a Hilbert module}
Let $\H$ be a Hilbert module and put $\H^{\otimes k}:=\underbrace{\H\otimes\cdots\otimes\H}_{k\text{ times}}$. The $k$-th exterior power $\H^{\wedge^k}$ of $\H$  is defined as the quotient of $\H^{\otimes k}$ w.r.t.\ the space of $L^\infty$-linear combinations of elements of the form $v_1\otimes\cdots\otimes v_k$ with $v_i=v_j$ for some $i\neq j$. 

We denote by $v_1\wedge\cdots\wedge v_k$ the image of $v_1\otimes\cdots\otimes v_k$ under the quotient map and  endow $\H^{\wedge^k}$ with the (rescaling of the) quotient pointwise scalar product given by
\[
\la v_1\wedge\cdots\wedge v_k,w_1\wedge\cdots\wedge w_k\ra:=\det\big(\langle v_i,w_j\rangle\big)\quad\mm\ae.
\]
Routine computations show that  $\H^{\wedge^k}$ is a Hilbert module. For $\H=L^2(T^*\X)$,  we write $L^2(\Lambda^kT^*\X)$ for the $k$-th exterior power if $k>1$, keeping the notation $L^2(T^*\X)$ and $L^2(\X)$ for the cases $k=1,0$ respectively. We shall refer to elements of $L^2(\Lambda^kT^*\X)$ as $k$-forms.

It is readily checked that the duality relation between $L^2(T^*\X)$ and $L^2(T\X)$ induces a duality relation between the respective $k$-th exterior powers; we shall typically write
$\omega(X_1,\ldots,X_k)$ in place of $\omega(X_1\wedge\cdots\wedge X_k)$.
\subsubsection{Sobolev differential forms and basic calculus rules}\label{se:extd}
In the smooth setting the exterior differential of the $k$-form $\omega$ if characterized by
\[
\begin{split}
\d\omega(X_0,\ldots,X_k)=&\sum_i(-1)^i\d\big(\omega(X_0,\ldots,\hat X_i,\ldots,X_k)\big)(X_i)\\
&+\sum_{i<j}(-1)^{i+j}\omega([X_i,X_j],X_0,\ldots,\hat X_i,\ldots,\hat X_j,\ldots,X_k),
\end{split}
\]
for any smooth vector fields $X_1,\ldots,X_k$.

Noticing that  for $X_i\in\vsm$ we have  $|X_1\wedge\ldots\wedge X_n|\in L^2(\X)$ and   $|[X_i,X_j]\wedge  X_1\wedge\ldots\wedge X_n|\in L^2(\X)$ as well, we are therefore lead to the following definition:
\begin{definition}[The space $W^{1,2}_\d(\Lambda^kT^*\X)$]
The space $W^{1,2}_\d(\Lambda^kT^*\X)\subset L^2(\Lambda^kT^*\X)$ is the space of $k$-forms $\omega$ such that there exists a $k+1$ form $\eta\in L^2(\Lambda^{k+1}T^*\X)$ for which the identity
\begin{equation}
\label{eq:defdext}
\begin{split}
\int \!\eta(X_0,\cdots, X_{k})\,\d\mm=& \int\sum_i (-1)^{i+1}\omega(X_0,\cdots,\hat X_i,\cdots, X_{k})\,\div( X_i)\,\d\mm\\
&+\int\sum_{i<j}(-1)^{i+j}\omega([X_i,X_j],X_0,\cdots ,\hat X_i ,\cdots ,\hat X_j ,\cdots, X_{k})\,\d\mm,
\end{split}
\end{equation}
holds for any $X_0,\ldots,X_{k}\in \vsm$. In this case $\eta$ will be called exterior differential of $\omega$ and denoted as $\d\omega$.

We endow $W^{1,2}_\d(\Lambda^kT^*\X)$ with the norm $\|\cdot\|_{W^{1,2}_\d(\Lambda^kT^*\X)}$ given by
\[
\|\omega\|_{W^{1,2}_\d(\Lambda^kT^*\X)}^2:=\|\omega\|^2_{L^2(\Lambda^kT^*\X)}+\|\d\omega\|^2_{L^2(\Lambda^{k+1}T^*\X)}.
\]
\end{definition}
It is readily verified that for $\omega\in W^{1,2}_\d(\Lambda^kT^*\X)$ the $\eta$ for which \eqref{eq:defdext} holds is unique and linearly depends on $\omega$, so that $W^{1,2}_\d(\Lambda^kT^*\X)$ is a normed vector space.

We then have the following:
\begin{theorem}[Basic properties of $W^{1,2}_\d(\Lambda^kT^*\X)$]\label{thm:basew12d} For every $k\in\N$ the following holds:
\begin{itemize}
\item[i)]  $W^{1,2}_\d(\Lambda^kT^*\X)$ is a separable Hilbert space.
\item[ii)] The exterior differential is a closed operator, i.e.\ $\{(\omega,\d\omega):\omega\in W^{1,2}_\d(\Lambda^kT^*\X)\}$ is a closed subspace of $L^{2}(\Lambda^kT^*\X)\times L^{2}(\Lambda^{k+1}T^*\X)$.
\item[iii)]  $W^{1,2}_\d(\Lambda^0T^*\X)=W^{1,2}(\X)$ and the two notions of differentials underlying these spaces coincide. 
\end{itemize}
\end{theorem}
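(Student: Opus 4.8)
The plan is to follow the template already used for Theorems \ref{thm:basew2} and \ref{thm:basew12c}: prove the closure statement (ii) first by passing to the limit in the defining identity \eqref{eq:defdext}, deduce (i) from it, and finally obtain the identification (iii) by unwrapping \eqref{eq:defdext} at $k=0$.

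\emph{Closedness (ii).} I would freeze test vector fields $X_0,\dots,X_k\in\vsm$ and observe that, with these fixed, every term in \eqref{eq:defdext} is a continuous linear functional of the relevant $L^2$-form. For $X_i\in\vsm$ one has $\div(X_i)\in L^2(\X)$ (by \eqref{eq:leibdiv} and $\test\X\subseteq D(\Delta)$), while the observation preceding the definition gives $|X_0\wedge\cdots\wedge X_k|\in L^2(\X)$ and $|[X_i,X_j]\wedge X_0\wedge\cdots|\in L^2(\X)$, the brackets being defined since $\vsm\subseteq H^{1,2}_C(T\X)$. Hence the left-hand side $\zeta\mapsto\int\zeta(X_0,\dots,X_k)\,\d\mm$ is continuous on $L^2(\Lambda^{k+1}T^*\X)$; the first (divergence) sum is continuous in $\omega\in L^2(\Lambda^kT^*\X)$ because its integrands are dominated by $|\omega|\,|X_0\wedge\cdots\widehat{X_i}\cdots\wedge X_k|\,|\div(X_i)|$ with the $\widehat{X_i}$-wedge in $L^\infty(\X)$ and $|\div(X_i)|\in L^2(\X)$; and the bracket sum is continuous in $\omega$ by the $L^2$-integrability of $|[X_i,X_j]\wedge\cdots|$. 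Thus, given $(\omega_n,\d\omega_n)\to(\omega,\eta)$ in $L^2(\Lambda^kT^*\X)\times L^2(\Lambda^{k+1}T^*\X)$, I pass to the limit in \eqref{eq:defdext} written for $\omega_n$ and conclude $\omega\in W^{1,2}_\d(\Lambda^kT^*\X)$ with $\d\omega=\eta$.

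\emph{Hilbert and separable (i).} Completeness is immediate from (ii): a sequence Cauchy for $\|\cdot\|_{W^{1,2}_\d(\Lambda^kT^*\X)}$ is Cauchy in both $L^2(\Lambda^kT^*\X)$ and $L^2(\Lambda^{k+1}T^*\X)$, so it converges there, and closedness forces the limit into $W^{1,2}_\d$ with the expected differential. The map $\omega\mapsto(\omega,\d\omega)$ is a linear isometry of $W^{1,2}_\d(\Lambda^kT^*\X)$ into $L^2(\Lambda^kT^*\X)\times L^2(\Lambda^{k+1}T^*\X)$, whose squared norm is a sum of two squared $L^2$-norms and hence satisfies the parallelogram law; so $W^{1,2}_\d$ is a Hilbert space. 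Separability follows since $L^2(T^*\X)$ is separable (Remark \ref{re:hilsep}), whence so are all its exterior powers (quotients of the separable tensor powers), the target product space is separable, and any subset of a separable metric space is separable.

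\emph{The case $k=0$ (iii).} At $k=0$ the bracket sum in \eqref{eq:defdext} is empty and the divergence sum reduces to the single term $-\omega\,\div(X_0)$, so the defining identity reads $\int\eta(X_0)\,\d\mm=-\int\omega\,\div(X_0)\,\d\mm$ for all $X_0\in\vsm$. For $f\in W^{1,2}(\X)$ this is precisely the integration by parts \eqref{eq:defdiv} with $\d f$ in place of $\eta$, giving $W^{1,2}(\X)\subseteq W^{1,2}_\d(\Lambda^0T^*\X)$ together with the coincidence of the two differentials. For the converse I would argue by duality: in the infinitesimally Hilbertian setting $\nabla\colon W^{1,2}(\X)\to L^2(T\X)$ is densely defined and closed (Proposition \ref{prop:closed}, transported via the musical isomorphism), and a direct comparison of definitions shows its Hilbert-space adjoint is exactly $-\div$, with $D(\nabla^*)=D(\div)$. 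Therefore $\nabla=(\nabla^*)^*=(-\div)^*$, and $\omega\in W^{1,2}(\X)$ with $\nabla\omega=\eta^\sharp$ is \emph{equivalent} to the identity $\int\eta(X)\,\d\mm=-\int\omega\,\div(X)\,\d\mm$ holding for every $X\in D(\div)$.

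The hypothesis, however, supplies this identity only for $X\in\vsm$. Since $X\mapsto\int\eta(X)\,\d\mm+\int\omega\,\div(X)\,\d\mm$ is continuous for the graph norm $\|X\|_{L^2(T\X)}+\|\div(X)\|_{L^2(\X)}$, it vanishes on the graph-norm closure of $\vsm$ inside $D(\div)$. The crux, and the step I expect to be the \textbf{main obstacle}, is thus to show that $\vsm$ is a \emph{core} for the divergence, i.e.\ graph-dense in $D(\div)$; this is the genuinely nontrivial input that upgrades the identity from test vector fields to all of $D(\div)$, after which the adjoint characterization closes the argument. I would establish this density by regularizing an arbitrary $X\in D(\div)$ with the heat flow (keeping control of $\div$ along the regularization) and then approximating the smoothed field by elements of $\vsm$. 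Alternatively one may sidestep the core issue by regularizing $\omega$ itself: using that $t\mapsto\|\nabla\h_t\omega\|_{L^2(T\X)}$ is nonincreasing to secure a uniform bound, one extracts a weak limit of $\nabla\h_t\omega$, identifies it with $\eta^\sharp$ through the frozen-$X$ testing of the first paragraph, and invokes Proposition \ref{prop:closed} to conclude $\omega\in W^{1,2}(\X)$ with $\d\omega=\eta$.
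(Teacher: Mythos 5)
Your handling of (i), (ii) and of the easy inclusion $W^{1,2}(\X)\subseteq W^{1,2}_\d(\Lambda^0T^*\X)$ is correct and coincides with the paper's argument (freeze $X_0,\dots,X_k\in\vsm$, use continuity of both sides of \eqref{eq:defdext} in $\omega$ and $\eta$, embed isometrically into a product of $L^2$ spaces, and invoke the definition of divergence). The problem lies in the converse inclusion in (iii), in both branches you propose. The adjoint route is fine as abstract functional analysis ($\div=-\nabla^*$, hence $(-\div)^*=\nabla$ since $\nabla$ is closed by Proposition \ref{prop:closed}), but it relocates the entire content into the claim that $\vsm$ is a core for $\div$: unwinding definitions, $f\in D\big((\div\restr{\vsm})^*\big)$ is \emph{exactly} the statement $f\in W^{1,2}_\d(\Lambda^0T^*\X)$, so ``$\vsm$ is a core'' is equivalent (via closure $=$ biadjoint, using that $D(\div)$ is dense by \eqref{eq:densediv}) to the very inclusion you want. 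Deferring it is deferring the theorem, and the plan you sketch for it -- ``regularize an arbitrary $X\in D(\div)$ with the heat flow'' -- is not available: at this point of the theory there is no semigroup acting on vector fields (the Hodge flow is built later, out of the spaces under construction), so nothing lets you smooth $X$ while keeping control of $\div X$.

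Your fallback is the paper's actual proof, but as written it has a genuine hole at the key estimate: the monotonicity of $t\mapsto\|\nabla\h_t\omega\|_{L^2}$ does \emph{not} ``secure a uniform bound'' -- for a generic $\omega\in L^2\setminus W^{1,2}(\X)$ this quantity blows up as $t\downarrow0$, which is perfectly compatible with monotonicity. The uniform bound must come from the hypothesis: pick $\omega_n\in L^2\cap L^\infty(\X)$ with $\omega_n\to\omega$ in $L^2(\X)$, note that $\nabla\h_t\omega_n\in\vsm$, plug $X=\nabla\h_t\omega_n$ into the defining identity and let $n\to\infty$ to obtain, using the self-adjointness of the semigroup,
\[
\int|\nabla\h_{t/2}\omega|^2\,\d\mm=-\int\omega\,\Delta\h_t\omega\,\d\mm=\int\eta(\nabla\h_t\omega)\,\d\mm\leq\|\eta\|_{L^2}\|\nabla\h_t\omega\|_{L^2}\leq\|\eta\|_{L^2}\|\nabla\h_{t/2}\omega\|_{L^2},
\]
where monotonicity enters only in the last inequality; dividing gives $\|\nabla\h_{t/2}\omega\|_{L^2}\leq\|\eta\|_{L^2}$ uniformly in $t$, after which $\h_{t/2}\omega\to\omega$ in $L^2$, Proposition \ref{prop:closed} yields $\omega\in W^{1,2}(\X)$, and testing against $\vsm$ (dense in $L^2(T\X)$) identifies $\d\omega=\eta$. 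This computation -- which is precisely what the paper does, and which would equally well establish your core property if you insist on the adjoint formulation -- is the missing ingredient in both of your branches.
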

\begin{proof} $(i)$ and $(ii)$ are proved along the same lines used for analogous claims in Theorem \ref{thm:basew2}. For $(iii)$ we notice that the inclusion $\supset$ and the fact that for $f\in W^{1,2}(\X)$ its differential as defined in Theorem \ref{thm:defcot} satisfies \eqref{eq:defdext} is obvious by the very definition of divergence. For the converse inclusion notice that in the case $k=0$ \eqref{eq:defdext} reads as
\[
-\int f\div(X)\,\d\mm=\int \eta(X)\,\d\mm\qquad\forall X\in \vsm,
\]
let $(f_n)\subset L^2\cap L^\infty(\X)$ be $L^2$-converging to $f$ and notice that for $t>0$ we have $\nabla \h_tf_n\in\vsm$, so that the above holds for $X=\nabla \h_tf_n$. Passing to the limit in $n$ and noticing that $\nabla \h_tf_n\to\nabla\h_t f$ and $\Delta \h_tf_n\to\Delta \h_tf$ in $L^2(T\X)$ and $L^2(\X)$ respectively we deduce
\[
\int|\nabla\h_{t/2}f|^2\,\d\mm=-\int f\Delta\h_tf(X)\,\d\mm=\int \eta(\nabla\h_tf)\,\d\mm\leq \|\eta\|_{L^2}\|\nabla\h_t f\|_{L^2}\leq \|\eta\|_{L^2}\|\nabla\h_{t/2} f\|_{L^2},
\]
having used the fact that $t\mapsto \int|\nabla\h_{t/2}f|^2\,\d\mm$ is non-increasing. The conclusion follows dividing by $\|\nabla\h_{t/2} f\|_{L^2}$ and letting $t\downarrow0$.
\end{proof}
It will be convenient to introduce the space of {\bf test $k$-forms} as
\[
\begin{split}
\ffsm k:=\Big\{\text{linear combinations of forms of the}& \text{ kind }f_0\d f_1\wedge\ldots\wedge\d f_k\\
&\text{with }\ f_i\in\test\X\ \forall i=0,\ldots,k\Big\}.
\end{split}
\]
It is not hard to check that $\ffsm k$ is dense in $L^2(\Lambda^kT^*\X)$.

\begin{proposition}[Basic calculus rules for exterior differentiation]\label{prop:basedext} The following holds:
\begin{itemize}
\item[i)] For  $f_i\in L^\infty\cap W^{1,2}(\X)$ with $|\d f_i|\in L^\infty$, $i=0,\ldots,k$, we have that both  $ f_0\d f_1\wedge\cdots\d f_k$ and $\d f_1\wedge\cdots\d f_k$ are in $W^{1,2}_\d(\Lambda^kT^*X)$ and
\begin{align}
\label{eq:df1fn}
\d( f_0\d f_1\wedge\cdots \wedge\d f_k)&= \d f_0\wedge \d f_1\wedge\cdots \wedge\d f_k,\\
\label{eq:ddf1fn}
\d(\d f_1\wedge\cdots \wedge\d f_k)&= 0.
\end{align}
\item[ii)]  We have $\ffsm k\subset W^{1,2}_\d(\Lambda^kT^*\X)$ and in particular $W^{1,2}_\d(\Lambda^kT^*\X)$ is dense in $L^{2}(\Lambda^kT^*\X)$.
\item[iii)]Let $\omega\in W^{1,2}_\d(\Lambda^{k}T^*\X)
$ and $\omega'\in\ffsm{k'}$. Then $\omega\wedge\omega'\in W^{1,2}_\d(\Lambda^{k+k'}T^*\X)$ with
\[
\d(\omega\wedge\omega')=\d\omega\wedge\omega'+(-1)^k\omega\wedge\d\omega'.
\]
\end{itemize}
\end{proposition}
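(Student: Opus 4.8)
The plan is to reduce every assertion to a direct verification of the defining identity \eqref{eq:defdext} on test objects and then to invoke the closedness of the exterior differential (Theorem \ref{thm:basew12d}(ii)) to pass to limits, the only genuinely nontrivial ingredient being a combinatorial cancellation of second-order terms governed by the torsion-free identity \eqref{eq:torsionfree}.

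\emph{Reduction and regularity for (i).} First I would reduce \eqref{eq:df1fn} to the case $f_0,\ldots,f_k\in\test\X$. Given general $f_i\in L^\infty\cap W^{1,2}(\X)$ with $|\d f_i|\in L^\infty$, set $f_i^n:=\heatl_{1/n}f_i\in\test\X$; by the maximum principle and the Bakry--\'Emery estimate \eqref{eq:be} these have $L^\infty$-norms and pointwise gradient norms bounded uniformly in $n$, and $f_i^n\to f_i$ in $W^{1,2}(\X)$. Hence $f_0^n\,\d f_1^n\wedge\cdots\wedge\d f_k^n\to f_0\,\d f_1\wedge\cdots\wedge\d f_k$ in $L^2(\Lambda^kT^*\X)$ and $\d f_0^n\wedge\cdots\wedge\d f_k^n\to\d f_0\wedge\cdots\wedge\d f_k$ in $L^2(\Lambda^{k+1}T^*\X)$ (the wedge products converge because the factors converge in $L^2$ with uniformly bounded pointwise norms), so closedness of $\d$ lets me assume all $f_i\in\test\X$. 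In that case Theorem \ref{thm:savare} (by polarization) together with the Leibniz rules \eqref{eq:leibniz} and \eqref{eq:leibdiv} guarantee that everything appearing below is well defined and integrable: in particular, for $X_i\in\vsm$ the functions $X_i(f_a)$ lie in $L^\infty\cap W^{1,2}(\X)$, their products do too, and $\div(X_i)\in L^2(\X)$.

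\emph{The core computation for (i).} Fix $X_0,\ldots,X_k\in\vsm$ and set $M_i:=\det\big(X_j(f_a)\big)_{j\neq i,\,1\le a\le k}$, so that $\omega(X_0,\ldots,\hat X_i,\ldots,X_k)=f_0M_i$ for $\omega=f_0\,\d f_1\wedge\cdots\wedge\d f_k$. Using the definition of divergence I integrate by parts the first sum in \eqref{eq:defdext}, turning it into $\sum_i(-1)^i\int X_i(f_0M_i)\,\d\mm=\int\sum_i(-1)^i\big(X_i(f_0)M_i+f_0X_i(M_i)\big)\,\d\mm$. The term $\sum_i(-1)^iX_i(f_0)M_i$ is exactly the cofactor expansion along the $f_0$-column of the $(k+1)\times(k+1)$ determinant $\det\big(X_i(f_a)\big)_{0\le a\le k}=(\d f_0\wedge\cdots\wedge\d f_k)(X_0,\ldots,X_k)$, which reproduces the claimed right-hand side. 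It then remains to show that the second-order remainder $\int\sum_i(-1)^if_0\,X_i(M_i)\,\d\mm$ cancels the bracket sum of \eqref{eq:defdext}; here I insert the torsion-free identity \eqref{eq:torsionfree} in the form $[X_i,X_j](f_a)=X_i(X_j(f_a))-X_j(X_i(f_a))$, differentiate $M_i$ row by row, and match terms using the antisymmetry of the determinant. This combinatorial matching --- the metric-measure incarnation of the classical fact that Cartan's intrinsic formula for $\d$ agrees with the coordinate one, and the single place where torsion-freeness is essential --- is the main obstacle of the whole proposition. Formula \eqref{eq:ddf1fn} is then the special case $f_0\equiv1$ (so $\d f_0=0$, and no integrability of $f_0$ is needed): the first-order term drops out and the second-order terms cancel as before.

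\emph{Parts (ii) and (iii).} Part (ii) is immediate: each element of $\ffsm k$ is a finite linear combination of forms $f_0\,\d f_1\wedge\cdots\wedge\d f_k$ with $f_i\in\test\X$, which are bounded, Lipschitz and hence fall under (i); since $W^{1,2}_\d(\Lambda^kT^*\X)$ is a vector space this gives $\ffsm k\subset W^{1,2}_\d(\Lambda^kT^*\X)$, and $L^2$-density follows from the already-noted density of $\ffsm k$. For (iii), by linearity I may take $\omega'=f_0\,\d f_1\wedge\cdots\wedge\d f_{k'}$, and I would isolate two building blocks, each proved by directly checking \eqref{eq:defdext} for a general $\omega\in W^{1,2}_\d(\Lambda^kT^*\X)$ (so that only the defining property of $\d\omega$ is used, and no $W^{1,2}_\d$-density is needed): namely, for $f\in\test\X$, (a) $\omega\wedge\d f\in W^{1,2}_\d$ with $\d(\omega\wedge\d f)=\d\omega\wedge\d f$, whose verification uses $\d(\d f)=0$ from (i) to kill the terms differentiating $f$ twice and \eqref{eq:torsionfree} for the brackets; and (b) $f\omega\in W^{1,2}_\d$ with $\d(f\omega)=\d f\wedge\omega+f\,\d\omega$, via \eqref{eq:defdext} and \eqref{eq:leibdiv}. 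Writing $\omega\wedge\omega'=f_0\,(\omega\wedge\d f_1\wedge\cdots\wedge\d f_{k'})$, I append the factors $\d f_1,\ldots,\d f_{k'}$ one at a time by (a) and insert $f_0$ by (b); commuting the resulting $\d f_0$ past the degree-$k$ form $\omega$ supplies the sign $(-1)^k$ and yields the graded Leibniz formula. The residual difficulty is the bookkeeping in (a), which is a single-factor version of the computation in the previous paragraph.
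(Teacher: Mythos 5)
Your route is the one the paper intends (its own proof is just the remark that everything follows from the definitions, the determinant identity and the calculus rules), and most of it is carried out correctly and in more detail than the paper: in (i) the heat-flow reduction is legitimate, since the weak maximum principle and \eqref{eq:be} give uniform $L^\infty$ and Lipschitz bounds while $\h_{1/n}f_i\to f_i$ in $W^{1,2}(\X)$, so the wedge products converge in $L^2$ and the closedness of $\d$ (Theorem \ref{thm:basew12d}) applies; for test functions the Cartan-type cancellation is justified because every entry $X_j(f_a)$ lies in $W^{1,2}\cap L^\infty(\X)$ (Theorem \ref{thm:savare} plus the algebra property of $\test\X$), so $f_0M_i\in W^{1,2}(\X)$ may be paired with $\div(X_i)$, and \eqref{eq:torsionfree} is applicable to $f_a\in\test\X\subset\LIP\cap H^{2,2}(\X)$ and $X_i\in\vsm\subset L^\infty\cap H^{1,2}_C(T\X)$. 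Part (ii) is immediate as you say, the (a)/(b) factorization of (iii) with the $(-1)^k$ bookkeeping is sound, and step (b) does work as indicated: since $\test\X$ is an algebra, $fX_{i_0}\in\vsm$, so plugging $(X_0,\ldots,fX_{i_0},\ldots,X_k)$ into \eqref{eq:defdext}, using \eqref{eq:leibdiv} and $[X,fY]=f[X,Y]+X(f)Y$ (a consequence of \eqref{eq:leibcov}), and reordering the resulting terms yields exactly $\d(f\omega)=\d f\wedge\omega+f\,\d\omega$.

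The gap is in step (a). For a general $\omega\in W^{1,2}_\d(\Lambda^kT^*\X)$ you cannot run "a single-factor version of the computation in the previous paragraph", because in part (i) all objects were test objects and you could integrate by parts freely, whereas here the only access to $\omega$ is the integral identity \eqref{eq:defdext} tested against fields in $\vsm$. Expanding $(\d\omega\wedge\d f)(X_0,\ldots,X_{k+1})=\sum_l\pm\,\d f(X_l)\,\d\omega(X_0,\ldots,\hat X_l,\ldots,X_{k+1})$ forces you to evaluate weighted integrals $\int h\,\d\omega(\cdots)\,\d\mm$ with $h=\d f(X_l)$; such $h$ lies in $W^{1,2}\cap L^\infty(\X)$ but in general not in $\test\X$ (it need not be Lipschitz nor have Laplacian in $W^{1,2}$), so the substitution device of (b) is not available verbatim: $hX_{i_0}\notin\vsm$. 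The missing, though fixable, step is to first extend \eqref{eq:defdext} --- equivalently your step (b) --- to weights $h\in W^{1,2}\cap L^\infty(\X)$: replace $h$ by $\h_{1/n}h\in\test\X$, use $\|\h_{1/n}h\|_{L^\infty}\leq\|h\|_{L^\infty}$, $\d\,\h_{1/n}h\to\d h$ in $L^2$, and dominated convergence in every term of \eqref{eq:defdext}, the bracket $[X_i,\h_{1/n}h\,X_{i_0}]$ being computed through \eqref{eq:leibcov}. With this weighted identity in hand your plan for (a) does go through: the second-order terms $X_m(X_l(f))$ coming from the $\d h\wedge\omega$ corrections recombine, via \eqref{eq:torsionfree}, with the $\d f([X_i,X_j])$ contributions (this, rather than the integral identity $\d(\d f)=0$, is the mechanism that kills the terms differentiating $f$ twice), giving $\d(\omega\wedge\d f)=\d\omega\wedge\d f$ and hence (iii). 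As written, however, the proposal hides precisely this point, which is the only place in the proposition where one cannot argue as if all factors were test objects.
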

\begin{sketch} These all follow from the definitions, the identity $\d f_1\wedge\ldots\wedge\d f_k(X_1,\ldots,X_k)=\det(\d f_i(X_j))$ and routine computations based on   the calculus rules obtained so far.
\end{sketch}
This last proposition motivates the following definition:
\begin{definition}
$H^{1,2}_\d(\Lambda^kT^*\X)\subset W^{1,2}_\d(\Lambda^kT^*\X)$ is the $W^{1,2}_\d$-closure of $\ffsm k$.
\end{definition}
Clearly,  $H^{1,2}_\d(\Lambda^kT^*\X)$ is dense in  $L^{2}(\Lambda^kT^*\X)$. Another crucial property of $H^{1,2}_\d$-forms is:
\begin{proposition}[$\d^2=0$ for forms in  $H^{1,2}_\d(\Lambda^kT^*\X)$]\label{prop:dd}
Let  $\omega\in H^{1,2}_\d(\Lambda^kT^*\X)$. Then
\[
\d\omega\in H^{1,2}_\d(\Lambda^{k+1}T^*\X)\qquad\text{and}\qquad \d(\d\omega)=0.
\]
\end{proposition}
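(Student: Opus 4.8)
The plan is to prove the statement first for test forms and then propagate it to all of $H^{1,2}_\d(\Lambda^kT^*\X)$ by a closure argument. By linearity it suffices to treat a single test form $\omega=f_0\,\d f_1\wedge\cdots\wedge\d f_k$ with $f_i\in\test\X$. For such $\omega$ formula \eqref{eq:df1fn} gives $\d\omega=\d f_0\wedge\d f_1\wedge\cdots\wedge\d f_k$, and then \eqref{eq:ddf1fn}, applied to the $k+1$ functions $f_0,\dots,f_k$, yields $\d(\d\omega)=0$ at once.

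The next step is to check that this same $\d\omega$ belongs to $H^{1,2}_\d(\Lambda^{k+1}T^*\X)$, i.e.\ that $\d f_0\wedge\cdots\wedge\d f_k$ lies in the $W^{1,2}_\d$-closure of $\ffsm{k+1}$. To this end I would pick good cutoff functions $\chi_n\in\test\X$ with $0\le\chi_n\le1$, $\chi_n\to1$ $\mm$-a.e.\ and $\|\d\chi_n\|_{L^\infty(\X)}\to0$ (whose existence on $\RCD$ spaces is standard). Then $\chi_n\,\d f_0\wedge\cdots\wedge\d f_k\in\ffsm{k+1}$, and since the $f_i$ are Lipschitz test functions one has $|\d f_0\wedge\cdots\wedge\d f_k|\le|\d f_0|\,|\d f_1|\cdots|\d f_k|\le C'|\d f_0|\in L^2(\X)$; hence by dominated convergence $\chi_n\,\d f_0\wedge\cdots\wedge\d f_k\to\d f_0\wedge\cdots\wedge\d f_k$ in $L^2(\Lambda^{k+1}T^*\X)$. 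Moreover \eqref{eq:df1fn} gives $\d(\chi_n\,\d f_0\wedge\cdots\wedge\d f_k)=\d\chi_n\wedge\d f_0\wedge\cdots\wedge\d f_k$, whose $L^2$-norm is bounded by $\|\d\chi_n\|_{L^\infty(\X)}\,\|\,|\d f_0\wedge\cdots\wedge\d f_k|\,\|_{L^2(\X)}\to0$. Thus the convergence is in $W^{1,2}_\d$ and $\d\omega\in H^{1,2}_\d(\Lambda^{k+1}T^*\X)$. (When $\mm(\X)<\infty$ one may simply take $\chi_n\equiv1\in\test\X$, and this step is immediate.)

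Finally I would remove the restriction to test forms. Given $\omega\in H^{1,2}_\d(\Lambda^kT^*\X)$, choose $\omega_n\in\ffsm{k}$ with $\omega_n\to\omega$ in $W^{1,2}_\d$, so that $\d\omega_n\to\d\omega$ in $L^2(\Lambda^{k+1}T^*\X)$. Since $\d(\d\omega_n)=0$ by the test-form case, the pairs $(\d\omega_n,0)$ converge in $L^2(\Lambda^{k+1}T^*\X)\times L^2(\Lambda^{k+2}T^*\X)$ to $(\d\omega,0)$, and the closedness of the exterior differential on $(k+1)$-forms (Theorem \ref{thm:basew12d}, point (ii)) gives $\d\omega\in W^{1,2}_\d(\Lambda^{k+1}T^*\X)$ with $\d(\d\omega)=0$. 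Knowing now that $\d(\d\omega)=0=\d(\d\omega_n)$, the $L^2$-convergence $\d\omega_n\to\d\omega$ upgrades to $W^{1,2}_\d$-convergence; as each $\d\omega_n\in H^{1,2}_\d(\Lambda^{k+1}T^*\X)$ by the previous paragraph and $H^{1,2}_\d(\Lambda^{k+1}T^*\X)$ is by definition $W^{1,2}_\d$-closed, we conclude $\d\omega\in H^{1,2}_\d(\Lambda^{k+1}T^*\X)$, which completes the proof.

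The step I expect to be the main obstacle is the middle one: promoting $\d\omega$ from $W^{1,2}_\d$ to $H^{1,2}_\d$ for test $\omega$ in the infinite-measure case, which hinges on the availability of cutoff functions $\chi_n\in\test\X$ with $\mm$-a.e.\ limit $1$ and vanishing differential. Everything else is bookkeeping with \eqref{eq:df1fn}–\eqref{eq:ddf1fn} together with the closedness provided by Theorem \ref{thm:basew12d}.
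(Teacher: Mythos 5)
Your proof is correct and follows essentially the same route as the paper's: the test-form case via \eqref{eq:df1fn} and \eqref{eq:ddf1fn}, then the general case by approximation combined with the closedness of the exterior differential from Theorem \ref{thm:basew12d}. Your middle step --- producing cutoffs $\chi_n\in\test\X$ with $\|\d\chi_n\|_{L^\infty(\X)}\to 0$ so that $\d f_0\wedge\cdots\wedge\d f_k\in H^{1,2}_\d(\Lambda^{k+1}T^*\X)$ when $\mm(\X)=\infty$ --- is a detail the paper leaves implicit in ``by approximation'', and such cutoffs are indeed available (e.g.\ heat-flow regularization, using \eqref{eq:be}, \eqref{eq:lilip} and the maximum principle, of $\tfrac1n$-Lipschitz truncated distance functions), so your argument is complete.
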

\begin{proof} The identities \eqref{eq:df1fn} and \eqref{eq:ddf1fn} establish the claim for forms in $\ffsm k$. The general case then follows  by approximation taking into account the closure of the exterior differential.
\end{proof}
\subsubsection{de Rham cohomology  and Hodge theorem}\label{se:dr}
Proposition \ref{prop:dd} is the starting point for building de Rham cohomology. The definition of closed and exact $k$-forms is naturally given by:
\[
\begin{split}
\closed k&:=\big\{\omega\in H^{1,2}_\d(\Lambda^kT^*\X)\ :\ \d\omega=0\big\},\qquad\qquad\exact k:=\big\{\d\omega\ :\ \omega\in H^{1,2}_\d(\Lambda^{k-1}T^*\X)\big\}.
\end{split}
\]
Proposition \ref{prop:dd} ensures that $\exact k\subset\closed k$ and the closure of the differential that  $\closed k$ is a closed subspace of $L^2(\Lambda^kT^*\X)$. Hence defining $\exacto k$ as 
\[
\exacto k:=\textrm{ $L^2(\Lambda^kT^*\X)$-closure of }\exact k
\]
we also have that $\exacto k\subset \closed k$. We can then give the following:
\begin{definition}[de Rham cohomology]
For $k\in \N$ the Hilbert space $H^k_{dR}(\X)$ is defined as the quotient
\[
{\sf H}^k_{\sf dR}(\X):=\frac{\closed k}{\exacto k},
\]
where $\closed k$ and $\exacto k$ are endowed with the $L^2(\Lambda^kT^*\X)$-norm.
\end{definition}
Cohomology as we just defined it is  functorial in the following sense. Let   $\varphi:\X_2\to\X_1$ be of bounded deformation and recall that in Theorem \ref{thm:pbf} we gave the definition of pullback of 1-forms $\varphi^*:L^2(T^*\X_1)\to L^2(T^*\X_2)$. It is then not hard  to see that for every $k\in\N$ there is a unique linear map $\varphi^*:L^2(\Lambda^kT^*\X_1)\to  L^2(\Lambda^kT^*\X_2)$ such that
\begin{equation}
\label{eq:pbk}
\begin{split}
\varphi^*(\omega_1\wedge\ldots\wedge \omega_k)&=(\varphi^*\omega_1)\wedge\ldots\wedge (\varphi^*\omega_k),\\
\varphi^*(f\omega)&=f\circ\varphi\,\varphi^*\omega,\\
|\varphi^*\omega|&\leq \lf(\varphi)^k|\omega|\circ\varphi,
\end{split}
\end{equation}
for every $\omega_1,\ldots,\omega_k\in L^2\cap L^\infty(T^*\X_1)$, $\omega\in L^2(\Lambda^kT^*\X_1)$ and $f\in L^\infty(\X_2)$.

Then we have:
\begin{proposition}[Functoriality]
Let $(\X_1,\sfd_1,\mm_1),(\X_2,\sfd_2,\mm_2)$ be two $\RCD(K,\infty)$ spaces, $K\in\R$, and $\varphi:\X_2\to\X_1$ of bounded deformation. Then for every $k\in\N$ and $\omega\in H^{1,2}_\d(\Lambda^kT^*\X_1)$ we have $\varphi^*\omega\in H^{1,2}_\d(\Lambda^kT^*\X_2)$ and
\begin{equation}
\label{eq:functd}
\d(\varphi^*\omega)=\varphi^*\d\omega.
\end{equation}
In particular, $\varphi^*$ passes to the quotient and induces a linear continuous map from ${\sf H}^k_{\sf dR}(\X_1)$ to ${\sf H}^k_{\sf dR}(\X_2)$ with norm bounded by $\lf(\varphi)^k$.
\end{proposition}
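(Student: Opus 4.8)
The plan is to prove the commutation $\d(\varphi^*\omega)=\varphi^*\d\omega$ first on test forms, then to upgrade to all of $H^{1,2}_\d$ by continuity of $\varphi^*$ together with the closure of the exterior differential, and finally to pass to the quotient. I would begin by recording that the $k$-form pullback of \eqref{eq:pbk} is continuous from $L^2(\Lambda^kT^*\X_1)$ to $L^2(\Lambda^kT^*\X_2)$: the pointwise bound $|\varphi^*\omega|\le\lf(\varphi)^k|\omega|\circ\varphi$ combined with $\varphi_*\mm_2\le\comp(\varphi)\mm_1$ gives $\|\varphi^*\omega\|_{L^2(\X_2)}\le\lf(\varphi)^k\sqrt{\comp(\varphi)}\,\|\omega\|_{L^2(\X_1)}$. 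Then for a test form $\omega=f_0\,\d f_1\wedge\cdots\wedge\d f_k$ on $\X_1$ I set $g_i:=f_i\circ\varphi$. Since each $f_i\in\test{\X_1}$ is bounded and Lipschitz with $|\d f_i|\in L^\infty$, bounded deformation yields $g_i\in L^\infty\cap W^{1,2}(\X_2)$ with $|\d g_i|\le\Lip(\varphi)|\d f_i|\circ\varphi\in L^\infty$ (via \eqref{eq:bdp}, using bounded compression for the $L^2$-integrability of $g_i$ and $|\d g_i|$). Using $\varphi^*\d f_i=\d g_i$ (Theorem \ref{thm:pbf}) and the multiplicativity in \eqref{eq:pbk} one computes $\varphi^*\omega=g_0\,\d g_1\wedge\cdots\wedge\d g_k$, so Proposition \ref{prop:basedext}(i) gives $\varphi^*\omega\in W^{1,2}_\d(\Lambda^kT^*\X_2)$ with $\d(\varphi^*\omega)=\d g_0\wedge\cdots\wedge\d g_k$; since $\d\omega=\d f_0\wedge\cdots\wedge\d f_k$ by \eqref{eq:df1fn}, this equals $\varphi^*\d\omega$.

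The delicate point, and the one I expect to be the main obstacle, is that $\varphi^*\omega$ must lie in $H^{1,2}_\d$ and not merely in $W^{1,2}_\d$: the functions $g_i$ are bounded Lipschitz but need not be test functions, so $\varphi^*\omega$ is not a priori a test form on $\X_2$. To remedy this I would regularize with the heat flow of $\X_2$, setting $g_i^t:=\h_t g_i\in\test{\X_2}$ (legitimate since $g_i\in L^2\cap L^\infty$) and $\omega^t:=g_0^t\,\d g_1^t\wedge\cdots\wedge\d g_k^t$, which is a genuine test $k$-form on $\X_2$. As $t\downarrow0$ one has $g_i^t\to g_i$ in $W^{1,2}(\X_2)$, while the weak maximum principle and the Bakry-\'Emery estimate \eqref{eq:be} supply the uniform controls $\|g_i^t\|_{L^\infty}\le\|g_i\|_{L^\infty}$ and $\||\d g_i^t|\|_{L^\infty}\le e^{-Kt}\||\d g_i|\|_{L^\infty}$. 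A telescoping estimate on the wedge products, in which each difference term carries exactly one factor converging in $L^2$ and all remaining factors uniformly bounded in $L^\infty$, then yields $\omega^t\to\varphi^*\omega$ in $L^2(\Lambda^kT^*\X_2)$ and, through \eqref{eq:df1fn}, $\d\omega^t=\d g_0^t\wedge\cdots\wedge\d g_k^t\to\d g_0\wedge\cdots\wedge\d g_k=\d(\varphi^*\omega)$ in $L^2(\Lambda^{k+1}T^*\X_2)$. Hence $\omega^t\to\varphi^*\omega$ in the $W^{1,2}_\d$-norm, so $\varphi^*\omega\in H^{1,2}_\d(\Lambda^kT^*\X_2)$.

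For a general $\omega\in H^{1,2}_\d(\Lambda^kT^*\X_1)$ I would choose test forms $\omega_n$ with $\omega_n\to\omega$ in $W^{1,2}_\d(\X_1)$. Continuity of $\varphi^*$ then gives $\varphi^*\omega_n\to\varphi^*\omega$ in $L^2(\Lambda^kT^*\X_2)$ and, using the test-form identity just proved, $\d(\varphi^*\omega_n)=\varphi^*\d\omega_n\to\varphi^*\d\omega$ in $L^2(\Lambda^{k+1}T^*\X_2)$. The closure of the exterior differential (Theorem \ref{thm:basew12d}(ii), applied on $\X_2$) forces $\varphi^*\omega\in W^{1,2}_\d(\Lambda^kT^*\X_2)$ with $\d(\varphi^*\omega)=\varphi^*\d\omega$, which is precisely \eqref{eq:functd}; moreover $\varphi^*\omega_n\to\varphi^*\omega$ in $W^{1,2}_\d$, and since each $\varphi^*\omega_n\in H^{1,2}_\d(\Lambda^kT^*\X_2)$ and the latter space is $W^{1,2}_\d$-closed, also $\varphi^*\omega\in H^{1,2}_\d(\Lambda^kT^*\X_2)$.

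Finally, \eqref{eq:functd} shows that $\varphi^*$ maps closed forms on $\X_1$ to closed forms on $\X_2$, and it maps exact forms to exact forms because $\varphi^*\d\alpha=\d(\varphi^*\alpha)$ with $\varphi^*\alpha\in H^{1,2}_\d$; by the $L^2$-continuity of $\varphi^*$ it therefore carries $\exactos{k}{\X_1}$ into $\exactos{k}{\X_2}$. Consequently $\varphi^*$ descends to a well-defined linear map ${\sf H}^k_{\sf dR}(\X_1)\to{\sf H}^k_{\sf dR}(\X_2)$, whose continuity and norm bound follow by passing the pointwise estimate $|\varphi^*\omega|\le\lf(\varphi)^k|\omega|\circ\varphi$ of \eqref{eq:pbk} to the quotient $L^2$-norms.
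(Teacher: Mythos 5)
Your argument is correct and follows essentially the same route as the paper: reduce to test forms $f_0\,\d f_1\wedge\cdots\wedge\d f_k$ by density and the closure of $\d$, compute the pullback via \eqref{eq:pbk} and Theorem \ref{thm:pbf}, apply Proposition \ref{prop:basedext}(i), and pass to the quotient. The one genuine addition is your heat-flow regularization step: the paper's proof invokes Proposition \ref{prop:basedext}(i), which only places $\varphi^*\omega=(f_0\circ\varphi)\,\d(f_1\circ\varphi)\wedge\cdots\wedge\d(f_k\circ\varphi)$ in $W^{1,2}_\d(\Lambda^kT^*\X_2)$, and leaves the claimed membership in $H^{1,2}_\d(\Lambda^kT^*\X_2)$ (needed since $\closed k$ and $\exact k$ are defined inside $H^{1,2}_\d$) implicit; your approximation of the $f_i\circ\varphi$ by $\h_t(f_i\circ\varphi)\in\test{\X_2}$, with the uniform $L^\infty$ and Bakry--\'Emery bounds feeding a telescoping estimate, supplies exactly that missing detail. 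Your treatment of the quotient map and the norm bound is at the same level of detail as the paper's.
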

\begin{proof}
From the linearity and continuity of $\varphi^*$ and of  $\d: H^{1,2}_\d(\Lambda^kT^*\X_2)\to L^2(\Lambda^{k+1}T^*\X_2)$, it is sufficient to prove \eqref{eq:functd} for $\omega$ of the form $\omega=f_0\d f_1\wedge\ldots\wedge\d f_k$, for $f_i\in\test{\X_1}$. In this case \eqref{eq:pbk} gives that 
\[
\varphi^*\omega=f_0\circ\varphi\,\d(f_1\circ\varphi)\wedge\ldots\wedge\d(f_k\circ\varphi)
\]
and since $f_i\circ\varphi\in L^\infty\cap W^{1,2}(\X_2)$ with $|\d(f_i\circ\varphi)|\in L^\infty(\X_2)$, from point $(i)$ of Proposition \ref{prop:basedext} we deduce that
\[
\d\varphi^*\omega=\d(f_0\circ\varphi)\wedge \d(f_1\circ\varphi)\wedge\ldots\wedge\d(f_k\circ\varphi)=\varphi^*\d\omega,
\]
as desired.

The fact that $\varphi^*$ passes to the quotient is then a direct consequence of its linearity and continuity, and  the bound on the norm comes directly from the last in \eqref{eq:pbk}.
\end{proof}
We now want to show that an analogue of Hodge theorem about representation of cohomology classes via harmonic forms holds.  We shall need a few definitions.

We start with that of {\bf codifferential}, defined as the adjoint of the exterior differential: for $k\in\N$ the space $D(\delta)\subset L^2(\Lambda^kT^*\X)$ is the space of those forms $\omega$ for which there exists a form $\delta\omega\in L^2(\Lambda^{k-1}T^*\X)$, called codifferential of $\omega$, such that 
\[
\int \la \delta\omega,\eta\ra\,\d\mm=\int \la\omega,\d\eta\ra\,\d\mm,\qquad\forall\eta\in\ffsm{k-1}.
\]
In  the case $k=0$ we put $D(\delta_0):=L^2(\X)$ and define the $\delta$ operator to be identically 0 on it.

It is not hard to check that $\delta$ is well-defined and closed, while some computations (which we omit) show that $\ffsm k\subset D(\delta)$. In particular, the following definitions of `Hodge' Sobolev spaces are meaningful:
\begin{definition} For $k\in\N$,  we define $W^{1,2}_\Ho(\Lambda^kT^*\X):=W^{1,2}_\d(\Lambda^kT^*\X)\cap D(\delta)$  with the norm
\[
\|\omega\|_{W^{1,2}_\Ho(\Lambda^kT^*\X)}^2:=\|\omega\|^2_{L^2(\Lambda^kT^*\X)}+\|\d\omega\|_{L^2(\Lambda^{k+1}T^*\X)}^2+\|\delta\omega\|^2_{L^2(\Lambda^{k-1}T^*\X)}.
\]
The space $H^{1,2}_\Ho(\Lambda^kT^*\X)$ is the $W^{1,2}_\Ho$-closure of $\ffsm k$. 
\end{definition}
In particular, $H^{1,2}_\Ho(\Lambda^kT^*\X)$ is a Hilbert space dense in $L^{2}(\Lambda^kT^*\X)$.

\begin{definition}[Hodge Laplacian and harmonic forms]
Given $k\in\N$, the domain $D(\Delta_{\Ho})\subset H^{1,2}_\Ho(\Lambda^kT^*\X)$ of the Hodge Laplacian is the set of $\omega\in H^{1,2}_\Ho(\Lambda^kT^*\X)$   for which there exists $\alpha\in L^2(\Lambda^kT^*\X)$ such that
\[
\int\la\alpha,\eta\ra\,\d\mm=\int \la \d\omega,\d\eta\ra+\la \delta\omega,\delta\eta\ra\,\d\mm,\qquad\forall \eta\in H^{1,2}_\Ho(\Lambda^kT^*\X).
\]
In this case, the form $\alpha$ (which is unique by the density of $H^{1,2}_\Ho(\Lambda^kT^*\X)$ in $L^2(\Lambda^kT^*\X)$) will be called Hodge Laplacian of $\omega$ and denoted by $\Delta_\Ho\omega$.

The space $\harm k\subset D(\Delta_{\Ho})$ is the space of forms $\omega\in D(\Delta_{\Ho})$ such that $\Delta_\Ho\omega=0$.
\end{definition}
In the case of functions, we have the usual  unfortunate sign relation:
\[
\Delta_\Ho f=-\Delta f\qquad\forall f\in D(\Delta)=D(\Delta_\Ho)\subset L^2(\Lambda^0T^*\X)=L^2(\X).
\]
The Hodge Laplacian is a closed operator: this can be seen by noticing that it is the subdifferential of the convex and lower semicontinuous functional on $L^2(\Lambda^kT^*\X)$ defined by
\[
\omega\quad\mapsto\quad \frac12\int |\d\omega|^2+|\delta\omega|^2\,\d\mm\qquad \text{if $\omega\in  H^{1,2}_\Ho(\Lambda^kT^*\X)$, \qquad $+\infty$ otherwise}.
\]
From such closure it follows that   $\harm k$  is a closed subspace of $L^2(\Lambda^kT^*\X)$ and thus a Hilbert space itself when endowed with the $L^2(\Lambda^kT^*\X)$-norm. We then have:
\begin{theorem}[Hodge theorem on $\RCD$ spaces]\label{thm:hodge} The map
\[
\harm k\ni \omega\qquad\mapsto\qquad[\omega]\in {\sf H}^k_{\sf dR}(\X)
\]
is an isomorphism of Hilbert spaces.
\end{theorem}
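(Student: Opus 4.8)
The plan is to deduce the statement from the $L^2$-orthogonal Hodge decomposition $\closed k=\harm k\oplus\exacto k$. First I would record the elementary characterization $\harm k=\{\omega\in H^{1,2}_\Ho(\Lambda^kT^*\X):\d\omega=0,\ \delta\omega=0\}$, obtained by testing $\Delta_\Ho\omega=0$ against $\omega$ itself: since $\Delta_\Ho$ is the subdifferential of $\omega\mapsto\frac12\int|\d\omega|^2+|\delta\omega|^2\,\d\mm$, the choice $\eta=\omega$ gives $\int|\d\omega|^2+|\delta\omega|^2\,\d\mm=0$, whence both differentials vanish (the converse being immediate). In particular $\harm k\subset\closed k$, so the map $\omega\mapsto[\omega]$ is well defined. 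Granting the orthogonal decomposition, the theorem is immediate: the restriction to $\harm k$ of the quotient projection $\closed k\to\closed k/\exacto k$ is injective (its kernel is $\harm k\cap\exacto k=\{0\}$), surjective (every class has a representative in $\harm k$), and isometric, because for $\omega\in\harm k$ orthogonal to $\exacto k$ one has $\inf_{e\in\exacto k}\|\omega-e\|^2=\|\omega\|^2$ by the Pythagorean identity, and this infimum is exactly the quotient norm of $[\omega]$.

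Next I would prove the orthogonality $\harm k\perp\exacto k$, which yields injectivity. For $\omega\in\harm k$ and a test form $\beta\in\ffsm{k-1}$, the defining relation of the codifferential gives $\int\la\omega,\d\beta\ra\,\d\mm=\int\la\delta\omega,\beta\ra\,\d\mm=0$ since $\delta\omega=0$. Because $H^{1,2}_\d(\Lambda^{k-1}T^*\X)$ is the closure of $\ffsm{k-1}$ and $\d$ is continuous on it, approximating an arbitrary $\beta\in H^{1,2}_\d(\Lambda^{k-1}T^*\X)$ by test forms shows $\int\la\omega,\d\beta\ra\,\d\mm=0$ for all such $\beta$; hence $\omega\perp\exact k$ and, passing to the $L^2$-closure, $\omega\perp\exacto k$.

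The substantial part is the complementary inclusion: every $\beta\in\closed k$ with $\beta\perp\exacto k$ is harmonic. Combined with the orthogonal splitting of the Hilbert space $\closed k$ into the closed subspace $\exacto k$ and its orthogonal complement (which then equals $\harm k$, using also the previous paragraph), this gives $\closed k=\harm k\oplus\exacto k$ and hence surjectivity onto cohomology. Such a $\beta$ satisfies $\d\beta=0$ (as $\beta\in\closed k$) and, testing $\beta\perp\exact k$ against $\d\eta$ for $\eta\in\ffsm{k-1}$, also $\beta\in D(\delta)$ with $\delta\beta=0$ by the very definition of the codifferential; thus $\beta\in W^{1,2}_\Ho(\Lambda^kT^*\X)$ with vanishing $\d\beta$ and $\delta\beta$. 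The genuine obstacle is that this only places $\beta$ in $W^{1,2}_\Ho$, whereas membership in $\harm k$ requires $\beta\in H^{1,2}_\Ho(\Lambda^kT^*\X)$, the \emph{closure of test forms}: a $W^{1,2}_\d$-approximation of $\beta$ by test forms need not converge in the codifferential part of the Hodge norm.

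I would overcome this by regularizing with the gradient flow of the functional $Q(\omega)=\frac12\int|\d\omega|^2+|\delta\omega|^2\,\d\mm$ on $L^2(\Lambda^kT^*\X)$, i.e.\ the Hodge heat semigroup $e^{-t\Delta_\Ho}$: each $\beta_t:=e^{-t\Delta_\Ho}\beta$ lies in $D(\Delta_\Ho)\subset H^{1,2}_\Ho(\Lambda^kT^*\X)$ and converges to $\beta$ in $L^2$ as $t\downarrow0$. Using that the flow intertwines with the exterior differential and the codifferential, so that $\d\beta_t$ and $\delta\beta_t$ are obtained by flowing $\d\beta=0$ and $\delta\beta=0$, one gets $\d\beta_t=\delta\beta_t=0$, i.e.\ $\beta_t\in\harm k$ for every $t$; since $\harm k$ is $L^2$-closed, letting $t\downarrow0$ gives $\beta\in\harm k$. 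Establishing this commutation of the Hodge flow with $\d$ and $\delta$ — equivalently, that $e^{-t\Delta_\Ho}$ preserves closed and co-closed forms — is the main technical point on which the whole argument rests.
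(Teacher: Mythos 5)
Your proposal has the same skeleton as the paper's proof: you identify $\harm k$ with the set of closed forms orthogonal to $\exacto k$ (via the characterization $\harm k=\{\omega\in H^{1,2}_\Ho(\Lambda^kT^*\X):\d\omega=0,\ \delta\omega=0\}$, proved by testing against $\omega$ itself exactly as in the notes), and you then conclude through the elementary Hilbert-space fact that the orthogonal complement of a subspace maps isometrically onto the quotient by its closure; the inclusion $\harm k\subset\closed k\cap(\exact k)^\perp$ is handled as in the paper. Where you diverge is the converse inclusion. The paper disposes of it in one line: from $\omega\in\closed k$ and $\omega\perp\exact k$ it deduces $\delta\omega=0$ ``recalling the definition of $\delta$'' and declares $\omega$ harmonic. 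You correctly observe that, with the definitions of these notes, harmonicity also requires $\omega\in H^{1,2}_\Ho(\Lambda^kT^*\X)$, i.e.\ approximability by test forms in the full Hodge norm, whereas closedness plus weak co-closedness only places $\omega$ in $W^{1,2}_\Ho(\Lambda^kT^*\X)=W^{1,2}_\d(\Lambda^kT^*\X)\cap D(\delta)$. That is a genuine observation: the notes pass over this point silently (the detailed treatment is deferred to \cite{Gigli14}).

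However, your proposed repair does not close the issue you raised. You regularize with $e^{-t\Delta_\Ho}$ and assert that $\d\beta_t=\delta\beta_t=0$ because ``the flow intertwines with $\d$ and $\delta$'', but you neither prove this commutation nor can you quote it: nothing in the notes shows that $e^{-t\Delta_\Ho}$ commutes with the closed operators $\d$ and $\delta$ when applied to data that is \emph{not} known to lie in the form domain $H^{1,2}_\Ho(\Lambda^kT^*\X)$ --- and your $\beta$ is exactly such data, since all you know is $\beta\in H^{1,2}_\d\cap D(\delta)$ with $\d\beta=0$, $\delta\beta=0$. Proving that $\ker\d\cap\ker\delta$ is invariant under the Hodge flow (equivalently, under the resolvents of $\Delta_\Ho$) is of essentially the same nature and difficulty as the membership statement you are trying to obtain: both amount to controlling the possible discrepancy between the ``$H$'' space $H^{1,2}_\Ho$ and the ``$W$'' space $W^{1,2}_\Ho$. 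So, as written, the crucial step of your argument is an acknowledged but unproven assertion, and the proof is incomplete precisely there; you must either supply a proof of the commutation for such initial data, or justify directly the equivalence $\harm k=\closed k\cap(\exact k)^\perp$ that the paper takes as immediate.
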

\begin{proof} Start noticing that 
\[
\omega\in\harm k\qquad\Leftrightarrow\qquad \d \omega=0\ \text{ and }\ \delta\omega=0,
\] 
indeed the `if' is obvious by definition, while the `only if' comes from the identity 
\[
\int\la\omega,\Delta_\Ho\omega\ra\,\d\mm=\int|\d\omega|^2+|\delta\omega|^2\,\d\mm.
\]
Recalling the definition of $\delta$, we thus see that 
\[
\omega\in\harm k\qquad\Leftrightarrow\qquad  \omega\in \closed k\ \text{ and }\ \int\la\omega,\eta\ra\,\d\mm=0\quad \forall\eta\in\exact k.
\]
The conclusion follows recalling that for every Hilbert space $H$ and subspace $V$, the map
\[
V^\perp\ni w\quad\mapsto\quad w+\overline V\in H/\overline V
\]
is an isomorphism of Hilbert spaces.
\end{proof}

\subsection{Ricci curvature}\label{se:ricci}
In the course of this section we shall abuse a bit the notation and identify vector and covector fields, thus in for instance we shall write $X\in D(\Delta_\Ho)$ and consider the vector field $\Delta_\Ho X\in L^2(T\X)$ when we should write $X^\flat \in D(\Delta_\Ho) $ and  $(\Delta_\Ho X^\flat)^\sharp\in L^2(T\X)$, where $\cdot^\flat:L^2(T\X)\to L^2(T^*\X)$ and $\cdot^\sharp:L^2(T^*\X)\to L^2(T\X)$ are the Riesz (musical) isomorphisms. 

We begin reinterpreting  the key Lemma \ref{le:lemmachiave}:  the differential operators introduced  so far allow to restate the key inequalities \eqref{eq:partesing1}, \eqref{eq:parteac1}  in a much more familiar way.
\begin{lemma}\label{le:riscritto}
Let $X\in\vsm$. Then $X\in D(\Delta_{\Ho})$, $|X|^2\in D(\bd)$ and the inequality
\begin{equation}
\label{eq:bochner}
\bd\frac{|X|^2}{2}\geq \Big(|\nabla X|_\HS^2-\la X, \Delta_\Ho X \ra+K|X|^2\Big)\mm
\end{equation}
holds
\end{lemma}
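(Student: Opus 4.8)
The plan is to read \eqref{eq:bochner} as the weak, measure-valued form of the Bochner--Weitzenb\"ock identity, whose entire content is that the defect measure
\[
\mu_X:=\bd\frac{|X|^2}{2}-\Big(|\nabla X|_\HS^2-\la X,\Delta_\Ho X\ra+K|X|^2\Big)\mm
\]
is non-negative, and then to identify $\mu_X$ with the measure $\mu\big((f_i),(g_i)\big)$ of Lemma \ref{le:lemmachiave}. Granting this identification, \eqref{eq:bochner} is immediate: writing $X=\sum_ig_i\nabla f_i\in\vsm$ and $\mu_X=\rho\mm+\mu^s$, inequality \eqref{eq:partesing1} yields $\mu^s\geq0$, while \eqref{eq:parteac1} forces $\rho\geq0$ $\mm$-a.e.: indeed its left-hand side is a square, so the coefficient $\rho$ of the non-negative quantity $\sum_{j,j'}|\la\nabla h_j,\nabla h_{j'}\ra|^2$ on the right cannot be negative where that quantity is positive, and letting $m=1$, $h_1=h$ run over a countable family of test functions whose differentials generate $L^2(T^*\X)$ (Remark \ref{rem:gencot}) handles $\mm$-a.e.\ $x$.

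First I would dispose of the two membership statements. Since $\test\X$ is an algebra and, by Theorem \ref{thm:savare} with polarization, $\la\nabla f_i,\nabla f_{i'}\ra\in D(\bd)\subset W^{1,2}(\X)$, the function $|X|^2=\sum_{i,i'}g_ig_{i'}\la\nabla f_i,\nabla f_{i'}\ra$ is a finite sum of products $u\,v$ with $u=g_ig_{i'}\in\test\X$ and $v\in D(\bd)$; the measure-valued analogue of the Leibniz rule \eqref{eq:leiblap}, namely $\bd(uv)=u\,\bd v+\big(v\Delta u+2\la\nabla u,\nabla v\ra\big)\mm$, then gives $|X|^2\in D(\bd)$ and already displays the singular part of $\bd\tfrac{|X|^2}{2}$ as $\sum_{i,i'}g_ig_{i'}\Ggamma_2^s(f_i,f_{i'})$, which is exactly $\mu^s$. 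For the Hodge Laplacian, note that $X^\flat=\sum_ig_i\d f_i\in\ffsm{1}\subset H^{1,2}_\Ho(\Lambda^1T^*\X)$; by \eqref{eq:leibdiv} its codifferential is $\delta X^\flat=-\div X=-\sum_i(\la\nabla g_i,\nabla f_i\ra+g_i\Delta f_i)\in W^{1,2}(\X)$, and by \eqref{eq:df1fn} its differential is the test $2$-form $\d X^\flat=\sum_i\d g_i\wedge\d f_i\in\ffsm{2}\subset D(\delta)$. Integrating by parts against the defining relations of $\d$ and $\delta$ then certifies $X^\flat\in D(\Delta_\Ho)$ with $\Delta_\Ho X^\flat=\d\delta X^\flat+\delta\d X^\flat\in L^2(T^*\X)$.

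The core is to compute the three objects explicitly and check that they recombine into $\mu$. For $|\nabla X|_\HS^2$ I would insert $\nabla X=\sum_i\big(\nabla g_i\otimes\nabla f_i+g_i(\He f_i)^\sharp\big)$ from Theorem \ref{thm:basew12c}(iv), expand the Hilbert--Schmidt product, and rewrite each mixed term through \eqref{eq:hess12} as $\He f_i(\nabla g_{i'},\nabla f_{i'})=H[f_i](g_{i'},f_{i'})$. For $\la X,\Delta_\Ho X\ra$ the decisive ingredient is a pointwise codifferential formula for the test $2$-form $\d g\wedge\d f$: running the classical computation in this setting gives
\[
\delta(\d g\wedge\d f)=\Delta f\,\d g-\Delta g\,\d f+\He g(\nabla f,\cdot)-\He f(\nabla g,\cdot),
\]
which, together with $\d\delta X^\flat=-\d(\div X)$, expresses $\la X,\Delta_\Ho X\ra$ through the $f_i,g_i$, their Laplacians and the forms $H[\cdot]$. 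Finally I would expand $(\bd\tfrac{|X|^2}{2})^{ac}$ via the Leibniz rule above, substituting $\bd\la\nabla f_i,\nabla f_{i'}\ra=2\Ggamma_2(f_i,f_{i'})+(\la\nabla f_i,\nabla\Delta f_{i'}\ra+\la\nabla f_{i'},\nabla\Delta f_i\ra)\mm$, and verify by collecting like terms the identity $\mu_X=\mu\big((f_i),(g_i)\big)$.

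The hard part will be exactly this last bookkeeping. The terms carrying $\la\nabla f_i,\nabla\Delta f_{i'}\ra$, the brackets $H[g_{i'}](f_{i'},f_i)$ arising from the codifferential formula, and the nested scalar products $\la\nabla\cdot,\nabla\la\nabla\cdot,\nabla\cdot\ra\ra$ must cancel in precisely the right pattern, so that the absolutely continuous density reduces to $\sum_{i,i'}\big(g_ig_{i'}(\ggamma_2(f_i,f_{i'})-K\la\nabla f_i,\nabla f_{i'}\ra)+2g_iH[f_i](f_{i'},g_{i'})+\tfrac12(\la\nabla f_i,\nabla f_{i'}\ra\la\nabla g_i,\nabla g_{i'}\ra+\la\nabla f_i,\nabla g_{i'}\ra\la\nabla g_i,\nabla f_{i'}\ra)\big)$, i.e.\ the density $\rho$ of $\mu$. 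Establishing the codifferential formula for $\d g\wedge\d f$ rigorously rather than formally is the single most delicate point, since it is what pins down $\la X,\Delta_\Ho X\ra$.
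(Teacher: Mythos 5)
There is a genuine gap, and it sits exactly at the point where the content of Lemma \ref{le:lemmachiave} has to be used. Your identification $\mu_X=\mu\big((f_i),(g_i)\big)$ is false: carrying out the bookkeeping you describe (and as in the paper's own computation) one finds
\[
\mu\big((f_i),(g_i)\big)=\bd\frac{|X|^2}2+\Big(\la X,\Delta_\Ho X\ra-K|X|^2-|(\nabla X)_{\sf Asym}|_\HS^2\Big)\mm,
\]
so that $\mu\big((f_i),(g_i)\big)=\mu_X+|(\nabla X)_{\sf Sym}|_\HS^2\,\mm$: the density of $\mu$ simply does not contain the term $-|(\nabla X)_{\sf Sym}|_\HS^2$, and hence $\mu\geq 0$ is strictly weaker than \eqref{eq:bochner}. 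Correspondingly, using \eqref{eq:parteac1} only to conclude $\rho\geq 0$ discards precisely the information needed: non-negativity of $\mu$ gives at best
\[
\bd\frac{|X|^2}2\geq \Big(|(\nabla X)_{\sf Asym}|_\HS^2-\la X,\Delta_\Ho X\ra+K|X|^2\Big)\mm,
\]
which misses the symmetric (Hessian-type) part of $|\nabla X|_\HS^2$, i.e.\ the whole self-improvement the lemma is about.

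The correct use of \eqref{eq:parteac1} is quantitative, not qualitative. Its left-hand side equals $|\nabla X:A|^2$ for $A=\sum_j\nabla h_j\otimes\nabla h_j$ (by \eqref{eq:hess12} and point $(iv)$ of Theorem \ref{thm:basew12c}), so it reads $|\nabla X:A|^2\leq\rho\,|A|_\HS^2$ $\mm$-a.e.; from $2\,\nabla X:A-|A|_\HS^2\leq\rho$ and the $L^2$-density of $L^\infty$-linear combinations of such $A$'s among symmetric $2$-tensors, an essential supremum yields the pointwise bound $|(\nabla X)_{\sf Sym}|_\HS^2\leq\rho$ $\mm$-a.e.. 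Only then, adding back $|(\nabla X)_{\sf Asym}|_\HS^2$ via \eqref{eq:normasym}, using \eqref{eq:partesing1} for the singular part and the identity displayed above, does \eqref{eq:bochner} follow. Your preliminary steps (the membership claims, the Leibniz rule for $\bd$, the formulas for $\nabla X$, $\delta X^\flat$, $\d X^\flat$ and the codifferential of $\d g\wedge\d f$) are in line with the paper's computations, but the final logical step as you propose it cannot yield the stated inequality.
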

\begin{sketch} Let $X=\sum_ig_i\nabla f_i$ for $f_i,g_i\in\test\X$. It is only a matter of computations to see that $|X|^2\in D(\bd)$ and $X\in D(\Delta_\Ho)$ with
\[
\begin{split}
\bd\frac{|X|^2}{2}&=\sum_{i,j}\frac12 g_i  g_j\bd\la\nabla f_i,\nabla f_j\ra+\Big(g_j\Delta g_i\la \nabla f_i,\nabla f_j\ra+\la\nabla g_i,\nabla g_j\ra\langle\nabla f_i,\nabla f_j\rangle\Big)\mm\\
&\qquad\qquad\qquad\qquad\qquad\qquad +\Big(2g_i\He{f_i}(\nabla f_j,\nabla g_j)+2g_i\He{f_j}(\nabla f_i,\nabla g_j)\Big)\mm\\
\Delta_{\Ho}X&=\sum_i-g_i\d\Delta f_i-\Delta g_i\d f_i-2\He {f_i}(\nabla g_i,\cdot)\\
(\nabla X)_{\sf Asym}&=\sum_{i}\frac{\nabla g_i\otimes\nabla f_i-\nabla f_i\otimes\nabla g_i}2
\end{split}
\]
and thus recalling the definition of the measure $\mu((f_i),(g_i))$ given in Lemma \ref{le:lemmachiave} we see that
\[
\mu\big((f_i),(g_i)\big)=\bd\frac{|X|^2}{2}+\Big(\la X, (\Delta_\Ho X)\ra-K|X|^2-|(\nabla X)_{\sf Asym}|_\HS^2\Big)\mm.
\]
Therefore writing  $\bd\frac{|X|^2}{2}=\Delta_{ac}\frac{|X|^2}{2}\mm+\bd_{sing}\frac{|X|^2}{2}$, with $\bd_{sing}\frac{|X|^2}{2}\perp\mm$, inequality \eqref{eq:partesing1} in Lemma \ref{le:lemmachiave} yields
\begin{equation}
\label{eq:bsb}
\bd_{sing}\frac{|X|^2}{2}\geq 0
\end{equation}
while from \eqref{eq:parteac1}  we see that for every $m\in\N$ and choice of $h_1,\ldots,h_m\in\test\X$ we have
\[
\left|\nabla X:\sum_{i=1}^m\nabla h_i\otimes\nabla h_i\right|\leq  \sqrt{ \Delta_{ac}\frac{|X|^2}{2}+ \la X, \Delta_\Ho X\ra-K|X|^2-|(\nabla X)_{\sf Asym}|_\HS^2}\left|\sum_{i=1}^m\nabla h_i\otimes\nabla h_i\right|_\HS
\]
$\mm$-a.e., which in turn implies
\[
2\nabla X:\sum_{i=1}^m\nabla h_i\otimes\nabla h_i-\left|\sum_{i=1}^m\nabla h_i\otimes\nabla h_i\right|_\HS^2\leq \Delta_{ac}\frac{|X|^2}{2}+  \la X, \Delta_\Ho X\ra-K|X|^2-|(\nabla X)_{\sf Asym}|_\HS^2
\]
$\mm$-a.e.. Noticing that $L^\infty$-linear combinations of objects of the form $\nabla h\otimes\nabla h$ for $h\in\test\X$, are $L^2$-dense in the space of symmetric 2-tensors, taking the (essential) supremum in this last inequality among  $m\in\N$ and choices of $h_1,\ldots,h_m\in\test\X$ we obtain
\[
|(\nabla X)_{\sf Sym}|_\HS^2\leq \Delta_{ac}\frac{|X|^2}{2}+ \la X,\Delta_\Ho X\ra-K|X|^2-|(\nabla X)_{\sf Asym}|_\HS^2,\qquad\mm\ae,
\]
which, recalling \eqref{eq:normasym} and \eqref{eq:bsb}, gives the conclusion.
\end{sketch}
Let us introduce the `covariant energy' and the `Hodge energy' functionals on $L^2(T\X)$ as 
\begin{align*}
\ec(X)&:=\frac12\int|\nabla X|^2\,\d\mm\qquad& \text{ if }X\in H^{1,2}_C(T\X),\quad +\infty\text{ otherwise},\\
\eh(X)&:=\frac12\int|\d X|^2+|\delta X|^2\,\d\mm\qquad& \text{ if }X\in H^{1,2}_\Ho(T\X),\quad+\infty\text{ otherwise}.
\end{align*}
Notice that the closure of the differential operators involved grant that these are $L^2(T\X)$-lower semicontinuous.
Then  the last lemma has the following useful corollary (which generalizes Corollary \ref{cor:bello}):
\begin{corollary}\label{cor:bello2}
We have $H^{1,2}_\Ho(T\X)\subset H^{1,2}_C(T\X)$ and
\begin{equation}
\label{eq:ehec}
\ec(X)\leq \eh (X)-\frac K2\|X\|^2_{L^2(T\X)},\qquad\forall X\in H^{1,2}_\Ho(T\X).
\end{equation}
\end{corollary}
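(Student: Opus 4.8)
The plan is to prove the inequality \eqref{eq:ehec} first for test vector fields $X\in\vsm$, by integrating the pointwise Bochner estimate of Lemma \ref{le:riscritto}, and then to reach a general $X\in H^{1,2}_\Ho(T\X)$ by approximation. So first I would fix $X=\sum_ig_i\nabla f_i\in\vsm$, with $f_i,g_i\in\test\X$, and integrate \eqref{eq:bochner} over $\X$. The key point is that the total mass of the left-hand side vanishes. Indeed $|X|^2=\sum_{i,i'}g_ig_{i'}\la\nabla f_i,\nabla f_{i'}\ra$ lies in $L^1\cap L^\infty(\X)$, and expanding $\d\frac{|X|^2}2$ by the Leibniz rule one gets a finite sum of products of bounded factors (the $g_i$, and the $\la\nabla f_i,\nabla f_{i'}\ra$, which are bounded since the $f_i$ are Lipschitz) with $L^2$ differentials; since the $g_i$ are also in $L^2$ this shows $|\d\frac{|X|^2}2|\in L^1(\X)$. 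Hence \eqref{eq:deltal1} applies and yields $\bd\frac{|X|^2}2(\X)=0$. As $\nabla X\in L^2(T^{\otimes2}\X)$ and $X\in D(\Delta_\Ho)$ by Lemma \ref{le:riscritto}, all the integrals below are finite, and integrating \eqref{eq:bochner} against the constant $1$ gives
\[
0=\bd\frac{|X|^2}2(\X)\geq\int|\nabla X|_\HS^2-\la X,\Delta_\Ho X\ra+K|X|^2\,\d\mm.
\]

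Next I would identify the three terms. Since $\vsm$ coincides with $\ffsm1$ under the musical isomorphism we have $X\in H^{1,2}_\Ho(T\X)$, so the defining relation of $\Delta_\Ho$ tested with $\eta=X$ gives $\int\la X,\Delta_\Ho X\ra\,\d\mm=\int|\d X|^2+|\delta X|^2\,\d\mm=2\eh(X)$; moreover $\int|\nabla X|_\HS^2\,\d\mm=2\ec(X)$ and $\int|X|^2\,\d\mm=\|X\|^2_{L^2(T\X)}$. Substituting into the previous display and dividing by $2$ produces exactly \eqref{eq:ehec} for every $X\in\vsm$.

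Finally, for general $X\in H^{1,2}_\Ho(T\X)$ I would invoke the definition of $H^{1,2}_\Ho(T\X)$ as the $W^{1,2}_\Ho$-closure of $\vsm$: choose $X_n\in\vsm$ with $X_n\to X$ in $W^{1,2}_\Ho$. Then $\d X_n\to\d X$ and $\delta X_n\to\delta X$ in $L^2$, whence $\eh(X_n)\to\eh(X)$ and $\|X_n\|^2_{L^2(T\X)}\to\|X\|^2_{L^2(T\X)}$, while $X_n\to X$ in $L^2(T\X)$. Using the $L^2(T\X)$-lower semicontinuity of $\ec$,
\[
\ec(X)\leq\liminf_n\ec(X_n)\leq\liminf_n\Big(\eh(X_n)-\frac K2\|X_n\|^2_{L^2(T\X)}\Big)=\eh(X)-\frac K2\|X\|^2_{L^2(T\X)}.
\]
The right-hand side is finite, so $\ec(X)<\infty$, which by the definition of $\ec$ means $X\in H^{1,2}_C(T\X)$; this establishes both the inclusion and the inequality at once.

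I expect the main obstacle to be the justification that $\bd\frac{|X|^2}2(\X)=0$, i.e.\ the integrability $|\d\frac{|X|^2}2|\in L^1(\X)$ required to apply \eqref{eq:deltal1}. This is automatic when $\mm(\X)<\infty$, but in general it rests on the fact that test vector fields are built from test functions, which are simultaneously bounded, Lipschitz and square-integrable, so that every product appearing in the Leibniz expansion of $\d|X|^2$ lands in $L^1$; this is the one spot where the special structure of $\vsm$ is genuinely used.
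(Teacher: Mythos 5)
Your proof is correct and follows essentially the same route as the paper's (sketched) argument: integrate the Bochner inequality \eqref{eq:bochner} for $X\in\vsm$, using \eqref{eq:deltal1} to kill the total mass of $\bd\frac{|X|^2}{2}$, and then pass to general $X\in H^{1,2}_\Ho(T\X)$ by approximation with test vector fields together with the $L^2$-lower semicontinuity of $\ec$. The details you supply (the $L^1$-integrability of $\d\frac{|X|^2}{2}$ and the identification $\int\la X,\Delta_\Ho X\ra\,\d\mm=2\eh(X)$) are exactly the steps the paper leaves implicit.
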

\begin{proof} For  $X\in\vsm$ the bound \eqref{eq:ehec} comes integrating \eqref{eq:bochner} recalling \eqref{eq:deltal1}. The general case then follows approximating $X\in H^{1,2}_\Ho(T\X)$ with vector fields in $\vsm$ and using the $L^2$-lower semicontinuity of $\ec$.
\end{proof}
We are now ready to introduce the Ricci curvature operator:
\begin{thmdef}[Ricci curvature]\label{thm:ricci}
There exists a unique continuous map, called Ricci curvature,  $\ric: [H^{1,2}_\Ho(T\X)]^2\to \mes(\X)$ such that for every $X,Y\in\vsm$ it holds
\begin{equation}
\label{eq:defricci}
\ric(X,Y)=\bd\frac{\la X, Y\ra}2+\Big(\frac12\la X,\Delta_\Ho Y \ra+ \frac12\la Y,\Delta_\Ho X\ra-\nabla X:\nabla Y\Big)\mm.
\end{equation}
Such map is bilinear, symmetric and satisfies
\begin{align}
\label{eq:riccibound}
\ric(X,X)&\geq K|X|^2\mm\\
\label{eq:riccitotal}
\ric(X,Y)(\X)&=\int\la\d X,\d Y\ra+\delta X\,\delta Y-\nabla X:\nabla Y\,\d\mm\\
\label{eq:riccitv}
\|\ric(X,Y)\|_{\sf TV}&\leq2\sqrt{\eh(X)+K^-\|X\|_{L^2(T\X)}^2}\,\sqrt{\eh(Y)+K^-\|Y\|_{L^2(T\X)}^2}
\end{align}
for every $X,Y\in H^{1,2}_\Ho(T\X)$, where $K^-:=\max\{0,-K\}$.
\end{thmdef}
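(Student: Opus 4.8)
The plan is to define $\ric$ first on the dense subspace $\vsm$ by the explicit formula \eqref{eq:defricci}, verify the three listed properties there, and then extend by continuity using the total-variation bound \eqref{eq:riccitv}. For $X,Y\in\vsm$, Lemma \ref{le:riscritto} together with polarization guarantees that $\la X,Y\ra\in D(\bd)$, that $X,Y\in D(\Delta_\Ho)$, and that $\nabla X:\nabla Y\in L^1(\X)$; hence the right-hand side of \eqref{eq:defricci} is a well-defined element of $\mes(\X)$, and bilinearity and symmetry in $(X,Y)$ are immediate from the bilinearity and symmetry of each term. Uniqueness of the whole construction will then be automatic, since a continuous map on $[H^{1,2}_\Ho(T\X)]^2$ is determined by its values on the dense set $\vsm$, which are fixed by \eqref{eq:defricci}.

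Next I would establish the three properties on $\vsm$. Taking $X=Y$ in \eqref{eq:defricci} gives $\ric(X,X)=\bd\frac{|X|^2}2+(\la X,\Delta_\Ho X\ra-|\nabla X|_\HS^2)\mm$, so the lower bound \eqref{eq:riccibound} is exactly the Bochner inequality \eqref{eq:bochner} of Lemma \ref{le:riscritto}. For the total-mass identity \eqref{eq:riccitotal} I would integrate \eqref{eq:defricci}: the term $\bd\frac{\la X,Y\ra}2$ contributes zero by \eqref{eq:deltal1} (for test fields $\la X,Y\ra\in W^{1,2}(\X)$ has differential in $L^1$), while the defining self-adjoint property of $\Delta_\Ho$ against the energy $\eh$ converts $\frac12\int\la X,\Delta_\Ho Y\ra+\la Y,\Delta_\Ho X\ra\,\d\mm$ into $\int\la\d X,\d Y\ra+\delta X\,\delta Y\,\d\mm$, yielding \eqref{eq:riccitotal}.

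The heart of the argument is the bound \eqref{eq:riccitv}, which I would obtain from a Cauchy--Schwarz inequality for positive-semidefinite measure-valued bilinear forms. Since $\vsm$ is a vector space, the lower bound shows that $B(X,Y):=\ric(X,Y)+K^-\la X,Y\ra\mm$ satisfies $B(Z,Z)\geq0$ for all $Z\in\vsm$. For such a form one proves, setting $\sigma:=B(X,X)+B(Y,Y)$ and using that $B(X+tY,X+tY)\geq0$ forces the pointwise discriminant inequality $c^2\leq ab$ for the $\sigma$-densities $a,b,c$ of $B(X,X),B(Y,Y),B(X,Y)$, that $\|B(X,Y)\|_{\sf TV}\leq\sqrt{B(X,X)(\X)}\sqrt{B(Y,Y)(\X)}$. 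Combining this with $B(X,X)(\X)=\ric(X,X)(\X)+K^-\|X\|_{L^2(T\X)}^2\leq2\eh(X)+K^-\|X\|_{L^2(T\X)}^2$ (using \eqref{eq:riccitotal} and $\ec\geq0$), and controlling the shift $K^-\la X,Y\ra\mm$ in total variation by $K^-\|X\|\,\|Y\|$, then delivers \eqref{eq:riccitv}.

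Finally, because $\vsm$ is dense in $H^{1,2}_\Ho(T\X)$ and \eqref{eq:riccitv} shows $\ric$ is continuous in each factor w.r.t.\ the $W^{1,2}_\Ho$-norm (note $\eh(X)+K^-\|X\|^2\lesssim\|X\|_{W^{1,2}_\Ho(T\X)}^2$), and since $(\mes(\X),\|\cdot\|_{\sf TV})$ is a Banach space, $\ric$ extends uniquely to a continuous bilinear map on $[H^{1,2}_\Ho(T\X)]^2$. All four properties pass to the limit: bilinearity and symmetry trivially, \eqref{eq:riccitotal} and \eqref{eq:riccitv} by continuity, and \eqref{eq:riccibound} because $\ric(X_n,X_n)-K|X_n|^2\mm\geq0$ with $\ric(X_n,X_n)\to\ric(X,X)$ in total variation and $|X_n|^2\to|X|^2$ in $L^1(\X)$, non-negativity of measures being closed under such limits. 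The main obstacle is precisely the measure-valued Cauchy--Schwarz step underlying \eqref{eq:riccitv}: one must justify the Radon--Nikodym decomposition of $B(X,Y)$ against the common dominating measure $\sigma$ (in particular that the singular parts are absolutely continuous w.r.t.\ $\sigma$, which follows from $|B(X,Y)|\leq\frac12\sigma$), track the constant through the shift by $K^-\la\cdot,\cdot\ra\mm$, and then ensure this estimate is compatible with the limiting procedure that preserves the pointwise lower bound.
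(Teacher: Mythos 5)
Your proposal is correct and follows essentially the same route as the paper's sketch: define $\ric$ on $\vsm$ by \eqref{eq:defricci} (well-posed by Lemma \ref{le:riscritto} and polarization), get \eqref{eq:riccibound} as a restatement of \eqref{eq:bochner} and \eqref{eq:riccitotal} from \eqref{eq:deltal1} plus the defining property of $\Delta_\Ho$, obtain the total-variation bound through a Radon--Nikodym decomposition against a common dominating measure and the pointwise discriminant inequality, and then extend by density of $\vsm$ in $H^{1,2}_\Ho(T\X)$, all properties passing to the limit as you describe. The one genuine difference is that the paper carries out the Cauchy--Schwarz step only in the simplified case $K=0$, applying it directly to $\ric$, whereas you handle general $K$ by shifting to $B(X,Y):=\ric(X,Y)+K^-\la X,Y\ra\mm$, which is the natural completion of the paper's argument. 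A small step in your last estimate should be made explicit, since the crude triangle-inequality combination of $\|B(X,Y)\|_{\sf TV}\leq\sqrt{(2\eh(X)+K^-\|X\|^2_{L^2(T\X)})(2\eh(Y)+K^-\|Y\|^2_{L^2(T\X)})}$ with the shift term $K^-\|X\|_{L^2(T\X)}\|Y\|_{L^2(T\X)}$ only yields the constant $3$; to recover the stated constant $2$ one should combine the two products by the elementary Cauchy--Schwarz inequality in $\R^2$:
\[
\sqrt{2\eh(X)+K^-\|X\|^2_{L^2(T\X)}}\,\sqrt{2\eh(Y)+K^-\|Y\|^2_{L^2(T\X)}}+\sqrt{K^-\|X\|^2_{L^2(T\X)}}\,\sqrt{K^-\|Y\|^2_{L^2(T\X)}}
\leq 2\sqrt{\eh(X)+K^-\|X\|^2_{L^2(T\X)}}\,\sqrt{\eh(Y)+K^-\|Y\|^2_{L^2(T\X)}}.
\]
With that one line added, your argument delivers \eqref{eq:riccitv} exactly as stated, for every $K$.
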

\begin{sketch} The fact that the right hand side of \eqref{eq:defricci} is well defined for  $X,Y\in\vsm$  is a direct consequence of Lemma \ref{le:riscritto}. That such right hand side is bilinear, symmetric and satisfies \eqref{eq:riccitotal} is obvious, while \eqref{eq:riccibound} is a restatement of   \eqref{eq:bochner}. Thanks to the density of $\vsm$ in $H^{1,2}_\Ho(T\X)$, to conclude it is therefore sufficient to prove \eqref{eq:riccitv} for $X,Y\in\vsm$:  we shall do so for the case $K=0$ only.

Let $X,Y\in\vsm$, choose $\mu\in\mes(\X)$, $\mu\geq 0$, such that $\ric(X,X),\ric(X,Y)$ and $\ric(Y,Y)$ are all absolutely continuous w.r.t.\ $\mu$ and let $f,g,h$ be the respective Radon-Nikodym derivatives. Then \eqref{eq:riccibound} grants that $f,h\geq 0$ $\mu$-a.e. and that for any $\lambda\in\R$ we have $\ric(\lambda X+Y,\lambda X+Y)\geq 0$. Hence
\[
\lambda^2 f+2\lambda g+h\geq 0,\qquad \mu\ae,
\]
which easily implies $|g|\leq \sqrt{fh}$ $\mu$-a.e.\ and therefore
\[
\|\ric(X,Y)\|_{\sf TV}=\int |g|\,\d\mu\leq \sqrt{\int f\,\d\mu \int h\,\d\mu} =\sqrt{\|\ric(X,X)\|_{\sf TV}\|\ric(Y,Y)\|_{\sf TV}}.
\]
The conclusion then follows noticing that
\[
\|\ric(X,X)\|_{\sf TV}\stackrel{\eqref{eq:riccibound}}=\ric(X,X)(\X)\stackrel{\eqref{eq:riccitotal}}=2\eh(\X)-2\ec(\X)\leq 2\eh(\X)\qquad\forall X\in\vsm.
\]
\end{sketch}

The Ricci curvature operator as defined in the last theorem is a tensor in the sense that it holds:
\[
\ric(fX,Y)=f\,\ric(X,Y)\qquad\forall X,Y\in H^{1,2}_\Ho(T\X),\ f\in\test\X,
\]
as can be showed with some algebraic manipulations based on the calculus rules developed so far (we omit the details). Moreover, directly from the definitions we get
\[
\left.\begin{array}{l}
(\X,\sfd,\mm)\text{ is a $\RCD(K',\infty)$ space with}\\
\ric(X,X)\geq K|X|^2\mm\quad\forall X\in H^{1,2}_\Ho(T\X)
\end{array}\right\}\qquad\Rightarrow\qquad  (\X,\sfd,\mm)\text{ is a $\RCD(K,\infty)$ space.}
\]
\begin{remark}{\rm
Directly from the definition it is easy to see that the Ricci measure gives  0 mass to sets with 0 capacity.  It follows that, for instance, on a two dimensional space with a conical singularity, the Ricci curvature as we defined it does not see any `delta' at the vertex: this also implies that we cannot hope for such measure to have any kind of Gauss-Bonnet formula.

If the space is sufficiently regular ($C^{1,1}$ manifold is enough), then one can detect the singularity of the curvature at the vertex of such a cone by computing the curvature along objects more regular than Sobolev vector fields, namely Lipschitz half densities (see \cite{Lott2016}).
}\fr\end{remark}
\subsection{Some properties in the finite dimensional case}\label{se:findim}
Here we briefly present, without proofs, some related results about analysis and geometry of finite dimensional $\RCD$ spaces (\cite{Gigli12}, \cite{AmbrosioGigliSavare12},  \cite{Erbar-Kuwada-Sturm13}, \cite{AmbrosioMondinoSavare13}).
\begin{definition}[$\RCD^*(K,N)$ spaces]
Let $K\in\R$, $N\in[1,\infty)$. $(\X,\sfd,\mm)$ is a $\RCD^*(K,N)$ space provided it is a $\RCD^*(K,\infty)$ space and the Bochner inequality holds in the following form:
\[
\frac12\int\Delta g|\d f|^2\,\d\mm\geq \int g\Big(\frac{(\Delta f)^2}N+\la\nabla f,\nabla \Delta f\ra+K|\d f|^2\Big)\,\d\mm
\]
for every $f\in D(\Delta)$ with $\Delta f\in W^{1,2}(\X)$ and $g\in L^\infty(\X)\cap D(\Delta)$ with $g\geq 0$, $\Delta g\in L^\infty(\X)$.
\end{definition}
On compact finite-dimensional $\RCD$ spaces, the following natural second-order differentiation formula holds (proved in \cite{GT17}), which links the Hessian as we defined it to the second derivative along geodesics, compare with Theorem \ref{thm:dergeo}.
\begin{theorem}[Second order differentiation formula]
Let $(\X,\sfd,\mm)$ be a compact $\RCD^*(K,N)$ space, $N<\infty$ and $(\mu_t)\subset \prob\X$ a $W_2$-geodesic such that $\mu_0,\mu_1\leq C\mm$ for some $C>0$.

Then for every $f\in H^{2,2}(\X)$ the map $t\mapsto\int f\,\d\mu_t$ is $C^2([0,1])$ and it holds
\[
\frac{\d^2}{\d t^2}\int f\,\d\mu_t=\int\He f(\nabla\varphi_t,\nabla\varphi_t)\,\d\mu_t\qquad\forall t\in[0,1],
\]
where $\varphi_t$ is, for every $t\in[0,1]$,  such that for some $s\neq t$ the function $(s-t)\varphi$ is a Kantorovich potential from $\mu_t$ to $\mu_s$.
\end{theorem}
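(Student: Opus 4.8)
The plan is to differentiate a second time the first-order formula of Theorem \ref{thm:dergeo}. I set $E(t):=\int f\,\d\mu_t$ and first record the reductions. Since $(\X,\sfd,\mm)$ is $\RCD^*(K,N)\subset\RCD(K,\infty)$ and $\mu_0,\mu_1\leq C\mm$, the geodesic $(\mu_t)$ satisfies the hypotheses of Theorem \ref{thm:dergeo} (bounded densities and supports, $L^p$-continuity of $t\mapsto\rho_t$), so $E\in C^1([0,1])$ with
\[
E'(t)=-\int\langle\nabla f,\nabla\varphi_t\rangle\,\d\mu_t .
\]
Because $\test\X$ is dense in $H^{2,2}(\X)$ and both sides of the claimed identity are continuous in $f$ for the $W^{2,2}$-norm (using $\mu_t\leq C'\mm$ and the uniform Lipschitz bound on $\varphi_t$), I would prove the statement for $f\in\test\X$ and then pass to the limit.

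Fix $t_0$ and put $\varphi:=\varphi_{t_0}$. The idea is to split the $t$-dependence in $E'$ into a frozen-potential part and a part carrying the motion of the potential: writing $\nabla\varphi_t=\nabla\varphi+\nabla(\varphi_t-\varphi)$,
\[
E'(t)=-\int\langle\nabla f,\nabla\varphi\rangle\,\d\mu_t-\int\langle\nabla f,\nabla(\varphi_t-\varphi)\rangle\,\d\mu_t .
\]
For the first term, $g:=\langle\nabla f,\nabla\varphi\rangle$ lies in $W^{1,2}(\X)$ with $\nabla g=\He f(\nabla\varphi,\cdot)+\He\varphi(\nabla f,\cdot)$ by Proposition \ref{prop:gradehess} (this uses that $\varphi$ may be taken in $\LIP\cap H^{2,2}(\X)$). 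Applying Theorem \ref{thm:dergeo} to $g$ at $t_0$,
\[
\frac{\d}{\d t}\Big(-\int g\,\d\mu_t\Big)\Big|_{t_0}=\int\big(\He f(\nabla\varphi,\nabla\varphi)+\He\varphi(\nabla f,\nabla\varphi)\big)\,\d\mu_{t_0}.
\]

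For the second term I would invoke the Hamilton--Jacobi structure of the potentials: in the sign convention of the theorem one has $\partial_t\varphi_t=\tfrac12|\nabla\varphi_t|^2$. Since $\varphi_t-\varphi$ vanishes at $t=t_0$, only the derivative of $\nabla(\varphi_t-\varphi)$ contributes, and using $\tfrac12\nabla|\nabla\varphi|^2=\He\varphi(\nabla\varphi,\cdot)$ (Proposition \ref{prop:gradehess} with $f_1=f_2=\varphi$) one obtains
\[
\frac{\d}{\d t}\Big(-\int\langle\nabla f,\nabla(\varphi_t-\varphi)\rangle\,\d\mu_t\Big)\Big|_{t_0}=-\int\He\varphi(\nabla f,\nabla\varphi)\,\d\mu_{t_0}.
\]
Adding the two contributions the $\He\varphi$ terms cancel, leaving $E''(t_0)=\int\He f(\nabla\varphi_{t_0},\nabla\varphi_{t_0})\,\d\mu_{t_0}$. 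Continuity of the right-hand side in $t_0$ (bounded densities, weak convergence of $\nabla\varphi_{t_n}$, and the bound $\int|\He f|_\HS^2\,\d\mm\le\int(\Delta f)^2-K|\nabla f|^2\,\d\mm$ from Corollary \ref{cor:bello}) then upgrades pointwise twice-differentiability to $C^2([0,1])$.

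The hard part will be making the Hamilton--Jacobi step rigorous, and this is where the finite-dimensional and compactness hypotheses are genuinely used. Three points need care: (a) one must choose the Kantorovich potentials with $\varphi_t\in\LIP\cap H^{2,2}(\X)$, so that Proposition \ref{prop:gradehess} applies — this relies on compactness, the Lipschitz bound for $c$-concave functions, and Corollary \ref{cor:bello} to produce the $L^2$ Hessian; (b) one must upgrade the metric (Hopf--Lax) Hamilton--Jacobi equation, which a priori holds only with the local slope and pointwise, to the identity $\partial_t\varphi_t=\tfrac12|\nabla\varphi_t|^2$ for the chosen representatives and in a sense usable inside the integrals; and (c) one must control the first-order remainder $\varphi_t-\varphi-(t-t_0)\tfrac12|\nabla\varphi|^2$ in a topology strong enough to justify differentiation under the integral uniformly near $t_0$. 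Establishing this time-regularity of the potentials and the attendant error estimates is the substance of the argument, the formal computation above being otherwise a direct consequence of the calculus rules already developed.
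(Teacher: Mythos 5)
Your formal computation (freeze the potential, apply Theorem \ref{thm:dergeo} to $\langle\nabla f,\nabla\varphi\rangle$, use the Hamilton--Jacobi equation for the moving part, and watch the $\He\varphi$ terms cancel) is indeed the classical heuristic behind the statement, but the proposal has a genuine gap: everything is deferred to your points (a)--(c), and (a) is not merely a technical chore --- it is an open problem that the tools of this paper cannot settle. To apply Proposition \ref{prop:gradehess} you need the Kantorovich potentials $\varphi_t$ to admit representatives in $\LIP\cap H^{2,2}(\X)$, and your suggested route to this, ``compactness, the Lipschitz bound for $c$-concave functions, and Corollary \ref{cor:bello}'', does not work: Corollary \ref{cor:bello} requires $\varphi_t\in D(\Delta)$, i.e.\ Laplacian in $L^2$, whereas a $c$-concave potential on a finite-dimensional $\RCD$ space is only known to have a measure-valued Laplacian bounded from above (Laplacian comparison); no $L^2$, let alone $W^{2,2}$ or $H^{2,2}$, regularity of Kantorovich potentials is known. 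The same unresolved regularity undermines (b) and (c): the Hopf--Lax/Hamilton--Jacobi identity holds only pointwise with slopes and for a specific choice of potentials, and without second-order control on $\varphi_t$ you cannot differentiate $t\mapsto\int\langle\nabla f,\nabla\varphi_t\rangle\,\d\mu_t$ or control the remainder you introduce.

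This is precisely why the actual proof, in \cite{GT17} (the paper itself gives no proof, only this reference and a remark), takes a completely different route: the $W_2$-geodesic is approximated by entropic interpolations (Schr\"odinger problem, see \cite{Leonard14}), whose interpolating densities and potentials are smooth because they are built from the heat flow, so the second-order computation can be carried out rigorously there; uniform estimates --- relying on the Li--Yau inequality, which is where finite-dimensionality enters, together with compactness --- then allow passing to the limit in the noise parameter. In other words, the difficulty you flag as ``the substance of the argument'' is genuinely the substance, it is not known to be resolvable along the direct path you propose, and the known proof avoids it entirely rather than establishing the time--space regularity of the Kantorovich potentials.
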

The proof of this theorem relies upon an approximation of $W_2$-geodesics with the so-called {\bf entropic interpolation} (see \cite{Leonard14} for an overview on the topic). The result requires finite-dimensionality because is based, among other things, on the Li-Yau inequality.  Compactness is likely not needed, but so far the general result is unknown.

A better understanding of the structure of $\RCD$ spaces can be achieved by introducing the concept of {\bf local dimension} of a module: we say that $\M$ has dimension $n\in\N$ on the Borel set $E\subset\X$ provided there are $v_1,\ldots,v_n\in\M$ such that 
\[
\begin{split}
&\sum_if_iv_i=0\qquad\Rightarrow \qquad f_i=0\quad\mm\ae\ \text{on $E$ for every }i=1,\ldots,n,\\
&\text{$L^\infty$-linear combinations of the $v_i$'s are dense in }\{v\in\M\ :\ \nchi_{E^c}v=0\}.
\end{split}
\]
It is then not hard to see that for any given module there exists a (unique up to negligible sets) Borel partition $(E_i)_{i\in\N\cup\{\infty\}}$ of $\X$ such that $\M$ has dimension $i$ on $E_i$ for every $i\in\N$ and has not finite dimension on any $F\subset E_\infty$ with positive measure.

When the module under consideration is the tangent one, we call the resulting partition the dimensional decomposition of $\X$. This also allows to $\mm$-a.e.\ define the `analytic local dimension' function $\dim_{\rm loc}:\X\to \N\cup\{\infty\}$ which sends $E_i$ to $i$ for every $i\in\N\cup\{\infty\}$. It is conjectured that such function is actually constant (after \cite{ColdingNaber12} and Theorem \ref{thm:GP} below this is known to hold at least for Ricci-limit spaces), but so far this is unknown.

The results in \cite{Mondino-Naber14} grant that the pointed rescaled spaces $(\X,\sfd/r,\mm(B_r(x))^{-1}\mm,x)$ converge, for $\mm$-a.e.\ $x\in\X$,   to the Euclidean space $(\R^{n(x)},\sfd_{\rm Eucl},\mathcal L^{n(x)},0)$ in the pointed-measured-Gromov-Hausdorff sense for some $n(x)\in\N$, $n(x)\leq N$. In particular, the number $n(x)$ provides a `geometric' notion of dimension at $x$. It turns out (\cite{GP16}) that this notion is equivalent to the analytic one:
\begin{theorem}\label{thm:GP}
With the above notation, we have $\dim_{\rm loc}(x)=n(x)$ for $\mm$-a.e.\ $x\in\X$. In particular, $\mm$-a.e.\ we have $\dim_{\rm loc}\leq N$.
\end{theorem}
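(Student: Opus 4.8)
The plan is to show that the two Borel partitions of $\X$ coincide up to $\mm$-negligible sets. Write $\mathcal R_n:=\{x:n(x)=n\}$ for the set of $n$-regular points provided by \cite{Mondino-Naber14} (whose union exhausts $\X$ up to a null set), and let $(E_i)$ be the dimensional decomposition of the tangent module $L^2(T\X)$. It suffices to prove the two one-sided bounds $\dim_{\rm loc}(x)\ge n(x)$ and $\dim_{\rm loc}(x)\le n(x)$ for $\mm$-a.e.\ $x$, since then $E_n=\mathcal R_n$ up to $\mm$-null sets for every $n$, and the bound $\dim_{\rm loc}\le N$ follows from $n(x)\le N$. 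First I would record an analytic reformulation of $\dim_{\rm loc}$: since $\{\nabla f: f\in\test\X\}$ generates $L^2(T\X)$ (the differential being generated by $\{\d f\}$ and $\test\X$ being dense in $W^{1,2}(\X)$), and since a pointwise Gram--Schmidt procedure based on the pointwise scalar product turns any local frame into a pointwise orthonormal one, $\dim_{\rm loc}=k$ on $E$ is equivalent to saying that $k$ is the maximal number of gradients of test functions that are pointwise linearly independent on positive-measure subsets near the relevant points.

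For the lower bound $\dim_{\rm loc}\ge n(x)$ on $\mathcal R_n$, I would invoke the $\RCD$ version of the Cheeger--Colding almost-splitting theorem, available precisely because the blow-ups at $x\in\mathcal R_n$ split off $\R^n$. Around a density point of $\mathcal R_n$ one constructs, at a small scale $r$, $n$ harmonic $\delta$-splitting functions $u_1,\dots,u_n$ whose gradients satisfy $\frac{1}{\mm(B_r)}\int_{B_r}\big|\langle\nabla u_a,\nabla u_b\rangle-\delta_{ab}\big|\,\d\mm\to 0$ as $r\downarrow0$ and $\delta\downarrow0$. A standard measure-theoretic argument then produces, on a subset of positive measure of every small ball, pointwise $\eps$-orthonormality of the $\nabla u_a$, hence pointwise linear independence; by the reformulation above this forces $\dim_{\rm loc}\ge n$ on a positive-measure subset of each neighbourhood, and by the Lebesgue density theorem $\dim_{\rm loc}\ge n(x)$ for $\mm$-a.e.\ $x\in\mathcal R_n$.

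For the upper bound $\dim_{\rm loc}\le n(x)$, suppose toward a contradiction that $\dim_{\rm loc}\ge n+1$ on a positive-measure set $A\subset\mathcal R_n$, so that there are test functions $f_1,\dots,f_{n+1}$ whose Gram matrix $G(x)=\big(\langle\nabla f_a,\nabla f_b\rangle(x)\big)_{a,b}$ is invertible with $\det G\ge c>0$ on a set of positive measure. Fixing a density point $x_0$ of that set and blowing up, the suitably rescaled and normalized functions $f_a$ converge, along the pointed-measured-Gromov-Hausdorff (pmGH) convergent rescalings $(\X,\sfd/r_i,\mm_{x_0}^{r_i},x_0)\to(\R^n,\sfd_{\rm eucl},c\mathcal L^n,0)$ granted by \cite{Mondino-Naber14}, to limit functions on $\R^n$, and the pointwise Gram matrices pass to the limit; evaluating at $0$ then yields $n+1$ linearly independent tangent vectors in $\R^n$, which is absurd. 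Hence $\dim_{\rm loc}\le n(x)$ a.e., and combined with the lower bound this proves $\dim_{\rm loc}(x)=n(x)$ for $\mm$-a.e.\ $x$.

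The hard part will be the blow-up/stability step in the upper bound. One must have at hand a robust theory of convergence of Sobolev functions and of their differentials along a pmGH-convergent sequence of $\RCD$ spaces, identify the limit of the rescaled tangent modules with $L^2(T\R^n)$, and — crucially — upgrade the lower semicontinuity of pointwise scalar products of gradients to genuine convergence of the Gram matrices, so that the positive lower bound $\det G\ge c$ is not lost in the limit. Setting up this convergence (and pinning down that no independence is destroyed) is the genuine technical core, whereas the splitting-map construction underlying the lower bound is comparatively routine once the $\RCD$ almost-splitting theorem is taken for granted.
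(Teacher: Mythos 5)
Your lower--bound half is sound in outline: the $\delta$-splitting maps of \cite{Mondino-Naber14}, a Chebyshev argument giving pointwise almost-orthonormality of the $n$ gradients on most of a small ball, density points of $E_i\cap\mathcal R_n$, and the fact that on $E_i$ any $i+1$ vector fields are $\mm$-a.e.\ pointwise dependent, do combine to give $\dim_{\rm loc}\geq n$ a.e.\ on $\mathcal R_n$ (modulo the routine localization of the splitting functions by cut-offs). The genuine gap is in the upper bound, precisely at the sentence ``the pointwise Gram matrices pass to the limit; evaluating at $0$''. As written this step would fail: along a pmGH-convergent sequence of rescaled spaces the general stability theory gives only \emph{lower semicontinuity} of $|\nabla\cdot|$ (gradient energy can drop in the limit) and no semicontinuity whatsoever for the off-diagonal terms $\langle\nabla f_a,\nabla f_b\rangle$, so the limit functions on $\R^n$ may a priori have a degenerate Gram matrix and the contradiction with linear algebra in $\R^n$ evaporates; moreover ``evaluating at $0$'' is not meaningful for limits taken in an $L^1$/measure sense, one must argue on a set of positive $\mathcal L^n$-measure. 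To close the argument you need a concrete mechanism forcing \emph{strong} local $H^{1,2}$ convergence of the rescaled functions, for instance: the Laplacian scales like $r$ under the blow-up $(\X,\sfd/r,\mm(B_r(x_0))^{-1}\mm,x_0)$, so at a Lebesgue point of $|\Delta f_a|^2$ the rescaled test functions are asymptotically harmonic on unit balls, and one can then invoke the convergence results for (almost) harmonic functions and their gradients along pmGH convergence of $\RCD(K,N)$ spaces (in the spirit of \cite{AH16}) to conclude that the Gram matrix of the limits equals $G(x_0)$ a.e.\ on $B_1$, hence has rank $n+1$, which is absurd. You flag this as ``the hard part'', but the issue is not bookkeeping: it is the one step where the naive semicontinuity points the wrong way, and your proposal offers no device to reverse it, so as it stands the proof is incomplete.

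For comparison, note that the lecture notes state the theorem without proof, referring to \cite{GP16}, and the route indicated there is different: one uses the $(1+\eps)$-bi-Lipschitz charts of \cite{Mondino-Naber14} covering $\mm$-almost all of $\mathcal R_n$ together with the key measure-theoretic input of \cite{MK16} and \cite{GP16-2} (the push-forward of $\mm$ under a chart is absolutely continuous w.r.t.\ $\mathcal L^n$), and from these builds an explicit isomorphism between the tangent module over the chart domains and the $L^2$-sections of the Gromov--Hausdorff tangent ``bundle'', which yields both inequalities simultaneously. Your lower bound is in the same spirit (the charts are built precisely from the splitting maps you invoke), while your upper bound trades the bi-Lipschitz-plus-absolute-continuity input for blow-up stability of Sobolev functions; that is a legitimate alternative strategy, but it relocates the entire difficulty into the convergence theory that your proposal leaves open.
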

In fact, something stronger holds: the tangent module $L^2(T\X)$ is isomorphic to the space of `$L^2$ sections' of the bundle on $\X$ made of the collections of the pmGH-limits of rescaled spaces. The proof of this fact uses the {\bf charts} built in  \cite{Mondino-Naber14}, along with the improvements given in \cite{MK16} and \cite{GP16-2}, to produce the desired isomorphism.

\bigskip

In a different direction, the properties of the cohomology groups reflect on the geometry of the space, as shown by the following result (proved in \cite{GR17}) which generalizes a classical result of Bochner to the setting of $\RCD$ spaces.
\begin{theorem}
Let $(\X,\sfd,\mm)$ be a $\RCD(0,\infty)$ space. Then $\dim({\sf H}^1_{\sf dR}(\X))\leq \min_\X \dim_{\rm loc}$.

Moreover, if $(\X,\sfd,\mm)$ is $\RCD(0,N)$ and $\dim({\sf H}^1_{\sf dR}(\X))=N$ (so that in particular $N\in\N$), then $\X$ is the flat $N$-dimensional torus.
\end{theorem}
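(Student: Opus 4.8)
The plan is to follow the classical Bochner scheme in the nonsmooth language developed above: realize first cohomology by harmonic $1$-forms, use the measure-valued Bochner identity to show such forms are parallel, count parallel fields through the local dimension, and in the equality case integrate the resulting parallel orthonormal frame to an $\R^N$-action exhibiting $\X$ as a flat torus. For the inequality, by the Hodge theorem (Theorem \ref{thm:hodge}) one has $\dim{\sf H}^1_{\sf dR}(\X)=\dim\harm 1$, so it suffices to bound the dimension of the space of harmonic $1$-forms. Fix $X\in\harm 1$ and identify it with a vector field as in Section \ref{se:ricci}; then $\d X=\delta X=0$ and $X\in D(\Delta_\Ho)\subset H^{1,2}_\Ho(T\X)$. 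Applying Theorem/Definition \ref{thm:ricci}, formula \eqref{eq:riccitotal} gives $\ric(X,X)(\X)=-\int|\nabla X|_\HS^2\,\d\mm$, while \eqref{eq:riccibound} with $K=0$ gives $\ric(X,X)\geq 0$ and hence $\ric(X,X)(\X)\geq 0$. Combining the two, $\int|\nabla X|_\HS^2\,\d\mm=0$, i.e. every harmonic $1$-form is parallel: $\nabla X=0$.

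Next I would count. Let $X_1,\dots,X_m$ be an $\R$-basis of $\harm 1$. Using compatibility with the metric (Proposition \ref{prop:compmetr}, in the approximated form appropriate to parallel fields) each function $\langle X_i,X_j\rangle$ has vanishing differential, hence is constant on the connected space $\X$; let $G=(\langle X_i,X_j\rangle)$ denote the resulting constant Gram matrix. If $\sum_{i,j}c_ic_jG_{ij}=0$ for real $c_i$ then $|\sum_ic_iX_i|^2=0$, so $\sum_ic_iX_i=0$ in $L^2(T\X)$ and $c=0$ by independence; thus $G$ is positive definite. Consequently, whenever $\sum_if_iX_i=0$ with $f_i\in L^0(\X)$, pairing with each $X_j$ yields $Gf=0$ pointwise $\mm$-a.e., whence $f_i=0$ $\mm$-a.e. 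This exhibits the tangent module as having dimension at least $m$ at $\mm$-a.e. point, so $m\leq\dim_{\rm loc}(x)$ for $\mm$-a.e. $x$, i.e. $\dim{\sf H}^1_{\sf dR}(\X)\leq\min_\X\dim_{\rm loc}$.

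For the equality case, assume $\RCD(0,N)$ with $\dim{\sf H}^1_{\sf dR}(\X)=N$. Then $N\in\N$ and, by Theorem \ref{thm:GP}, $\dim_{\rm loc}\leq N$ $\mm$-a.e.; with the first part this forces $\dim_{\rm loc}\equiv N$. After diagonalizing $G$ by an $\R$-linear change of basis I obtain parallel vector fields $X_1,\dots,X_N$ with $\langle X_i,X_j\rangle=\delta_{ij}$ constant, forming a pointwise orthonormal frame that generates $L^2(T\X)$; in particular $|X_i|\equiv 1\in L^\infty$. Since each $X_i$ is parallel, its divergence is the trace of $\nabla X_i=0$, so $X_i\in L^\infty\cap W^{1,2}_C(T\X)$ with vanishing divergence, and the Regular Lagrangian Flow theory recalled in Section \ref{se:ricci} produces measure-preserving flows $F^i$. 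I would then argue that these flows are \emph{isometries} (parallelism of an orthonormal frame being the metric-measure analogue of flatness) and that they commute, the latter because the Lie bracket $[X_i,X_j]=\nabla_{X_i}X_j-\nabla_{X_j}X_i=0$. This yields a measure-preserving isometric action of $\R^N$ on $\X$ whose orbit directions span the tangent module everywhere; transitivity gives $\X\cong\R^N/\Gamma$ with $\Gamma$ a discrete isometry subgroup, and finiteness of $\mm(\X)$ (forced by $|X_i|\equiv 1$ together with $X_i\in L^2$) makes $\Gamma$ a full-rank lattice, so $\X$ is a flat $N$-torus.

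The genuinely hard part is this last geometric step: upgrading the formal flows of the parallel orthonormal frame to a transitive isometric $\R^N$-action and identifying the quotient as a flat torus. Establishing that the $F^i$ preserve $\sfd$ (not merely $\mm$), that the orbit map is a surjective local isometry, and that compactness and the lattice structure follow, requires the fine structure theory of $\RCD(0,N)$ spaces and is carried out in \cite{GR17}. The first three paragraphs above are the analytic core I would reproduce in full, whereas for this final structural argument I would invoke \cite{GR17}.
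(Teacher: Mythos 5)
Your proposal is correct and takes essentially the same route as the paper: the paper likewise deduces that harmonic $1$-forms are parallel (it invokes \eqref{eq:ehec}, which at $K=0$ is exactly your combination of \eqref{eq:riccitotal} and \eqref{eq:riccibound}), and it defers the torus rigidity in the equality case to \cite{GR17}, where the isomorphism is built from the Regular Lagrangian Flows of a basis of harmonic forms, precisely as you outline. Your counting step via the constant, positive definite Gram matrix and the local dimension of the tangent module is the natural filling-in of the part the paper leaves implicit.
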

The first part of the statement follows noticing that, much like in the smooth case, harmonic 1-forms must be parallel (because of \eqref{eq:ehec}). The second claim is harder, because the classical proof which passes via universal cover can't be adapted; instead, the desired isomorphism is built from scratch by considering the Regular Lagrangian Flows of a basis of harmonic forms.

\bigskip

In the smooth setting of weighted Riemannian manifolds, it is well known that the validity of a curvature dimension condition is linked to the fact that the $N$-Ricci tensor is bounded from below by $K$ and that $N$ is equal to the geometric dimension of the manifold if and only if the trace of the Hessian is equal to the Laplacian.

Something similar holds on $\RCD^*(K,N)$ spaces, as proved in \cite{Han14} by adapting the computations done in \cite{Sturm14} to the non-smooth setting.  Let us introduce the function $R_N:[H^{1,2}_\Ho(T\X)]^2\to L^1(\X)$ as
\[
R_N(X,Y):=
\dfrac{\big({\rm tr}(\nabla X)-\div X\big)\big({\rm tr}(\nabla Y)-\div Y\big)}{N-\dim_{\rm loc}}\quad\text{ if }\dim_{\rm loc}<N,\qquad\qquad
0\quad\text{ otherwise}
\]
and the $N$-Ricci tensor $\ric_N:[H^{1,2}_\Ho(T\X)]^2\to \mes(\X)$ as
\[
\ric_N(X,Y):=\ric(X,Y)-R_N(X,Y)\mm.
\]
It is easy to see that
\[
\begin{split}
|\nabla X|^2_\HS+R_N(X,X)&\geq\frac{(\div X)^2}N,\qquad\text{ and }\qquad\ric_N(fX,Y)=f\,\ric_N(X,Y),
\end{split}
\]
for every $X,Y\in H^{1,2}_\Ho(T\X)$ and $f\in\test\X$.

The main results in \cite{Han14} can then be summarized as:
\begin{theorem}
Let $(\X,\sfd,\mm)$ be a $\RCD^*(K',\infty)$ space. Then it is a  $\RCD^*(K,N)$ space if and only if
\begin{itemize}
\item[i)] $\dim_{\rm loc}\leq N$ $\mm$-a.e.
\item[ii)] For any $X\in H^{1,2}_\Ho(T\X)$ we have ${\rm tr}(\nabla X)=\div X$ $\mm$-a.e.\ on $\{\dim_{\rm loc}=N\}$
\item[iii)] For any $X\in H^{1,2}_\Ho(T\X)$ we have
\[
\begin{split}
\ric_N(X,X)&\geq K|X|^2\mm.
\end{split}
\]
\end{itemize}
\end{theorem}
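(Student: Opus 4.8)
The plan is to reduce both implications to two ingredients that do not themselves involve the dimension: the Bochner \emph{identity} defining $\ric$ in Theorem/Definition \ref{thm:ricci}, and a single pointwise linear-algebra inequality. Specializing \eqref{eq:defricci} to $X=Y=\nabla f$ and using $\nabla(\nabla f)=(\He f)^\sharp$ together with $\Delta_\Ho\nabla f=-\nabla\Delta f$ yields, for $f\in\test\X$,
\[
\bd\frac{|\nabla f|^2}{2}=\ric(\nabla f,\nabla f)+\big(|\He f|_\HS^2+\la\nabla f,\nabla\Delta f\ra\big)\mm.
\]
The dimension enters only through the elementary inequality
\[
\frac{b^2}{n}+\frac{(a-b)^2}{N-n}\ \geq\ \frac{a^2}{N}\qquad(n\leq N,\ a,b\in\R),
\]
with the convention that for $n=N$ finiteness of the left-hand side forces $a=b$; geometrically it says that an $n$-dimensional symmetric tensor of trace $b$, completed to $N$ dimensions by $N-n$ equal diagonal entries summing to $a-b$, has squared Hilbert--Schmidt norm at least $a^2/N$ by Cauchy--Schwarz. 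Its consequence, used repeatedly, is the fibrewise bound
\[
|\He f|_\HS^2+R_N(\nabla f,\nabla f)\ \geq\ \frac{(\Delta f)^2}{N}\qquad\mm\ae,
\]
obtained with $b={\rm tr}(\He f)$, $a=\Delta f$ and $|\He f|_\HS^2\geq({\rm tr}\He f)^2/\dim_{\rm loc}$ on $\{\dim_{\rm loc}<N\}$, reducing to plain Cauchy--Schwarz on $\{\dim_{\rm loc}=N\}$ by means of (ii).

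For the (easier) reverse implication I would apply (iii) to the gradient fields $X=\nabla f$, $f\in\test\X$, obtaining $\ric(\nabla f,\nabla f)\geq\big(R_N(\nabla f,\nabla f)+K|\nabla f|^2\big)\mm$. Inserting this into the Bochner identity and then using the fibrewise bound above gives
\[
\bd\frac{|\nabla f|^2}{2}\ \geq\ \Big(\frac{(\Delta f)^2}{N}+\la\nabla f,\nabla\Delta f\ra+K|\nabla f|^2\Big)\mm .
\]
Testing this measure inequality against a non-negative $g\in L^\infty\cap D(\Delta)$ with $\Delta g\in L^\infty$, and approximating to pass from $\test\X$ to general $f\in D(\Delta)$ with $\Delta f\in W^{1,2}(\X)$, reproduces exactly the defining Bochner inequality of $\RCD^*(K,N)$.

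The forward implication is where the real work lies, and the main obstacle is to upgrade the \emph{scalar} dimensional Bochner inequality to a \emph{tensorial} statement valid for all $X\in H^{1,2}_\Ho(T\X)$ and not merely for gradients; note that this is genuinely stronger than the bound $\ric(X,X)\geq K|X|^2\mm$ of \eqref{eq:riccibound}, since $\ric_N\leq\ric$. Concretely I would rerun the self-improvement of Lemma \ref{le:lemmachiave} (Bakry's argument through the auxiliary functions $\Phi_{\lambda,c}$) starting from the dimensional inequality and carefully retaining the trace, aiming at the dimensional refinement of Lemma \ref{le:riscritto},
\[
\bd\frac{|X|^2}{2}\ \geq\ \Big(|\nabla X|_\HS^2-\la X,\Delta_\Ho X\ra+K|X|^2+\frac{\big({\rm tr}(\nabla X)-\div X\big)^2}{N-\dim_{\rm loc}}\Big)\mm
\]
for $X\in\vsm$, the last term read as $0$ on $\{\dim_{\rm loc}=N\}$. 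Rewriting this via \eqref{eq:defricci} gives precisely (iii) for $X\in\vsm$, and density of $\vsm$ in $H^{1,2}_\Ho(T\X)$ together with the $L^2$-lower semicontinuity of $\ec$ (as in Corollary \ref{cor:bello2}) and the total-variation continuity of $\ric$ in \eqref{eq:riccitv} propagates it to all of $H^{1,2}_\Ho(T\X)$. Condition (ii) is the rigidity content of the elementary inequality at $n=N$ (finiteness forces ${\rm tr}(\nabla X)=\div X$), and the dimension bound (i) follows by testing the self-improved scalar inequality against functions whose Hessian saturates Cauchy--Schwarz on a fibre: a fibre of dimension exceeding $N$ would violate $|\He f|_\HS^2\geq(\Delta f)^2/N$.

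Besides the self-improvement, two technical points carry the real difficulty. First, the trace ${\rm tr}(\nabla X)$ and the function $\dim_{\rm loc}$ must be given $\mm$-a.e.\ meaning, which rests on the dimensional decomposition of $L^2(T\X)$ into the pieces $\{\dim_{\rm loc}=n\}$ and on measurable local orthonormal frames there; all the pointwise manipulations above are to be performed fibrewise on this decomposition, and the realization of prescribed (e.g.\ round) Hessians needed for (i) uses the structure theory of the tangent module. Second, the passage from $\test\X$ and $\vsm$ to general functions and vector fields relies on truncation and heat-flow regularization together with the closedness of $\He$, $\nabla$, $\d$ and $\delta$ proved earlier; one must check that the $R_N$-term is stable under these approximations, for which the lower semicontinuous surrogate $|\nabla X|_\HS^2+R_N(X,X)\geq(\div X)^2/N$ recorded in the text is the convenient tool.
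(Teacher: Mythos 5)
First, a point of comparison: the paper itself gives no proof of this statement -- Section \ref{se:findim} is explicitly ``without proofs'' and the theorem is quoted from \cite{Han14}, which adapts the computations of \cite{Sturm14}. Your proposal follows that same strategy, and the sufficiency half you sketch is essentially correct: specializing \eqref{eq:defricci} to $X=Y=\nabla f$ with $f\in\test\X$, using $\nabla(\nabla f)=(\He f)^\sharp$ and $\Delta_\Ho\d f=-\d\Delta f$, and invoking the elementary bound $\tfrac{b^2}{n}+\tfrac{(a-b)^2}{N-n}\geq\tfrac{a^2}{N}$ stratum by stratum (with (ii) handling $\{\dim_{\rm loc}=N\}$ and (i) guaranteeing the strata exhaust $\X$) does give $\bd\tfrac{|\nabla f|^2}{2}\geq\big(\tfrac{(\Delta f)^2}{N}+\la\nabla f,\nabla\Delta f\ra+K|\nabla f|^2\big)\mm$, and the passage to general $f\in D(\Delta)$ with $\Delta f\in W^{1,2}(\X)$ is the standard cut-off/heat-flow approximation. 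The propagation of (iii) from $\vsm$ to $H^{1,2}_\Ho(T\X)$ is also fine, since $\dim_{\rm loc}$ is integer valued, so $N-\dim_{\rm loc}$ is bounded away from $0$ on $\{\dim_{\rm loc}<N\}$ and $R_N$ is continuous along $H^{1,2}_\Ho$-convergent sequences once Corollary \ref{cor:bello2} is used.

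The genuine gap is in the necessity direction, which is precisely where the content of \cite{Han14} lies. The refined inequality
\begin{equation*}
\bd\frac{|X|^2}{2}\ \geq\ \Big(|\nabla X|_\HS^2-\la X,\Delta_\Ho X\ra+K|X|^2+\frac{\big({\rm tr}(\nabla X)-\div X\big)^2}{N-\dim_{\rm loc}}\Big)\mm,\qquad X\in\vsm,
\end{equation*}
is not obtained by simply ``rerunning'' Lemma \ref{le:lemmachiave}: the extra term $\tfrac1N(\Delta f)^2$ changes the quadratic form produced by Bakry's optimization over the auxiliary functions, and the sharp coefficient $\tfrac{1}{N-\dim_{\rm loc}}$ only emerges after a further optimization in the trace direction carried out stratum by stratum on the dimensional decomposition, together with a separate limiting/rigidity argument on $\{\dim_{\rm loc}=N\}$ (this is what actually yields (ii)) and a treatment of the singular parts. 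In your proposal this computation -- the heart of the theorem -- is announced as an aim rather than carried out. Moreover, your route to (i) (``test against functions whose Hessian saturates Cauchy--Schwarz on a fibre'') is not available in this setting: one cannot prescribe a Hessian, even approximately a multiple of the identity, on a given fibre. In the known argument the trace information is extracted through the \emph{other} slot of the key inequality, by testing against tensors $\sum_j\nabla h_j\otimes\nabla h_j$ which approximate the identity of a stratum (the paper's observation that such tensors are $L^2$-dense among symmetric ones is what makes this possible), and the dimension bound then comes out of the same discriminant-type analysis rather than from a construction of special functions. So the overall scheme and the sufficiency half are sound, but as written the necessity half is a programme, not a proof.
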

\def\cprime{$'$} \def\cprime{$'$}

\end{document}